\definecolor{light-blue}{rgb}{0.8,0.85,1}
\definecolor{light-red}{rgb}{1,.4,.4}
\definecolor{purp}{rgb}{.7,.3,1}
\definecolor{yel}{rgb}{1,1,.5}
\definecolor{cy}{rgb}{0,1,1}
\definecolor{darkgreen}{cmyk}{1,0,1,.2}
\definecolor{m}{rgb}{1,0.1,1}
\theoremstyle{plain}
\newtheorem{theorem}{Theorem}[section]
\newtheorem{corollary}[theorem]{Corollary}
\newtheorem{lemma}[theorem]{Lemma}
\newtheorem{proposition}[theorem]{Proposition}
\newtheorem*{thmA}{Theorem A}
\newtheorem*{thmB}{Theorem B}
\newtheorem*{thmC}{Theorem C}
\newtheorem*{thmD}{Conjecture D}
\newtheorem*{thmE}{Theorem E}
\newtheorem*{thmF}{Theorem F}
\newtheorem*{thmG}{Theorem G}
\newtheorem*{thmH}{Theorem H}
\newtheorem*{thmI}{Theorem I}
\def\Di{\mathfrak{D}\kern-6.5pt/}
\def\Spi{\mathfrak{S}\kern-6.5pt/}
\theoremstyle{definition}
\newtheorem{remark}[theorem]{Remark}
\newtheorem{definition}[theorem]{Definition}
\newtheorem{conjecture}[theorem]{Conjecture}
\newcommand{\Diff}{\mathrm{Diff}}
\DeclareMathOperator*{\dirlim}{dirlim}
\def \Ch {{\rm Ch}}
\newcommand{\Pdo}{\mathrm{\Psi}}
\newcommand{\CP}{\mathbb{CP}}
\newcommand{\HP}{\mathbb{HP}}
\newcommand{\p}{\partial}
\newcommand{\spin}{\mathsf{spin}}
\newcommand{\spinc}{\mathsf{spin}^c}
\newcommand{\MSpin}{\mathsf{MSpin}}
\newcommand{\MSpinc}{\mathsf{MSpin}^c}
\newcommand{\BP}{\mathsf{BP}}
\newcommand{\K}{\mathsf{K}}
\newcommand{\ko}{\mathsf{ko}}
\newcommand{\KU}{\mathsf{KU}}
\newcommand{\ku}{\mathsf{ku}}
\newcommand{\sF}{\mathsf{F}}
\newcommand{\co}{\colon\,}
\newcommand{\bR}{\mathbb R}
\newcommand{\bC}{\mathbb C}
\newcommand{\bF}{\mathbb F}
\newcommand{\bH}{\mathbb H}
\newcommand{\bZ}{\mathbb Z}
\newcommand{\bP}{\mathbb P}
\newcommand{\bN}{\mathbb N}
\newcommand{\cA}{\mathcal A}
\newcommand{\cL}{\mathcal L}
\newcommand{\cO}{\mathcal O}
\newcommand{\cP}{\mathcal P}
\newcommand{\cR}{\mathcal R}
\newcommand{\cS}{\mathcal S}
\newcommand{\Sp}{\mathop{\rm Sp}}
\newcommand{\Spin}{\text{\rm Spin}}
\newcommand{\Spinc}{\text{{\rm Spin}}^c}
\newcommand{\wM}{\widetilde M}
\newcommand{\wW}{\widetilde W}
\newcommand{\tM}{\widetilde M}
\newcommand{\tX}{\widetilde X}
\newcommand{\rR}{\mathrm R}
\newcommand{\pt}{\text{\textup{pt}}}
\newcommand{\lp}{\textup{(}}
\newcommand{\rp}{\textup{)}}
\newcommand{\ind}{\operatorname{ind}}
\newcommand{\Ind}{\operatorname{Ind}}
\newcommand{\Hom}{\operatorname{Hom}}
\newcommand{\Tr}{\operatorname{Tr}}
\renewcommand\Im{\operatorname{Im}}
\newcommand{\Pos}{\operatorname{Pos}}
\newcommand{\rank}{\operatorname{rank}}
\newcommand{\del}{\text{\textup{del}}}
\newcommand{\id}{\text{id}}
\newcommand{\pr}{\text{pr}}
\newcommand{\per}{\operatorname{\mathbf{per}}}
\newcommand{\ord}{\operatorname{ord}}
\newcommand\SG{{\bf {\rm S}}}
\newcommand{\abs}[1]{\left\lvert#1\right\rvert} 
\newcommand{\comm}[1]{}
\newcommand{\tw}{\mathrm{tw}}
\newcommand{\gen}{\mathrm{gen}}
\title{Classification of spin$^c$ manifolds with\\
  generalized positive scalar curvature}
\author{Boris Botvinnik}
\address{Department of Mathematics\\
University of Oregon\\
Eugene OR 97403-1222, USA} 
\email[Boris Botvinnik]{botvinn@uoregon.edu}
\urladdr{http://pages.uoregon.edu/botvinn/}
\author{Paolo Piazza}
\address{Dipartimento di Matematica\\ 
  Sapienza Universit\`a di Roma\\ Piazzale Aldo Moro 5\\ 00185 Roma, Italy} \email[Paolo
  Piazza]{piazza@mat.uniroma1.it}
\urladdr{http://www1.mat.uniroma1.it/people/piazza/} 
\author{Jonathan Rosenberg}
\address{Department of Mathematics\\
University of Maryland\\
College Park, MD 20742-4015, USA} 
\email[Jonathan Rosenberg]{jmr@umd.edu}
\urladdr{http://www2.math.umd.edu/\raisebox{-3pt}{~}jmr/}
\begin{document}
\begin{abstract}
Suppose $M$ is a closed $n$-dimensional spin$^c$ manifold with
spin$^c$ structure $\sigma$ and associated spin$^c$ line bundle $L$.
If one fixes a Riemannian metric $g$ on $M$ and a connection
$\nabla_L$ on $L$, the \emph{generalized scalar curvature}
$R^{\text{gen}}$ of $(M,L)$ is $R_g - 2|\Omega_L|_{\text{op}}$, where
$|\Omega_L|_{\text{op}}$ is the pointwise operator norm of the
curvature $2$-form $\Omega_L$ of $\nabla_L$, acting on spinors. In a
previous paper, we showed that positivity of $R^{\text{gen}}$ is
obstructed by the non-vanishing of the index of the spin$^c$ Dirac
operator on $(M,g,L,\nabla_L)$, and that in some cases, the vanishing
of this index guarantees the existence of a pair $(g,\nabla_L)$ with
positive generalized scalar curvature.  Building on this and on
surgery techniques inspired by those that have been developed in the
theory of positive scalar curvature on spin manifolds, we show that if
$\dim M = n \ge 5$, if the fundamental group $\pi$ of $M$ is in a
large class including surface groups and finite groups with periodic
cohomology, and if $M$ is totally non-spin (meaning that the universal
cover is not spin), then $(M,L)$ admits positive generalized scalar
curvature if and only if the generalized $\alpha$-invariant of $(M,L)$
vanishes in the $K$-homology group $K_n(B\pi)$.  We also develop an
analogue of Stolz's sequence for computing the group of concordance
classes of positive generalized scalar curvature metrics, and connect
this to the analytic surgery sequence of Roe and Higson.  Finally, we
give a number of applications to moduli spaces of positive generalized
scalar curvature metrics.
\end{abstract}

\keywords{positive scalar curvature, spin$^c$ manifold,
  bordism, transfer, $K$-theory, index, index difference, eta-invariant,
  rho invariant, Stolz sequence}
\subjclass[2020]{Primary 53C21; Secondary 53C27, 58J22, 55N22, 19L41}

\maketitle
\vspace*{-5mm}

\tableofcontents
\renewcommand{\baselinestretch}{1.25}\normalsize
\vspace*{-5mm}

\section{Introduction}
\label{sec:intro}
The classification theory for spin manifolds with positive scalar
curvature (which we will often abbreviate psc for short), especially
on manifolds of dimension $\ge 5$, is at this point very highly
developed. (For surveys, see for example
\cite{MR1268192,MR1818778,MR2408269}.) The first known obstruction for
existence of psc metrics comes from the Lichnerowicz-Schr\"odinger
formula for the square of the spin Dirac operator:
$ D^2 = \nabla\nabla^* + \frac{1}{4}R_g$,
where $R_g$ is the scalar curvature of
the Riemannian metric $g$.
In fact, the entire set of known obstructions to such metrics 
in dimensions $>4$ arises
from the index theory of the spinor Dirac operator, coupled to flat
vector bundles (in the generalized sense of Mishchenko-Fomenko, with
fibers that are projective modules over the real group $C^*$-algebra),
or from the minimal hypersurface method of Schoen and
Yau \cite{MR535700}, while constructions of psc metrics often come
from the surgery method of Gromov and Lawson \cite{MR577131} or Schoen
and Yau
\cite{MR535700}. In addition, much is known about the topology of the
space of psc metrics --- see for example
\cite{MR1339924,MR2366359,Xie-Yu-Moduli,PSZ,MR3268776,MR3956897,MR4466049,MR3681394}. 

In the papers
\cite{MR4649186,MR4817674}, two of us started to develop a parallel
theory for spin$^c$ manifolds $M$, where $R_g$
is replaced by the (matrix-valued) function on spinors
$R_{g,L}^{\tw}:=R_g+2ic(\Omega_L)$ which we call the \emph{twisted scalar
curvature}. Here the complex line bundle $L$ is the one associated to the chosen 
spin$^c$ structure $\sigma$ on $M$ (implicitly equipped
with a hermitian metric and a unitary connection $\nabla_L$) and $\Omega_L$ is
the curvature $2$-form of $\nabla_L$. 

Again, the Lichnerowicz-Schr\"odinger formula for the square of the
spin$^c$ Dirac operator
\begin{equation*}
D^2 = \nabla\nabla^* + \frac{1}{4}R_{g,L}^{\tw}
\end{equation*}  
was a starting point of the theory.  Then we define the
\emph{generalized scalar curvature} $R^{\gen}_{g,L}$ of $(M,g,L,\nabla_L)$
to be $R_g - 2|\Omega_L|_{\text{op}}$, where $|\Omega_L|_{\text{op}}$
is the pointwise operator norm of $\Omega_L$ acting on spinors.  We
showed in \cite[Lemma 3.1]{MR4817674} that positivity of the operator
$R_{g,L}^{\tw}$ is equivalent to positivity of the generalized scalar
curvature function $R^{\gen}_{g,L}$, which we
sometimes abbreviate as gpsc --- ``generalized psc''.
Thus the positivity of the curvatures
$R_{g,L}^{\tw}$ and $R^{\gen}_{g,L}$ is obstructed by the non-vanishing of the
index of the spin$^c$ Dirac operator on $(M,g,L,\nabla_L)$. 
The vanishing of the index is therefore
a necessary condition for the existence of a generalized psc 
metric, but one is of course also interested in sufficient conditions.
The case when the spin$^c$ manifold $M$ is simply connected was
tackled in \cite{MR4649186}. 
Let 
$M$ be a spin$^c$ manifold with a fixed spin$^c$ structure 
$\sigma$. 
We denote by $L$ the  line bundle associated to $\sigma$.%
\footnote{See Appendix \ref{sec:spinc-structures} for a 
brief introduction to spin$^c$ structures, 
including the way $L$ is derived from $\sigma$.}

\medskip
\noindent
{\bf Notation.}  {\it We shall write $(M,\sigma)$ for a spin$^c$
manifold with spin$^c$ structure $\sigma$.  If we want to record that
$L$ is the associated complex line bundle, then we write
$(M,\sigma,L)$ or even $(M,L)$ if the spin structure $\sigma$ is
understood. Correspondingly, we shall denote an element in
$\Omega^{\spinc}_n$ either as $[(M,\sigma)]$ or as
$[(M,L)]$.\footnote{If $H^2 (M,\bZ)$ does not contain 2-torsion
elements then there is no ambiguity in employing the notation
$[(M,L)]$; see Appendix \ref{sec:spinc-structures} for more on this.}
If $f\co M\to X$, the associated bordism class in
$\Omega^{\spinc}_n(X)$ is denoted $[(M,\sigma,f)]$ or $[(M,L,f)]$.}

\medskip
\noindent
Let $[(M,\sigma)]\in \Omega^{\spinc}_n$ be the bordism class of
$(M,\sigma)$. Recall that $(M,\sigma)$ is spin$^c$ bordant to
$(M',\sigma')$ if there exists a spin$^c$ manifold with boundary $W$
and spin$^c$ structure $\sigma_W$ such that $\partial
(W,\sigma_W)=(M,\sigma)\sqcup -(M',\sigma')$. Notice that if $L$ and
$L'$ are the line bundles associated to $(M,\sigma)$ and
$(M',\sigma')$ then $W$ comes equipped with a line bundle
$\mathcal{L}$ restricting to $L$ and $L'$ on the boundary.  Recall now
that there exists a well defined index homomorphism
\begin{equation}\label{eq:alphac}
\beta^c\co \Omega^{\spinc}_n\to KU_n.
\end{equation}
(The map $\beta^c$ also extends to a natural transformation
of homology theories.) Here is the result for the simply-connected case:
\begin{thmA}\label{thm:bj01}{\cite[Corollary 5.2]{MR4649186}}
Let $(M,\sigma)$ be a simply connected spin$^c$ manifold which is not
spin, $\dim M\geq 5$. Then $(M,\sigma)$ admits gpsc if and only if the
index $\beta^c([(M,\sigma)])$ vanishes in $KU_n$.
\end{thmA}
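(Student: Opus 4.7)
The plan is to prove necessity by a direct Lichnerowicz argument and sufficiency by combining a spin$^c$ surgery theorem for gpsc with a bordism reduction to explicit generators.

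\emph{Necessity.} Suppose a pair $(g,\nabla_L)$ realizes $R^{\gen}_{g,L}>0$. By \cite[Lemma 3.1]{MR4817674} this is equivalent to positivity of the operator-valued twisted scalar curvature $R^{\tw}_{g,L}$. The Lichnerowicz-Schr\"odinger formula $D^2=\nabla^*\nabla+\tfrac14 R^{\tw}_{g,L}$ then makes $D^2$ strictly positive, so $D$ is invertible on the closed manifold $M$ and $\beta^c([(M,\sigma)])=\ind(D)=0$ in $KU_n$.

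\emph{Sufficiency.} This is the substantive direction. First I would establish a generalized Gromov-Lawson surgery theorem: if $(M^n,\sigma,L)$ admits gpsc and $M'$ is obtained from $M$ by a surgery of codimension $\geq 3$ whose trace carries a compatible spin$^c$ structure, then $(M',\sigma',L')$ also admits gpsc. The metric modification near the handle follows the classical spin case, but the line bundle and its connection must additionally be extended across the trace of the surgery; codimension $\geq 3$ makes the topological extension of $L$ routine, and $\nabla_L$ is chosen on the extension so that $|\Omega_L|_{\text{op}}$ remains pointwise controlled by the new scalar curvature. Combined with the standard fact that simply connected spin$^c$ cobordisms of dimension $\geq 6$ can be put into normal form with handles of index in $[3,n-2]$, this shows that existence of gpsc is an invariant of the class $[(M,\sigma)]\in\Omega^{\spinc}_n$ within the simply connected, totally non-spin category.

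It then remains to produce gpsc representatives for a generating set of $\ker\beta^c\subset \Omega^{\spinc}_n$. Using the Anderson-Brown-Peterson style splitting of $\MSpinc$ into $\ku$ and Eilenberg-MacLane summands, $\ker\beta^c$ modulo torsion is generated by explicit classes built from manifolds such as $\CP^{2k}$ with its standard spin$^c$ structure, on which one writes down gpsc metrics directly from Fubini-Study/K\"ahler geometry with the line-bundle curvature arranged to be pointwise small relative to the scalar curvature. Applying the surgery-bordism argument to a bordism from $(M,\sigma)$ to such a representative then produces gpsc on $M$ itself.

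\emph{Main obstacle.} The spin$^c$ surgery theorem is the principal technical challenge: the standard Gromov-Lawson neck construction only modifies the metric, whereas here one must simultaneously deform $\nabla_L$ so that the pointwise inequality $|\Omega_L|_{\text{op}}<R_g/2$ survives throughout the surgery region, and in particular on the tip of the developing handle where the scalar curvature of the new metric is most delicate. A secondary difficulty is verifying that the extended line bundle on the trace really restricts back to the given $L$ on $M$, which is why the hypothesis that $M$ is non-spin (so that $w_2(M)$ alone determines the relevant part of $\sigma$) together with simple-connectedness is used to rigidify the choice of $\sigma'$ on $M'$.
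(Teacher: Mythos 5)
Your necessity argument (Lichnerowicz--Schr\"odinger plus \cite[Lemma 3.1]{MR4817674}) and your surgery theorem are exactly the ingredients used in \cite{MR4649186} (the surgery theorem is quoted in this paper as Theorem \ref{thm:Crelle4.2}), so the overall skeleton is right. The first genuine gap is in your choice of gpsc generators for $\ker\beta^c$. The manifold $\CP^{2k}$ with its \emph{standard} spin$^c$ structure (determinant line bundle $\cO(2k+1)$) has spin$^c$ index equal to the Todd genus, which is $1$; it does not lie in $\ker\beta^c$, cannot admit gpsc, and no arrangement of the connection on $\cO(2k+1)$ will make $|\Omega_L|_{\text{op}}$ pointwise small, since the index obstruction is already nonzero. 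The correct statement (Theorem \ref{thm:ku-bj2}, i.e.\ \cite[Theorem 5.1]{MR4649186}) is that $\ker\alpha^c\subset\Omega^{\spinc}_n$ is additively generated by the images of two transfer maps: total spaces of geometric fiber bundles with fiber $(\CP^2,\cO(1))$ and structure group $SU(3)$, and with fiber $\HP^2$ and structure group $PSp(3)$. These carry gpsc because one can shrink the fibers, not because of a pointwise curvature estimate on a single projective space. Identifying these generators --- including the $2$-torsion coming from the $H\bZ/2$ summands of $\MSpinc$, which your ``modulo torsion'' reduction silently discards even though the theorem must account for it --- is the main homotopy-theoretic content of the proof, and citing an Anderson--Brown--Peterson-style splitting does not supply it.

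The second gap is that you misattribute the role of the non-spin hypothesis. In the simply connected case the spin$^c$ structure is already determined by the orientation and the line bundle, so no ``rigidification'' of $\sigma'$ is at stake. What the hypothesis actually buys is that $w_2(M)\co\pi_2(M)\to\bZ/2$ is surjective, whence $\pi_2(W,M)$ for a spin$^c$ bordism $W$ is generated by classes of embedded $2$-spheres with trivial normal bundle lying in $\ker w_2(W)$; surgering these out makes $M\hookrightarrow W$ $2$-connected, and only then does Wall's theorem \cite[Theorem 3]{MR290387} give a handle decomposition in which every surgery from the gpsc representative back to $M$ has codimension $\ge 3$. Your appeal to a ``standard fact'' that simply connected spin$^c$ cobordisms admit such normal forms is exactly the step that fails for spin $M$, so without isolating this use of non-spinness the sufficiency argument does not close.
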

As in the spin case, the
key ingredients for the proof of the above result were a relevant 
surgery lemma and a detailed homotopy-theoretical
analysis of the kernel of the index homomorphism
$\beta^c\co \Omega^{\spinc}_n\to KU_n$.

In this paper, we will make the first steps towards a classification of
non-spin spin$^c$-manifolds with positive generalized scalar curvature
when the fundamental group is non-trivial. We say that a spin$^c$
manifold $(M,\sigma)$ is \emph{totally non-spin} if its universal cover is
non-spin. It turns out that this condition is crucial for the whole
theory to work.

Here is a relevant Spin$^c$ Bordism Theorem which shows that the
existence question of gpsc depends only on the spin$^c$ bordism class
of $(M,\sigma)$ in  $\Omega^{\spinc}_n(B\pi)$, where
$\pi=\pi_1 M$.
\begin{thmB}
\label{thm:spincbordismA}
Let $(M,\sigma,c\co M\to B\pi)$ be a connected closed $n$-dimensional 
spin$^c$ manifold which is totally non-spin, where $c\co M\to B\pi$ is a
classifying map for the universal $\pi_1 M=\pi$ 
cover of $M$ and $n\geq 5$. Let $L$ be the associated spin$^c$ 
line bundle. Assume that there exists a triple 
$(M',\sigma',c'\co M'\to B\pi)$ in the same bordism class in
$\Omega^{\spinc}_n(B\pi)$ with a metric $g'$ on $M'$ and a hermitian
metric $h'$ and a unitary connection on the associated line bundle $L'$ 
such that $R^{\tw}_{g',L'}>0$.
Then $M$ admits a Riemannian metric
$g$ and $L$ admits a hermitian bundle metric $h$ and a unitary
connection such that $R^{\tw}_{g,L}>0$.
\end{thmB}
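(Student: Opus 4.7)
The plan is to adapt the Gromov--Lawson--Rosenberg bordism argument from the spin setting to the spin$^c$ setting. The strategy rests on two ingredients: (i) a surgery theorem asserting that gpsc is preserved under spin$^c$-structure-preserving surgeries of codimension $\geq 3$ that cover the classifying map to $B\pi$; and (ii) a handle-trading procedure that rewrites any given spin$^c$ bordism $(W, \Sigma_W, C\co W\to B\pi)$ between $(M',\sigma',c')$ and $(M,\sigma,c)$ as a composition of such codimension-$\geq 3$ surgeries starting from $M'$. Ingredient (i) is the spin$^c$ analogue of the classical Gromov--Lawson surgery theorem, and I will take it as one of the main lemmas established elsewhere in the paper; the new content of Theorem B lies in carrying out (ii) in the presence both of the reference map $c$ and of the spin$^c$ line bundle.

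Concretely, I would first fix a bordism $W$ realizing the equality $[(M,\sigma,c)] = [(M',\sigma',c')]$ in $\Omega^{\spinc}_n(B\pi)$ and decompose it as a sequence of handle attachments to $M'\times I$. Handles of index $0$, $1$, $2$, viewed from the outgoing boundary, correspond to surgeries of codimension $\leq 2$, which cannot be expected to preserve gpsc. The classical device, due to Gajer and refined by Stolz and Rosenberg in the spin case, is to perform further \emph{interior} surgeries on $W$, not altering $\partial W$ or the bordism class rel boundary, so as to kill the relative homotopy groups $\pi_k(W,M')$ for $k \leq 2$. After these trades, $W$ is built from $M'\times I$ by handles of index $\geq 3$ only, which dually are codimension-$\geq 3$ surgeries; (i) then propagates gpsc step by step from $M'$ to $M$, producing the required $(g,h,\nabla_L)$ on $(M,\sigma)$ once one transports the resulting structure via the identification of $M$ with the relevant component of $\partial W$.

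The hard part is ensuring that each trading surgery on $W$ can be carried out in the \emph{spin$^c$-over-$B\pi$} category: every class in $\pi_k(W,M')$ to be killed must be represented by a framed embedded sphere whose surgery extends both $\Sigma_W$ and the map $C$. Compatibility with $C$ is automatic in the ranges that matter: for $k=1$ because $c'$ induces an isomorphism on $\pi_1$, and for $k=2$ because $\pi_2(B\pi)=0$, so the spheres to be killed already die in $B\pi$. Compatibility with $\Sigma_W$ is where the \emph{totally non-spin} hypothesis enters decisively. Since spin$^c$ structures form a torsor over $H^2(-;\bZ)$, one has considerable freedom in modifying the spin$^c$ datum on a handle, but in order for the local modification to close up globally one needs the universal cover of the ambient manifold to fail to be spin. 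Totally non-spin is precisely what guarantees that every embedded $S^1$ or $S^2$ with the appropriate normal framing data admits a spin$^c$-compatible surgery; this is the direct analogue of how, in the spin case, a spin structure on the 2-skeleton is used to restrict to spin-compatible surgeries.

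I expect the main technical obstacle to be exactly this spin$^c$-compatible handle-trading at index $2$: one must balance the need to kill $\pi_2(W,M')$ against the requirement that the surgery extend $\Sigma_W$, and verify that the totally non-spin assumption supplies exactly the flexibility required (while also arranging that the resulting bordism remains totally non-spin, so that the induction can continue). Once the trading has been accomplished the remainder of the proof is essentially formal: iterate the surgery theorem along the resulting index-$\geq 3$ handle decomposition of $W$ to transport the positivity of $R^{\tw}_{g',L'}$ from $(M',\sigma',L')$ to $(M,\sigma,L)$, yielding the sought-after $(g,h,\nabla_L)$ on $M$ with $R^{\tw}_{g,L}>0$.
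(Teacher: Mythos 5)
Your overall strategy --- propagate gpsc through the bordism via the spin$^c$ surgery theorem after trading away the bad handles --- is the same as the paper's, but two things go wrong at the decisive step. First, you propose to kill the wrong relative homotopy groups. The surgeries of codimension $\le 2$ in passing from $M'$ to $M$ correspond to handles of index $\le 2$ attached on the $M$ side of $W$, so what must be made to vanish is $\pi_k(W,M)$ for $k\le 2$: then Wall's theorem makes the inclusion $M\hookrightarrow W$ geometrically $2$-connected and hence decomposes $W$ into codimension-$\ge 3$ surgeries from $M'$ to $M$. Killing $\pi_k(W,M')$ instead only removes low-index handles on the $M'$ side, which were harmless (they are high-codimension surgeries going forward), and leaves the dangerous ones untouched. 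This is not a cosmetic slip, because the totally non-spin hypothesis is imposed on $M$, not on $M'$, and it is used precisely to control $\pi_2(W,M)$.

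Second, the step you defer as ``the main technical obstacle'' is the actual content of the theorem, and the heuristic you offer for it (spin$^c$ structures form a torsor over $H^2$, so every embedded $S^1$ or $S^2$ with appropriate framing data admits a spin$^c$-compatible surgery) is not correct as stated: surgery on a $2$-sphere requires its normal bundle to be trivial, which fails for spheres on which $w_2$ is nonzero, and no change of spin$^c$ structure repairs that. The paper's argument is: since $M$ is totally non-spin, $w_2(\wM)\co \pi_2(M)\cong H_2(\wM)\to\bZ/2$ is surjective, so in the exact sequence $\pi_2(M)\to\pi_2(W)\to\pi_2(W,M)\to 0$ of finitely generated $\bZ[\pi]$-modules (finite generation coming from the identification with $H_2$ of the universal covers) every generator of $\pi_2(W,M)$ can be corrected by a class from $\pi_2(M)$ to lie in $\ker w_2(\wW)$; such classes are represented by embedded $2$-spheres with trivial normal bundle, on which $c_1(\cL)$ is even, so after tensoring $\cL$ with the square of a line bundle it becomes trivial there and extends over the trace of the surgery. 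Without this argument (or a substitute for it) the proposal does not establish the theorem.
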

Similarly to the spin case, we have a subgroup
$\Omega^{\spinc,+}_n(B\pi)$ of $\Omega^{\spinc}_n(B\pi)$ which contains
all classes $[(M,L,c)]\in \Omega^{\spinc}_n(B\pi)$ which admit
positive generalized scalar curvature; see
Corollary \ref{cor:possubgroup}. In turn, the group
$\Omega^{\spinc,+}_n(B\pi)$ can be reduced 
to a much smaller group $ku^+_n(B\pi)$:
\begin{thmC}
Let $(M,\sigma)$ be a totally non-spin spin$^c$-manifold, with 
$\pi_1 M = \pi$ and classifying map $c\co M\to B\pi$. Assume that $n\ge 5$.
Then the question of whether or not $M$ admits generalized psc only
depends on the class of $c_*([M,\sigma])\in ku_n(B\pi)$, 
where $[M,\sigma]\in ku_n (M)$
is the $ku$-valued spin$^c$-fundamental class of $M$.
In other words,
there is a subgroup $ku^+_n(B\pi)\subset ku_n(B\pi)$ such that $(M,\sigma)$
admits generalized psc if and only if $c_*([M,\sigma])\in ku^+_n(B\pi)$.
\end{thmC}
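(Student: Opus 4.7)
The plan is to combine Theorem B (bordism invariance of gpsc) with the Atiyah-Bott-Shapiro spin$^c$ orientation $\MSpinc \to \ku$, which induces a natural transformation $\beta^c_{\ku}\colon \Omega^{\spinc}_n(X) \to ku_n(X)$ sending $[(M,\sigma,f)]$ to $f_*([M,\sigma])$, where $[M,\sigma]\in ku_n(M)$ is the $ku$-valued spin$^c$ fundamental class. By Corollary~\ref{cor:possubgroup}, the totally non-spin classes in $\Omega^{\spinc}_n(B\pi)$ admitting gpsc form a subgroup $\Omega^{\spinc,+}_n(B\pi)$, and I would simply define
$$ku^+_n(B\pi) := \beta^c_{\ku}\bigl(\Omega^{\spinc,+}_n(B\pi)\bigr) \subset ku_n(B\pi),$$
which is automatically a subgroup. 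The ``only if'' direction is then immediate: if $M$ admits gpsc then $[(M,\sigma,c)] \in \Omega^{\spinc,+}_n(B\pi)$, so $c_*([M,\sigma]) = \beta^c_{\ku}[(M,\sigma,c)] \in ku^+_n(B\pi)$.

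For the converse, suppose $c_*([M,\sigma]) \in ku^+_n(B\pi)$. Then there exists a totally non-spin gpsc representative $(M',\sigma',c')$ with $\beta^c_{\ku}[(M',\sigma',c')] = \beta^c_{\ku}[(M,\sigma,c)]$, so $\eta := [(M,\sigma,c)] - [(M',\sigma',c')]$ lies in $\ker \beta^c_{\ku}$. In view of Theorem B, the fact that $\Omega^{\spinc,+}_n(B\pi)$ is a subgroup, and the option of taking connected sums with totally non-spin gpsc classes to restore total non-spinness of a representative, the theorem reduces to the crucial containment
$$\ker\bigl(\beta^c_{\ku}\colon \Omega^{\spinc}_n(B\pi) \to ku_n(B\pi)\bigr) \subseteq \Omega^{\spinc,+}_n(B\pi).$$

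To establish this, I would use the Atiyah-Hirzebruch spectral sequences
$$H_p(B\pi; \Omega^{\spinc}_q) \Rightarrow \Omega^{\spinc}_{p+q}(B\pi), \qquad H_p(B\pi; ku_q) \Rightarrow ku_{p+q}(B\pi),$$
together with the fact that $\beta^c_{\ku}$ is a morphism between them, whose $E^2$-kernel is $H_p(B\pi; \ker(\beta^c_{\ku}\colon \Omega^{\spinc}_q \to ku_q))$. By the simply connected case (Theorem A and its proof in \cite{MR4649186}) the absolute kernel $\ker(\beta^c_{\ku}\colon \Omega^{\spinc}_q \to ku_q)$ is generated by simply connected non-spin spin$^c$ manifolds $N$ admitting gpsc. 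An $E^2$-kernel class is thus represented geometrically by a product $N \times V \to B\pi$; this carries the product metric, which is gpsc because the $N$-factor is, and is totally non-spin because $N$ is. An induction on filtration degree, using the surgery/bordism techniques of the present paper (in particular the surgery lemma employed in the proof of Theorem B) to handle extensions, then shows every class in $\ker \beta^c_{\ku}$ admits a totally non-spin gpsc representative.

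The main obstacle will be the filtration/extension step. A general class in $\ker \beta^c_{\ku}$ is only of product form modulo lower-filtration pieces of the spectral sequence, so passing from an $E^\infty$-representative to an honest manifold with gpsc requires performing spin$^c$ surgeries along codimension $\geq 3$ embedded spheres inside bordisms over $B\pi$, in such a way that one neither destroys the classifying map nor the totally non-spin hypothesis that makes Theorem B applicable. A cleaner alternative, worth pursuing in parallel, is to exploit an Anderson-Brown-Peterson type splitting of $\MSpinc$ as a wedge of suspensions of $\ku$ and of higher connective covers: this provides an explicit list of bordism generators on which gpsc can be verified one generator at a time via Gromov-Lawson style constructions, thereby sidestepping the extension problem entirely.
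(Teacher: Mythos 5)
Your setup is the same as the paper's: define $ku^+_n(B\pi)$ as the image of $\Omega^{\spinc,+}_n(B\pi)$ under $\alpha^c$, note the ``only if'' direction is formal, and reduce everything to the single containment $\ker\bigl(\alpha^c\co \Omega^{\spinc}_n(B\pi)\to ku_n(B\pi)\bigr)\subseteq \Omega^{\spinc,+}_n(B\pi)$. That reduction is correct. The gap is in how you propose to prove the containment. The Atiyah--Hirzebruch comparison argument is the classical naive approach, and it breaks down at exactly the points you flag but do not resolve. First, a class in $E^2_{p,q}=H_p(B\pi;\ker(\alpha^c\co\Omega^{\spinc}_q\to ku_q))$ is in general \emph{not} represented by a product $N\times V\to B\pi$: that only covers the image of the external product map, and integral homology classes of $B\pi$ need not even be representable by manifolds in higher degrees. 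Second, and more seriously, the ``induction on filtration degree'' has no mechanism for handling differentials and extensions: knowing that the associated graded pieces of a class are gpsc-representable does not produce a gpsc representative of the class itself, and the surgery lemma only transports gpsc \emph{within} a fixed bordism class over $B\pi$ --- it cannot assemble a representative out of $E^\infty$ data. This is precisely why Stolz had to replace the AHSS approach by the fiber sequence $\widehat{\MSpin}\to\MSpin\to\ko$ together with a spectrum-level split surjection of the $\HP^2$-transfer; the paper itself remarks that even in the spin case it is not clear that kernel classes over a general $B\pi$ come from $\HP^2$-bundles.

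The paper's actual proof supplies exactly the input your sketch is missing, and it is not a cosmetic alternative but the substance of the argument. Two-locally one needs Granath's theorem that $\mathcal{T}_{\spinc}\co\MSpinc\wedge(\Sigma^4BSU(3)_+\vee\Sigma^8BPSp(3)_+)\to\widehat{\MSpinc}$ is a \emph{split surjection of spectra}, so that after smashing with $B\pi_+$ every class in $\widehat{\MSpinc}_n(B\pi)$ is hit by a geometric $\CP^2$- or $\HP^2$-bundle and hence carries gpsc. At odd primes the transfer is not split, and one needs F\"uhring's result realizing all classes of $\widehat{\MSpin}_*(X)$ by $\HP^2$-bundles, plus a further step: after smashing the fiber sequence with $\Sigma^{-2}\CP^\infty$ one is left with the summand $\sF\simeq\bigvee_{\ell\ge1}\Sigma^{4\ell}\ku$ of $\widehat{\ku}=\ko\wedge\Sigma^{-2}\CP^\infty$, which is handled by exhibiting a $\ku$-module filtration of $\sF$ whose subquotients are generated by powers of $(\CP^2,\cO(1))$ --- a gpsc manifold that is \emph{not} in the image of the transfers. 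Your parenthetical mention of an ``Anderson--Brown--Peterson type splitting'' points in the right direction, but a splitting of the spectrum alone does not identify the wedge summands with geometrically gpsc bordism classes; that identification is the content of the transfer-surjectivity and $\bC\bP^2$-filtration arguments above. As written, the proposal does not prove the theorem.
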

Here $ku_*$ is connective $K$-homology, the
homology theory associated to the spectrum $\ku$
which is the connective truncation of the complex $K$-theory spectrum
$\KU$.  This is defined by the universal property that $ku_n=0$
for $n<0$ and that that there is a map
of spectra $\per\co \ku\to\KU$ which is an isomorphism 
on homotopy groups in
degrees $\ge 0$, such that any map of spectra from a connective spectrum
to $\KU$ factors uniquely through $\per$. 
See \cite[Part III]{MR1324104} for more details.

The subgroup $ku^+_n(B\pi)$ in Theorem B is the image 
of $\Omega^{\spinc,+}_n(B\pi)$
under the natural map $\alpha^c\co \Omega^{\spinc}_n(B\pi)\to ku_n(B\pi)$.
\footnote{The maps $\beta^c$ and $\alpha^c$ are almost the same,
except that $\alpha^c$ takes values in connective $K$-homology and $\beta^c$
takes values in periodic $K$-homology.  We have $\beta^c=\per\circ \alpha^c$,
with $\per$ the periodization map from connective to periodic
$K$-homology.}

\medskip

\noindent
{\bf Notation.}
{\it Note that if $(M,\sigma)$ is a spin$^c$-manifold, then we 
denote the fundamental class in $\Omega^{\spinc}_n (M) $ by 
$[(M,\sigma)]$, whereas we denote the  fundamental class in 
connective $K$-homology $ku_n (M)$
by $[M,\sigma]$ {\lp}i.e., without the round parentheses{\rp}. 
The natural map 
$\alpha^c\co \Omega^{\spinc}_n (M)\to 
ku_n (M)$ sends $\alpha^c [(M,\sigma)]$  to $[M,\sigma]$.
When this won't cause confusion, 
we often write $\alpha^c [(M,\sigma)]$ to mean the image of
the $[(M,\sigma)]$ in $ku_n(\pt)$.}

\medskip
Let $\mathbf{per}\co \ku \to \KU$ be the periodization map, induced by inverting
the Bott element $\beta\in ku_2$. We also need the \emph{complex
assembly map} $A\co K_*(B\pi)\to K_*(C^*_r(\pi))$ from the $K$-homology of
$B\pi$ to the $K$-theory of the (reduced) group $C^*$-algebra of
$\pi$. As in the spin case we have a composition:
\begin{equation*}
ku_n(B\pi)\xrightarrow{\mathbf{per}}K_n(B\pi) \xrightarrow{A}
K_*(C^*_r(\pi)).
\end{equation*}
Index theory implies that the element 
$A\circ\per\circ c_*([M,\sigma])\in
K_*(C^*_r(\pi))$ (which is nothing but the higher index class of 
the Mishchenko-Fomenko spin$^c$ Dirac operator)
is an obstruction to the existence of gpsc on
$(M,\sigma,L)$ (see Theorem \ref{thm:index} below).  As far as the 
converse is concerned, we have a spin$^c$
version of the Gromov-Lawson-Rosenberg Conjecture:
\begin{thmD}[Gromov-Lawson-Rosenberg Conjecture, spin$^c$ version]
Let $(M,\sigma)$ be a closed connected totally non-spin spin$^c$
manifold with $\dim M= n$, fundamental group $\pi$ and classifying map
$c\co M \to B\pi$. Let $L$ be the associated spin$^c$ line bundle.
Then for $n\ge 5$, $(M,\sigma,L)$ admits positive generalized scalar
curvature if and only if $A\circ\per\circ c_*([M,\sigma])=0$ in
$K_*(C^*_r(\pi))$.
\end{thmD}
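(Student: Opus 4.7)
The plan is to split the biconditional and treat the two directions quite differently. The necessity direction --- that gpsc on $(M,\sigma,L)$ forces $A \circ \per \circ c_*([M,\sigma])$ to vanish in $K_*(C^*_r(\pi))$ --- is essentially index-theoretic and should follow directly from Theorem \ref{thm:index}: one couples the spin$^c$ Dirac operator to the Mishchenko-Fomenko bundle associated to $c\co M\to B\pi$ and applies the Lichnerowicz-type formula for the twisted spin$^c$ Dirac operator, together with the positivity of $R^{\tw}_{g,L}$ \lp equivalent to positivity of $R^{\gen}_{g,L}$ by \cite[Lemma 3.1]{MR4817674}\rp. No new input is needed here.

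For the sufficiency direction I would first invoke Theorem C to reduce the question to whether $c_*([M,\sigma])$ lies in the subgroup $ku^+_n(B\pi) \subset ku_n(B\pi)$, and then use Theorem B \lp together with Corollary \ref{cor:possubgroup}, which establishes that $\Omega^{\spinc,+}_n(B\pi)$ is a subgroup\rp to argue that realizing $c_*([M,\sigma])$ by \emph{any} totally non-spin spin$^c$ bordism representative carrying a gpsc metric is enough. Sufficiency therefore reduces to proving the containment $\ker(A \circ \per) \subseteq ku^+_n(B\pi)$ on the image of the $\alpha^c$-map from totally non-spin spin$^c$ classes in $\Omega^{\spinc}_n(B\pi)$.

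To attack this containment, I would build a spin$^c$ version of Stolz's long exact sequence relating spin$^c$ bordism of $B\pi$, spin$^c$ bordism of manifolds equipped with gpsc metrics, and concordance classes of gpsc metrics, and compare it termwise to the analytic surgery sequence of Higson and Roe, whose middle and right-hand terms are $K_n(B\pi)$ and $K_n(C^*_r(\pi))$. A diagram chase should then show that any class killed by $A \circ \per$ has a preimage in the relevant bordism-of-gpsc group, provided the vertical comparison map is surjective in degree $n$. Via the Atiyah--Hirzebruch spectral sequence for $ku_*(B\pi)$, this surjectivity reduces to knowing that each simply-connected totally non-spin spin$^c$ building block with vanishing index admits gpsc, which is precisely Theorem A; the totally-non-spin hypothesis is what allows the building blocks to be of this form rather than of pure spin type \lp where Rosenberg's counterexamples obstruct the analogous statement\rp.

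The main obstacle is carrying this program out uniformly, and for general $\pi$ the conjecture remains open because the complex assembly map $A$ can have kernel that need not be realized geometrically by gpsc manifolds. For the specific classes of $\pi$ named in the introduction --- surface groups and finite groups with periodic cohomology --- the reduction is tractable: for surface groups $B\pi$ is $2$-dimensional so the spectral sequence collapses at $E_2$, while for finite groups with periodic cohomology one combines transfer through cyclic subgroups with explicit representation-theoretic descriptions of $ku_*(B\pi)$ and of $A$. Even in these cases one must take care that the bordism modifications and surgeries needed for Theorem B preserve the totally non-spin condition and transport the spin$^c$ line bundle $L$ coherently across the bordism, which is the subtlety the spin$^c$-bordism framework of Theorem B was engineered to handle.
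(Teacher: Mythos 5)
The statement you are trying to prove is Conjecture D, which the paper states as Conjecture \ref{conj:GLR} and explicitly does \emph{not} prove in general --- indeed, Theorem \ref{thm:Schickargument} (Theorem F of the introduction) adapts Schick's counterexample to produce totally non-spin spin$^c$ $5$-manifolds with $\pi=\bZ^4\times\bZ/p$ whose generalized Dirac index vanishes in $K_5(C^*_r(\pi))$ but which admit no psc metric at all, hence no gpsc. So the containment your sufficiency argument hinges on, namely $\ker(A\circ\per)\subseteq ku^+_n(B\pi)$, is simply false for some groups: the class of such an $M$ lies in $\ker(A\circ\per)$ (it is torsion, while $K_5(C^*_r(\bZ^4\times\bZ/p))$ is torsion-free) yet not in $ku^+_5(B\pi)$, because capping with cohomology classes of $T^4$ sends $[M]$ to a nonzero class in $H_2(B\pi;\bZ)$ that would have to be represented by a psc surface. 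You acknowledge at the end that the general case "remains open," but the situation is worse than open: no proof of the full biconditional exists, and any argument purporting to establish it for all $\pi$ must contain an error. Your "only if" direction is fine and is exactly Theorem \ref{thm:index}.

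Even restricted to the cases where the conjecture does hold, your proposed mechanism is not the one that works. The paper's positive results do not come from a surjectivity/diagram chase between the spin$^c$ Stolz sequence and the Higson--Roe sequence (that machinery is used later, for concordance classes and moduli spaces). Instead, Corollary \ref{cor:assembinj} handles groups for which both $A$ and $\per$ are \emph{injective}, where the claim collapses immediately to Theorem \ref{thm:main-ku}; and Theorem \ref{thm:GLRpercoh} handles finite groups with periodic cohomology by reduction to $p$-groups, explicit Atiyah--Hirzebruch computations of $ku_*(B\bZ/p^r)$ and $ku_*(BQ_{2^k})$, lens-space representatives, transfers, and eta invariants. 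Moreover, the step in which you reduce representability of $\ker(\alpha^c\co\Omega^{\spinc}_n(B\pi)\to ku_n(B\pi))$ to Theorem A is not correct as stated: the paper establishes this via the geometric $\CP^2$- and $\HP^2$-transfers and the $2$-local splitting results of Stolz and Granath (and F\"uhring's construction at odd primes), not via the simply connected existence theorem, whose building blocks are not the cells of the spectral sequence for $ku_*(B\pi)$.
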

We will show that Conjecture D holds in a large number of
cases; in particular, we prove (see Theorem \ref{thm:GLRpercoh} below):
\begin{thmE}
Let $(M,\sigma)$ be a closed, connected, totally non-spin manifold
with $\dim M = n\geq 5$, and finite fundamental
group $\pi$ having periodic cohomology.  Then $(M,\sigma)$ admits positive
generalized scalar curvature if and only
if $\alpha^c(M,\sigma)=0$ in $ku_n$.
\end{thmE}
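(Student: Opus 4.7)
The necessity direction is immediate from the index obstruction: positivity of $R^{\gen}_{g,L}$ forces the spin$^c$ Dirac index on $(M,g,L,\nabla_L)$ to vanish, which in particular kills the image of $[M,\sigma]$ in $ku_n(\pt)$, i.e., $\alpha^c(M,\sigma)=0$ in $ku_n$.

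For sufficiency, the plan is to invoke Theorems B and C to reduce the existence of gpsc on $(M,\sigma)$ to the assertion $c_*[M,\sigma]\in ku_n^+(B\pi)$. Using the basepoint splitting $ku_n(B\pi)\cong ku_n(\pt)\oplus \widetilde{ku}_n(B\pi)$, the hypothesis $\alpha^c(M,\sigma)=0$ in $ku_n$ places $c_*[M,\sigma]$ in the reduced summand $\widetilde{ku}_n(B\pi)$, which is torsion since $\pi$ is finite. So it suffices to prove the inclusion $\widetilde{ku}_n(B\pi)\subseteq ku_n^+(B\pi)$.

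The geometric input is to exhibit, for every class in $\widetilde{ku}_n(B\pi)$, a totally non-spin spin$^c$ manifold admitting gpsc that represents it. The periodic-cohomology hypothesis ensures $\pi$ acts freely on some $S^{2d-1}$, giving spherical space forms $Y=S^{2d-1}/\pi$ carrying the quotient of the round psc metric. Equipping $Y$ with a spin$^c$-structure whose line bundle comes from a unitary character of $\pi$ (and hence admits a flat connection), $Y$ becomes a psc, a fortiori gpsc, spin$^c$ manifold with classifying map $Y\to B\pi$. Taking products $Y\times N$ with simply connected non-spin spin$^c$ manifolds $N$ of vanishing $\beta^c$ --- which admit gpsc by Theorem A --- yields manifolds that are totally non-spin (the universal cover $S^{2d-1}\times N$ is non-spin because $N$ is) and that are gpsc via the elementary product-metric estimate $R^{\gen}_{Y\times N}\geq R^{\gen}_Y + R^{\gen}_N$; they map to $B\pi$ via projection onto the $Y$-factor.

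The main obstacle is $K$-theoretic: to show that the classes $[Y\times N]$ constructed above span $\widetilde{ku}_n(B\pi)$. This requires an explicit description of $ku_*(B\pi)$ for finite groups with periodic cohomology, obtainable by running the Atiyah--Hirzebruch spectral sequence together with the Atiyah--Segal description of $K^*(B\pi)$ in terms of the representation ring $R(\pi)$, and proceeding case-by-case through the finitely many isomorphism types of such $\pi$ in Milnor's classification (cyclic, binary polyhedral, and the remaining families). When a given bordism representative is not already in the form $Y\times N$, the spin$^c$ surgery lemma underlying Theorem B lets us modify it by surgeries of codimension $\geq 3$ over $B\pi$ that preserve the spin$^c$-structure and the gpsc condition, bringing it into the required form. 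The overall pattern mirrors the spin Gromov--Lawson--Rosenberg proof of Botvinnik--Gilkey--Stolz for finite groups with periodic cohomology, simplified by systematic use of complex $K$-theory in place of real $K$-theory.
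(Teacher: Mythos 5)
Your overall strategy coincides with the paper's: necessity via the index obstruction (using that $\per\co ku_n\to KU_n$ is an isomorphism for $n\ge 0$), and sufficiency by reducing, via Theorems B and C, to showing $\widetilde{ku}_n(B\pi)\subseteq ku_n^+(B\pi)$ and then representing classes by space forms crossed with simply connected gpsc spin$^c$ manifolds. But the step you yourself flag as ``the main obstacle'' is the actual content of the theorem, and your sketch of how to overcome it does not work. The Atiyah--Hirzebruch filtration of $\widetilde{ku}_{2k-1}(B\pi)$ has subquotients $H_{2s-1}(B\pi;ku_{2(k-s)})$, and products $Y^{2s-1}\times N^{2(k-s)}$ with $Y$ a psc spherical space form only account for the pieces with $s\ge 2$: the $s=1$ piece $H_1(B\pi;ku_{2k-2})$ would require $Y=S^1$, which carries no psc metric. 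This is exactly the delicate point. The paper resolves it by first reducing to $p$-groups via Kwasik--Schultz (so only cyclic and generalized quaternion groups occur, rather than all of Milnor's families --- note also that there are infinitely many isomorphism types of groups with periodic cohomology, not finitely many), and then proving that the image of $H_1$ in $ku_3(B\bZ/p)$ is already generated by $3$-dimensional lens spaces. That step uses relative eta invariants for $p\ge 5$, the class of $\bR\bP^3$ for $p=2$, and a separate argument for $p=3$ (where the eta invariant has rank only one, so one must instead use that $\Omega^{\spin}_5(B\bZ/3)\cong\bZ/9$ is generated by a lens space); the higher $H_1$ contributions are then handled by Bott periodicity. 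For generalized quaternion groups a further argument is needed, comparing $H_1(B\pi)$ with the $H_1$ of two cyclic subgroups via edge homomorphisms. None of this is present in your proposal.

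Your fallback --- ``when a given bordism representative is not already in the form $Y\times N$, the spin$^c$ surgery lemma lets us modify it by surgeries\dots bringing it into the required form'' --- is also logically backwards. Surgery in codimension $\ge 3$ propagates gpsc \emph{within} a bordism class once you already have a gpsc representative (that is Theorem B); it cannot be used to manufacture a gpsc representative for a class whose known representatives are not of product form. What is needed, and what the paper supplies, is an explicit set of gpsc generators (lens spaces, quaternionic space forms, and their products with Bott manifolds) together with a $K$-theoretic argument --- transfer exact sequences, Hashimoto's computation of $ku_*(B\bZ/p^r)$, and eta-invariant detection --- showing these generate all of $\widetilde{ku}_n(B\pi)$. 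So the proposal correctly identifies the reduction and the class of candidate representatives, but leaves the essential generation argument unproved and proposes an invalid substitute for it.
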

While we have promising evidence for Conjecture D in many cases,
we can modify the proof given by Thomas Schick in \cite{MR1632971} to
show that Conjecture D is false for at least some infinite groups with
torsion.
\begin{thmF}
There exist totally non-spin spin$^c$ $5$-manifolds with fundamental
group $\pi=\bZ^4\times \bZ/p$, where $p$ a prime, which have vanishing
generalized Dirac index in $K_5(C^*(\pi))$ but which do not admit
positive generalized scalar curvature, and do not admit positive
scalar curvature at all.
\end{thmF}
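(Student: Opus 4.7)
The plan is to adapt Schick's spin counterexample from \cite{MR1632971} to the totally non-spin spin$^c$ setting by connect-summing with a suitable simply-connected non-spin spin$^c$ $5$-manifold. First, I would use a complex variant of Schick's construction to produce a closed spin $5$-manifold $N$ with $\pi_1 N = \pi = \bZ^4 \times \bZ/p$ whose Mishchenko-Fomenko higher index vanishes in $K_5(C^*_r(\pi))$ and which does not admit positive scalar curvature. Passing from the real to the complex setting is mostly bookkeeping: since $C^*_r(\pi) \cong C(T^4) \otimes \bC^p$, the group $K_5(C^*_r(\pi)) \cong \bZ^{8p}$ is free, so vanishing of the complex higher index is detected rationally by the higher $\hat A$-genera against classes pulled back from $H^*(B\pi;\bQ) \cong H^*(T^4;\bQ)$; these can be forced to vanish by the same bordism manipulations Schick employs in the real case. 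Next, let $W = S^2\,\tilde\times\,S^3$ be the total space of the non-trivial smooth $S^3$-bundle over $S^2$: it is simply-connected, $w_2(W)$ is the generator of $H^2(W;\bZ/2) \cong \bZ/2$, and it is spin$^c$ because $W_3 = \beta(w_2)$ lies in the torsion-free group $H^3(W;\bZ) \cong \bZ$ and so must be zero. Set $M = N \# W$ with the spin$^c$ structure obtained by gluing those of $N$ and $W$ across the neck $S^4$; this gluing is unambiguous since $H^2(S^4;\bZ) = 0$. Then $\pi_1 M = \pi$ and $M$ is totally non-spin, because any lift of $W$ into the universal cover $\widetilde M$ retains its non-zero $w_2$.

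Next, I would verify vanishing of the higher spin$^c$ index of $M$. Attaching a $1$-handle yields a spin$^c$ cobordism from $N \sqcup W$ to $M$, so the image of the spin$^c$ fundamental class $[M,\sigma]$ in $ku_5(B\pi)$ equals $c_*[N,\sigma_N] + c_*[W,\sigma_W]$. Because $W$ is simply-connected, $c|_W$ is null-homotopic and $c_*[W,\sigma_W]$ factors through $ku_5(\pt) = 0$; hence $c_*[M,\sigma] = c_*[N,\sigma_N]$, and its image under $A \circ \per$ vanishes in $K_5(C^*_r(\pi))$ by the choice of $N$. For non-existence of psc on $M$, I would invoke the Schoen-Yau minimal-hypersurface method, which is independent of spin structure and detects obstructions via classes in $H^*(M;\bZ)$ pulled back along $c\co M \to B\pi \twoheadrightarrow B\bZ^4 = T^4$. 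Because $c|_W$ is null-homotopic, the relevant cohomology classes in $H^1(M;\bZ) \cong H^1(N;\bZ) \cong \bZ^4$ are supported on the $N$-side, and Schick's iterated descent via stable minimal hypersurfaces transfers verbatim from $N$ to $M$.

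Finally, non-existence of gpsc is automatic: $R^{\gen}_{g,L} > 0$ implies $R_g > 2|\Omega_L|_{\text{op}} \geq 0$ pointwise, so any metric with $R^{\gen}_{g,L} > 0$ is in particular a psc metric, contradicting the previous step. The main technical obstacle lies in the first step, namely engineering Schick's construction so that the complex higher index vanishes while the minimal-hypersurface obstruction from the $\bZ^4$ factor is preserved intact. Schick's original calculations are phrased in terms of real $K$-theory and the spin $\alpha$-invariant, so one must re-examine the relevant transfer and higher-index computations in the complex setting; fortunately, the freeness of $K_5(C^*_r(\pi))$ reduces this to a rational check that falls out of Schick's existing bordism-theoretic framework.
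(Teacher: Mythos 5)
Your proposal is correct, but it reaches the example by a different construction than the paper. The paper builds the failure of spin directly into Schick's base manifold: it takes $N^5=(T^4\#\bC\bP^2)\times S^1$ with spin$^c$ line bundle coming from $\cO(3)$ on the $\bC\bP^2$ summand, maps it to $B\pi=T^4\times B\bZ/p$ by collapsing the $\bC\bP^2$ and using a $\pi_1$-surjection $S^1\to B\bZ/p$, and performs one spin$^c$ surgery on a circle to make the fundamental group exactly $\pi$. You instead start from Schick's spin counterexample $N$ and connect-sum with the simply connected non-spin spin$^c$ manifold $W=S^2\,\tilde\times\,S^3$; since $c|_W$ is null-homotopic, this changes neither $c_*[M]\in H_5(B\pi;\bZ)$ nor the image in $ku_5(B\pi)$, so both the index computation and the minimal-hypersurface obstruction carry over, exactly as you argue. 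Both routes then rest on the same two pillars: (a) the bordism class is $p$-torsion while $K_5(C^*_r(\pi))\cong\bZ^{8p}$ is torsion-free, so the complex higher index vanishes; (b) capping $c_*[M]=x_1\times\cdots\times x_4\times y$ with the duals of $x_2,x_3,x_4$ yields $x_1\times y\ne 0$ in $H_2(B\pi;\bZ)$, which is incompatible with psc by Schoen--Yau descent (valid in ambient dimension $5$ and independent of spin structures), and gpsc implies psc since $R^{\gen}_{g,L}>0$ forces $R_g>0$. One remark: the ``main technical obstacle'' you flag --- re-deriving Schick's index vanishing in complex $K$-theory --- is not actually an obstacle. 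Point (a) disposes of it in one line, which is exactly how the paper argues; there is no need to invoke higher $\hat A$-genera or rational detection, and this is also why the statement holds for every prime $p$, including $p=2$, even though the real-$K$-theory version of Schick's argument is more delicate there.
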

Next, we construct and analyze a spin$^c$ analogue of the Stolz' exact
sequence:
\begin{equation}
\label{eq:Stolz-int}
\cdots\to{\rR}^{\spinc}_{*+1} (X)\xrightarrow{\partial}
{\rm Pos}^{{\rm spin}^c}_* (X)\xrightarrow\
\Omega^{\spinc}_* (X)\xrightarrow{\iota} {\rR}^{\spinc}_{*} (X)\to\cdots
\end{equation} 
The groups ${\rm Pos}^{{\rm spin}^c}_* (X)$ and ${\rR}^{\spinc}_{n}
(X)$ are defined similarly to the spin case. We prove a few
technical results: the Extension Theorem
(Theorem \ref{th:extension-theorem}) and, in particular, building on the spin$^c$ bordism theorem 
and results of Puglisi-Schick-Zenobi, the following
\begin{thmG}
Let $f\co X\to Y$ be  a 2-equivalence. Then the induced homomorphism
\begin{equation*}
f_* \co  {\rR}^{\spinc}_{*} (X)\rightarrow  {\rR}^{\spinc}_{*} (Y)
\end{equation*} 
is an isomorphism.
\end{thmG}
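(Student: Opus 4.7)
The plan is to adapt the surgery-based argument that proves the analogue of this result in the spin setting (Stolz, refined by Puglisi--Schick--Zenobi) to the spin$^c$ case, using Theorem B whenever gpsc must be propagated across a spin$^c$ bordism.

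For surjectivity of $f_*\colon \rR^{\spinc}_n(X)\to\rR^{\spinc}_n(Y)$: a class in $\rR^{\spinc}_n(Y)$ is represented by $(W^n,\sigma_W,h\colon W\to Y)$ with gpsc data on $\partial W$. The homotopy-theoretic obstructions to lifting $h$ through $f$ live in the kernel and cokernel of $f_*$ on $\pi_1$ and $\pi_2$ respectively, both trivial by hypothesis. Each obstruction can be realized by an embedded framed $0$- or $1$-sphere in the interior of $W$ (here $n\ge 5$ leaves plenty of room). Because $B\Spinc$ is $2$-connected, each elementary surgery extends canonically to a spin$^c$ trace bordism rel $\partial W$, yielding $W'$ with a lift $h'\colon W'\to X$ and with boundary data unchanged. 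Thus $[(W',\sigma_{W'},h')]\in\rR^{\spinc}_n(X)$ projects under $f_*$ to the original class.

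For injectivity, suppose $[(W,\sigma_W,h\colon W\to X)]\in\rR^{\spinc}_n(X)$ has zero image, witnessed by a nullbordism $(V^{n+1},\sigma_V,H\colon V\to Y)$ whose boundary contains $W$ together with a gpsc portion extending that of $\partial W$. The same interior surgery argument applied to $V$ rel $\partial V$ (with $\dim V\ge 6$) produces a nullbordism $V'$ over $X$, whence the class vanishes in $\rR^{\spinc}_n(X)$.

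The delicate point is compatibility of auxiliary data along the surgery cascade. Spin$^c$-structure extension across index $\le 2$ surgeries is automatic by $2$-connectivity of $B\Spinc$, and interior surgeries leave boundary metric and connection untouched, so gpsc on $\partial W$ (resp.\ on the psc portion of $\partial V$) is preserved by default. Theorem B enters at the points where, to match the natural $\rR^{\spinc}$-bordism relation and to absorb the surgery trace into an honest $\rR^{\spinc}$-equivalence, one must replace a spin$^c$-bordant boundary region by a gpsc representative: applied over $B\pi$ with $\pi\cong\pi_1 X\cong\pi_1 Y$, Theorem B is precisely the tool that allows this replacement. Modulo these bookkeeping steps the Puglisi--Schick--Zenobi scheme assembles into the desired isomorphism $f_*$.
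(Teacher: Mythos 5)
Your overall strategy (adapt the Puglisi--Schick--Zenobi argument for $\rR^{\spin}_*$) is the same as the paper's, but the proposal has genuine gaps in exactly the two places where the spin$^c$ setting requires care. First, $B\Spinc(n)$ is \emph{not} $2$-connected: $\pi_2(B\Spinc(n))\cong\pi_1(\Spinc(n))\cong\bZ$ (the paper uses this fact explicitly in Lemma \ref{lem:Bgamma}). So the extension of the spin$^c$ structure across the trace of a surgery is not ``canonical'' or ``automatic''; for $2$-surgeries in particular one must check that $c_1(\cL)$ restricts evenly to the surgery sphere and, if necessary, twist $\cL$ by the square of a line bundle, as is done in the proof of Theorem \ref{thm:spincbordism}. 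Your surjectivity sketch also understates the topology: a map from an $n$-manifold does not lift through a $2$-equivalence merely because the $\pi_1$/$\pi_2$ obstructions vanish (there are higher obstructions valued in the homotopy of the fiber), which is why the actual argument cannot be reduced to interior $0$- and $1$-surgeries that ``leave the boundary untouched.''

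Second, and more seriously, you invoke Theorem B (the spin$^c$ Bordism Theorem) as the device for transporting gpsc across the bordisms that arise. Theorem B applies only to \emph{closed, connected, totally non-spin} spin$^c$ manifolds whose reference map is an isomorphism on $\pi_1$; none of these hypotheses is available for the boundary pieces $\partial W$ and the faces $V$ occurring in the $\rR^{\spinc}$-relation for arbitrary $X$, $Y$ and arbitrary cycles (Theorem G carries no totally-non-spin hypothesis at all). The paper's proof instead identifies the two crucial analytic steps as applications of the spin$^c$ Surgery Theorem, Theorem \ref{thm:Crelle4.2}, carried out on traces of surgeries of codimension at least three (where no cancellation of low-codimension handles is needed); that theorem requires no connectivity, fundamental-group, or totally-non-spin assumptions and is therefore the correct tool here. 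As written, the step where you ``replace a spin$^c$-bordant boundary region by a gpsc representative'' via Theorem B does not go through.
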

We define a concordance set $\cP^c (M,L)$ of gpsc for a spin$^c$
manifold $M$ with associated spin$^c$ line bundle $L$ 
admitting gpsc and show, once
again using the spin$^c$ bordism theorem, that it carries a
(non-natural) group structure if $M$ is totally non-spin. Continuing
the analogy with the spin case, we prove the crucial result that there
exists a free and transitive action of the group ${\rR}^{\spinc}_{n}
(B\pi)$ on $\cP^c (M,L)$ (see Theorem \ref{thm:classification} and
Proposition \ref{prop:action}). 

We emphasize that while most of the statements concerning the groups
${\rm Pos}^{{\rm spin}^c}_* (X)$ and ${\rR}^{\spinc}_{*} (X)$ are
parallel to the spin case, the proofs do involve new ideas and new
complications.  In particular, it turns out that the condition for
spin$^c$ manifolds to be totally non-spin is crucial for all
constructions. 

The exact sequence (\ref{eq:Stolz-int}) is intimately related to the Higson-Roe analytic surgery sequence:
\begin{thmH}
Let $\Gamma$ be a finitely presented group.  Then there is a
universal commutative diagram with exact rows:
\begin{equation*}\label{eq:StolzToAna}
\begin{tikzcd}
\cdots \to \Omega^{\spinc}_{n+1} (B\Gamma) \ar[r] \ar[d, "\beta^c"]&
R^{\spinc}_{n+1}(B\Gamma) \ar[r] \ar[d, "\Ind^c_\Gamma"]
&
\Pos^{\spinc}_n (B\Gamma) \ar[r] \ar[d, "\rho^c_\Gamma"] &
\Omega^{\spinc}_n (B\Gamma) \ar[d, "\beta^c"] \to \cdots
\\ 
\cdots \to K_{n+1} (B\Gamma) \ar[r] & K_{n+1} ( C^* (E\Gamma)^\Gamma) \ar[r] & 
K_{n+1}(D^* (E\Gamma)^\Gamma) \ar[r] & K_{n} (B\Gamma)  \to \cdots
\end{tikzcd}
\end{equation*}
with the $\Ind^c_\Gamma$ and $\rho^c_\Gamma$  homomorphisms defined using coarse index theory.
\end{thmH}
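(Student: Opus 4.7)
The plan is to construct the vertical arrows using coarse index theory for the spin$^c$ Dirac operator, following the template set up in the spin case by Higson--Roe, Piazza--Schick, Xie--Yu and Zenobi, and then to verify commutativity square by square. Throughout, we work on the Galois $\Gamma$-cover $\widetilde{M}\to M$ classified by $c\co M\to B\Gamma$, which inherits a $\Gamma$-equivariant spin$^c$ structure, a lifted metric $\widetilde{g}$, and a lifted hermitian connection $\widetilde{\nabla}_L$ on the pulled-back line bundle $\widetilde{L}$. The associated equivariant spin$^c$ Dirac operator $\widetilde{D}$ acts on spinors twisted by $\widetilde{L}$, and the generalized Lichnerowicz formula $\widetilde{D}{}^2=\widetilde{\nabla}\widetilde{\nabla}{}^*+\tfrac{1}{4}R^{\tw}_{\widetilde{g},\widetilde{L}}$ shows that the gpsc condition $R^{\gen}_{g,L}>0$ forces $\widetilde{D}$ to be uniformly invertible. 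This positivity of the spectrum at zero is the key analytic input that makes the constructions in the Higson--Roe framework go through in the spin$^c$ setting.

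First, I would define the arrows. The map $\beta^c\co \Omega^{\spinc}_n(B\Gamma)\to K_n(B\Gamma)$ is already in hand: it is the composition of $\alpha^c$ with periodization $\per$, i.e.\ it sends $[(M,\sigma,c)]$ to $c_*[M,\sigma]\in K_n(B\Gamma)$. For $\Ind^c_\Gamma\co R^{\spinc}_{n+1}(B\Gamma)\to K_{n+1}(C^*(E\Gamma)^\Gamma)$, I take a representative $(W,\sigma_W,c_W)$ whose boundary $(\partial W,\sigma_{\partial W})$ carries a spin$^c$ structure and metric/connection with $R^{\tw}>0$, and assign to it the coarse relative index of the lifted spin$^c$ Dirac operator on $\widetilde{W}$; this is well-defined because the boundary operator is invertible. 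For $\rho^c_\Gamma\co \Pos^{\spinc}_n(B\Gamma)\to K_{n+1}(D^*(E\Gamma)^\Gamma)$, I assign to $(M,\sigma,g,\nabla_L,c)$ with $R^{\gen}_{g,L}>0$ the coarse $\rho$-class of $\widetilde{D}$: using functional calculus with a chopping function $\chi$, the operator $\chi(\widetilde{D})$ lies in $D^*(\widetilde{M})^\Gamma$ and, since $\widetilde{D}$ is invertible, $\chi(\widetilde{D})^2-1$ lies in $C^*(\widetilde{M})^\Gamma$, so it defines a class in $K_{n+1}(D^*/C^*)=K_{n+1}(D^*(\widetilde{M})^\Gamma)$ modulo the locally compact ideal, which then maps to $K_{n+1}(D^*(E\Gamma)^\Gamma)$ via the classifying map.

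Next I would verify well-definedness and additivity. For $\Ind^c_\Gamma$, I need to show that it vanishes on the image of $\Omega^{\spinc}_{n+1}(B\Gamma)$ in $R^{\spinc}_{n+1}(B\Gamma)$ (where there is no gpsc boundary) and is bordism invariant through gpsc bordisms; this follows from a relative coarse index theorem and the cobordism invariance of the coarse index. For $\rho^c_\Gamma$, I need concordance invariance of gpsc metrics, which follows from a standard cylinder argument combined with the fact that a concordance provides an invertible Dirac operator on the cylinder. Here the spin$^c$ setting enters only through the operator norm bound on $\Omega_L$, which by Lemma 3.1 of \cite{MR4817674} is exactly what is needed to make $R^{\tw}$ and hence $\widetilde{D}{}^2$ uniformly positive.

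Commutativity of the three squares then follows from three principles: (i) the square involving $\beta^c$ on both sides commutes because the coarse assembly of the Dirac class on a closed manifold coincides with the pushforward of the K-homology fundamental class into $K_n(C^*(E\Gamma)^\Gamma)$, which factors through the standard assembly $A\co K_n(B\Gamma)\to K_n(C^*_r\Gamma)\cong K_n(C^*(E\Gamma)^\Gamma)$; (ii) the square with $\Ind^c_\Gamma$ and $\rho^c_\Gamma$ commutes because the boundary map in the Higson--Roe sequence is, by construction, obtained by applying the coarse index to a manifold with gpsc boundary and reading off the $\rho$-class of the boundary operator; (iii) the square involving the boundary map from $\Pos^{\spinc}_n$ back to $\Omega^{\spinc}_n(B\Gamma)$ and the map $K_{n+1}(D^*/C^*)\to K_n(B\Gamma)$ commutes because both recover the spin$^c$ fundamental class, via Paschke duality identifying $K_{n+1}(D^*(E\Gamma)^\Gamma/C^*(E\Gamma)^\Gamma)\cong K_n(B\Gamma)$.

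The main obstacle will be square (ii), which in the spin case is the ``delocalized APS/partitioned-manifold'' compatibility, and hence requires an honest relative coarse index computation on $(W,\partial W)$ with invertible boundary operator. Adapting the proof to the spin$^c$ setting is essentially mechanical once one knows that $R^{\gen}_{g,L}>0$ implies uniform invertibility of the boundary spin$^c$ Dirac operator (which we do, by \cite{MR4817674}), so that the results of Zenobi and Piazza--Schick on the mapping of Stolz sequences to Higson--Roe sequences apply with the Dirac operator replaced by the spin$^c$ Dirac operator. Universality of the diagram is then automatic from the universality of the constructions of both rows.
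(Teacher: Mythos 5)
Your overall strategy coincides with the paper's: the paper defines $\beta^c$ as the pushforward of the $K$-homology Dirac class, $\Ind^c_\Gamma$ as the coarse/APS index of the spin$^c$ Dirac operator on the $\Gamma$-cover with cylindrical ends attached (well-defined by Bunke's relative index theorem), and $\rho^c_\Gamma$ by functional calculus applied to the $L^2$-invertible operator $D_\Gamma$; it then reduces well-definedness and commutativity of the middle square to the delocalized APS index theorem of Piazza--Schick (and Zenobi in the even case), exactly the input you identify in your final paragraph. So the architecture of your argument is the right one.

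There is, however, a genuine error in your construction of $\rho^c_\Gamma$ as written. You say that $\chi(\widetilde{D})^2-1\in C^*(\widetilde{M})^\Gamma$ \emph{because} $\widetilde{D}$ is invertible, that this yields a class in $K_{n+1}(D^*/C^*)$, and that this class ``then maps to'' $K_{n+1}(D^*(E\Gamma)^\Gamma)$. All three steps are off. First, $\chi(\widetilde{D})^2-1\in C^*(\widetilde{M})^\Gamma$ holds for \emph{any} chopping function and any Dirac operator; this is what produces the $K$-homology class $[D]\in K_{n+1}(D^*/C^*)\cong K_n(M)$, with no positivity needed. Second, the structure group is $K_{n+1}(D^*(E\Gamma)^\Gamma)$ itself, not the quotient, and there is no map $K_{n+1}(D^*/C^*)\to K_{n+1}(D^*)$ --- in the Higson--Roe sequence the arrow goes the other way, and the whole point of the $\rho$-class is that it is a \emph{lift} of $[D]$ across that arrow. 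The correct use of invertibility is that one may take $\chi$ to be the sign function on the spectrum of $\widetilde{D}$, so that $\chi(\widetilde{D})=\widetilde{D}/|\widetilde{D}|$ satisfies $\chi(\widetilde{D})^2=1$ exactly; then $\tfrac12(\chi(\widetilde{D})+1)$ is an honest projection (resp.\ $U\chi(\widetilde D)_+$ an honest unitary in the even case) in $D^*(\widetilde{M})^\Gamma$, defining a class directly in $K_{n+1}(D^*(\widetilde{M})^\Gamma)$, which is then pushed forward to $K_{n+1}(D^*(E\Gamma)^\Gamma)$. Relatedly, well-definedness of $\rho^c_\Gamma$ on $\Pos^{\spinc}_n$ requires invariance under gpsc \emph{bordism}, not merely concordance, and this is again the delocalized APS theorem rather than a cylinder argument; you invoke that theorem only for the middle square, whereas it is needed for well-definedness as well. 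With these corrections your proof matches the paper's.
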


See section \ref{sec:mapping}, in particular Theorem \ref{theo:commute-r-pos}, for more details
and Appendix \ref{appendix1} for 
the definitions of $\Ind^c_\Gamma$ and $\rho^c_\Gamma$.

Next, we analyze the rho invariant $\rho_{\Gamma}\co {\rm Pos}^{{\rm
spin}^c}_* (B\Gamma)\to \SG^\Gamma_* (E\Gamma)$ for cyclic and
elementary abelian finite groups $\Gamma$, where $\SG^\Gamma_*
(E\Gamma)$ is the \emph{structure group}, the natural target for the
rho map. (See Propositions \ref{prop:strsetZp}
and \ref{prop:strsetZpr}, Theorem \ref{thm:rangeofrho}.)

We also analyze the moduli space $\mathcal{R}^{c,+}(M,\sigma,L)/ {\rm
Diff} (M,\sigma)$ of gpsc; see section \ref{sec:psc} for relevant
definitions.  In particular, we prove
\begin{thmI}
Let $(M,\sigma,L)$ be spin$^c$ and totally non-spin and assume that
$(g,\nabla_L)$ is gpsc. Assume that $\pi_1 (M)\equiv \Gamma$ has an
element of finite order and that $\dim
M=4k+3$, $k\ge 1$. Then the set
$\pi_0 \left(\mathcal{R}^{c,+}(M,\sigma,L)/ {\rm Diff}
(M,\sigma) \right)$ has infinite cardinality.
\end{thmI}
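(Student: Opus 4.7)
The approach is to adapt to the spin$^c$ setting the Piazza--Schick and Botvinnik--Gilkey strategy from the spin case: exhibit an invariant on the concordance set $\cP^c(M,L)$, built out of the rho invariant, that takes infinitely many values even after modding out by $\Diff(M,\sigma)$. The natural surjection
\[
\pi_0\bigl(\mathcal{R}^{c,+}(M,\sigma,L)/\Diff(M,\sigma)\bigr) \twoheadrightarrow \cP^c(M,L)/\Diff(M,\sigma)
\]
reduces the claim to proving the right-hand quotient infinite, so I will work with $\cP^c(M,L)$ throughout.

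The first input is the free and transitive action of $R^{\spinc}_n(B\Gamma)$ on $\cP^c(M,L)$ from Theorem~\ref{thm:classification} and Proposition~\ref{prop:action}: fixing the basepoint $[(g,\nabla_L)]$ realises $\cP^c(M,L)$ as an $R^{\spinc}_n(B\Gamma)$-torsor on which $\Diff(M,\sigma)$ acts through the natural $\mathrm{Out}(\Gamma)$-action on the homotopy class of the classifying map $c$. The second input is Theorem H: the rho homomorphism $\rho^c_\Gamma : \Pos^{\spinc}_n(B\Gamma) \to K_{n+1}(D^*(E\Gamma)^\Gamma)$, evaluated on $[(M,\sigma,c,g',\nabla')]$ for any representative gpsc pair, gives a well-defined secondary invariant $\overline{\rho^c} : \cP^c(M,L) \to K_{n+1}(D^*(E\Gamma)^\Gamma)$. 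By naturality of the diagram in Theorem H, the variation of $\overline{\rho^c}$ along an orbit of the $R^{\spinc}_n(B\Gamma)$-action is governed by a group homomorphism out of $R^{\spinc}_n(B\Gamma)$.

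Because $\Gamma$ contains a torsion element $\gamma$ of some order $m>1$ and $n=4k+3$, I invoke the rho-invariant computations for finite cyclic groups in Propositions~\ref{prop:strsetZp}, \ref{prop:strsetZpr} and Theorem~\ref{thm:rangeofrho}. Via delocalized eta invariants of spin$^c$ Dirac operators on lens spaces $L^{4k+3}_m$ they produce infinitely many elements of $R^{\spinc}_n(B(\bZ/m))$ with pairwise distinct rho values, detected at the delocalized summands of $K_{n+1}(D^*(E(\bZ/m))^{\bZ/m})$ indexed by non-identity conjugacy classes. Pushing forward along the inclusion $\bZ/m \hookrightarrow \Gamma$ generated by $\gamma$, these rho values embed into the delocalized summand of $K_{n+1}(D^*(E\Gamma)^\Gamma)$ indexed by the conjugacy class of $\gamma$ in $\Gamma$. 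Since $\mathrm{Out}(\Gamma)$ permutes the countably many conjugacy classes of finite-order elements of $\Gamma$ with finite orbits, the infinite family survives in the quotient by $\Diff(M,\sigma)$, which proves the theorem.

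The main obstacle is the explicit non-triviality of the spin$^c$ delocalized rho invariants on lens-space-type elements of $R^{\spinc}_n(B\bZ/m)$ together with the verification that pushforward along $\bZ/m \hookrightarrow \Gamma$ preserves a cofinal infinite subfamily; both should rest on the rho-invariant analysis already developed earlier in the paper, and the dimension hypothesis $n = 4k+3$ enters precisely because delocalized eta invariants of spin$^c$ Dirac operators are generically nonzero only in odd dimensions of this parity.
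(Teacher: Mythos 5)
Your overall strategy (a rho-type secondary invariant on $\cP^c(M,L)$, fed by lens-space classes induced from the finite cyclic subgroup generated by a torsion element) is the right family of ideas, but there is a genuine gap at the step where you pass to the quotient by $\Diff(M,\sigma)$. You assert that $\mathrm{Out}(\Gamma)$ permutes the conjugacy classes of finite-order elements of $\Gamma$ \emph{with finite orbits}, so that your infinite family of delocalized rho values survives in the quotient. For a general finitely presented $\Gamma$ with torsion this is unjustified and false: an automorphism preserves the order of $\gamma$ but can move its conjugacy class through an infinite orbit, and nothing in the hypotheses of Theorem I controls $\mathrm{Out}(\Gamma)$. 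This is precisely why the paper's delocalized/higher-rho moduli results, Theorems \ref{theo:moduli1} and \ref{theo:moduli2}, are stated only for Gromov hyperbolic groups and only after replacing $\Diff(M,\sigma)$ by a finite-index subgroup $G$. A second, related problem is that $K_{n+1}(D^*(E\Gamma)^\Gamma)$ does not in general split into summands indexed by conjugacy classes, and the compatibility of such ``delocalized components'' with pushforward along $\bZ/m\hookrightarrow\Gamma$ is exactly the hard input that the paper only establishes (via cyclic cohomology and the Puschnigg algebra) in the hyperbolic case; Propositions \ref{prop:strsetZp}--\ref{prop:strsetZp2} and Theorem \ref{thm:rangeofrho} compute $\cS_d$ for finite (elementary) abelian groups only and do not by themselves yield an induction statement into an arbitrary infinite $\Gamma$.

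The paper's proof sidesteps both difficulties by using the Cheeger--Gromov $L^2$ rho invariant $\rho_{(2)}\co \Pos^{\spinc}_n(B\Gamma)\to\bR$ instead of the structure-group-valued one: (i) it is a group homomorphism which, by the unitary equivalence of $D_{(g,\nabla_L)}$ and $D_{(\varphi^*g,\nabla_{\varphi^* L})}$ (Proposition \ref{prop:invariance-CG}), is invariant under \emph{all} of $\Diff(M,\sigma)$, so no finite-orbit argument is needed; (ii) it is compatible with induction from $B\bZ/m$ to $B\Gamma$; (iii) it has infinite image already on the kernel of $\Pos^{\spinc}_{4k+3}(B\bZ/m)\to\Omega^{\spinc}_{4k+3}(B\bZ/m)$, obtained by pushing the Botvinnik--Gilkey spin computation through the map $j$ that equips a spin manifold with the trivial line bundle (this is where $\dim M\equiv 3\bmod 4$ enters); and (iv) the kernel of $\Pos^{\spinc}_n(B\Gamma)\to\Omega^{\spinc}_n(B\Gamma)$ acts freely and transitively on the image of $\cP^c(M,L)$ in $\Pos^{\spinc}_n(B\Gamma)$ as a consequence of the spin$^c$ Bordism Theorem (Proposition \ref{prop:char}); the full Classification Theorem that you invoke is not needed for this statement, and note in any case that the group acting there is $\rR^{\spinc}_{n+1}$, not $\rR^{\spinc}_{n}$. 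If you replace your conjugacy-class-by-conjugacy-class invariant with this single $\Diff(M,\sigma)$-invariant real number, your argument closes.
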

Theorem I says that in many cases, not only
is the set of connected components of gpsc structures on $(M,\sigma)$
infinite, but it even remains infinite after dividing out by the 
action of the spin$^c$-preserving diffeomorphism group.\footnote{Incidentally,
in contrast with the spin case, we shall argue in Section \ref{sec:psc} that  ${\rm Diff}
(M,\sigma)$ is {\em not} always a finite index subgroup of ${\rm Diff}
(M)$.} The proof of Theorem I employs
crucially the spin$^c$-bordism theorem and it is presented in Appendix \ref{appendix2}. 
Using higher rho numbers, as in \cite{PSZ}, and employing 
heavily our classification theorem, Theorem \ref{thm:classification}, we  give sharper results on moduli spaces
assuming in addition that the fundamental group is Gromov hyperbolic. See Theorem \ref{theo:moduli1}
and Theorem \ref{theo:moduli2}. Proofs of these sharper results can be found in Appendix \ref{appendix2}; the discussion there can also serve as a quick introduction to the results in \cite{PSZ}.

\subsection*{Acknowledgments}
Part of this work was done during visits of B.\ B.\ and 
J.\ R.\ at Sapienza Universit\`a di Roma and 
of B.\ B.\ and P.\ P.\ at the Brin Mathematics Research Center. We thank these institutions for their hospitality and financial support that made these visits possible. Boris Botvinnik was
partially supported by Simons collaboration grant 708183. 
We thank Vito Felice Zenobi for interesting discussions.

\section{The spin$^c$ Bordism Theorem}
\label{sec:bordism}
The first thing we need to do is to generalize some
of 
\cite[Section 4]{MR4649186} to the non-simply connected case.
First we remind the reader of a key tool, the surgery theorem
from \cite{MR4649186}.
As in the Introduction we use the notation $(M,\sigma)$ for a spin$^c$ manifold,
with $\sigma$ the chosen spin$^c$ structure on $M$. 
Recall that $\sigma$ determines a choice of an orientation on $M$
and a complex line bundle $L$ on $M$ with $c_1(L)$ reducing mod $2$ to
$w_2(M)$.  When $H^1(M;\bZ/2)=0$, the orientation 
together with the choice of
the spin$^c$ line bundle $L$ are equivalent to the choice of spin$^c$ 
structure $\sigma$ (see  Appendix \ref{sec:spinc-structures} for an explanation).

\begin{definition}[{\cite[Definition 4.1]{MR4649186}}]
\label{def:spincsurgery}
Let $(M, \sigma)$ be a closed spin$^c$ manifold.
We say that a spin$^c$ manifold $(M', \sigma')$
is \emph{obtained from
$(M,\sigma)$ by spin$^c$ surgery in codimension $k$} if there is a
sphere $S^{n-k}$ embedded in $M$ with trivial normal bundle, $M'$ is
the result of gluing in $D^{n-k+1}\times S^{k-1}$ in place of
$S^{n-k}\times D^k$, and there 
is a spin$^c$ structure $\sigma''$ on the
trace of the surgery---a bordism from $M$ to $M'$---restricting to $\sigma$
on $M$ and to $\sigma'$ on $M'$.
\end{definition}
\begin{theorem}[{\cite[Theorem 4.2]{MR4649186}}]
\label{thm:Crelle4.2}
Let $(M,\sigma)$ be a closed 
$n$-dimensional spin$^c$ manifold. Assume that $M$ admits a
Riemannian metric $g$ and that the associated spin$^c$ line bundle
$L$ admits a hermitian bundle metric $h$
and a unitary connection such that $\frac14 R_g + \frac{i}{2}c(\Omega_L) > 0$.
Let $(M', \sigma')$ be obtained from $(M,\sigma)$ 
by spin$^c$ surgery in codimension $k\ge 3$.
Then there is a metric $g'$ on $M'$, and the
spin$^c$ line bundle $L'$ on $M'$ admits a hermitian bundle
metric $h'$ and a unitary connection, such that
$\frac14 R_{g'} + \frac{i}{2}c(\Omega_{L'}) > 0$.
\end{theorem}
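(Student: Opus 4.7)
The strategy is to adapt the classical Gromov--Lawson surgery construction for positive scalar curvature to the spin$^c$ setting. The new feature is that the positivity condition $R_g + 2ic(\Omega_L) > 0$ couples the Riemannian metric $g$ to the connection $\nabla_L$ on the line bundle, so both must be deformed jointly in a tubular neighborhood of the surgery sphere. Let $S := S^{n-k}\hookrightarrow M$ be the embedded sphere (with trivial normal bundle, codimension $k\ge 3$), and fix a tubular neighborhood $N\cong S\times D^k_R$. Because $\sigma$ extends over the trace of the surgery---a manifold obtained from $M$ by attaching a contractible $(n-k+1)$-cell to $S$---the spin$^c$ line bundle $L$ is topologically trivial on $N$. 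Fix a unitary trivialization of $L|_N$ and write $\nabla_L|_N = d + A$ for an imaginary-valued $1$-form $A$.

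\textbf{Modifying the connection.} Let $\pi\co N\to S$ be the bundle projection and $\iota\co S\hookrightarrow N$ the zero section, and set $\xi := A - \pi^*\iota^*A$. Choose a smooth cutoff $\chi$ equal to $1$ on an inner tube $N' = S\times D^k_{R/2}$ and vanishing near $\partial N$, and define the modified connection $\tilde\nabla := d + (A - \chi\xi)$. Then $\tilde\nabla = \pi^*\iota^*\nabla_L$ on $N'$, so its curvature there is the pullback $\pi^*\iota^*\Omega_L$ and in particular has no components in the normal directions, while $\tilde\nabla = \nabla_L$ outside $N$. In a suitable (e.g.\ radial) gauge, $|\xi|_{C^0(N)} = O(R)$, so the curvature perturbation $\Omega_{\tilde\nabla} - \Omega_L = -d(\chi\xi)$ is bounded in operator norm. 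The openness of the gpsc condition on the compact manifold $M$ provides a positive margin $\delta > 0$; by choosing $R$ and $\chi$ carefully, the perturbation can be absorbed within $\delta$, so that $(g,\tilde\nabla)$ still satisfies $R_g > 2|\Omega_{\tilde\nabla}|_{\mathrm{op}}$ everywhere.

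\textbf{Metric deformation, extension, and main obstacle.} Apply the Gromov--Lawson torpedo construction to the metric inside $N'$: replace $g|_{N'}$ by a warped-product metric $g_S + ds^2 + f(s)^2 g_{S^{k-1}}$ with the $S^{k-1}$ factor tapered to a small radius $\epsilon$. Because $k\ge 3$, the scalar curvature in the neck is of order $(k-1)(k-2)/\epsilon^2$. Since $\Omega_{\tilde\nabla}$ on $N'$ is a pullback from $S$, its operator norm in the torpedo metric equals that of $\iota^*\Omega_L$ in $g_S$, bounded independently of $\epsilon$; taking $\epsilon$ small enough forces $R_{\tilde g} > 2|\Omega_{\tilde\nabla}|_{\mathrm{op}}$ throughout $N'$, and the standard Gromov--Lawson estimates handle the transition collar. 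The torpedo extends compatibly across the surgery to the dual handle $D^{n-k+1}\times S^{k-1}_\epsilon$; since $L$ is trivial there and $\iota^*\Omega_L$ is exact on $S$ (the bundle $L|_S$ being trivial), the connection extends with curvature pulled back from the contractible $D^{n-k+1}$, preserving positive twisted scalar curvature throughout $M'$. The chief obstacle is the bookkeeping of the connection step: the cutoff interpolation introduces a curvature perturbation of magnitude essentially $|d\chi|\cdot|\xi|$ which, even in the best gauge, is only bounded rather than arbitrarily small with $R$; making it fit within the gpsc margin $\delta$ requires a careful joint choice of gauge, tube radius, and cutoff, and this orchestration of the simultaneous connection and metric deformations is what distinguishes the spin$^c$ case from the classical Gromov--Lawson argument.
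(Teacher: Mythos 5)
First, note that the paper you are working from does not actually prove this statement: it is imported verbatim from \cite[Theorem 4.2]{MR4649186}, so the only comparison available is with the argument in that reference. Your overall strategy is the expected one and surely matches it in outline: use the extension of $\sigma$ over the trace of the surgery to trivialize $L$ on the tubular neighborhood, first deform the connection on an inner tube so that its curvature is pulled back from $S^{n-k}$ (this step is genuinely necessary, since otherwise the components of $\Omega_L$ in the $S^{k-1}$-directions are amplified by $\epsilon^{-2}$ under the Gromov--Lawson shrinking, exactly the order of the scalar curvature gain), and then run the torpedo/bending construction, which is where $k\ge 3$ enters.

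The gap is precisely the one you flag and then fail to close. On the transition annulus where $0<\chi<1$ the new curvature is $(1-\chi)\Omega_L+\chi\,\pi^*\iota^*\Omega_L-d\chi\wedge\xi$; the convex combination is harmless, but $|d\chi\wedge\xi|\sim |\chi'|\cdot|\xi|\sim (C/R)\cdot(C'R)=O(1)$ is a constant determined by the normal derivatives of $\Omega_L$ along the surgery sphere and \emph{independent of $R$} --- the product $|d\chi|\cdot|\xi|$ is scale-invariant, so shrinking the tube does not help. Since the gpsc margin $\delta$ is fixed and this constant can exceed it, the assertion that the perturbation ``can be absorbed within $\delta$'' by choosing $R$ and $\chi$ carefully is unjustified with a linear cutoff, and indeed contradicts your own admission two sentences later that the term is ``only bounded rather than arbitrarily small.'' The missing ingredient is a device that actually makes this term small: the standard one is a logarithmically stretched cutoff, $\chi$ a function of $\log|v|$ supported on $\rho\le |v|\le R$, so that $|d\chi|\le C\bigl(|v|\log(R/\rho)\bigr)^{-1}$; combined with the \emph{pointwise} radial-gauge bound $|\xi(x,v)|\le C'|v|$ (not merely the sup bound $O(R)$) this gives $|d\chi\wedge\xi|\le C''/\log(R/\rho)\to 0$ as $\rho/R\to 0$. (Alternatively one can synchronize the connection interpolation with the bending, modifying the connection only where the scalar curvature has already been increased.) Without some such device the proof as written does not go through. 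A secondary loose end: to cap off with $D^{n-k+1}\times S^{k-1}_\epsilon$ you must further deform $\pi^*\iota^*\nabla_L$ to a connection extending over the disk (e.g.\ cut $\iota^*A$ off to zero), again using that the scalar curvature on the cap is of order $\epsilon^{-2}$ while the extended curvature is bounded with no $S^{k-1}$-components; this is routine once the main estimate is in place.
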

We say that a
manifold $M$ is \emph{totally non-spin} if its universal cover is
non-spin. 
The main result of this section, which generalizes \cite[Theorem 4.3]{MR4649186},
is the following.
\begin{theorem}[Spin$^c$ Bordism Theorem]
\label{thm:spincbordism}
Let $(M,\sigma)$ be a connected closed 
$n$-dimensional spin$^c$ manifold
which is totally non-spin, with $n\geq 5$.
Assume that $\pi_1 M =\pi$ and that we have
fixed a classifying map $c\co M\to B\pi$ for the universal cover of $M$.  
Consider $(M,\sigma,c)\in \Omega^{\spinc}_n(B\pi)$ and assume that there exists
$(M',\sigma',c')$ in the same bordism class in $\Omega^{\spinc}_n(B\pi)$
with a metric $g'$ on $M'$ and a hermitian metric $h'$ and
a unitary connection on the associated line bundle
$L'$ such that
$\frac14 R_{g'} + \frac{i}{2}c(\Omega_{L'}) > 0$.  Then
$M$ admits a Riemannian metric
$g$ and the spin$^c$ line bundle $L$ for $M$
admits a hermitian bundle metric $h$
and a unitary connection such that $\frac14 R_{g} + \frac{i}{2}c(\Omega_L) > 0$.
\end{theorem}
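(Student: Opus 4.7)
The proof follows the Gromov--Lawson--Rosenberg--Stolz surgery template, adapted to spin$^c$ bordism over $B\pi$; it parallels the simply-connected case \cite[Theorem 4.3]{MR4649186} but with additional care to preserve the classifying map. Let $(W, \sigma_W, C\co W \to B\pi)$ be a bordism representing the class $[(M,\sigma,c)] = [(M',\sigma',c')]$ in $\Omega^{\spinc}_{n+1}(B\pi)$, with associated line bundle $\mathcal{L}$ restricting to $L$ and $L'$ on the two boundary components, and with the gpsc data $(g',h',\nabla_{L'})$ fixed on $M'$. The plan is to modify $W$ by a sequence of spin$^c$ surgeries in its interior, keeping $\partial W$ and the homotopy class of $C$ fixed, until the inclusion $M \hookrightarrow W$ is a 2-equivalence; a handle decomposition of the resulting $W$ relative to $M'$ will then consist only of handles of index $\leq n - 2$, equivalently, spin$^c$ surgeries on $M'$ of codimension $\geq 3$, to which Theorem \ref{thm:Crelle4.2} can be applied iteratively.

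The first stage performs $0$-surgeries in $\mathrm{int}(W)$ to make $W$ connected, then surgeries on embedded circles representing generators of $\ker(C_*\co \pi_1 W \to \pi)$ and of $\ker(\iota_*\co \pi_1 M \to \pi_1 W)$. The normal bundle of each such circle is an orientable rank-$n$ bundle and hence trivial, so the surgery can be performed; each circle is null-homotopic in $B\pi$, so $C$ extends over the trace; and the spin$^c$ structure extends because the obstruction on the attached handle vanishes. After this stage, $C_*\co \pi_1 W \to \pi$ and $\iota_*\co \pi_1 M \to \pi_1 W$ are both isomorphisms.

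The critical stage performs surgeries on embedded $2$-spheres in $\mathrm{int}(W)$ representing a generating set of $\pi_2(W, M)$. Each such sphere $\Sigma$ must be chosen with trivial normal bundle, and with a framing compatible with $\sigma_W$ so that the spin$^c$ structure extends across the trace. The normal bundle of $\Sigma$ in $W$ is orientable of rank $n - 1$, and it is trivial if and only if $c_1(\mathcal{L}|_\Sigma)$ is even (so that $w_2$ of the bundle vanishes) and the stable class vanishes. To arrange this, one tubes $\Sigma$ along an embedded arc with a non-spin embedded $2$-sphere in $M$; by the totally non-spin hypothesis, such a $2$-sphere exists and has odd $c_1(L|_{S^2})$, so tubing switches the parity of $c_1(\mathcal{L}|_\Sigma)$ without altering the class of $\Sigma$ in $\pi_2(W, M)$ (the modification lies in the image of $\pi_2(M)$). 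The same tubing argument corrects the spin$^c$ framing, exactly as in \cite[proof of Theorem 4.3]{MR4649186}, and the map $C$ extends across each trace since the $2$-spheres are null-homotopic in $B\pi$ modulo $\pi_2(M)$.

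Once $M \hookrightarrow W$ is a 2-equivalence, standard handle-trading in the dimension range $\dim W = n+1 \geq 6$ produces a handle decomposition of $W$ relative to $M'$ with all handle indices at most $n - 2$, so $M$ is obtained from $M'$ by a finite sequence of spin$^c$ surgeries of codimension $\geq 3$. Starting from $(g', h', \nabla_{L'})$ on $M'$ and iterating Theorem \ref{thm:Crelle4.2} yields a Riemannian metric $g$, a hermitian bundle metric $h$, and a unitary connection on $L$ with $\frac14 R_g + \frac{i}{2}c(\Omega_L) > 0$. The main obstacle is the $2$-surgery step: the totally non-spin assumption on $M$ is precisely what permits the tubing construction that simultaneously trivializes the normal bundle and preserves the spin$^c$ structure, and without it the argument breaks down.
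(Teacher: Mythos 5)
Your proposal is correct and follows essentially the same route as the paper's proof: reduce the fundamental group of the bordism $W$ by $1$-surgeries using the split surjection $\pi_1(W)\to\pi$, use the totally non-spin hypothesis to arrange that $\pi_2(W,M)$ is killed by surgeries on $2$-spheres with trivial normal bundle while extending the spin$^c$ structure, invoke Wall's geometric connectivity theorem to get all surgeries from $M'$ to $M$ in codimension $\ge 3$, and iterate Theorem \ref{thm:Crelle4.2}. The only difference is presentational: you correct the $w_2$-obstruction on the generating $2$-spheres geometrically by tubing with a non-spin sphere in $M$, whereas the paper carries out the equivalent $\bZ[\pi_1(M)]$-module diagram chase showing $\pi_2(W,M)$ is generated by classes in $\ker w_2(\wW)$.
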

\begin{proof}
The argument is modeled on the proof of \cite[Theorem 2.13]{MR866507}.
We can assume that $M'$ is connected, since if it isn't we can do
spin$^c$ surgery on embedded $0$-spheres to arrange this.  
Using the assumption that $(M,\sigma,c)$ and
$(M',\sigma',c')$ define the same class in $\Omega^{\spinc}_n(B\pi)$,
choose a compact spin$^c$
bordism $(W^{n+1},\sigma'')$ from $M'$ to $M$, along with a map $f\co
W\to B\pi$ extending $c$. Let $\cL$ be the
spin$^c$ line bundle
on $W$, restricting to $L$ on $M$ and to $L'$ on $M'$. Note
that we have a diagram of groups
\[
\xymatrix{\pi_1(M') \ar[r] \ar[dr]_{c_*'}& \pi_1(W) \ar[d]_{f_*}& \ar[l] \pi_1(M)\ar[dl]_(.55)\cong^(.6){c_*}\\
& \pi}\,
\]
which shows that the homomorphism $\pi_1(W)\xrightarrow{f_*} \pi$ is a
split surjection.  Begin by doing spin$^c$ surgeries on $1$-spheres
with trivial normal bundles in the interior of the manifold $W$,
generating $\ker(f_*\co \pi_1(W)\to \pi)$,
changing the bundle $\cL$ and the spin$^c$
structure as needed,
using Theorem \ref{thm:Crelle4.2}, to reduce to the case where $W$
also has fundamental group $\pi$. In order to use
Theorem \ref{thm:Crelle4.2}, we need the inclusion $M\hookrightarrow
W$ to be $2$-connected, which will require further surgery on $W$.
Since $\pi_1(M)\to \pi_1(W)$ is an isomorphism, we can pass to the
universal covers $\wM \hookrightarrow \wW$, and can then identify
$\pi_2$ with $H_2$ via the Hurewicz theorem.  We need to use the
assumption that $M$ is spin$^c$ and totally non-spin.  This implies
that $w_2(\wM)\ne 0$ and can be identified with a map
$H_2(\wM)\twoheadrightarrow \bZ/2$.  Furthermore, although $\pi_2(M)$
and $\pi_2(W)$ may not be finitely generated as abelian groups, they
are finitely generated as $\bZ[\pi_1(M)]$-modules because they can be
identified with $H_2(\wM)$ and $H_2(\wW)$.  The commuting diagram of
$\bZ[\pi_1(M)]$-modules with exact rows
\[
\xymatrix{
\pi_2(M)\ar[r]\ar[d]^{w_2(\wM)} & \pi_2(W) \ar[r]\ar[d]^{w_2(\wW)}
& \pi_2(W,M)\ar[r] & 0\\
\bZ/2 \ar@{=}[r] & \bZ/2 \ar[r] &0&}\,,
\]
along with the surjectivity of the homomorphism
$w_2(\wM)\co\pi_2(M)\to\bZ/2$, shows that $\pi_2(W,M)$ is generated as
a $\bZ[\pi_1(M)]$-module by finitely many classes in the kernel of
$w_2(\wW)$, i.e., is generated by classes of finitely many $2$-spheres
in the interior of $W$ on which the normal bundle is trivial. Since
$\dim W\ge 6$, we can take these $2$-spheres to be embedded and do
surgery on them to kill off the group
$\pi_2(W,M)$. We just need to check that this procedure is compatible
with the spin$^c$ condition.  The line bundle $\cL$ on $\wW$ has first
Chern class reducing mod $2$ to $w_2(\wW)$.  Since our surgeries were
done on $2$-spheres in the kernel of $w_2(\wW)$, the class $c_1(\cL)$
restricted to one of these $2$-spheres is \emph{even}, and we can
modify $\cL$ by tensoring with the square of another line bundle so
that $\cL$ will be trivial on the $2$-sphere and thus will extend
across the surgery without changing the spin$^c$ condition (which is
that $c_1$ of the line bundle reduces mod $2$ to $w_2$).
So we can assume that $M\hookrightarrow W$ is $2$-connected.

To conclude the proof, we want to apply Theorem \ref{thm:Crelle4.2}.
But that requires knowing that we can decompose $W$ so that all
spin$^c$ surgeries done in going from $M'$ to $M$
through $W$ are in
codimension at least three.  For this we apply \cite[Theorem
3]{MR290387}, which says that if the inclusion $M\hookrightarrow W$ is
2-connected, then it is geometrically 2-connected, which means exactly
that all surgeries going from $M'$ to $M$ through $W$ are in
codimension at least three.
\end{proof}

\begin{corollary}
\label{cor:possubgroup}
Fix a finitely presented group $\pi$.  Then for $n\ge 5$, there is a
subgroup $\Omega^{\spinc,+}_n(B\pi)$ of the bordism
group $\Omega^{\spinc}_n(B\pi)$ such that if $(M^n,\sigma)$ is a
totally non-spin spin$^c$ manifold with classifying map $c\co M\to
B\pi$ for the universal cover of $M$, then $(M,\sigma)$ admits
positive generalized scalar curvature if and only if the class of
$(M,\sigma,c)$ in $\Omega^{\spinc}_n(B\pi)$ lies in
$\Omega^{\spinc,+}_n(B\pi)$.
\end{corollary}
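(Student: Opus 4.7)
The plan is to define
\[
\Omega^{\spinc,+}_n(B\pi) := \{\,[(M',\sigma',c')] \in \Omega^{\spinc}_n(B\pi) : (M',\sigma',L') \text{ admits gpsc for some choice of } (g',\nabla_{L'})\,\},
\]
verify that this subset is a subgroup, and then deduce the biconditional directly from the Spin$^c$ Bordism Theorem (Theorem \ref{thm:spincbordism}).

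For the subgroup axioms I would proceed as follows. The identity element is represented by the round sphere $S^n$ equipped with its canonical spin$^c$ structure (trivial line bundle with flat connection) together with a constant classifying map; this triple bounds $D^{n+1}$ with the analogous data, and on $S^n$ the round metric yields $R^{\gen} = R_g > 0$. Closure under addition is immediate by disjoint union of the gpsc data. For closure under inverses, the inverse of $[(M',\sigma',c')]$ in spin$^c$ bordism is obtained by passing to the conjugate spin$^c$ structure, whose associated complex line bundle is $\overline{L'}$; since the metric is unchanged and $|\Omega_{\overline{L'}}|_{\text{op}} = |\Omega_{L'}|_{\text{op}}$, gpsc is preserved. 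This confirms that $\Omega^{\spinc,+}_n(B\pi)$ is a subgroup, with no totally non-spin hypothesis needed at this stage.

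For the biconditional, the ($\Rightarrow$) direction is tautological: if $(M,\sigma)$ admits gpsc, then $[(M,\sigma,c)]$ itself lies in $\Omega^{\spinc,+}_n(B\pi)$. For the ($\Leftarrow$) direction, suppose $[(M,\sigma,c)] \in \Omega^{\spinc,+}_n(B\pi)$; then by definition there exists a representative $(M',\sigma',c')$ of this bordism class carrying $(g',\nabla_{L'})$ with $R^{\tw}_{g',L'} > 0$. Since $M$ is totally non-spin and $\dim M = n \ge 5$, Theorem \ref{thm:spincbordism} transfers this positivity from $M'$ to $M$, producing the required metric and connection on $(M,L)$.

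I do not expect any real obstacle: all of the analytical and geometric content has been absorbed into Theorem \ref{thm:spincbordism}, and the present corollary is essentially a packaging of that result together with the routine subgroup verifications. The only step requiring minor attention is checking that $R^{\gen}$ is invariant under spin$^c$ conjugation, which is transparent from the definition $R^{\gen}_{g,L} = R_g - 2|\Omega_L|_{\text{op}}$.
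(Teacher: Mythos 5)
Your proof is correct, and it differs from the paper's in one structural respect worth noting. The paper defines $\Omega^{\spinc,+}_n(B\pi)$ more restrictively, as the set of classes of \emph{connected totally non-spin} spin$^c$ manifolds admitting gpsc; with that definition, closure under addition is not immediate from disjoint union (the disjoint union is disconnected and has the wrong fundamental group), so the paper must perform a spin$^c$ surgery to form a connected sum, reduce $\pi*\pi$ back to $\pi$, and then invoke Theorem \ref{thm:spincbordism} a second time to see that the resulting connected totally non-spin manifold still has gpsc. Your definition --- all bordism classes possessing \emph{some} gpsc representative, with no connectivity or totally non-spin requirement on that representative --- makes closure under addition genuinely trivial via disjoint union, and confines the use of Theorem \ref{thm:spincbordism} to the single place it is really needed, namely the ``if'' direction of the biconditional (where the hypotheses of that theorem are satisfied because the target $M$, not the representative, is assumed connected, totally non-spin, and of dimension $\ge 5$). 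Both candidate subgroups validate the corollary as stated, since it only asserts the existence of \emph{a} subgroup with the required property; your treatment of inverses via conjugating the spin$^c$ structure and the invariance of $|\Omega_L|_{\text{op}}$ matches the paper's one-line remark about reversing the spin$^c$ structure.
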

\begin{proof}
We simply take $\Omega^{\spinc,+}_n(B\pi)$ to consist of all classes
of $(M,\sigma,c\co M\to B\pi)$ for which $M$ is a connected totally non-spin
spin$^c$ manifold with gpsc.  This is closed under multiplication by
$-1$ since that comes from reversing the spin$^c$ structure.  We
need to show it is closed under addition. Now addition of bordism
classes comes from disjoint union, but given $M\xrightarrow{c} B\pi$
and $M'\xrightarrow{c'} B\pi$ with $c_*$ and $c'_*$ isomorphisms on
$\pi_1$, we can do spin$^c$ surgery on $M\sqcup M'$ to make it
connected and to reduce the fundamental group down to $\pi$
(the connected sum will have fundamental group $\pi*\pi$).  The
resulting manifold will again be totally non-spin and will have gpsc
by Theorem \ref{thm:spincbordism}.  Finally, any totally non-spin
spin$^c$ manifold in the same bordism class as something in
$\Omega^{\spinc,+}_n(B\pi)$ will again have gpsc, again by
Theorem \ref{thm:spincbordism}.   
\end{proof}

\section{Reduction to $\ku$}
\label{sec:ku}
In this section, we want to improve Theorem \ref{thm:spincbordism} to
something which is easier to use, parallel to \cite[Theorem
4.11]{MR1268192}. 
Here is the main result of this section:
\begin{theorem}[Reduction to $ku$-theory]\label{thm:main-ku}
Let $(M,\sigma)$ be a totally non-spin spin$^c$-manifold.
Assume that $M$ has fundamental group $\pi$ and classifying map $c\co
M\to B\pi$ and that $n\ge 5$. Then the question of whether or not $M$
admits generalized psc only depends on the
class $c_*([M,\sigma])\in ku_n(B\pi)$, 
where $[M,\sigma]\in ku_n (M)$ is the
$ku$-valued spin$^c$-fundamental class of $M$.
More precisely, there
is a subgroup $ku^+_n(B\pi)\subset ku_n(B\pi)$ such that $(M,\sigma)$
admits generalized psc if and only
if $c_*([M,\sigma])\in ku^+_n(B\pi)$.
\end{theorem}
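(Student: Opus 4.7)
The plan is to adapt the standard reduction from spin bordism to $ko$-theory to the spin$^c$/$ku$ setting, reducing Theorem~\ref{thm:main-ku} to Corollary~\ref{cor:possubgroup} and to Theorem~A. First define
\begin{equation*}
ku^+_n(B\pi) := \alpha^c\bigl(\Omega^{\spinc,+}_n(B\pi)\bigr),
\end{equation*}
which is a subgroup of $ku_n(B\pi)$ because $\Omega^{\spinc,+}_n(B\pi)$ is a subgroup by Corollary~\ref{cor:possubgroup}. The forward direction is immediate: if $(M,\sigma)$ admits gpsc, then $[(M,\sigma,c)] \in \Omega^{\spinc,+}_n(B\pi)$, whence $c_*[M,\sigma] = \alpha^c([(M,\sigma,c)]) \in ku^+_n(B\pi)$.

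For the converse, suppose $c_*[M,\sigma] \in ku^+_n(B\pi)$. Choose a totally non-spin representative $(M',\sigma',c')$ admitting gpsc with $\alpha^c([(M',\sigma',c')]) = c_*[M,\sigma]$. The class $\xi := [(M,\sigma,c)] - [(M',\sigma',c')]$ lies in $\ker(\alpha^c\co \Omega^{\spinc}_n(B\pi) \to ku_n(B\pi))$. If one can establish the inclusion
\begin{equation*}
\ker\bigl(\alpha^c\co \Omega^{\spinc}_n(B\pi) \to ku_n(B\pi)\bigr) \subseteq \Omega^{\spinc,+}_n(B\pi) \quad (n \ge 5),
\end{equation*}
then $\xi \in \Omega^{\spinc,+}_n(B\pi)$, so $[(M,\sigma,c)] = \xi + [(M',\sigma',c')] \in \Omega^{\spinc,+}_n(B\pi)$ by Corollary~\ref{cor:possubgroup}, and the Spin$^c$ Bordism Theorem \lp Theorem~\ref{thm:spincbordism}\rp{} transports gpsc back to $(M,\sigma)$ itself.

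The main step is the kernel inclusion above. The natural tool is a spin$^c$ Conner--Floyd-type isomorphism $ku_*(X) \cong \Omega^{\spinc}_*(X) \otimes_{\Omega^{\spinc}_*} ku_*$, which identifies the kernel with $J \cdot \Omega^{\spinc}_n(B\pi)$, where $J = \ker(\alpha^c\co \Omega^{\spinc}_* \to ku_*)$ is the coefficient kernel. Any kernel element is then a finite sum of products $[P_i] \cdot [(N_i,c_i)]$ with $[P_i] \in J$. The idea is to represent each product by a totally non-spin manifold admitting gpsc: after arranging each $N_i$ to be connected and totally non-spin within its bordism class \lp via $0$- and $1$-surgeries as in Corollary~\ref{cor:possubgroup}\rp, the K\"unneth computation of $w_2$ on the universal cover shows that $P_i \times N_i$ is automatically totally non-spin, regardless of whether $P_i$ is; a product-metric scaling argument then transfers gpsc from the $P_i$ factor to the product, provided $P_i$ itself admits gpsc.

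The hardest step---and the substantive content beyond the spin case---is to represent every class in $J$ by a gpsc spin$^c$ manifold. Theorem~A covers totally non-spin $P_i$ of dimension $\ge 5$. Low-dimensional generators of $J$ can be stabilized by crossing with $\CP^2$, which itself admits gpsc and preserves gpsc under products. Spin generators of $J$ \lp which arise in dimensions $\equiv 1,2 \pmod 8$ where $ko_* \to ku_*$ has nontrivial kernel\rp{} are more delicate: here one either invokes Stolz's classification of simply-connected spin manifolds with psc, or crosses with a totally non-spin factor and applies Theorem~A to the enlarged product class. Completing these case checks establishes the kernel inclusion and hence the theorem.
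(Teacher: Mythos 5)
Your overall architecture matches the paper's: define $ku^+_n(B\pi)=\alpha^c(\Omega^{\spinc,+}_n(B\pi))$, observe that the forward implication is formal, and reduce the converse to the inclusion $\ker\bigl(\alpha^c\co\Omega^{\spinc}_n(B\pi)\to ku_n(B\pi)\bigr)\subseteq\Omega^{\spinc,+}_n(B\pi)$, after which Corollary \ref{cor:possubgroup} and Theorem \ref{thm:spincbordism} finish the argument. The gap is in how you establish that inclusion. You invoke a ``spin$^c$ Conner--Floyd-type isomorphism'' $ku_*(X)\cong\Omega^{\spinc}_*(X)\otimes_{\Omega^{\spinc}_*}ku_*$ in order to identify the kernel with $J\cdot\Omega^{\spinc}_n(B\pi)$. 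The classical Conner--Floyd theorem holds for \emph{periodic} $K$-theory; it does not descend to the connective cover ($\ku$ is not Landweber exact, and no such isomorphism is available for $\MSpinc\to\ku$ --- nor for $\MSpin\to\ko$, which is exactly why Stolz and F\"uhring had to develop the $\bH\bP^2$-transfer and Baas--Sullivan/smoothing machinery in the spin case instead of quoting a tensor identity). The claim that every kernel class decomposes as $\sum_i[P_i]\cdot[(N_i,c_i)]$ with $[P_i]$ in the coefficient kernel is precisely the hard homotopy-theoretic content of the theorem, not a tool one can assume; the paper itself flags this pitfall in its discussion of the spin case (``it is not clear that all such classes come from manifolds with an $\bH\bP^2$-bundle structure; this is true if $\pi$ is trivial but it's not clear if it's true for general $\pi$'').

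For comparison, the paper's proof supplies this step as follows: $2$-locally it uses Granath's splitting $\MSpinc\simeq\widehat{\MSpinc}\vee\ku$ together with the split surjectivity of the transfer $\mathcal{T}_{\spinc}$ onto $\widehat{\MSpinc}$, so every $2$-local kernel class in $\Omega^{\spinc}_n(B\pi)$ is represented by the total space of a geometric $\CP^2$- or $\HP^2$-bundle and hence carries gpsc; with $2$ inverted it uses F\"uhring's theorem to handle $\widehat{\MSpin}\wedge\Sigma^{-2}\CP^{\infty}$ and then analyzes the cofiber $\widehat{\ku}=\ko\wedge\Sigma^{-2}\CP^{\infty}\simeq\bigvee_{\ell\ge0}\Sigma^{4\ell}\ku$, showing that the fiber $\sF$ of $\widehat{\ku}\to\ku$ is filtered by $\ku$-module spectra generated by powers of $(\CP^2,\cO(1))$. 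Your proposal must either prove the connective Conner--Floyd isomorphism (a new theorem) or replace that step with inputs of this kind. A secondary inaccuracy: your worry about ``spin generators of $J$ in dimensions $\equiv1,2\pmod 8$'' is misplaced, since $\Omega^{\spinc}_*$ is detected by characteristic numbers and carries no exotic $\alpha$-invariant torsion, so the classes in $\ker(ko_*\to ku_*)$ you have in mind already vanish in spin$^c$ bordism; and $0$- and $1$-surgeries alone do not make a representative totally non-spin --- for that one connect-sums with a null-bordant totally non-spin piece.
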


\noindent
We notice that the subgroup $ku^+_n(B\pi)$ is simply the 
image of $\Omega^{\spinc,+}_n(B\pi)$ under the natural map
$\alpha^c\co \Omega^{\spinc}_n(B\pi)\to ku_n(B\pi)$.

\smallskip
\noindent
Before giving a proof of Theorem \ref{thm:main-ku}, we have to recall
a few results and constructions concerning spin and spin$^c$ bordism.
\subsection{Background on spin and spin$^c$ bordism} 
Recall that
the group $\Spinc(n)$ is defined to be $\Spin(n)\times_{\bZ/2} U(1)$.
After passing to classifying spaces and then to their Thom spectra,
this gives the decomposition
\begin{equation}
\label{eq:Mspinc}
\MSpinc = \MSpin\wedge \Sigma^{-2}\CP^{\infty}.
\end{equation}
This has a nice geometric interpretation.  Given a spin$^c$
manifold $(M,\sigma)$ of dimension $n$ and a map $M\to X$ for some
space $X$, the bordism class of $(M,\sigma)\to X$ corresponds to
a class in 
\[
\pi_n(\MSpinc\wedge X_+)=\pi_n(\MSpin\wedge \Sigma^{-2}\CP^{\infty}\wedge X_+)
=\pi_{n+2}(\MSpin\wedge \CP^{\infty}\wedge X_+) =
\widetilde\Omega^{\spin}_{n+2}(X\times \CP^{\infty}),
\]
which represents the spin bordism class of a spin $(n+2)$-manifold
$N$ equipped with a complex line bundle $L'$
and a map to $X$, so that $M$ is obtained from $N$ by dualizing $L'$.  
(Roughly speaking, $M$ is the vanishing locus of
a generic section of $L'$.)

Now denote by
$\ko$ and $\ku$ the spectra classifying connective real and complex 
$K$-theory, respectively. We have index maps of spectra,
coming from the canonical $\ko$- and $\ku$-orientations of
spin and spin$^c$ manifolds:
\begin{equation*}
\alpha\co \MSpin \to \ko, \qquad \alpha^c \co \MSpinc \to \ku. 
\end{equation*}
Let $\widehat{\MSpin}$ and $\widehat{\MSpinc}$
be the fibers of these maps (in the category
of spectra), following the notation in \cite{MR1189863},
so that we have fiber/cofiber sequences of spectra
\begin{equation}
\label{eq:MSpinfiber}
\ \ \ \widehat{\MSpin} \to \MSpin \xrightarrow{\alpha} \ko \ \ \
\mbox{and}
\end{equation}
\begin{equation}
\label{eq:MSpincfiber}
\widehat{\MSpinc} \to \MSpinc \xrightarrow{\alpha^c} \ku.
\end{equation}
A fundamental result of Stolz \cite{MR1189863,MR1259520} is that the
homotopy groups of $\widehat{\MSpin}$ are represented by spin
manifolds with a fiber bundle structure $\bH\bP^2\to M^{n+8}\to N^n$,
where $N$ is a spin $n$-manifold and the structure group of the bundle
is the isometry group $P\Sp(3)$ of the standard homogeneous metric on
$\bH\bP^2$.  Since such manifolds $M^{n+8}$ clearly have psc metrics
(obtained by taking any metric which agrees with the standard metric
on the $\bH\bP^2$ fibers and then shrinking the diameters of the
fibers), this led via \eqref{eq:MSpinfiber} to the proof of the
result \cite[Theorem 1.3]{MR4541288} that any class in
$\Omega^{\spin}_n(X)$ mapping under $\alpha$ to $0$ in $ko_n(X)$ is
represented by a psc manifold.  However, the proof is trickier than
one might at first guess because it is not clear that all such classes
come from manifolds with an $\bH\bP^2$-bundle structure; this is true
if $\pi$ is trivial but it's not clear if it's true for general $\pi$.

We will apply similar constructions below, but
first we need a few more facts about spin$^c$ bordism.  It is known
that classes $[(M,\sigma,L)]$ in
$\Omega^{\spinc}_n$ are determined by mod-$2$
Stiefel-Whitney numbers of $M$ along with integral cohomology
characteristic numbers $\langle x,[M]\rangle$, where $x$ is
a polynomial in the Pontryagin classes of $M$ and in $c_1(L)$
(\cite[\S8]{MR0234475} and 
\cite[Chapter XI]{MR0248858}).  From this one can see that
the first nontrivial spin$^c$ bordism groups are
$\Omega_2^{\spinc}\cong \bZ$, generated by 
$[(\bC\bP^1, \cO(2))]$,\footnote{Recall that for 
simply connected manifolds, an orientation and a choice of spin$^c$
line bundle already determine the spin$^c$ structure, so we omit
the $\sigma$ from the notation.}
and $\Omega_4^{\spinc}\cong \bZ\oplus \bZ$, generated by
$[(\bC\bP^1, \cO(2))]^2$ and $[(\bC\bP^2, \cO(1))]$.  The more
familiar generator $[(\bC\bP^2, \cO(3))]$ ($\cO(3)$ is the
anticanonical bundle of $\bC\bP^2$), is spin$^c$ bordant
to the sum of these two.  The advantage of the generator
$[(\bC\bP^2, \cO(1))]$ is that $\alpha^c([(\CP^2,\cO(1))])=0$;
see \cite{MR4649186,MR508087}. 

We use $(\CP^2,\cO(1))$ to construct the first transfer map
\begin{equation}\label{ku:eq1}
T_{\CP^2}: \Omega^{\spinc}_{n-4}(BSU(3)) \to \Omega^{\spinc}_{n}
\end{equation}
as follows.  We identify $\CP^2$ with the homogeneous space
$SU(3)/S(U(2)\times U(1))$; we have a fiber sequence
\begin{equation*}
S(U(2)\times U(1))\to SU(3)\to \CP^2
\end{equation*}
which in turn gives us a fiber bundle
\begin{equation}\label{ku:eq2}
BS(U(2)\times U(1))\to BSU(3)
\end{equation}
with the fiber $\CP^2$, which we call \emph{the universal geometric
$\CP^2$-bundle}. Note that the group $SU(3)$ acts transitively on $\CP^2$
(by isometries of the standard Fubini-Study 
metric on $\CP^2$), preserving the first
Chern class of the bundle $\cO(1)$.

Let $(B,\sigma)$ be a spin$^c$-manifold, $\dim B=n-4$, and let $f\co B\to BSU(3)$
represent an element in $\Omega^{\spinc}_{n-4}(BSU(3))$. By pulling
back the universal geometric $\CP^2$-bundle, we obtain a fiber bundle
$E\to B$ with fiber $\CP^2$. The manifold $E$ has dimension $n$ and
a spin$^c$ structure inherited from the spin$^c$ structure on $B$ defined
by $\sigma$ and the $\spinc$ structure on $\CP^2$ defined by $\cO(1)$.

Similarly we have the second transfer map 
\begin{equation}\label{ku:eq3}
T_{\HP^2} \co \Omega^{\spinc}_{n-8}(BPSp(3)) \to \Omega^{\spinc}_{n}.
\end{equation}
We identify $\HP^2$ with the homogeneous space
$PSp(3)/P(Sp(2)\times Sp(1))$ to obtain \emph{the universal geometric
$\HP^2$-bundle}:
\begin{equation}\label{ku:eq4}
BP(Sp(2)\times Sp(1))\to BSp(3),
\end{equation}
which, similarly to the above, determines the above transfer
homomorphism $T$. (The spin$^c$ line bundle on $\HP^2$ is trivial,
as $\HP^2$ is a $3$-connected spin manifold.)
We need the following result:
\begin{theorem}\label{thm:ku-bj2}\cite[Theorem 5.1]{MR4649186} 
Let $n\geq 4$. Then the transfer maps
\begin{equation*}
\Omega^{\spinc}_{n-4}(BSU(3)) \xrightarrow{T_{\CP^2}} \Omega^{\spinc}_{n} \ \ \mbox{and} \ \ 
\ \Omega^{\spinc}_{n-8}(BPSp(3)) \xrightarrow{T_{\HP^2}} \Omega^{\spinc}_{n}
\end{equation*}
have images lying in the kernel of $\alpha^c\co \Omega^{\spinc}_{n} \to
ku_n$. Furthermore, the kernel of $\alpha^c$ is additively generated
by the images of $T_{\CP^2}$ and $T_{\HP^2}$. 
\end{theorem}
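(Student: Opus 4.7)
The plan is to prove the two assertions of the theorem separately: first, that the images of $T_{\CP^2}$ and $T_{\HP^2}$ land in $\ker\alpha^c$, and second, that these images additively generate the entire kernel.

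For the vanishing on images, I would invoke multiplicativity of the $\ku$-valued spin$^c$ Dirac index in fiber bundles whose compact structure group $G$ acts by isometries preserving the spin$^c$ structure on the fiber. Concretely, for a bundle $F\to E\to B$ classified by $B\to BG$, the family of fiberwise spin$^c$ Dirac operators defines a class in $\ku^0(B)$ pulled back from a universal class in $\ku^0(BG)$ that depends only on the $G$-equivariant index on $F$; capping with the $\ku$-fundamental class of $B$ then computes $\alpha^c([E])$. For $F=(\CP^2,\cO(1))$ with $G=SU(3)$ acting through Fubini-Study isometries, a direct Atiyah-Singer computation gives $\alpha^c([(\CP^2,\cO(1))])=0\in ku_4$ (the product $\hat A(T\CP^2)\,e^{H/2}$ has vanishing top degree, because the $-H^2/8$ contribution from $\hat A$ cancels the $H^2/8$ contribution from $e^{H/2}$). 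For $F=\HP^2$ with $G=P\Sp(3)$, the fiber is $3$-connected and hence spin with trivial $\spinc$ line bundle; since $\HP^2$ carries the $P\Sp(3)$-invariant Fubini-Study metric of positive scalar curvature, Lichnerowicz gives $\alpha(\HP^2)=0\in ko_8$, and complexifying yields $\alpha^c(\HP^2)=0\in ku_8$. Applying multiplicativity then shows $\alpha^c\circ T_{\CP^2}=0$ and $\alpha^c\circ T_{\HP^2}=0$.

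For the generation statement I would exploit the Anderson--Brown--Peterson-type splitting $\MSpinc\simeq \MSpin\wedge\Sigma^{-2}\CP^\infty$ recalled in \eqref{eq:Mspinc}, which identifies $\Omega^{\spinc}_n\cong \widetilde\Omega^{\spin}_{n+2}(\CP^\infty)$ with classes represented by pairs $(N^{n+2},L')$ where $N$ is spin and $L'$ is a complex line bundle whose vanishing locus recovers $M$. Under this splitting $\alpha^c$ is obtained from $\alpha\wedge 1\co \MSpin\wedge\Sigma^{-2}\CP^\infty\to \ko\wedge\Sigma^{-2}\CP^\infty$, composed with the map $\ko\wedge\Sigma^{-2}\CP^\infty\to \ku$ built from complexification and the canonical map $\Sigma^{-2}\CP^\infty\to\ku$ representing the Thom class of the universal line bundle. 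Consequently $\ker\alpha^c$ receives contributions from two sources: elements in $\ker(\alpha\co \Omega^{\spin}_{n+2}(\CP^\infty)\to ko_{n+2}(\CP^\infty))$, and elements coming from the kernel of $\Sigma^{-2}\CP^\infty\to\ku$ on homotopy groups.

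The first source is controlled by Stolz's theorem \cite{MR1189863}, extended with parameters: $\ker\alpha$ on $\Omega^{\spin}_{*}(X)$ is additively generated by $\HP^2$-bundles over spin manifolds mapping to $X$. Taking $X=\CP^\infty$ and pulling back through the splitting, these become precisely the $\HP^2$-bundles appearing in the image of $T_{\HP^2}$, because $\HP^2$ is spin with trivial spin$^c$ line bundle and $BP\Sp(3)$ classifies such bundles. The second source is geometrically realized by the $\CP^2$-transfer: the class $[(\CP^2,\cO(1))]$ generates the kernel of $\pi_4(\Sigma^{-2}\CP^\infty)\to\pi_4(\ku)$ in degree $4$, and the universal geometric $\CP^2$-bundle in \eqref{ku:eq2} propagates this relation to all higher degrees, producing exactly the image of $T_{\CP^2}$. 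Combining both contributions gives the desired generation.

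The hard step is the homotopy-theoretic analysis in the third paragraph, namely the explicit identification of the cofiber spectrum $\widehat{\MSpinc}$ in \eqref{eq:MSpincfiber} with a wedge of pieces whose geometric generators can be read off as $\CP^2$- and $\HP^2$-bundles. This parallels Stolz's analysis of $\widehat{\MSpin}$ but requires one to track the extra $\CP^\infty$ factor coming from the spin$^c$ line bundle, in particular verifying that no further transfer maps (from other homogeneous spaces with vanishing $\alpha^c$) are needed to close the cofiber sequence.
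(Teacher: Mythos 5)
Your outline follows the same route that underlies this result (which the paper does not reprove but imports from \cite{MR4649186}, recalling the ingredients in Section \ref{sec:ku}: the decomposition \eqref{eq:Mspinc}, the splitting \eqref{eq:split}, Theorems \ref{thm:stolz2}--\ref{thm:stolz-spinc}, and the filtration of $\sF$ by powers of $(\CP^2,\cO(1))$ in the proof of Theorem \ref{thm:main-ku}). However, as written there are two genuine gaps. First, in the vanishing half you infer $\alpha^c\circ T_{\CP^2}=0$ and $\alpha^c\circ T_{\HP^2}=0$ from the vanishing of the \emph{numerical} index of the fiber. That inference is invalid in general: a virtual index bundle $\lambda\in K^0(B)$ of rank zero can still satisfy $p_!\bigl([B]\cap\lambda\bigr)\neq 0$, so multiplicativity plus ``index of the fiber is $0$'' proves nothing. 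What makes the argument work is that the fiberwise spin$^c$ Dirac operator is \emph{invertible}: for $(\CP^2,\cO(1))$ the $SU(3)$-invariant Fubini--Study metric and standard connection have positive twisted scalar curvature (equivalently, the operator is the Dolbeault operator twisted by $\cO(-1)$ and $H^q(\CP^2,\cO(-1))=0$ for all $q$, so the $SU(3)$-equivariant index vanishes term by term), and for $\HP^2$ the invariant psc metric kills harmonic spinors fiberwise. Hence the index bundle itself is zero, which is Stolz's mechanism; you state that the universal class in $\ku^0(BG)$ depends on the $G$-equivariant index but only verify its augmentation.

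Second, the generation half is where the actual content of the theorem lies, and your sketch asserts rather than proves the three essential inputs: (i) the splitting $\ko\wedge\Sigma^{-2}\CP^\infty\simeq\bigvee_{\ell\ge0}\Sigma^{4\ell}\ku$ at \emph{all} primes (2-locally from \cite{MR3120631}, at odd primes via the $E(1)$-module argument the paper gives after \eqref{eq:split}); (ii) surjectivity of the $\HP^2$-transfer onto $\pi_*\bigl(\widehat{\MSpin}\wedge\Sigma^{-2}\CP^\infty\bigr)$ integrally, which needs Stolz's 2-local splitting together with the odd-primary geometric result of \cite{MR4541288}, plus the observation (used in the proof of Theorem \ref{thm:main-ku}) that line bundles on $\HP^2$-bundle total spaces are pulled back from the base, so dualizing preserves the bundle structure and lands in the image of $T_{\HP^2}$; and (iii) the identification of the complementary kernel $\sF\simeq\bigvee_{\ell\ge1}\Sigma^{4\ell}\ku$ together with the fact that each summand is generated, as a $\ku$-module and by honest manifold classes in the image of $T_{\CP^2}$, by $(\CP^2,\cO(1))^{\ell}$; your phrase about the universal geometric $\CP^2$-bundle ``propagating the relation to higher degrees'' is not an argument, and the displayed map should be $\pi_4\bigl(\ko\wedge\Sigma^{-2}\CP^\infty\bigr)\to\pi_4(\ku)$ (i.e.\ $\pi_4(\sF)$), not $\pi_4\bigl(\Sigma^{-2}\CP^\infty\bigr)\to\pi_4(\ku)$. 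You correctly flag this paragraph as the hard step, but flagging it does not close it; without (i)--(iii) the proposal is an outline of the known strategy rather than a proof.
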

Now we recall one more ingredient from the proof of
Theorem \ref{thm:ku-bj2}.  Namely \cite[Theorem 2.1]{MR3120631},
\begin{equation}\label{eq:split}
\ko\wedge \Sigma^{-2}\CP^{\infty}\simeq \bigvee_{\ell\geq 0} \Sigma^{4\ell} \ku.
\end{equation}
The proof of \eqref{eq:split} given in
\cite{MR3120631} is only $2$-local, but one can also prove this
after localizing at an odd prime $p$ as follows.  Recall that
after localizing at $p$ odd, $\ku\simeq \ko\vee \Sigma^2\ko$.
By \cite[Theorem 3.1.15]{MR860042} or its dual, if $\cA_p$
denotes the mod-$p$ Steenrod algebra, then $H^*(\ku; \bF_p)$
is (as an $\cA_p$-algebra) a direct sum
$\bigoplus_{0\le \ell<p-1}\Sigma^{2\ell}\cA_p\otimes_{E(1)} M$,
with $M=\bF_p$ a trivial module over the subalgebra 
$E(1)=E(Q_0, Q_1)$ of $\cA_p$,  where $Q_j$ are the
Milnor elements (with $Q_0$ the Bockstein and $Q_1=P^1Q_0-Q_0P^1$).
We have $H^*(\bC\bP^\infty;\bF_p)=\bF_p[u]$ with $u$ in degree $2$,
with trivial action of $E(1)$,
so as an $\cA_p$-module the reduced cohomology splits as 
$\bigoplus_{1\le \ell\le p-1}\cA_p u^\ell$.  Looking at the resulting
$\cA_p$-module structure on 
$H^*(\ku)\otimes \widetilde H^*(\Sigma^{-2}\CP^{\infty})$
(all cohomology over $\bF_p$) gives the result.

Smashing \eqref{eq:MSpinfiber} with $\Sigma^{-2}\CP^{\infty}$
we get another fiber sequence
\begin{equation}
\label{eq:MSpincfiber1}
\widehat{\MSpin}\wedge \Sigma^{-2}\CP^{\infty} \to \MSpinc 
\to \ko\wedge \Sigma^{-2}\CP^{\infty}.
\end{equation}
We denote the cofiber of \eqref{eq:MSpincfiber1} by
\begin{equation*}
\displaystyle
\widehat{\ku}:= \ko\wedge \Sigma^{-2}\CP^{\infty}\simeq
\bigvee_{\ell\geq 0} \Sigma^{4\ell}\ku
\end{equation*}
(note the use of \eqref{eq:split}), which we will use later.

Localized at the prime 2, the spectrum $\MSpin^c$ splits additively as a
direct sum of suspensions $\Sigma^{4n(J)}\ku$ of $\ku$ indexed by
partitions $J$ of the size $n(J)$ together with some suspensions of
the Eilenberg-Mac Lane spectra $H\bZ_2$, see \cite[\S2]{MR1166518}.  The lowest
dimensional summand is $\ku$, which corresponds to the empty partition,
and, at the 
prime 2, the index map $\alpha^c$ is the projection onto that summand.

The transfer maps $T_{\CP^2}$ and $T_{\HP^2}$ from
Theorem \ref{thm:ku-bj2} determine a map of spectra:
\begin{equation*}
\mathcal{T}_{\spinc}= T_{\CP^2}\vee T_{\HP^2}\co
\MSpinc\wedge(\Sigma^4BSU(3)_+\vee \Sigma^8BPSp(3)_+)\to \MSpinc.
\end{equation*}

Let
$\mathcal{T}_{\spin}\co \Omega^{\spin}_{n-8}(BPSp(3))\to \Omega^{\spin}_{n}$
be the original transfer used by Stolz, \cite{MR1189863}. This gives a map of
spectra
\begin{equation*}
\mathcal{T}_{\spin}\co \MSpin\wedge \Sigma^8BPSp(3)_+ \to \MSpin.
\end{equation*}
The transfer map $\mathcal{T}_{\spin}$ factors through the 
spectrum $\widehat{\MSpin}$, i.e.
there is a commutative diagram of spectra:
\begin{equation*}
\begin{tikzcd}
& \widehat{\MSpin}\ar[d,"i"]& \\
		\MSpin\wedge\Sigma^8 BPSp(3)_+
                \ar[r,"\mathcal{T}_{\spin}"]
                \ar[ur,"\widehat{\mathcal{T}}_{\spin}"]
                & \MSpin \ar[r,"\alpha"] & \ko
\end{tikzcd}
\end{equation*}
\begin{theorem}[{\cite{MR1259520}}]\label{thm:stolz2}
Localized at the prime $2$, the map
\begin{equation*}
\mathcal{T}_{\spin}\co \MSpin\wedge\Sigma^8 BPSp(3)_+\to \MSpin
\end{equation*}
is a split surjection. In particular, there is a $2$-local spitting of spectra:
$
\MSpin = \widehat{\MSpin}\vee \ko.
$
\end{theorem}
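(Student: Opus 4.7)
The plan is to follow Stolz's strategy from \cite{MR1259520}, which rests on the Anderson-Brown-Peterson (ABP) splitting of $\MSpin$ at the prime $2$, combined with a careful mod-$2$ cohomology argument for the transfer. I would treat the two conclusions in reverse order, first establishing the splitting $\MSpin \simeq_2 \widehat{\MSpin} \vee \ko$ and then showing that $\mathcal{T}_{\spin}$ is a split surjection.

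For the splitting, I would invoke the ABP theorem, which gives a $2$-local wedge decomposition of $\MSpin$ into suspensions of $\ko$ indexed by partitions with no parts of the form $2^j-1$, together with a wedge of suspensions of $H\mathbb{F}_2$. The $\ko$-summand corresponding to the empty partition is the one onto which the index map $\alpha$ projects; the inclusion of this summand into $\MSpin$ provides a section of $\alpha$, and hence splits the cofiber sequence $\widehat{\MSpin} \to \MSpin \xrightarrow{\alpha} \ko$, yielding $\MSpin \simeq_2 \widehat{\MSpin} \vee \ko$.

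For the split surjectivity of $\mathcal{T}_{\spin}$, I would first observe that $\alpha \circ \mathcal{T}_{\spin} = 0$, since the $\mathbb{H}P^2$-bundles produced by the transfer have vanishing $\hat{A}$-genus (each fiber admits psc). Hence $\mathcal{T}_{\spin}$ factors through $\widehat{\MSpin}$ as $\widehat{\mathcal{T}}_{\spin}$, and combined with the splitting above, it suffices to show $\widehat{\mathcal{T}}_{\spin} \co \MSpin \wedge \Sigma^8 BPSp(3)_+ \to \widehat{\MSpin}$ is a $2$-local split surjection. I would do this by passing to mod-$2$ cohomology and showing the induced map is a split injection of $\mathcal{A}_2$-modules: the cohomology of $BPSp(3)$ is polynomial on Borel-type generators in degrees $4, 8, 12$, and the Thom cohomology of $\MSpin \wedge \Sigma^8 BPSp(3)_+$ therefore contains an abundance of characteristic classes arising from the tangents-along-the-fiber of the universal $\mathbb{H}P^2$-bundle. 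These are matched against the generators of the non-trivial ABP summands of $\widehat{\MSpin}$, indexed by nontrivial partitions. Once a split injection on mod-$2$ cohomology is exhibited, $2$-local split surjectivity for connective finite-type spectra follows by standard arguments (e.g., lifting the splitting through the Adams spectral sequence or directly realizing the retraction summand-by-summand).

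The main obstacle is precisely this matching of generators: one has to verify combinatorially that the characteristic classes obtained from $BPSp(3)$, after the Thom twist, surject onto every nontrivial ABP summand of $\widehat{\MSpin}$, i.e.\ that no summand is left uncovered. Stolz's resolution was to use a clever Poincar\'e-series and dimension count over $\mathcal{A}_2$, leveraging the explicit polynomial structure of $H^*(BPSp(3); \mathbb{F}_2)$ and the known $\mathcal{A}_2$-module structure of $H^*(\MSpin; \mathbb{F}_2)$ from ABP, to force the split injection on cohomology. This is the genuinely computational heart of the proof and where almost all the work lies.
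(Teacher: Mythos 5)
The paper itself gives no proof of this statement: it is imported verbatim from Stolz's paper \cite{MR1259520}, so there is nothing internal to compare against, and what follows measures your sketch against Stolz's actual argument. Your top-level architecture is faithful to it: the splitting $\MSpin\simeq_{(2)}\widehat{\MSpin}\vee\ko$ does come from the Anderson--Brown--Peterson decomposition (the empty-partition $\ko$-summand provides a section of $\alpha$), and the transfer does factor through $\widehat{\MSpin}$ because geometric $\HP^2$-bundles have vanishing $\alpha$-invariant (you need the full $\ko$-valued index to vanish, not merely the rational $\hat A$-genus, but the psc argument gives exactly that). You also correctly read the statement as asserting split surjectivity of $\widehat{\mathcal{T}}_{\spin}$ onto $\widehat{\MSpin}$, which is what is meant.

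The gap is in your final step. Passing from a split injection of $\cA_2$-modules on $H^*(-;\bF_2)$ to a $2$-local split surjection of spectra is not a ``standard argument'' here: it would require knowing that the source $\MSpin\wedge\Sigma^8 BPSp(3)_+$ is itself of ABP type (a wedge of suspensions of $\ko$, $\ko\langle 2\rangle$ and $H\bZ/2$), which is not known a priori and is essentially as hard as the theorem; outside the generalized Eilenberg--Mac\,Lane world, mod-$2$ cohomology does not detect splittings. Stolz's route is different: he works in the category of $\MSpin$-module spectra, where a module map onto an ABP-type target splits as soon as it is surjective on homotopy groups (a free-module adjunction argument, carried out summand by summand over the ABP wedge). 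The computational heart is therefore not an $\cA_2$-module Poincar\'e-series count but the surjectivity of $\pi_*(\widehat{\mathcal{T}}_{\spin})$, established via $KO$-Pontryagin characteristic numbers of $\HP^2$-bundles over quaternionic projective spaces, building on his earlier Annals paper. A smaller but genuine error: $H^*(BPSp(3);\bF_2)$ is not polynomial on generators in degrees $4,8,12$ --- that is $BSp(3)$; the central $\bZ/2$-quotient introduces a degree-$2$ class via the fibration $BSp(3)\to BPSp(3)\to K(\bZ/2,2)$, and this extra structure is part of what makes the characteristic-number computation delicate.
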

According to Theorem \ref{thm:ku-bj2}, the transfer map
$\mathcal{T}_{\spinc}$ also factors through the spectrum $\widehat{\MSpinc}$:
\begin{equation*}
\begin{tikzcd}
& \widehat{\MSpinc}\ar[d,"i^c"]& \\
		\MSpinc\wedge(\Sigma^4BSU(3)_+\vee\Sigma^8 BPSp(3)_+) 
                \ar[r,"\mathcal{T}_{\spinc}"]
                \ar[ur,"\widehat{\mathcal{T}}_{\spinc}"]
                & \MSpinc \ar[r,"\alpha^c"] & \,\ku.
\end{tikzcd}
\end{equation*}
Granath proved in his thesis the following $\spinc$ analog of
Theorem \ref{thm:stolz2}:
\begin{theorem}[{\cite[Theorem 1.8]{MR4639563}}]
\label{thm:stolz-spinc}
Localized at prime 2, the map
\begin{equation*}
\mathcal{T}_{\spinc}:
\MSpinc\wedge(\Sigma^4BSU(3)_+\vee\Sigma^8 BPSp(3)_+) 
\to \widehat{\MSpinc}
\end{equation*}
is a split surjection. In particular, there is a $2$-local spitting of
spectra: $\MSpinc = \widehat{\MSpinc}\vee \ku$.
\end{theorem}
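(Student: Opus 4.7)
The plan is to leverage Stolz's 2-local splitting $\MSpin \simeq \widehat{\MSpin}\vee \ko$ from Theorem~\ref{thm:stolz2} together with the identification $\MSpinc \simeq \MSpin \wedge \Sigma^{-2}\CP^{\infty}$ from \eqref{eq:Mspinc} and the splitting $\ko \wedge \Sigma^{-2}\CP^{\infty} \simeq \widehat{\ku} = \bigvee_{\ell \geq 0}\Sigma^{4\ell}\ku$ from \eqref{eq:split}. Smashing Stolz's decomposition with $\Sigma^{-2}\CP^{\infty}$ and applying \eqref{eq:split} produces a 2-local wedge decomposition
\begin{equation*}
\MSpinc \simeq \bigl(\widehat{\MSpin}\wedge\Sigma^{-2}\CP^{\infty}\bigr) \vee \bigvee_{\ell \geq 0}\Sigma^{4\ell}\ku.
\end{equation*}
Under this decomposition, $\alpha^c$ agrees, up to a 2-adic unit, with the composition $\MSpinc \xrightarrow{\alpha\wedge \id} \widehat{\ku} \twoheadrightarrow \ku$, where the second map projects onto the $\ell=0$ summand; indeed, in Stolz's splitting $\alpha$ annihilates $\widehat{\MSpin}$ and is, up to a unit, the identity on the $\ko$-summand. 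Hence $\alpha^c$ admits a 2-local section, yielding both the splitting $\MSpinc \simeq \widehat{\MSpinc} \vee \ku$ asserted in the theorem and the identification
\begin{equation*}
\widehat{\MSpinc} \simeq \bigl(\widehat{\MSpin}\wedge\Sigma^{-2}\CP^{\infty}\bigr) \vee \bigvee_{\ell \geq 1}\Sigma^{4\ell}\ku \quad \textrm{at the prime }2.
\end{equation*}

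It remains to verify that $\widehat{\mathcal{T}}_{\spinc}$ is a 2-local split surjection onto each summand separately. For the $T_{\HP^2}$ component, since $\HP^2$ is spin, its $\spinc$ line bundle is trivial, so on spectra the $\spinc$ transfer is literally $\widehat{\mathcal{T}}_{\spin}\wedge\id_{\Sigma^{-2}\CP^{\infty}}$. Theorem~\ref{thm:stolz2} already provides a 2-local splitting of $\widehat{\mathcal{T}}_{\spin}\co \MSpin\wedge\Sigma^8 BPSp(3)_+\to \widehat{\MSpin}$, and smashing with $\Sigma^{-2}\CP^{\infty}$ delivers a 2-local split surjection $\MSpinc\wedge\Sigma^8 BPSp(3)_+ \twoheadrightarrow \widehat{\MSpin}\wedge\Sigma^{-2}\CP^{\infty}$ onto the first summand of $\widehat{\MSpinc}$.

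The main obstacle is to show that the $T_{\CP^2}$ component is a 2-local split surjection onto $\bigvee_{\ell \geq 1}\Sigma^{4\ell}\ku$. I would approach this via the mod-$2$ Adams spectral sequence. Under \eqref{eq:split}, the $\Sigma^{4\ell}\ku$-summands of $\widehat{\ku}$ correspond to the $E(1)$-cyclic $\cA_2$-submodules of $H^*(\Sigma^{-2}\CP^{\infty};\bF_2)$ generated by the classes $u^\ell$, where $u\in H^2(\CP^{\infty};\bF_2)$ is the fundamental class. The key geometric input is that $[(\CP^2,\cO(1))]\in \pi_4\MSpinc$, obtained as $T_{\CP^2}$ applied to the unit of $\Omega^{\spinc}_0(BSU(3))$, has $\alpha^c$-image zero by Theorem~\ref{thm:ku-bj2} yet generates the $\Sigma^4\ku$-summand of $\pi_4\widehat{\MSpinc}$. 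More generally, iterated constructions inside $BSU(3)$---exploiting the module structure of $\MSpinc\wedge BSU(3)_+$ over $\MSpinc$ together with the Leray--Hirsch structure on the universal $\CP^2$-bundle \eqref{ku:eq2}---furnish preimages of generators of every $\Sigma^{4\ell}\ku$-summand, so that $T_{\CP^2}$ induces a surjection onto the corresponding $\cA_2$-submodules on Adams $E_2$-pages. A standard filtration argument promotes this cohomological surjectivity to a 2-local split surjection of spectra, and combining the two cases completes the proof.
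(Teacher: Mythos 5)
First, note that the paper does not actually prove this statement: Theorem~\ref{thm:stolz-spinc} is imported verbatim from Granath's thesis \cite[Theorem 1.8]{MR4639563}, so there is no in-paper argument to compare yours against. Judged on its own terms, your overall architecture is reasonable and is likely close to how the result is actually established: smashing Stolz's $2$-local splitting $\MSpin\simeq\widehat{\MSpin}\vee\ko$ with $\Sigma^{-2}\CP^{\infty}$, invoking \eqref{eq:split}, identifying $\alpha^c$ with the projection onto the bottom $\ku$, and observing that the $\HP^2$-component of $\mathcal{T}_{\spinc}$ is $\mathcal{T}_{\spin}\wedge\id_{\Sigma^{-2}\CP^{\infty}}$ (legitimate because $\HP^2$ is $3$-connected, so the line bundle is pulled back from the base) are all correct and essentially formal consequences of Theorem~\ref{thm:stolz2}.

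The gap is that the remaining step --- that the $T_{\CP^2}$-component is a $2$-local split surjection onto $\bigvee_{\ell\geq 1}\Sigma^{4\ell}\ku$ --- is precisely the new content of Granath's theorem beyond Stolz, and your treatment of it is an assertion rather than a proof. Two things are missing concretely. (i) You need to exhibit, for each $\ell\ge 1$, a class in $\Omega^{\spinc}_{4\ell-4}(BSU(3))$ whose transfer projects to a generator of $\pi_{4\ell}$ of the $\ell$-th summand; this is a genuine characteristic-number computation (for instance for $[(\CP^2,\cO(1))]^{\ell}$, which lies in the image of $T_{\CP^2}$ via a constant map to $BSU(3)$), and your bookkeeping of the summands is already off: the $\ell$-th $\ku$-summand of $\ko\wedge\Sigma^{-2}\CP^{\infty}$ corresponds to the $\cA(1)$-module on $u^{2\ell+1}, u^{2\ell+2}$ (the quotient $\cA(1)/\!/E(1)$ is two-dimensional, in degrees $0$ and $2$), not to the single class $u^{\ell}$. (ii) Even granting surjectivity on homotopy groups, promoting this to a \emph{split} surjection of spectra is not a ``standard filtration argument'' in the abstract: one must use the $2$-local structure of $\MSpinc\wedge\Sigma^4 BSU(3)_+$ (a wedge of suspensions of $\ku$ together with $H\bZ/2$-type summands, by \cite{MR1166518} and the evenness of $H^*(BSU(3))$), check that the detected generators are carried by $\ku$-summands rather than the $H\bZ/2$ part, and assemble the section summand by summand, correcting for cross-terms between the $\CP^2$ and $\HP^2$ pieces. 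As written, the proposal is a plausible roadmap but not a proof of the theorem it replaces.
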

In particular, this means that 2-locally the induced map
\begin{equation}\label{eq:transfer01}
\widehat{\mathcal{T}}_{\spinc}: (\MSpinc\wedge(\Sigma^4BSU(3)_+
\vee\Sigma^8 BPSp(3)_+))_n(X) \to  (\widehat{\MSpinc})_n(X)
\end{equation}
is surjective for any space $X$, and in particular for 
$X=B\pi$, for any group $\pi$.

\subsection{Proof of Theorem \ref{thm:main-ku}}
Theorem \ref{thm:main-ku} will follow from
Theorem \ref{thm:spincbordism} and Corollary \ref{cor:possubgroup},
once we show that the kernel of the map
$\alpha^c\co \Omega_n^{\spinc}(B\pi)\to
ku_n(B\pi)$ is represented in dimensions
$n\ge 5$ by spin$^c$ manifolds of gpsc.
We define $ku_n^+(B\pi):= \alpha^c(\Omega_n^{\spinc,+}(B\pi))$.
Let $(M,\sigma,c)$ be a totally non-spin spin$^c$
$n$-manifold with fundamental group $\pi$ and classifying map $c\co
M\to B\pi$. Then, according to
Corollary \ref{cor:possubgroup}, $M$ admits gpsc
$\Leftrightarrow$ $[(M,\sigma,c)]\in \Omega_n^{\spinc,+}(B\pi))$, but
if $\ker\alpha^c$ lies in
$\Omega_n^{\spinc,+}(B\pi)$, that's the same as
saying that the image of $[(M,\sigma,c)]$ in $ku_n(B\pi)$ lies in
$ku_n^+(B\pi)$.)

The 
problem splits into two cases: the $2$-local problem and the problem
when $2$ is inverted.

The $2$-local problem follows from surjectivity of the map
$\widehat{\mathcal{T}}_{\spinc}$ from (\ref{eq:transfer01}), i.e. that
\begin{equation*}
\Im(\widehat{\mathcal{T}}_{\spinc}\co \Omega^{\spinc}_n((\Sigma^4BSU(3)_+
\vee\Sigma^8 BPSp(3)_+)\wedge X_+)_{(2)} \to \Omega^{\spinc}_n(X)_{(2)})
\subset \Omega^{\spinc,+}_n(X)_{(2)}. 
\end{equation*}
Since the image of $\widehat{\mathcal{T}}_{\spinc}$ consists
of bordism classes which must have gpsc because of their
fiber bundle structure,
an element $([M,\sigma])\in \Omega^{\spinc,+}_n(B\pi)_{(2)}$ if and
only if its image $c_*([M,\sigma])\in ku^+_n(B\pi)_{(2)}$. 
This completes the 2-local part of the proof.

Now we consider the case when $2$ is inverted. This requires different
treatment since, at odd primes, $\MSpin$ is built up
from the Brown-Peterson spectrum $\BP$ and not from
$\ko$ and $H\bZ/2$. However, by the main theorem of
\cite{MR4541288}, for any space $X$, every spin manifold representing
a class in $\pi_j\left(\widehat{\MSpin}\wedge X\right)$
can be chosen to be a fiber bundle with fiber $\bH\bP^2$,
and thus admits positive scalar curvature.  To explain this
a little better, the transfer map $\widehat{\mathcal{T}}_{\spin}$ of Stolz
is unfortunately \emph{not} a split surjection onto
$\widehat{\MSpin}$ at odd primes, but it \emph{is} surjective on homotopy
groups.  Thus to get a bordism theory represented by
$\widehat{\MSpin}$ using psc manifolds at odd primes, one needs to do a 
kind of localization using the Baas-Sullivan theory of
bordism with singularities.  F\"uhring cleverly modifies this
by smoothing to work only with smooth manifolds, obtaining a
bordism theory $\mathcal P^a$ equivalent to $\widehat{\MSpin}$
at odd primes.  Since his theory is built out of spin manifolds
with an $\bH\bP^2$-bundle structure, all of the manifolds represented
by his theory have such a structure.  Since
$\bH\bP^2$ is $3$-connected, any complex line bundle on such
a manifold must be pulled back from the base, and so, when we dualize a line
bundle on such a manifold, the $\bH\bP^2$ fiber structure inherits to
the resultant spin$^c$ manifold, ensuring that it admits gpsc.
This implies that $\widehat{\MSpin}\wedge \Sigma^{-2}\CP^{\infty}$
is a spectrum representing a bordism theory of spin$^c$
manifolds with positive generalized scalar curvature.  So
going back to the fiber sequence \eqref{eq:MSpincfiber1},
we see that we are reduced to studying representatives
for $\widehat{\ku}_*(B\pi)$.

Now observe that $\widehat{\ku}$ inherits a ring spectrum
multiplication $\widehat{\ku}\wedge \widehat{\ku}\to \widehat{\ku}$
from $\MSpinc$, and the $\alpha^c$ map $\MSpinc\to \ku$
factors through $\widehat{\ku}$, giving a ring spectrum
map $\widehat{\alpha^c}\co \widehat{\ku}\to \ku$.  This map
splits (additively), because of the decomposition
$\widehat{\ku}\simeq \bigvee_{\ell\geq 0} \Sigma^{4\ell}\ku$.
Let $\sF$ be the fiber of the map $\widehat{\alpha^c}$:
\begin{equation*}
\sF \to \widehat{\ku}\xrightarrow{\widehat{\alpha^c}} \ku.
\end{equation*}
Clearly we have that $\sF\simeq \bigvee_{\ell\geq
1} \Sigma^{4\ell}\ku$. Hence
we just need to show that the fiber $\sF$ of $\widehat{\alpha^c}$,
$\sF\simeq \bigvee_{\ell\geq 1} \Sigma^{4\ell}\ku$, has the property
that $\pi_*(\sF\wedge B\pi_+)$ is represented  by spin$^c$ manifolds
of gpsc. Consider the simply connected spin$^c$ manifold
$(\bC\bP^2, \cO(1))$ of dimension $4$.  It is not in the image 
of the $\bH\bP^2$ transfer
(since its dimension is too small) but it is in the kernel of
$\alpha^c$. Since the signature of $\bC\bP^2$ is $1$, it 
represents a generator
of $\pi_4(\sF)$.  But $\sF$ is a $\ku$-module spectrum, and it's
clear that we have a filtration of $\ku$-module spectra
\[
\widehat{\ku} \supsetneq \sF \supsetneq \sF^2 \supsetneq \sF^3
\supsetneq \cdots, \quad \bigcap \sF^j = 0,
\]
with $\sF^j$ generated mod $\sF^{j+1}$ as a $\ku$-module spectrum by
the class of $(\bC\bP^2, \cO(1))^j$.  So Theorem \ref{thm:main-ku}
follows.  \hfill $\Box$

\section{The spin$^c$ Gromov-Lawson-Rosenberg Conjecture}
\label{sec:GLRc}
In this section we discuss methods for computing 
$\Omega^{\spinc,+}_n(B\pi)$ from Corollary \ref{cor:possubgroup}, or
its image $ku^+_n(B\pi)$ in $ku_n(B\pi)$, in parallel to results about
the classification of spin manifolds that admit psc metrics (see for
example \cite[\S4]{MR1818778} or \cite[\S1]{MR2408269}).  While, up to
now, we have worked entirely in the context of bordism and homotopy
theory, we now want to make a connection to index theory.  A few
prefatory comments might be in order.  A (compact)
$n$-dimensional spin$^c$ manifold $(M,\sigma)$ has a
homotopy-theoretic fundamental class $[M,\sigma]\in ku_n(M)$,
which is the image of the bordism class
$[(M,\sigma,\id)]\in \Omega^{\spinc}_n(M)$ under the natural
transformation $\alpha^c\co \MSpin^c\to \ku$.%
\footnote{This is described in
\cite[Chapter XI]{MR0248858}; it comes from the $KU$ Thom
class of spin$^c$ bundles along with the universal
property of $\ku$ mentioned above in section \ref{sec:intro}
right after the statement of Theorem C.} After
applying the \emph{periodization map} $\per\co \ku\to \K$, induced by
inverting the Bott element $\beta\in ku_2$, we obtain $\per([M,\sigma])\in
K_n(M)$, the $K$-homology fundamental class.
\begin{remark}
We note that the fundamental class 
$\per([M,\sigma])\in K_n(M)$
\emph{does} depend on the choice of spin$^c$ structure $\sigma$ on $M$.
Changing the spin$^c$ structure as explained in
Remark \ref{rem:changing-the-line-bundle} in 
Appendix \ref{sec:spinc-structures}, which changes the line bundle
$L$ to $L\otimes L'\otimes L'$ for some other line
bundle $L'$ on $M$, which is allowed since $L'\otimes L'$ is even, changes
$\per([M,\sigma])$ to its cap product with the class of $L'$ in $K^0(M)$.
\end{remark}
The homotopy-theoretic $K$-homology fundamental
class $\per([M,\sigma])$ can also be viewed as living in Baum-Douglas
$K$-homology, coming from spin$^c$ bordism by adding ``vector bundle
modification''.  In fact, homotopy-theoretic and ``geometric'' (Baum-Douglas)
$K$-homology both come from spin$^c$ bordism and are naturally isomorphic,
as proved in \cite{MR1624352}.  In turn, there is a natural isomorphism
from Baum-Douglas geometric $K$-homology to Kasparov analytic $K$-homology,
as proved in \cite{MR2330153}.  Composing these two isomorphisms sends the
homotopy-theoretic fundamental class $\per([M,\sigma])$, which is
the image of the spin$^c$ bordism
class $[(M,\sigma,\id)]\in \Omega^{\spinc}_n(M)$, to the Kasparov class
of the spin$^c$ Dirac operator on $M$.  This enables us to link
homotopy theory with index theory.

We will also need the complex \emph{assembly map}
$A\co K_*(B\pi)\to K_*(C^*_r(\pi))$ from $K$-homology of $B\pi$ to the
$K$-theory of the (reduced) group $C^*$-algebra of $\pi$.  This
assembly map is the same as the Baum-Connes assembly map if $\pi$ is
torsion-free, and the Baum-Connes conjecture asserts that it is an
isomorphism in that case.  (This conjecture is known in a large number
of cases, and there are no known counterexamples.  Counterexamples to
``Baum-Connes with coefficients'' exist, but they don't directly apply
to our situation.)
The Mishchenko-Fomenko index theorem
implies that $A$ sends $\per\circ c_*([M,\sigma])\in K_*(B\pi)$
or, equivalently, $c_* [D]\in K_*(B\pi)$, to the index class of the Dirac operator $D$ on $M$ twisted by the Mishchenko-Fomenko bundle 
$\mathcal{V}$: $\Ind (D_{\mathcal{V}})\in K_* (C^*_r \pi)$.

The main tool we have at our disposal is the
following Index Theorem:
\begin{theorem}[Index Theorem]
\label{thm:index}
Let $(M,\sigma)$ be a closed connected spin$^c$ manifold
with $\dim M= n$, fundamental group $\pi$ and classifying map $c\co
M \to B\pi$.  Then $A\circ \per\circ c_*([M,\sigma])\in K_n(C^*_r(\pi))$ is
an obstruction to the existence of positive generalized scalar
curvature on $(M,\sigma)$.  Thus in the notation of
Theorem \ref{thm:main-ku}, $ku^+_n(B\pi)\subseteq \ker (A\circ \per)$.
\end{theorem}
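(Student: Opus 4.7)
The plan is to prove Theorem \ref{thm:index} by the standard Mishchenko--Fomenko higher index argument, using the Lichnerowicz--Schr\"odinger formula for the spin$^c$ Dirac operator twisted by a flat bundle of Hilbert $C^*_r(\pi)$-modules. Suppose $(g,\nabla_L)$ realizes generalized psc on $(M,\sigma)$, i.e.\ the operator-valued twisted scalar curvature $R^{\tw}_{g,L} = R_g + 2ic(\Omega_L)$ is pointwise positive on the spinor bundle $S$ (by \cite[Lemma 3.1]{MR4817674} this is equivalent to $R^{\gen}_{g,L}>0$). Form the Mishchenko--Fomenko bundle $\mathcal V \to M$ obtained by pulling back the universal flat $C^*_r(\pi)$-module bundle on $B\pi$ along $c\co M \to B\pi$; equip it with its canonical flat connection $\nabla_{\mathcal V}$. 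Let $D_{\mathcal V}$ denote the spin$^c$ Dirac operator on $M$ coupled to $(\mathcal V, \nabla_{\mathcal V})$, acting on sections of $S \otimes \mathcal V$ as a regular self-adjoint operator on a Hilbert $C^*_r(\pi)$-module.

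The second step is the Weitzenb\"ock computation. Because $\nabla_{\mathcal V}$ is flat, the only curvature term surviving in the twisted Lichnerowicz--Schr\"odinger identity is the one already present in the untwisted formula for the spin$^c$ Dirac operator, so
\begin{equation*}
D_{\mathcal V}^2 \;=\; \nabla^*\nabla \;+\; \tfrac14\bigl(R_g + 2ic(\Omega_L)\bigr) \;=\; \nabla^*\nabla \;+\; \tfrac14 R^{\tw}_{g,L},
\end{equation*}
acting on $C^\infty(M; S\otimes \mathcal V)$. Positivity of $R^{\tw}_{g,L}$ as an operator, combined with compactness of $M$, yields a uniform lower bound $D_{\mathcal V}^2 \geq \varepsilon\cdot \mathrm{id}$ for some $\varepsilon>0$ in the sense of Hilbert $C^*_r(\pi)$-modules. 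Hence $D_{\mathcal V}$ is invertible as an unbounded regular operator over $C^*_r(\pi)$, so its higher index class $\Ind(D_{\mathcal V}) \in K_n(C^*_r(\pi))$ vanishes.

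The third step is to identify this analytic index with $A \circ \per \circ c_*([M,\sigma])$. By the isomorphism between homotopy-theoretic and Kasparov $K$-homology (the chain $\Omega^{\spinc}_*(M) \to K_*^{BD}(M) \to K_*^{\rm an}(M)$ of \cite{MR1624352,MR2330153} recalled in the section opening), the image $\per([M,\sigma]) \in K_n(M)$ of the spin$^c$ fundamental class corresponds to the Kasparov class $[D] \in KK_n(C(M),\bC)$ of the spin$^c$ Dirac operator. Applying $c_*$ then the assembly map $A$ gives exactly the Mishchenko--Fomenko index class of $D_{\mathcal V}$; this is the content of the Mishchenko--Fomenko index theorem. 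Combining with the vanishing from the Weitzenb\"ock step gives $A\circ\per\circ c_*([M,\sigma])=0$ whenever $(M,\sigma)$ admits gpsc, which is the first assertion.

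For the final statement $ku_n^+(B\pi) \subseteq \ker(A\circ \per)$, recall from Theorem \ref{thm:main-ku} that $ku_n^+(B\pi) = \alpha^c(\Omega^{\spinc,+}_n(B\pi))$ and that for $(M,\sigma,c)$ representing a class in $\Omega^{\spinc,+}_n(B\pi)$ we have $\alpha^c[(M,\sigma,c)] = c_*[M,\sigma]$. By Corollary \ref{cor:possubgroup}, any such class is represented by a totally non-spin manifold admitting gpsc, so the obstruction vanishes by the first part, giving $(A\circ \per)(c_*[M,\sigma])=0$. The only genuinely nontrivial input in the whole argument is the identification of $A\circ\per\circ c_*([M,\sigma])$ with $\Ind(D_{\mathcal V})$; the Weitzenb\"ock/positivity part is routine once one has both the equivalence of positivity of $R^{\tw}_{g,L}$ with gpsc and the flatness of $\nabla_{\mathcal V}$.
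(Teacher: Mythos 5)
Your proposal is correct and is exactly the argument the paper invokes: the paper's proof simply cites \cite[Theorem 3.4]{MR842428} (the Mishchenko--Fomenko/Lichnerowicz argument in the spin case) with the spin Dirac operator replaced by the spin$^c$ Dirac operator, which is precisely the twisted Weitzenb\"ock computation and index identification you spell out. The final deduction of $ku^+_n(B\pi)\subseteq\ker(A\circ\per)$ from the definition of $ku^+_n(B\pi)$ as $\alpha^c(\Omega^{\spinc,+}_n(B\pi))$ is likewise the intended one.
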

\begin{proof}
The proof is identical to that of \cite[Theorem 3.4]{MR842428},
except for replacement of the spin Dirac operator by the
spin$^c$ Dirac operator.
\end{proof}
An obvious conjecture, which we will show is true for
some fundamental groups and false for others,
is the following:
\begin{conjecture}[Gromov-Lawson-Rosenberg Conjecture, spin$^c$
version]
\label{conj:GLR}
Let $(M,\sigma)$ be a closed connected totally non-spin
$\spin^c$ manifold with $\dim M= n$, fundamental group $\pi$ and
classifying map $c\co M \to B\pi$. Then for $n\ge 5$, $(M,\sigma)$ admits
positive generalized scalar curvature if and only if $A\circ\per\circ
c_*([M,\sigma])=0$ in $K_*(C^*_r(\pi))$.
\end{conjecture}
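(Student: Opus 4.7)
The plan is to reduce the conjecture to a statement entirely in terms of $ku$-homology using Theorem \ref{thm:main-ku}, and then compare the subgroup $ku^+_n(B\pi)\subseteq ku_n(B\pi)$ with the kernel of the composite
\[
ku_n(B\pi) \xrightarrow{\per} K_n(B\pi) \xrightarrow{A} K_n(C^*_r(\pi)).
\]
The forward direction is handled by Theorem \ref{thm:index}: since $A\circ\per$ identifies $A\circ\per\circ c_*([M,\sigma])$ with the higher index of the Mishchenko-Fomenko--twisted spin$^c$ Dirac operator, and the spin$^c$ Lichnerowicz formula shows this index obstructs gpsc, we have $ku^+_n(B\pi)\subseteq \ker(A\circ\per)$. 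The content of the conjecture is the reverse inclusion $\ker(A\circ\per)\subseteq ku^+_n(B\pi)$.

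For the nontrivial inclusion I would proceed by exhibiting explicit generators of $\ker(A\circ\per)$ coming from gpsc manifolds. First, split the problem by localizing at $2$ and away from $2$, using the splittings recorded in Theorem \ref{thm:stolz-spinc} and in the proof of Theorem \ref{thm:main-ku}. Away from $2$, the fiber $\sF\simeq \bigvee_{\ell\ge 1}\Sigma^{4\ell}\ku$ of $\widehat{\alpha^c}$ is built as a $\ku$-module from powers of $[(\CP^2,\cO(1))]$, all of which carry gpsc; so one needs only to handle the $\ku$-summand itself, and show that classes in $\ker(A\circ\per\co ku_n(B\pi)_{(p)}\to K_n(C^*_r(\pi))_{(p)})$ are realized by gpsc representatives. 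Second, at the prime $2$, use the transfer maps $T_{\CP^2}$ and $T_{\HP^2}$ of Theorem \ref{thm:ku-bj2} to realize the kernel of $\alpha^c$ by total spaces of $\CP^2$- and $\HP^2$-bundles, which automatically carry gpsc. In both cases, whenever $\pi$ is small enough for $ku_n(B\pi)$ to be generated (via the Atiyah-Hirzebruch spectral sequence, a product formula, or the geometric description of $K$-homology by Baum-Douglas) by cap products of the $ku$-fundamental class of a standard gpsc spin$^c$ manifold with classes pulled back from $B\pi$, one can produce gpsc representatives for any class in the kernel of the assembly map.

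Concretely, I would run this plan in the following steps: (i) identify a distinguished collection of gpsc spin$^c$ ``building blocks'' (e.g.\ $(\CP^2,\cO(1))$, fiber bundles with fibers $\CP^2$ or $\HP^2$, $S^n$ with its standard $\spinc$ structure, and Lens-space-type manifolds adapted to finite $\pi$); (ii) show the $ku$-homology images of these generate $ku_n(B\pi)$ modulo the assembly kernel, subject to controlling $2$-torsion via the Baum-Connes or Gromov-Lawson-Rosenberg machinery; (iii) use the Spin$^c$ Bordism Theorem \ref{thm:spincbordism} together with Corollary \ref{cor:possubgroup} to transport gpsc across spin$^c$ bordism classes, so that any class of $(M,\sigma,c)$ realized as a sum of gpsc building blocks yields gpsc on $M$ itself.

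The main obstacle is step (ii), the surjectivity of ``gpsc generators'' onto $\ker(A\circ\per)$. Schick's counterexample in the spin setting is expected to be transplantable to the spin$^c$ setting (indeed this is precisely Theorem \ref{thm:bj01}, pardon, Theorem F above, with $\pi=\bZ^4\times\bZ/p$), so the conjecture cannot hold in full generality and the plan can succeed only when the assembly map and the relevant $K$-theory computations are sufficiently rigid. This is why the conjecture should be proved conditionally: for $\pi$ finite with periodic cohomology (as in Theorem \ref{thm:GLRpercoh}/Theorem E) the structure of $ku_*(B\pi)$ and the explicit periodicity generators make (ii) tractable; more generally one expects the argument to go through for torsion-free groups satisfying the Baum-Connes conjecture together with suitable vanishing of higher $K$-theoretic obstructions. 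The proposal is therefore to execute the scheme above case-by-case, isolating the precise hypotheses on $\pi$ under which the gpsc building blocks hit all of $\ker(A\circ\per)$.
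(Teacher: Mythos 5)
Your proposal matches the paper's treatment of this statement, which is deliberately left as a \emph{conjecture} because it is false in general: the ``only if'' direction is exactly Theorem \ref{thm:index}, the ``if'' direction is established only under additional hypotheses (injectivity of both $A$ and $\per$ in Corollary \ref{cor:assembinj}, which reduces the claim to Theorem \ref{thm:main-ku}; finite fundamental groups with periodic cohomology in Theorem \ref{thm:GLRpercoh} via lens-space generators and the transfers), and Theorem \ref{thm:Schickargument} refutes the unconditional statement for $\pi=\bZ^4\times\bZ/p$. You correctly identify all of these ingredients --- the reduction to $\ker(A\circ\per)\subseteq ku^+_n(B\pi)$, the $2$-local/odd-primary splitting with $\CP^2$- and $\HP^2$-bundle representatives, the bordism theorem to transport gpsc, and the impossibility of a general proof --- so your plan is essentially the paper's own.
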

Thanks to Theorem \ref{thm:index}, this is true in a large
number of cases:
\begin{corollary}
\label{cor:assembinj} 
Let $\pi$  be a finitely presented group 
for which the complex assembly map $A\co K_*(B\pi)$
$\to K_*(C^*_r(\pi))$
and the periodization map $\per\co ku_*(B\pi)\to K_*(B\pi)$ are both
injective.  Then Conjecture \ref{conj:GLR} holds for $\pi$.
\end{corollary}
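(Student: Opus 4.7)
The plan is to deduce the corollary by threading together Theorem \ref{thm:index}, Theorem \ref{thm:main-ku}, and the two injectivity hypotheses, with essentially no new geometric input.

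First, the ``only if'' direction of Conjecture \ref{conj:GLR} is already available from Theorem \ref{thm:index}: if $(M,\sigma)$ admits positive generalized scalar curvature, then $A\circ\per\circ c_*([M,\sigma])=0$ in $K_n(C^*_r(\pi))$, and this requires no hypothesis on $\pi$. So all the content of the corollary lies in the converse implication.

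For the converse, I would start by assuming $A\circ\per\circ c_*([M,\sigma])=0$ in $K_n(C^*_r(\pi))$ and use the injectivity of the assembly map $A$ to cancel it, concluding $\per\circ c_*([M,\sigma])=0$ in $K_n(B\pi)$. A second cancellation, this time using the injectivity of the periodization map $\per\co ku_n(B\pi)\to K_n(B\pi)$, then yields $c_*([M,\sigma])=0$ in $ku_n(B\pi)$. Now, because $(M,\sigma)$ is totally non-spin with $n\geq 5$, Theorem \ref{thm:main-ku} applies: the existence of gpsc on $(M,\sigma)$ is governed entirely by whether $c_*([M,\sigma])$ lies in the subgroup $ku_n^+(B\pi)\subset ku_n(B\pi)$. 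Since $ku_n^+(B\pi)$ is a subgroup, it contains $0$, so the vanishing $c_*([M,\sigma])=0$ is by itself enough to guarantee that $(M,\sigma)$ admits gpsc.

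I do not anticipate any serious obstacle here. The result is really a bookkeeping corollary: all of the heavy lifting has already been carried out in the spin$^c$ bordism theorem (Theorem \ref{thm:spincbordism}), in the reduction to $ku$-theory (Theorem \ref{thm:main-ku}), and in the index theorem (Theorem \ref{thm:index}). The role of the two injectivity hypotheses is precisely to allow one to promote the vanishing of the analytic obstruction in $K_n(C^*_r(\pi))$ back up to the homotopy-theoretic obstruction in $ku_n(B\pi)$ that Theorem \ref{thm:main-ku} knows how to detect geometrically.
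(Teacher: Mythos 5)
Your proposal is correct and follows exactly the paper's own argument: the ``only if'' direction is Theorem \ref{thm:index}, and the ``if'' direction uses the two injectivity hypotheses to reduce the vanishing in $K_*(C^*_r(\pi))$ to the vanishing of $c_*([M,\sigma])$ in $ku_n(B\pi)$, which lies in the subgroup $ku_n^+(B\pi)$ so that Theorem \ref{thm:main-ku} applies. Nothing further is needed.
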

\begin{proof}
This is identical to the proof of \cite[Theorem 4.13]{MR1818778}.
The ``only if'' direction is the Index Theorem, Theorem
\ref{thm:index}, while the ``if'' direction is reduced
(via injectivity of the maps $A$ and $\per$) to
Theorem \ref{thm:main-ku}.
\end{proof}
Examples of groups satisfying the conditions of 
Corollary \ref{cor:assembinj} are surface groups, free
groups, and free abelian groups.  Any torsion-free group
for which the Baum-Connes Conjecture or the Strong Novikov
Conjecture is known to hold has
injective assembly map $A\co K_*(B\pi)\to K_*(C^*_r(\pi))$.
Injectivity of the periodization map can be studied by comparing
the Atiyah-Hirzebruch spectral sequences for $ku_*(B\pi)$
and for $K_*(B\pi)$ --- see \cite[Figure 5]{MR4703039}.
Additional examples where Conjecture \ref{conj:GLR}
is true may be found in \cite[\S2]{MR1778107}.

\begin{remark}
\label{rem:finitegp}
One important difference from the spin case is that
the assembly map
\begin{equation*}
A^{\spin}\co KO_*(B\pi)\to KO_*(C^{*,\bR}_r(\pi))
\end{equation*}
can be highly non-trivial for even-order finite groups (see
\cite{MR1133900}), whereas for finite groups $\pi$,
the complex $K$-theory
$K_*(C^*_r(\pi))$ is free abelian and thus the
assembly map in complex $K$-theory is trivial (i.e.,
it vanishes except on the summand of $\bZ$ coming from the
trivial group, since $\widetilde{K}_*(B\pi)$ is torsion). Thus Conjecture \ref{cor:assembinj}
for a finite group $\pi$ is equivalent to the statement that
if $(M,\sigma)$, $\dim M \ge 5$, is a closed connected totally non-spin
$\spinc$ manifold with finite fundamental group $\pi$, then
$(M,\sigma)$ admits generalized psc if and only if the universal
cover $\wM$ (with the pull-back line bundle) admits gpsc.
Indeed, the 
``only if'' direction is clear, and the ``if'' direction
assumes that $A\circ\per\circ c_*([M,\sigma])=0$, which just says that
$\alpha^c(M,\sigma)=0$ in $ku_n\cong KU_n$, which is the condition for
gpsc on the universal cover according to Theorem A.
\end{remark}
In spite of the triviality of the assembly map for finite
groups (see Remark \ref{rem:finitegp}), we can often
prove Conjecture \ref{conj:GLR} for certain finite groups
anyway.  One useful tool is 
\cite[Proposition 1.5]{MR1057244} along with its
Corollary 1.6, which reduces the problem for finite groups
to the case of $p$-groups (the proof carries over from the
spin case to the spin$^c$ case without change).

Now we are ready for one of our major results, the proof of
Conjecture \ref{conj:GLR} for finite groups with periodic cohomology.
Recall that a finite group $\pi$ has periodic
cohomology if and only if all of the Sylow subgroups of $\pi$ are either
cyclic or generalized quaternion, see 
\textup{\cite[Theorem XII.11.6]{MR1731415}}.  
The following theorem
is modeled on the main result of \cite{MR1484887}, though the
proof is much easier because $\ku$ is much less complicated than $\ko$.
\begin{theorem}
\label{thm:GLRpercoh}
Let $(M,\sigma)$ be a closed, connected, totally non-spin manifold
with $\dim M = n\geq 5$, and finite fundamental
group $\pi$ having periodic cohomology.  Then $(M,\sigma)$ admits positive
generalized scalar curvature if and only
if $\alpha^c(M,\sigma)=0$ in $ku_n$.
\end{theorem}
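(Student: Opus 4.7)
The plan is to combine Theorem \ref{thm:main-ku} with an explicit geometric construction, closely paralleling the spin-case argument of Botvinnik--Gilkey--Stolz \cite{MR1484887}; the proof is easier here because $\ku$-theory is simpler than $\ko$-theory, and because the complex assembly map is trivial on $\widetilde{K}_*(B\pi)$ when $\pi$ is finite (see Remark \ref{rem:finitegp}).

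\emph{Necessity} is immediate from the Lichnerowicz--Schr\"odinger formula for the spin$^c$ Dirac operator: gpsc forces the $KU$-index $\beta^c([M,\sigma])$ to vanish, and since $n\geq 0$ the periodisation map $\per\co ku_n\to KU_n$ is an isomorphism, so $\alpha^c(M,\sigma)=0$ in $ku_n$. For \emph{sufficiency}, assume $\alpha^c(M,\sigma)=0$. By Theorem \ref{thm:main-ku} it suffices to show $c_*([M,\sigma])\in ku^+_n(B\pi)$, and since this class lies in the kernel $\widetilde{ku}_n(B\pi)$ of the augmentation map, we reduce to proving
\begin{equation*}
\widetilde{ku}_n(B\pi)\subseteq ku^+_n(B\pi) \qquad (n\geq 5).
\end{equation*}

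The first step is a reduction to Sylow $p$-subgroups. The spin$^c$ analog of \cite[Prop.~1.5 and Cor.~1.6]{MR1057244}, whose proof carries over verbatim after replacing $\MSpin$, $\ko$ with $\MSpinc$, $\ku$ and invoking Theorem \ref{thm:spincbordism} and Theorem \ref{thm:Crelle4.2} in place of their spin counterparts, reduces the above inclusion for $\pi$ to the corresponding inclusion for each Sylow subgroup $\pi_p\leq \pi$. By \cite[Theorem~XII.11.6]{MR1731415}, periodic cohomology forces each $\pi_p$ to be cyclic, or (when $p=2$) generalized quaternion. In both cases $\pi_p$ acts freely and isometrically on round spheres $S^d$ for all sufficiently large odd $d$, giving spherical space forms $F_d=S^d/\pi_p$ with quotient metrics of positive scalar curvature and classifying maps $c_d\co F_d\to B\pi_p$.

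The second step is geometric realization. The Atiyah--Hirzebruch spectral sequence $E^2_{p,q}=H_p(B\pi_p;ku_q)\Rightarrow ku_{p+q}(B\pi_p)$ is particularly simple for these $\pi_p$: for cyclic $\pi_p$ it is concentrated in odd rows, and for generalized quaternion a similar pattern holds. A direct inspection shows that every class in $\widetilde{ku}_n(B\pi_p)$ can be written as a $ku_*$-module combination of images $(c_d)_*[F_d,\sigma_d]$ multiplied by Bott powers $\beta^k$. Each such product class is realized geometrically by $F_d\times(\bC\bP^1,\cO(2))^{k}$, which carries a product psc metric, hence gpsc, and has fundamental group $\pi_p$. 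To force total non-spinness (the universal cover, a product of spheres, being spin), form the connected sum with a simply connected, non-spin spin$^c$ $n$-manifold $X$ satisfying $\alpha^c(X)=0$, for instance a suitable product involving $(\bC\bP^2,\cO(1))$, whose $\alpha^c$ vanishes. By Theorem A such an $X$ admits gpsc; since $X$ is simply connected with vanishing $\alpha^c$, it does not alter the class in $ku_n(B\pi_p)$, and Theorem \ref{thm:Crelle4.2} preserves gpsc across the connected sum.

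The principal obstacle is the explicit identification of $\widetilde{ku}_n(B\pi_p)$ as generated over $ku_*$ by spherical space-form fundamental classes, particularly for $\pi_p=Q_{2^n}$, where the AHSS is not concentrated in a single row. This is the $\ku$-theoretic analog of the main calculation in \cite{MR1484887}; while it requires some bookkeeping, it is substantially easier than its $\ko$-theoretic counterpart because $\ku$ has none of the $\eta$-torsion classes that complicate the spin case.
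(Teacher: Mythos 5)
Your overall strategy coincides with the paper's: necessity via $\per\co ku_n\to KU_n$ being an isomorphism, reduction to $\widetilde{ku}_n(B\pi)\subseteq ku_n^+(B\pi)$ via Theorem \ref{thm:main-ku}, reduction to Sylow subgroups via Kwasik--Schultz, and then geometric realization of $\widetilde{ku}_*(B\pi_p)$ by space forms. But the step you dismiss as ``a direct inspection'' and ``some bookkeeping'' is in fact the entire content of the proof, and as stated your realization claim is false in the degrees that matter. The $E^\infty_{t,q}$ terms of the Atiyah--Hirzebruch spectral sequence with $t\ge 3$ odd are indeed represented by (products of) lens spaces and Bott-type manifolds, but the $t=1$ column $H_1(B\pi_p;ku_{2k-2})$ is \emph{not}: the naive representative is $S^1/\bZ_{p^a}\times N^{2k-2}$, i.e., a circle times something, which carries no psc metric. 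Resolving this requires showing that the image of $H_1(B\bZ/p;ku_2)$ in $ku_3(B\bZ/p)$ is already hit by genuine $3$-dimensional lens spaces, and this is where the paper has to invoke the relative eta invariant of \cite{MR1339924} to produce lens spaces $L^3(p;a)$ with independent classes generating $ku_3(B\bZ/p)\cong\bZ/p\oplus\bZ/p$ for $p\ge 5$, a separate argument for $p=2$ via $\bR\bP^3$, and a genuinely different argument for $p=3$ (where the eta invariant only detects a rank-one subgroup and one must use that $\Omega^{\spin}_5(B\bZ/3)\cong\bZ/9$ is generated by a lens space). None of this is bookkeeping, and your proposal contains no substitute for it.

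The generalized quaternion case has a second gap of the same kind. Quaternionic space forms $S^{d}/Q_{2^k}$ exist only for $d\equiv 3\pmod 4$, so the subquotients $E^\infty_{2s-1,*}$ with $s$ odd (contributing $\bZ/2\oplus\bZ/2\cong H_{2s-1}(BQ_{2^k})$) cannot be realized by quotients of spheres by $Q_{2^k}$ at all; your blanket assertion that $\widetilde{ku}_n(B\pi_p)$ is a $ku_*$-module combination of space-form classes breaks down here. The paper handles this by choosing two cyclic subgroups $C_1=\langle x\rangle$, $C_2=\langle xy\rangle$ whose inclusions induce a surjection $H_{2s-1}(C_1)\oplus H_{2s-1}(C_2)\to H_{2s-1}(Q_{2^k})$ for all odd $s$ (using that the periodicity class restricts to periodicity classes), and then feeds the already-settled cyclic case through a comparison of edge homomorphisms. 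Your proposal would need this, or an equivalent device, to be a proof. (A minor point in your favor: your connected-sum trick for arranging total non-spinness of representatives is harmless but unnecessary, since Theorem \ref{thm:spincbordism} only requires the \emph{target} manifold $M$ to be totally non-spin, not the gpsc representative of its bordism class.)
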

\begin{proof}
Obviously the condition $\alpha^c(M,\sigma)=0$ is
necessary, since the map $\per\co ku_n\to KU_n$
is an isomorphism for $n\ge 0$,
and only provides a restriction when $n$ is even, so we
need to prove the sufficiency.  This means we need to 
show that $\widetilde{ku}_n(B\pi)$, the subgroup in $ku_n(B\pi)$ that maps
to $0$ in $ku_n$, is contained in $ku_n^+(B\pi)$, the classes
represented by gpsc.  Then the result will follow from Theorem 
\ref{thm:main-ku}. By Kwasik and Schultz
\cite[Proposition 1.5]{MR1057244}, we can assume $\pi$ is
a $p$-group.  Thus either $\pi$ is cyclic or else $p=2$
and $\pi$ is generalized quaternion.  First assume
$\pi=\bZ/p^r$,
for some prime $p$ and some $r\ge 1$.  Note that the
Atiyah-Hirzebruch spectral sequence 
\begin{equation}
\label{eq:AHSScyclic}
\widetilde H_t(B\bZ/p^r, ku_q) \Rightarrow 
\widetilde{ku}_{t+q}(B\bZ/p^r)
\end{equation}
has nonzero entries only for $q\ge 0$ even and for
$t>0$ odd, so it collapses at $E_2$ for parity reasons.
Thus for $k\ge 1$, $|ku_{2k-1}(B\bZ/p^r)|=p^{kr}$
and the group $\widetilde{ku}_{2k}(B\bZ/p^r)$ vanishes.
By comparison of the spectral sequences \eqref{eq:AHSScyclic} for
$B\bZ/p^r$ and for $B\bZ/p^{r-1}$, we 
obtain for each $2k-1$, $k\ge 1$, the exact sequence
\begin{equation}
\label{eq:transfer}
0 \to ku_{2k-1}(B\bZ/p^{r-1}) \xrightarrow{i_*}
ku_{2k-1}(B\bZ/p^r) \xrightarrow{\text{tr}}
ku_{2k-1}(B\bZ/p)\to 0,
\end{equation}
where $i\co \bZ/p^{r-1}\to \bZ/p^r$ is the inclusion and $\text{tr}$
is the transfer (we learned this argument from \cite{MR1057244}).
Since the homomorphism $i_*$ and the transfer both
send classes in $ku^+_{2k-1}(\cdot)\subset
ku_{2k-1}(\cdot)$ to $ku^+_{2k-1}(\cdot)\subset ku_{2k-1}(\cdot)$, we can use
\eqref{eq:transfer} and induction on $r$ to reduce to the
case $\pi=\bZ/p$.

Now the groups $ku_*(B\bZ/p^r)$ were computed in
\cite[Theorem 3.1]{MR716975}, and we use that calculation here.

Recall that the group $ku_{2k-1}(B\bZ/p^r)$ is an abelian
group of order $p^k$. In fact, it is a direct sum of $N=\min(k, p-1)$
cyclic groups of orders $t(i)=p^{\bar a-s}$, where
(see \cite[p.\ 770]{MR716975})
\begin{equation*}
\begin{array}{c}
1\le i\le N, \ \ 0\le d<p^s(p-1), \ \
k-p^s+1=a_{s,k}p^s(p-1)+b_{s,k}, \ \ 
\\
\\
0\le b_{s,k}<p^s(p-1)\ \ \mbox{and}\ \ \bar a =
\left\{\begin{array}{ll}
a_{s,k}+1 \ \ & \mbox{if} \ \ d<b_{s,k},
\\
a_{s,k} \ \ & \mbox{if}  \ \ d\ge b_{s,k}
\end{array}\right.
\end{array}
\end{equation*}
Denote by $N^{2k-2j}$ a spin$^c$ manifold (here we omit a choice of
spin$^c$ structure on $N^{2k-2j}$) such that $\alpha^c(N^{2k-2j})$ is
a generator of $ku_{2k-2j}$. Then the $E_2=E_\infty$ terms
in \eqref{eq:AHSScyclic} are generated by spin$^c$ manifolds of the
form $L^{2j-1}\times N^{2k-2j}$, where $L^{2j-1}=S^{2j-1}/\bZ_{p^{\bar
a-s}}$ is a lens space.
These all have gpsc \textbf{except} when $j=1$ and $L^{2j-1}$
collapses to a circle $S^1$.  So we only need to worry about the
contribution of $H_1(B\bZ/p, ku_{2k-2})$.  Furthermore, it's enough
just to show that $ku_3(B\bZ/p)$ is represented by lens spaces since
the higher-dimensional
$ku$-groups $ku_{2k-1}(B\bZ/p)$ are generated by
this and by manifolds of the form $L^{2j-1}\times N^{2k-2j}$ with
$j\ge 2$.

Specializing Hashimoto's Theorem to dimension $3$, we
obtain that $ku_3(B\bZ/p)$ is cyclic if $p=2$ and is isomorphic to
$\bZ/p\oplus \bZ/p$ for $p$ odd.  When $p=2$,
$ku_3(B\bZ/2)\cong \bZ/4$ and is generated by the class of the psc
spin manifold $\bR\bP^3$.  Also, the relative eta
invariant \cite{MR1339924} shows that that for $p\ge 5$ odd, there are
different $3$-dimensional lens spaces $L^3(p;a)$ with linearly
independent mod-$p$ characteristic classes, which then generate
$ku_3(B\bZ/p)\cong \bZ/p\oplus \bZ/p$.

The case $p=3$ requires different treatment, since in this case the
image of the relative eta invariant only has rank one. Indeed,
by \cite[Theorem 2.1]{MR1339924}, the rank of the image of the
relative eta invariant for manifolds with fundamental group $\bZ/p$
for $p$ an odd prime in dimensions $3$ mod $4$ is $\rank R_0^+(\bZ/p)
= \frac{p-1}{2}$.  However, the group $\Omega^\spin_5(B\bZ/3)$ is
cyclic of order $9$ \cite[Theorem 36.1]{MR176478}, generated by a lens
space, and $ku_5(B\bZ/3)$ is generated by that lens space and by the
image of $L^3(3;1)\times (\bC\bP^1, \cO(2))$, both of which are 
manifolds with gpsc.  Putting everything together, we conclude
that $ku^+_{2k-1}(B\pi)$ coincides with $ku_{2k-1}(B\pi)$ for $\pi$ a
cyclic $p$-group and $k\ge 3$.

Next we consider the remaining case of $p=2$ and $\pi=Q_{2^k}$ generalized
quaternion of order $2^k$, $k\ge 3$.  In this case the
homology of $\pi$ with integer coefficients is $\bZ/2\oplus\bZ/2$
in degrees $1$ mod $4$, $\bZ/2^k$ in degrees $3$ mod $4$
\cite[Ch.\ XII, \S7]{MR1731415}, and $0$ in even positive
degrees.  Thus the Atiyah-Hirzebruch spectral sequence 
\begin{equation}
\label{eq:AHSSquat}
\widetilde H_t(BQ_{2^k}, ku_q) \Rightarrow 
\widetilde ku_{t+q}(BQ_{2^k})
\end{equation}
again collapses at $E_2$, and $ku_{2r-1}(BQ_{2^k})$ is an iterated
extension of copies of $\bZ/2^k$ and $\bZ/2\oplus\bZ/2$, while reduced
$ku$ vanishes in even degrees.  We need to show that
$ku_{2r-1}^+(BQ_{2^k})=ku_{2r-1}(BQ_{2^k})$, $r\ge 3$.  Now we have
some obvious manifolds which represent elements in the subquotients
$E^\infty_{2s-1,2(r-s)} = H_{2s-1}(BQ_{2^k}, ku_{2(r-s)})$, namely the
quaternionic lens space
$L^{2s-1}_Q:=S^{2s-1}/Q_{2^k}$ (when $s$ is even, so
we are in dimension $3$ mod $4$) and the real lens spaces
$M_C^{2s-1}:=S^{2s-1}/C$, where $C$ a cyclic subgroup of $Q_{2^k}$ (when $s$
is odd, so we are in dimension $3$ mod $4$).  We can take
$L^\infty_Q=\varinjlim L^{2s-1}_Q$
as a model for $BQ_{2^k}$, and inside this space, $L^{2s-1}_Q$ is the
($2s-1$)-skeleton and, as a homology cycle, represents a generator of
$H_{2s-1}(BQ_{2^k})\cong \bZ/2^k$.  Since $L^{2s-1}_Q$ is spin, psc, and
\emph{a fortiori} spin$^c$, we see that $E^\infty_{2s-1,2(r-s)}$
is represented by spin$^c$ manifolds of gpsc for $s$ even.

It remains to deal with $E^\infty_{2s-1,2(r-s)}$
for $s$ odd.  However, for any cyclic subgroup $C$ of $\pi=Q_{2^k}$,
the inclusion $C\hookrightarrow\pi$  induces the map
$C\to \pi/[\pi,\pi]\cong \bZ/2\oplus\bZ/2$ on $H_1$.  So
if we write 
\[
\pi = \langle x, y \mid x^{2^{k-2}}=y^2, xyx = y\rangle,
\]
where $x=\exp\left(\frac{\pi i}{2^{k-2}}\right)$ 
(not the same $\pi$!) and $y=j$ inside
the usual quaternions $\bH$, choosing
$C_1=\langle x\rangle$ and $C_2=\langle xy\rangle$
we see that the map $H_1(C_1)\oplus H_1(C_2)\to H_1(\pi)$
induced by the inclusions of $C_1$ and $C_2$ is surjective.
Since the restriction of the periodicity element
in $H^4(\pi;\bZ)$ to $H^4(C_\ell;\bZ)$, $\ell=1,2$,
is also a periodicity element \cite[Proposition 11.4]{MR1731415},
the map $H_{2s-1}(C_1)\oplus H_{2s-1}(C_2)\to H_{2s-1}(\pi)$
is again surjective for all odd $s$.  But by the case of
cyclic groups which we have already handled, $H_{2s-1}(C_\ell)$,
$\ell=1,2$, $s\ge 2$, is represented by spin$^c$ lens spaces of gpsc.
There is only one last step, since $H_1(C_\ell)$ is \emph{not}
represented by a gpsc manifold.  However, as we saw before,
$ku_3(B\bZ/2^t)$ is generated classes of gpsc manifolds, for any
$t$.  We claim that $ku_3(B\pi)$ is generated by the classes
of spin$^c$ manifolds with gpsc, namely the $3$-dimensional
quaternionic lens space $L^3$ and the spherical space forms
$S^3/C_\ell$.  To see this, consider the
commutative diagram
\begin{equation*}
\xymatrix{H_1(BC_1;ku_2)\oplus\rule[-8pt]{0pt}{8pt}
H_1(BC_2;ku_2)\ar@{->>}[r]\ar@{>>->}[d]& 
H_1(B\pi\rule[-8pt]{0pt}{8pt};ku_2)\ar@{>>->}[d]\\
ku_3(BC_1)\oplus ku_3(BC_2)\ar[r]& \,ku_3(B\pi), }
\end{equation*}
where the downward arrows are the edge homomorphisms in the
Atiyah-Hirzebruch spectral sequence.  The diagram shows that the image
of $H_1(B\pi;ku_2)$ in $ku_3(B\pi)$ is generated by the images of
$ku_3(BC_1)$ and $ku_3(BC_2)$, which are represented by gpsc
manifolds.  In higher dimensions $2s-1$, $s\ge 3$, the image of
$H_1(B\pi;ku_{2s-2})$ in $ku_{2s-1}(B\pi)$ factors through
$ku_3(B\pi)$ via Bott periodicity (take a spin$^c$
$3$-manifold with gpsc and take its product with a power of
$(S^2,\cO(2))$ representing the Bott element). We
obtain $ku_{2s-1}(B\pi)=ku_{2s-1}^+(B\pi)$ as required,
hence the theorem follows from Theorem \ref{thm:main-ku}.
\end{proof}
While Theorem \ref{thm:GLRpercoh} provides promising
evidence for  Conjecture \ref{conj:GLR} in many cases,
we can modify the proof given by Thomas Schick in \cite{MR1632971} to show
that Conjecture \ref{conj:GLR} is false for
at least some infinite groups with torsion.
\begin{theorem}
\label{thm:Schickargument}
There exist totally non-spin spin$^c$ $5$-manifolds
with fundamental group $\pi=\bZ^4\times \bZ/p$,
$p$ a prime, which have vanishing generalized Dirac index 
in $K_5(C^*_r(\pi))$ but which do not admit positive
generalized scalar curvature, in fact which do not admit
positive scalar curvature at all.
\end{theorem}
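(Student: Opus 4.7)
The plan is to adapt Schick's construction from \cite{MR1632971} to the totally non-spin spin$^c$ setting. Schick exhibits a closed spin 5-manifold $N^5$ with $\pi_1(N)=\pi=\bZ^4\times \bZ/p$ whose real assembly class vanishes in $KO_5(C^{*,\bR}_r(\pi))$ but which admits no psc, the obstruction coming from a Schoen-Yau minimal hypersurface descent that terminates with a $T^2$-class in $B\pi$. My first step is to upgrade $N$ to a totally non-spin spin$^c$ manifold by forming the connected sum $M := N \# W'$, where $W' = S^3\,\tilde{\times}\,S^2$ is the non-trivial $S^3$-bundle over $S^2$. Since $W'$ is simply connected, non-spin, and spin$^c$ (as $H^2(W';\bZ)\cong \bZ$ surjects onto $H^2(W';\bZ/2)$), we have $\pi_1(M)=\pi$, and the universal cover $\tilde M$ is the connected sum of the spin manifold $\tilde N$ with one copy of $W'$ per element of $\pi$, so $w_2(\tilde M)\neq 0$ and $M$ is totally non-spin. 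Equip $M$ with the spin$^c$ structure $\sigma$ obtained by gluing the trivial spin$^c$ structure on $N$ (coming from its spin structure) with a chosen $\sigma_{W'}$ on $W'$.

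Next I would verify vanishing of the generalized Dirac index. The $1$-handle cobordism realizing the connected sum gives a bordism in $\Omega^{\spinc}_5(B\pi)$ between $(M,\sigma,c)$ and $(N,\sigma_N,c_N)\sqcup (W',\sigma_{W'},\text{pt})$, hence
\begin{equation*}
\per \circ c_*[M,\sigma]
\;=\; \per\circ c_{N*}[N,\sigma_N] \,+\, j_*\bigl(\per[W',\sigma_{W'}]\bigr)\in K_5(B\pi),
\end{equation*}
with $j\co \text{pt}\to B\pi$. The second summand lies in $K_5(\text{pt})=0$. For the first, $N$ is spin and $L|_N$ is trivial, so the spin$^c$ Dirac operator on $N$ equals the spin Dirac operator, and $\per\circ c_{N*}[N,\sigma_N]$ is the image of the real $KO$-homology fundamental class under complexification. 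Because complexification is compatible with assembly, Schick's vanishing of the real assembly class forces $A\circ \per\circ c_*[M,\sigma] = 0$ in $K_5(C^*_r(\pi))$.

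Finally I would rule out psc on $M$ by re-running Schick's minimal hypersurface descent inside $M$. The descent relies on cohomology classes pulled back via $c\co M\to B\pi$; since $W'$ is simply connected these classes restrict trivially to the $W'$ summand, and $H^1(M;\bZ)=H^1(N;\bZ)$. A hypothetical psc metric on $M$ would yield a codimension-one area-minimizing hypersurface $\Sigma^4$ Poincar\'e dual to Schick's starting class, carrying a conformally psc structure by Schoen-Yau stability; iterating the descent produces a contradiction on the $T^2$-class in $B\pi$ exactly as in \cite{MR1632971}. The main obstacle is checking that the descent is not derailed by the $W'$-region: one must argue that any component of the minimal hypersurface lying in $W'$ can be eliminated (for instance, by a cut-and-paste or by a slight perturbation of the metric that pushes the minimizer off the connected-sum neck) because the relevant class pulled back from $B\pi$ is null on $W'$. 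Since gpsc implies psc (the pointwise inequality $R_g > 2|\Omega_L|_{\mathrm{op}}\ge 0$), the non-existence of psc on $M$ gives the non-existence of gpsc, completing the proof.
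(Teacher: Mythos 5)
Your construction is correct, but it is genuinely different from the one in the paper. You black-box Schick's spin manifold $N$ and restore total non-spin-ness by connect-summing with the non-trivial $S^3$-bundle over $S^2$, then get index vanishing by complexifying Schick's vanishing real index (the $W'$ summand contributing nothing since it factors through $K_5(\pt)=0$). The paper instead builds the example from scratch: it takes $N^5=(T^4\#\bC\bP^2)\times S^1$ with spin$^c$ line bundle coming from $\cO(3)$ on the $\bC\bP^2$ summand (so total non-spin-ness is supplied by $\bC\bP^2$ rather than by a connected summand in dimension $5$), surgers an embedded circle to make $\pi_1=\pi$ exactly, and deduces index vanishing from the fact that the resulting class in $\Omega^{\spinc}_5(B\pi)$ is $p$-torsion while $K_5(C^*_r(\pi))$ is torsion-free. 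The paper's route is self-contained on the index side and exploits the key difference between the real and complex settings (complex $K$-theory of a finite group has no torsion, so torsion bordism classes automatically have zero index); your route leans on Schick's harder real computation but transfers it painlessly through the complexification square relating $KO_*(C^{*,\bR}_r\pi)$ and $K_*(C^*_r\pi)$, which is legitimate since for a spin manifold with trivial twist the spin$^c$ Dirac index is the complexification of the Clifford-linear one (cf.\ Proposition \ref{prop:spinKO-versus-spinc}). Both arguments conclude with the same Schoen--Yau descent to the nonzero class $x_1\times y\in H_2(B\pi;\bZ)$.

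One clarification on your final step: the ``main obstacle'' you flag is not actually an obstacle, and the fixes you suggest (cut-and-paste on the minimizer, perturbing the metric near the neck) are not the right mechanism and would be delicate to justify. The Schoen--Yau argument needs no control over where the minimizing hypersurface sits: it produces a psc representative of the homology class $[M]\cap c^*\beta$, and the contradiction is obtained purely from the projection formula $c_*([M]\cap c^*\beta)=c_*[M]\cap\beta$ in $H_*(B\pi)$, since $c_*[M]=c_*[N]$ is unchanged by summing with the simply connected $W'$ and a psc oriented surface is a union of spheres mapping trivially to the aspherical space $B\pi$. Stated this way (as in the paper's one-line appeal to \cite[Theorem 1]{MR535700}), the descent goes through verbatim and your proof closes.
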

\begin{proof}
Fix a prime $p$ and recall from the proof of Theorem
\ref{thm:GLRpercoh} that ${ku}_{2k-1}(B\bZ/p)$
is torsion of order $p^k$ for any $k\ge 1$.  Furthermore,
the Atiyah-Hirzebruch spectral sequence for computing
${ku}_*(B\bZ/p)$ collapses at $E^2$, so the edge 
homomorphism ${ku}_{2k-1}(B\bZ/p)\to H_{2k-1}(B\bZ/p;\bZ)
\cong \bZ/p$ is surjective.  Now take $\pi=\bZ^4\times \bZ/p$
and define $N^5=(T^4\# \bC\bP^2)\times S^1$, which is
a totally non-spin spin$^c$ manifold with line bundle
coming from $\cO(3)$ on the $\bC\bP^2$ summand.  Now define
\[
f\co N^5=(T^4\# \bC\bP^2)\times S^1
\xrightarrow{c\times g}
T^4\times B\bZ/3=B\pi,
\]
where $c$ is the classifying map for $T^4\# \bC\bP^2$ (collapsing out
the $\bC\bP^2$) and $g\co S^1\to B\bZ/p$ induces a surjection on
$\pi_1$.  Do spin$^c$ surgery on an embedded circle in $(N,f)$ to
create a totally non-spin spin$^c$ manifold $M^5$ with fundamental
group exactly $\pi$. In particular, the map $f$
determines a map $f_1 \co M\to B\pi$ inducing isomorphism of the
fundamental group. Note that $(M,f_1)$ represents a $p$-torsion
element in $\Omega^{\spinc}_5(B\pi)$, so its image in $ku_5(B\pi)$ is
also $p$-torsion and goes to the homology class $x_1\times x_2\times
x_3\times x_4\times y$ in $H_5(B\pi;\bZ)\cong \bZ^4\times \bZ/p$,
where $x_1,\cdots, x_4$ are a basis for $H_1(T^4;\bZ)\cong \bZ^4$ and
where $y$ is a generator of the cyclic group
$H_1(B\bZ/p;\bZ)\cong \bZ/p$.  Now we argue exactly as in
in \cite{MR1632971}.  The manifold $M$ must have vanishing index since
its spin$^c$ bordism class is $p$-torsion but $K_5(C^*(\pi))$ is
torsion-free.  So if Conjecture
\ref{conj:GLR} were true, $M$ would admit gpsc.
But capping with the cohomology classes dual to
$x_2,\cdots, x_4$ sends the homology class of $M$
to $x_1\times y\ne 0 \in H_2(B\pi;\bZ)$, and also
would send psc classes to psc classes
by \cite[Theorem 1]{MR535700}.  This gives a contradiction
since any oriented psc $2$-manifold is $S^2$ and
cannot represent a non-zero homology class in $H_2(B\pi;\bZ)$.
\end{proof}

\section{The Stolz sequence in the spin$^c$ case}
\label{sec:Stolz}
In this section we define the
spin$^c$ analog of the Stolz surgery sequence
from \cite{StolzConc}
and we investigate its basic properties.

\begin{remark}\label{rem:gamma}
Stolz introduced the classifying space $BG(\gamma, n)$, which, for
spin manifolds with fundamental group $\pi$, is simply the product
$B\Spin(n) \times B\pi$. The obvious counterpart of this for
$n$-dimensional spin$^c$ manifolds with fundamental group  $\pi$ is the
classifying space $B\Spin^c(n) \times B\pi$, which we denote by
$B\gamma$ for short (when the $n$ is understood).
We notice that a spin$^c$ $n$-manifold $M$ with
fundamental group $\pi$ will always have a classifying map to
$B\gamma$; however, this can be a 2-equivalence only when $M$ is
totally non-spin.
In that case, the universal cover
$\wM$ will have to have $H^2(\wM;\bZ)\ne 0$, and by the universal
coefficient theorem and the Hurewicz theorem,
$H^2(\wM;\bZ)\cong \Hom(\pi_2(M),\bZ)$. So there will be a nonzero
map $\pi_2(M)\to\pi_2(B\Spinc (n))\cong\bZ$, but it may not be
surjective. We need one more classifying space:
$B\gamma'=K(\bZ/2,2)\times B\pi$.
For an $n$-dimensional spin$^c$
manifold $M$ with fundamental group $\pi$, we have a commutative
diagram
\begin{equation}\label{eq:classifying}
\xymatrix{& B\gamma\ar[d]^{(c_1\text{ mod 2}) \times Id}\\
M \ar[ur]^c \ar[r]^{c'} & \,B\gamma',}
\end{equation}
where $c$ is the classifying map of the universal cover of $M$
paired with the classifying map for the spin$^c$ structure and
$c'$ is classifying map of the universal cover of $M$
paired with the Stiefel-Whitney class $w_2$.  If $M$ is
totally non-spin, then $c'$ \emph{will} be a $2$-equivalence. 
\end{remark}

We summarize some of the relevant facts in the following
lemma. It is convenient to call a complex line bundle
$L$ \emph{even} if its first Chern class reduces mod $2$ to zero.
\begin{lemma}
\label{lem:Bgamma}
Let $(M,\sigma,L)$ be an $n$-dimensional connected spin$^c$ manifold,
$n\ge3$, with fundamental group $\pi$.  Let $B\gamma=B\Spinc(n)\times
B\pi$ and $B\gamma'=K(\bZ/2,2)\times B\pi$.  Then $M$ has classifying
maps $c\co M\to B\gamma$ and $c'\co M\to B\gamma'$ related as in
\textup{(\ref{eq:classifying})}.  If $M$ is totally non-spin, then $c'$ is a
$2$-equivalence, i.e., induces an isomorphism on $\pi_1$ and a
surjection on $\pi_2$.  Furthermore, after 
changing the spin$^c$ structure $\sigma$, which changes the spin$^c$ line
bundle from $L$ to $L\otimes L'$, where $L'$
is even, one can arrange for $c$ also to be a
$2$-equivalence.
\end{lemma}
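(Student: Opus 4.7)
I would prove the two claims of the lemma in sequence: the statement about $c'$ is essentially immediate from the hypothesis, while the statement about $c$ requires analyzing an obstruction and modifying the spin$^c$ structure.

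For the claim that $c'$ is a $2$-equivalence, the $B\pi$-factor of $c'$ is by definition a classifying map for the universal cover, so it induces an isomorphism on $\pi_1$; combined with $\pi_1(K(\bZ/2,2))=0$, this gives that $\pi_1(c')$ is an isomorphism. On $\pi_2$ we have $\pi_2(B\gamma')\cong \pi_2(K(\bZ/2,2))\cong \bZ/2$, and under the Hurewicz identification $\pi_2(M)\cong H_2(\wM;\bZ)$ the induced map is the evaluation pairing with $w_2(\wM)\in H^2(\wM;\bZ/2)$. Since $M$ is totally non-spin, $w_2(\wM)\ne 0$, so this pairing is surjective onto $\bZ/2$.

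For the second claim, $\pi_1(c)$ is an isomorphism by the same argument, so the task is to arrange surjectivity of $\pi_2(c)\co H_2(\wM;\bZ)\to \pi_2(B\Spinc(n))\cong \bZ$. Under Hurewicz, this map sends $\alpha$ to $\langle c_1(\widetilde L),\alpha\rangle = \langle c_1(L),p_*\alpha\rangle$, where $p\co \wM\to M$ is the universal cover and $\widetilde L=p^*L$. Since the reduction mod $2$ of this pairing is the surjective map $w_2(\wM)$, its image is $d\bZ$ for some odd integer $d$. I would then use the $H^2(M;\bZ)$-torsor structure on the set of spin$^c$ structures refining a fixed orientation of $M$: a class $\beta\in H^2(M;\bZ)$ acts by tensoring $L$ with a line bundle of first Chern class $2\beta$ (equivalently with $L_0\otimes L_0$ where $c_1(L_0)=\beta$), producing a new spin$^c$ line bundle $L\otimes L'$ with $L'$ even as required. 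Under this modification $c_1(\widetilde L)$ is replaced by $c_1(\widetilde L)+2p^*\beta$, and the goal is to choose $\beta$ so that this map is surjective on $H_2(\wM;\bZ)$. Picking $\alpha_0\in H_2(\wM;\bZ)$ with $\langle c_1(\widetilde L),\alpha_0\rangle$ odd (available since $w_2$ is surjective), one observes that $p_*\alpha_0$ has infinite order in $H_2(M;\bZ)$; if one can arrange $p_*\alpha_0$ to be primitive in $H_2(M;\bZ)/\mathrm{Tors}$, then the universal coefficient theorem supplies a $\beta$ with $\langle \beta,p_*\alpha_0\rangle=(1-\langle c_1(\widetilde L),\alpha_0\rangle)/2$, and the modified map takes the value $1$ on $\alpha_0$ and is therefore surjective.

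\textbf{Main obstacle.} The technical heart is the existence of some $\alpha_0\in H_2(\wM;\bZ)$ with $w_2(\alpha_0)=1$ whose image $p_*\alpha_0$ is primitive in $H_2(M;\bZ)/\mathrm{Tors}$. Via the five-term Cartan--Leray sequence $H_2(\wM;\bZ)_\pi\to H_2(M;\bZ)\to H_2(\pi;\bZ)\to 0$, the image of $p_*$ equals the kernel of $H_2(M;\bZ)\to H_2(\pi;\bZ)$, so the question reduces to finding a primitive class in that kernel on which $w_2$ evaluates to $1$. I expect to establish this by combining the $\pi$-equivariance and surjectivity of $w_2$ on $H_2(\wM;\bZ)$ with an elementary divisibility argument, exploiting the freedom to replace $\alpha_0$ by $\alpha_0+\alpha_1$ for $\alpha_1\in \ker w_2$ without affecting the parity of the pairing.
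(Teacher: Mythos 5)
Your treatment of $c'$ and your reduction of the statement about $c$ to a divisibility question are both correct, and up to that point you follow essentially the same route as the paper: the image of $c_*\co\pi_2(M)\to\pi_2(B\Spinc(n))\cong\bZ$ is $d\bZ$ with $d$ odd because its mod-$2$ reduction is given by $w_2(\wM)$, and one then tries to shift a generator of the image by an even integer using Remark \ref{rem:changing-the-line-bundle}. The problem is that your self-declared ``main obstacle'' is left entirely unproved, and it is not a routine verification. You need a class $\alpha_0\in\pi_2(M)\cong H_2(\wM;\bZ)$ with $\langle c_1(L),p_*\alpha_0\rangle$ odd such that evaluation against $p_*\alpha_0$ can realize a prescribed integer; equivalently, you need control over the divisibility, modulo torsion, of elements of $\im p_*=\ker\bigl(H_2(M;\bZ)\to H_2(\pi;\bZ)\bigr)$ inside $H_2(M;\bZ)$. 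The totally non-spin hypothesis only controls divisibility by $2$ (it says the composite $\langle c_1(L),-\rangle\circ p_*$ is onto $\bZ/2$) and says nothing about odd primes. Concretely, if every class in $\im p_*$ were divisible by $3$ modulo torsion --- which is a priori possible, since the cokernel of $p_*$ is $H_2(\pi;\bZ)$ and may contain $3$-torsion --- then for every even twist the composite $\langle c_1(L)+2\beta,-\rangle\circ p_*$ would take values in $3\bZ$, and no choice of $\beta$ would make $c$ a $2$-equivalence. Your proposed manoeuvre of replacing $\alpha_0$ by $\alpha_0+\alpha_1$ with $\alpha_1\in\ker w_2$ preserves parity but does nothing about odd divisibility, so the ``elementary divisibility argument'' you defer to does not obviously exist.

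To be fair, the paper's own proof of this last assertion is a single sentence (``adding an even element onto a generator of the image \dots we can arrange surjectivity'') and is silent on exactly the same point; your write-up has the merit of making explicit where the real work lies. But as submitted, the proposal does not establish the final claim of the lemma.
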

\begin{proof}
The only thing that needs proof is the last statement.  Since
$\pi_2(B\Spinc(n))\cong\bZ$, the homomorphism
$c_*\co \pi_2(M)\to \pi_2(B\Spinc(n))$  maps $\pi_2(M)$ onto
a subgroup of $\bZ$ of odd index, and then adding an even element onto
a generator of the image by using Remark
\ref{rem:changing-the-line-bundle}, we can arrange surjectivity.
\end{proof}
\begin{definition}\label{def:pos^c}
Let $X$ be a space (say a CW complex). We denote by 
${\rm Pos}^{\spinc}_n (X)$ the group of
equivalence classes of 4-tuples $((M, \sigma,L), \varphi\co M\to X,
g,\nabla_L)$, where $(M,\sigma,L)$ is a compact spin$^c$
manifold, $\dim M = n$, without boundary with
spin$^c$ structure $\sigma$ and associated line bundle $L$, $g$ a
Riemannian metric on $M$, and $\nabla_L$ a connection on $L$ such that
the twisted scalar curvature $\frac14 R_{g}
+ \frac{i}{2}c(\Omega_{L})$ of $(g,\nabla_L)$ is positive on all of
$M$.
%
Namely, two 4-tuples
\begin{equation*}
((M_0,\sigma_0,L_0),  \varphi_0\co
 M_0\to X, g_0,\nabla_{L_0})
 \ \  \mbox{and} \ \ ((M_1,\sigma_1,L_1), \varphi_1\co M_1\to X,
 g_1,\nabla_{L_1})
\end{equation*}
are said to be equivalent if there exists a
$4$-tuple $((W,\sigma_W,\mathcal{L}), \Phi: W\to X, \bar
g, \nabla_{\bar g})$, where $(W,\sigma_W,\mathcal{L})$ is spin$^c$
manifold with boundary $\partial (W,\sigma_W)=(M_0,\sigma_0)\sqcup -
(M_1,\sigma_1)$, endowed with a Riemannian metric $\bar g$, a
continuous map $\Phi\co W\to X$, and a connection
$\nabla_{\mathcal{L}}$ on the associated line bundle $\mathcal{L}$
such that
\[
\bar g|_{\partial W}=g_0\sqcup g_1, \quad \Phi
  |_{\partial M}= \varphi_0\sqcup \varphi_1\quad \cL|_{\partial
  W}=L_0\sqcup L_1, \quad \nabla_{\cL}|_{\partial
  W}= \nabla_{L_0}\sqcup \nabla_{L_1}
\]
and the generalized scalar  curvature of $(\bar g,\nabla_{\cL})$ is positive
on $W$. All the structures are required to be product-like
near the boundary.

The group law on ${\rm Pos}^{\spinc}_n (X)$ is induced by disjoint
union. The inverse comes from reversing the 
spin$^c$ structure, since $M\times I$ with the product metric
and bundle and connection pulled back from $M$ shows
that $((M,\sigma), \varphi\co M\to X, g, \nabla_L) \sqcup
((M,-\sigma), \varphi\co M\to X, g, \nabla_L)$ is a boundary.
\end{definition}

\begin{definition}\label{def:R^c}
Let $X$ be a space, say a CW complex. 
We denote by ${\rm R}^{\spinc}_n (X)$
the group of equivalence classes of 4-tuples $((M,\sigma,L),\varphi\co
M\to X, g_{\partial M}$, $\nabla_{L,{\partial}})$, where
$(M,\sigma,L)$ is a compact spin$^c$ manifold,
$\dim M=n$, with (possibly empty) boundary $\p M$ and associated line
bundle $L$, $g_{\partial M}$ is a Riemannian metric on $\partial M$,
and $\nabla_{L,{\partial}}$ a connection on $L|_{\partial M}$ such
that the generalized scalar curvature of $(g_{\partial
M},\nabla_{L,{\partial}})$ is positive on all of $\partial M$. Two
4-tuples
\begin{equation*}
((M_0, \sigma_0,L_0), \varphi_0\co M_0\to X, g_{\partial
M_0},\nabla_{L_0,\partial}) \ \ \mbox{and} \ \ ((M_1,\sigma_1, L_1), \varphi_1\co M_1\to X,
g_{\partial M_1},
\nabla_{L_1,\partial})
\end{equation*}
are equivalent if there exist a spin$^c$ manifold with corners 
$(W,\sigma_W,\mathcal{L})$ endowed
with a Riemannian metric $\bar g$ and a continuous map
$\Phi\co W\to X$ satisfying the following properties:
\begin{itemize}
\item the codimension 1 faces of $W$ are
$M_0$, $M_1$ and a bordism $V$ from $\partial M_0$ to $\partial M_1$;
\item  the only codimension 2 corners are  $\partial M_0=M_0\cap V$ and $\partial M_1=M_1\cap V$;
\item $\cL$  restricts to $L_j$ and $\Phi$ restricts to $\varphi_j$ on $M_j$, $j=0,1$;
\item there exists a Riemannian metric $g_V$ on $V$ and there exists a connection $\nabla_V$ on $L_V:=\mathcal{L}|_{V}$ 
such that 
their restrictions to $\partial V=\partial M_0 \sqcup
- \partial M_1$ are given by
\begin{equation*}
g_{\partial M_0}\sqcup g_{\partial
M_1}\;\;\text{and}\;\; \nabla_{L_0,\partial}\sqcup \nabla_{L_1,\partial}
\end{equation*}
and such that $(V, L_V)$ has positive generalized scalar curvature.
Moreover, all the structures are required to be product-like near the boundary.
\end{itemize}
The group law on ${\rR}^{\spinc}_n (X)$ is
induced by disjoint union.
\end{definition}

\noindent
{\bf Notation.} Unless confusion should
arise we shall write a cycle for the bordism group
${\rm Pos}^{\spinc}_*(X)$ simply as $(M, \varphi\co M\to X,
g,\nabla_L)$ and we shall write a cycle for the
group ${\rm R}^{\spinc}_* (X)$ simply as $(M, \varphi\co M\to X,
g_{\partial M},\nabla_{L,\partial})$.

\noindent
As for the classic spin-Stolz sequence
we observe that there are natural homomorphisms:
\begin{equation*}
{\rR}^{\spinc}_{*+1} (X)\xrightarrow{\partial} {\rm Pos}^{{\rm spin}^c}_* (X)\,; \quad
\Omega^{\spinc}_* (X)\xrightarrow{\iota} {\rR}^{\spinc}_{*} (X),
\end{equation*} 
the first induced by the passage to the boundary and the second
obtained by considering a closed manifold as a manifold with empty
boundary. We also have a forgetful homomorphism
\begin{equation*}
{\rm Pos}^{\spinc}_* (X)\rightarrow
\Omega^{\spinc}_* (X)
\end{equation*} 
which sends a class $[(M, \varphi\co M\to X,
g,\nabla_L)]$ in ${\rm Pos}^{\spinc}_* (X)$ to $[(M, \varphi\co M\to
X)]\in \Omega^{\spinc}_* (X)$.

\begin{proposition}
The groups and the homomorphisms that we have defined give a long
exact sequence
\begin{equation*}
\cdots\rightarrow {\rR}^{\spinc}_{*+1} (X)\xrightarrow{\partial} {\rm Pos}^{\spinc}_* (X)
\rightarrow
\Omega^{\spinc}_* (X)\xrightarrow{\iota} {\rR}^{\spinc}_{*} (X)\rightarrow \cdots
\end{equation*} 
\end{proposition}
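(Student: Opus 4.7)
The plan is to verify exactness at each of the three intermediate terms by direct cut-and-paste constructions, following the template of Stolz \cite{StolzConc} for the spin Stolz sequence. The product-like conditions near the boundary built into the definitions of $\Pos^{\spinc}_*(X)$ and $\rR^{\spinc}_*(X)$ are precisely what allow the gluings below to preserve the spin$^c$ structure, the hermitian line bundle, the unitary connection, and the positivity of the twisted scalar curvature.

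For exactness at $\Omega^{\spinc}_*(X)$, the composition $\iota\circ(\text{forget})$ vanishes because for a gpsc cycle $(M,\varphi,g,\nabla_L)$ the cylinder $M\times I$ with product data is a spin$^c$ manifold with corners exhibiting $\iota[M,\varphi]=0$ in $\rR^{\spinc}_*(X)$, its side-face carrying the given gpsc. Conversely, an $\rR^{\spinc}$-nullbordism of $\iota[M,\varphi]$ is a spin$^c$ manifold with corners $W$ whose codimension-one faces are $M$, $\emptyset$, and a closed gpsc manifold $(V,\Phi_V,g_V,\nabla_V)$; then $\partial W = M \sqcup V$ in spin$^c$ bordism, so $[M,\varphi]=[V,\Phi_V]$ in $\Omega^{\spinc}_*(X)$, and the latter lies visibly in the image of the forgetful map from $\Pos^{\spinc}_*(X)$.

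For exactness at $\Pos^{\spinc}_*(X)$, that $(\text{forget})\circ \partial = 0$ is immediate; for the converse, if $(M,\varphi,g,\nabla_L)$ is gpsc and bounds a spin$^c$ nullbordism $(W,\sigma_W,\mathcal{L},\Phi)$, then the cycle $[(W,\Phi,g,\nabla_L)] \in \rR^{\spinc}_{*+1}(X)$, obtained by equipping $\partial W = M$ with the given gpsc data, maps to the original class under $\partial$. For exactness at $\rR^{\spinc}_*(X)$, $\partial \circ \iota = 0$ is trivial since a closed manifold has empty boundary; conversely, if $[(M,\varphi,g_\partial,\nabla_\partial)]$ has trivial image in $\Pos^{\spinc}_{*-1}(X)$, then $(\partial M, g_\partial, \nabla_\partial)$ bounds a gpsc manifold $(N,\Phi_N,g_N,\nabla_N)$ with $\partial N = \partial M$. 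Gluing along $\partial M$ produces a closed spin$^c$ manifold $M' = M \cup_{\partial M} N$ with map $\varphi \cup \Phi_N$, and the corner manifold $W = (M \times I) \cup_{\partial M \times \{0\}} N$, with side-face $V = \partial M \times I$ carrying the pulled-back gpsc data, provides an $\rR^{\spinc}$-bordism between $\iota[M',\varphi\cup\Phi_N]$ and $[(M,\varphi,g_\partial,\nabla_\partial)]$.

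The main technical task is collar-neighborhood bookkeeping: each of the corner manifolds built above must be assembled so that the spin$^c$ structures, line bundles, hermitian metrics, and connections agree on overlaps and the product-like conditions persist at every new face. This is routine because on every common face the data has been prescribed and matches by construction, so no additional characteristic-class obstruction intervenes, the spin$^c$ pieces sharing the given line bundles on their boundaries.
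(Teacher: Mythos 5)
Your proposal follows the same route as the paper: the paper's entire proof is a one-line deferral to Stolz's argument in the spin case, and what you have written is exactly that cut-and-paste verification transplanted to the spin$^c$ setting, so the approach matches and the first two exactness checks are fine. The only defect is in the last step, exactness at $\rR^{\spinc}_*(X)$: the object $W=(M\times I)\cup_{\partial M\times\{0\}}N$ is not a manifold (you are attaching the $n$-manifold $N$ to the $(n+1)$-manifold $M\times I$ along an $(n-1)$-dimensional corner), and the proposed side-face $V=\partial M\times I$ is a bordism from $\partial M$ to $\partial M$, whereas Definition \ref{def:R^c} requires $V$ to be a gpsc bordism from $\partial M_0=\partial M$ to $\partial M_1=\emptyset$. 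The correct bordism is $W=M'\times I$ with a corner introduced along $\partial M\times\{0\}$, so that the codimension-one faces are $M_0=M\times\{0\}$, $M_1=M'\times\{1\}$, and $V=N\times\{0\}$ carrying the gpsc data $(g_N,\nabla_N)$ that restricts to $(g_\partial,\nabla_\partial)$ on the single corner $\partial M\times\{0\}$; with this substitution the argument closes.
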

\begin{proof}
The relatively simple proof
from \cite[(4.4)]{StolzConc} in the spin
case carries over to the spin$^c$ case.
\end{proof}
If $f\co X\to Y$ is a continuous map, then we have a covariant group
homomorphism $f_*$ induced on these groups, simply by replacing
$\varphi\co M\to X$ with $f\circ \varphi\co M \to Y$.
\begin{theorem}\label{thm:R^c-2-equivalence}
If $f:X\to Y$ is a 2-equivalence, then the induced
homomorphism
\begin{equation*}
f_* \co  {\rR}^{\spinc}_{*} (X)\rightarrow  {\rR}^{\spinc}_{*} (Y)
\end{equation*} 
is an isomorphism.
\end{theorem}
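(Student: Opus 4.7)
The plan is to establish surjectivity and injectivity of $f_*$ separately, in both cases by spin$^c$ surgery performed in the interior of the underlying manifold (or bordism) --- and thus, crucially, away from the boundary carrying the gpsc structure. Because these surgeries do not touch the gpsc data, we do not require the full gpsc-preserving surgery theorem (Theorem \ref{thm:Crelle4.2}); only the spin$^c$-topological compatibility of surgery is needed, for which we invoke the bundle-twisting trick from the final paragraph of the proof of Theorem \ref{thm:spincbordism} (tensoring the relevant line bundle with the square of an auxiliary bundle so that it becomes trivial on each surgery sphere and the spin$^c$ structure extends across the trace).

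For surjectivity, let $(M,\varphi\co M\to Y, g_{\partial M},\nabla_{L,\partial})$ represent a class in $\mathrm{R}^{\spinc}_n(Y)$. After replacing $f$ by the inclusion into its mapping cylinder, we may assume $X\hookrightarrow Y$ is a cofibration with $(Y,X)$ a 2-connected pair. Since $f_*$ is an isomorphism on $\pi_1$, $\varphi$ lifts without obstruction on the 1-skeleton of $M$; the next obstruction, living in $H^3(M;\pi_2(F))$ for $F$ the 1-connected homotopy fiber of $f$, is Poincar\'e-dual (interior to $M$) to a class in $H_{n-3}(M,\partial M)$, which by $\dim M\ge 5$ can be represented by embedded 2-spheres in the interior of $M$ with trivial normal bundle. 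Spin$^c$-compatible surgery on these 2-spheres kills this obstruction. Iterating with higher-dimensional surgeries (always in codimension $\ge 3$, so spin$^c$-compatibility is preserved and the surgeries remain disjoint from $\partial M$), after finitely many steps one arrives at a manifold $M'$ and a map $\tilde{\varphi}\co M'\to X$ with $f\circ\tilde{\varphi}$ homotopic to the corresponding $\varphi'\co M'\to Y$. The trace $W$ of the surgeries is a spin$^c$ manifold with corners whose codimension-1 faces are $M$, $M'$, and $V=\partial M\times [0,1]$; the last carries the product metric and connection and hence has gpsc, exhibiting the equivalence $f_*[(M',\tilde{\varphi},\ldots)]=[(M,\varphi,\ldots)]$ in $\mathrm{R}^{\spinc}_n(Y)$.

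For injectivity, one argues in the same spirit: if two cycles in $\mathrm{R}^{\spinc}_n(X)$ become equivalent in $\mathrm{R}^{\spinc}_n(Y)$ via a manifold-with-corners $(W,\Phi\co W\to Y,\ldots)$, then performing interior spin$^c$ surgeries on $W$ disjoint from every codimension-1 face $M_0$, $M_1$, $V$ yields an equivalent bordism over which $\Phi$ factors through $f$, proving equivalence already in $\mathrm{R}^{\spinc}_n(X)$. The main obstacle, as in Theorem \ref{thm:spincbordism}, is twofold: ensuring the spin$^c$-compatibility of each surgery via the bundle-twisting adjustment, and verifying that the iteration terminates --- this holds because the obstructions live in $H^k(M)$ with $3\le k\le n$ and the Poincar\'e-dual surgery at each stage strictly reduces the remaining obstruction, while the 2-equivalence hypothesis guarantees that no obstruction appears below degree $3$.
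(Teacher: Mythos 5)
Your key structural claim --- that every surgery can be confined to the interior, away from the gpsc data, so that the gpsc surgery theorem (Theorem \ref{thm:Crelle4.2}) is not needed --- is wrong, and the paper's proof (which imports the Puglisi--Schick--Zenobi argument) turns on exactly the opposite point. If your surjectivity argument were correct, then applied to a cycle with $\partial M=\emptyset$ it would produce a bordism of \emph{closed} manifolds over $Y$ from $(M,\varphi)$ to something whose reference map lifts through $f$; that is, it would show that any $2$-equivalence induces a surjection $\Omega^{\spinc}_n(X)\to\Omega^{\spinc}_n(Y)$. This is false: $\pt\to K(\bZ,3)$ is a $2$-equivalence, yet the generator of $H_3(K(\bZ,3);\bZ)$ is hit by the edge homomorphism from $\Omega^{\spinc}_3(K(\bZ,3))$ and no class with constant reference map can map to it. (More generally, for a $2$-connected pair $(Y,X)$ the relative bordism groups are built from $H_p(Y,X;\Omega^{\spinc}_q)$ with $p\ge 3$, which need not vanish.) The theorem is nevertheless true for ${\rR}^{\spinc}_*$ precisely because the equivalence relation allows $\partial M$ to change through the face $V$, provided $V$ carries gpsc. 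Compressing $\varphi$ into $X$ therefore forces surgeries that meet $\partial M$ (and, in the injectivity step, the face $V$ of the manifold with corners, which is the face carrying the gpsc structure), and the gpsc data must be propagated across the traces of those codimension-$\ge 3$ surgeries. These are the ``two crucial applications'' of Theorem \ref{thm:Crelle4.2} that the paper's proof refers to; your choice of $V=\partial M\times[0,1]$ cannot be arranged in general.

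There is also a problem with the surgery mechanism itself, independent of the above. The Lefschetz dual of the obstruction in $H^3(M;\pi_2(F))$ lies in $H_{n-3}(M,\partial M)$ and is represented by an $(n-3)$-dimensional \emph{relative} cycle --- in particular one meeting $\partial M$ --- not by closed $2$-spheres in the interior; and surgering a submanifold dual to a cohomological obstruction class is not, without substantial further argument, a procedure that kills the obstruction or that terminates. The correct bookkeeping, as in the proof of Theorem \ref{thm:spincbordism} and in Puglisi--Schick--Zenobi, is done with the relative homotopy groups of the trace of the surgeries (e.g.\ $\pi_2(W,M)$, finitely generated over $\bZ[\pi_1]$), not with obstruction cocycles on $M$. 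Your observation about the bundle-twisting trick for spin$^c$-compatibility of the individual surgeries is correct, but it addresses the easy half of the problem.
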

\begin{proof}
The proof given by Puglisi, Schick and Zenobi in the spin
case \cite[Theorem 4.1]{puglisistolzgroups} carries over to this more
general situation once we use the surgery theorem in the spin$^c$
case. Indeed, there are purely topological arguments
in \cite{puglisistolzgroups} that carry over to the spin$^c$ situation
unchanged; there are then two crucial applications of the
Gromov-Lawson surgery theorem, both for the proof of the surjectivity
and for the proof of the injectivity.  These two applications of the
surgery theorem are done on the trace of surgeries of codimension
at least three;
in particular, there are no cancellation arguments involved
and for this reason we can apply Theorem \ref{thm:Crelle4.2} and
proceed as in \cite{puglisistolzgroups} in order to complete the
proof.  We omit the details.
\end{proof}
\begin{corollary}
If $M$ is a totally non-spin spin$^c$ manifold
with fundamental group $\pi$, then the
natural maps 
\[\rR^{\spinc}_{*} (M)\xrightarrow{c'_*} 
    \rR^{\spinc}_{*} (B\gamma')
    \xrightarrow{(\textup{\text{pr}}_2)_*}  \rR^{\spinc}_{*} (B\pi),
\]
with $B\gamma'$ and $c'$ as in Remark \ref{rem:gamma}, are
isomorphisms.
\end{corollary}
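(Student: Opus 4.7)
The plan is to apply Theorem \ref{thm:R^c-2-equivalence} twice, once to each of the two maps in the composition, after verifying that each map is a $2$-equivalence.

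First, I would verify that $c'\co M\to B\gamma'$ is a $2$-equivalence. This is precisely the content of Lemma \ref{lem:Bgamma}: since $M$ is assumed totally non-spin, the classifying map $c'=c_{B\pi}\times w_2$ induces an isomorphism on $\pi_1$ (because the $B\pi$ factor classifies the universal cover of $M$), and $w_2$ applied to $\pi_2(M)$ surjects onto $\pi_2(K(\bZ/2,2))=\bZ/2$ precisely because $\wM$ is non-spin, i.e.\ $w_2(\wM)\ne 0$. Hence $c'$ is a $2$-equivalence, and Theorem \ref{thm:R^c-2-equivalence} gives that $c'_*\co \rR^{\spinc}_{*}(M)\to \rR^{\spinc}_{*}(B\gamma')$ is an isomorphism.

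Next, I would verify that the projection $\textup{pr}_2\co B\gamma'=K(\bZ/2,2)\times B\pi\to B\pi$ is a $2$-equivalence. Since $K(\bZ/2,2)$ is simply connected, $\textup{pr}_2$ induces an isomorphism on $\pi_1$. On $\pi_2$, the target $B\pi$ is a $K(\pi,1)$ and hence has $\pi_2(B\pi)=0$, so the induced map on $\pi_2$ is automatically surjective (indeed it has source $\pi_2(K(\bZ/2,2))=\bZ/2$ and trivial target). Thus $\textup{pr}_2$ is a $2$-equivalence, and another application of Theorem \ref{thm:R^c-2-equivalence} shows that $(\textup{pr}_2)_*\co \rR^{\spinc}_{*}(B\gamma')\to \rR^{\spinc}_{*}(B\pi)$ is an isomorphism.

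Composing the two isomorphisms yields the claim. There is no real obstacle here; the point of the corollary is purely to package Theorem \ref{thm:R^c-2-equivalence} together with the $2$-equivalence established in Lemma \ref{lem:Bgamma} so that $\rR^{\spinc}_{*}$ of the manifold can be replaced by $\rR^{\spinc}_{*}(B\pi)$, which depends only on the fundamental group.
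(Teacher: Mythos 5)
Your proof is correct and takes essentially the same route as the paper's: the paper's own (one-sentence) argument likewise observes that $c'$ is a $2$-equivalence because $M$ is totally non-spin and that $\mathrm{pr}_2$ is a $2$-equivalence, and then invokes Theorem \ref{thm:R^c-2-equivalence}. You have merely spelled out the verification of the two $2$-equivalences in more detail, which is fine.
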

\begin{proof}
Since $M$ is totally non-spin, it
guarantees that $c'\co M\to B\gamma'$ is a
$2$-equivalence, and the projection $\mathrm{pr}_2:
B\gamma' = K(\bZ/2,2)\times B\pi$ is also a $2$-equivalence.
\end{proof}
Next, we wish to prove the following extension result
(cf.\ \cite[Theorem 5.4(1)]{StolzConc}):
\begin{theorem}[Extension Theorem]
\label{th:extension-theorem}
Let $M$ be {\bf totally non-spin} spin$^c$ manifold with boundary
$\p M$, and $\dim M=n\geq 5$.
Let $\pi=\pi_1 (M)$ and let $\varphi\co M\to B\pi$ be a 2-equivalence;
for example $\varphi$ can be the classifying map for
the universal cover of $M$.  Then the following holds:
\begin{enumerate}
\item Let $(W^{n+1},\sigma,\cL)$ be a spin$^c$ manifold with corners,
whose boundary is decomposed as $\partial W=M\cup_{\partial M} V$,
where $M$ and $V$ have the same boundary and the inclusion
$M\hookrightarrow W$ is $2$-connected.  Then,
if a metric $g_V$ and a connection
$\nabla_V$ exist on $V$ such that $(g_V,\nabla_V)$ has gpsc, there exists a
pair $(g_W,\nabla_{\cL})$ extending $(g_V,\nabla_V)$ and with gpsc on
all of $W$.
\item A class
$[(M,\varphi\co M\to B\pi, g_{\partial M}, \nabla_{L_{\partial M}})]=0$ in $\rR^{{\rm spin}^c}_* (B\pi)$ if and only if
$(g_{\partial M}, \nabla_{L_{\partial M}})$ extends to $(g,\nabla_L)$ 
with gpsc on $M$.
\end{enumerate}
\end{theorem}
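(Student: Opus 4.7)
My plan is to reduce Part (2) to Part (1), after an auxiliary modification of the bordism, and to prove Part (1) via Morse-theoretic handle decomposition and the spin$^c$ surgery theorem (Theorem \ref{thm:Crelle4.2}).

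For Part (1), I view $W^{n+1}$ as a spin$^c$ bordism from $V$ to $M$ and choose a Morse function realizing this. The hypothesis that $M\hookrightarrow W$ is $2$-connected means, dually (using \cite[Theorem 3]{MR290387} for geometric $2$-connectedness, as in the proof of Theorem \ref{thm:spincbordism}), that $W$ admits a handle decomposition built from $V\times I$ by attaching handles of index $k\le n-2$; equivalently, going from $V$ to $M$ consists of a sequence of spin$^c$ surgeries on spheres $S^{k-1}\subset V$ with $k-1\le n-3$, i.e.\ of codimension $n-k+1\ge 3$ in the codimension-one boundary slice. Starting from the gpsc pair $(g_V,\nabla_V)$ and applying Theorem \ref{thm:Crelle4.2} at each handle, one propagates a gpsc metric and unitary connection across the trace of each elementary cobordism. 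A finite induction produces $(g_W,\nabla_{\cL})$ with gpsc on all of $W$, extending $(g_V,\nabla_V)$ and restricting to some gpsc pair on $M$. Here ``extending'' is in the product-collar sense, as in Definition \ref{def:R^c}.

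For Part (2), the ``if'' direction is immediate: if $(g_{\partial M},\nabla_{L,\partial})$ extends to a gpsc pair $(g,\nabla_L)$ on $M$, then the cylinder $M\times I$ with its product structures shows $[(M,\varphi,g_{\partial M},\nabla_{L,\partial})]=0$. For ``only if,'' assume the class vanishes. By Definition \ref{def:R^c}, there exists an $(n{+}1)$-dimensional spin$^c$ manifold with corners $(W,\sigma_W,\cL)$, a map $\Phi\co W\to B\pi$ extending $\varphi$, and on the codimension-one face $V\subset\partial W$ with $\partial V=\partial M$, a gpsc pair $(g_V,\nabla_V)$ restricting to $(g_{\partial M},\nabla_{L,\partial})$ along $\partial V$. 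If we can arrange $M\hookrightarrow W$ to be $2$-connected, Part (1) gives $(g_W,\nabla_\cL)$ with gpsc on $W$, whose restriction to $M$ is then the desired extension.

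So the core task is to modify $W$ in its interior (fixing all structures on $\partial W=M\cup_{\partial M}V$) until $M\hookrightarrow W$ is $2$-connected. I repeat the argument of Theorem \ref{thm:spincbordism}: since $\varphi$ is a $2$-equivalence, $\pi_1(M)\cong\pi$, and $\Phi_*\co\pi_1(W)\twoheadrightarrow\pi$ is split surjective; spin$^c$ surgeries on embedded circles in the interior of $W$, with trivial normal bundle (available since $\dim W\ge 6$) and with compatible extension of $\cL$, kill $\ker\Phi_*$ and reduce to $\pi_1(W)\cong\pi$. Next I pass to universal covers $\wM\hookrightarrow \wW$; the totally non-spin hypothesis produces a surjection $w_2(\wM)\co\pi_2(M)\twoheadrightarrow\bZ/2$, and exactly the diagram chase of Theorem \ref{thm:spincbordism} shows $\pi_2(W,M)$ is generated, as a $\bZ[\pi]$-module, by finitely many classes of embedded $2$-spheres in $\mathrm{int}(W)$ lying in $\ker w_2(\wW)$. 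On each such sphere $c_1(\cL)$ is even, so tensoring $\cL$ by the square of an appropriate line bundle trivializes $\cL$ on the sphere, and surgery then extends the spin$^c$ structure across the trace. After finitely many such interior surgeries $M\hookrightarrow W$ is $2$-connected while $\partial W$, together with $(g_V,\nabla_V)$ and $(g_{\partial M},\nabla_{L,\partial})$, is untouched. Applying Part (1) completes the proof.

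The main obstacle is bookkeeping at this last step: one must verify simultaneously that every interior surgery can be performed compatibly with the spin$^c$ structure (the $c_1(\cL)$ parity adjustment), that the classifying map $\Phi$ extends across each trace, and that none of these modifications disturb the gpsc pair already fixed on $V$. Once that is secured, Part (1) is a direct iteration of Theorem \ref{thm:Crelle4.2}.
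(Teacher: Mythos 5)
Your proposal is correct and follows essentially the same route as the paper: Part (1) via Wall's geometric $2$-connectedness to reduce to codimension~$\ge 3$ surgeries handled by Theorem \ref{thm:Crelle4.2} (in its version for manifolds with boundary), and Part (2) by the cylinder for ``if'' and, for ``only if,'' by repeating the interior-surgery argument of Theorem \ref{thm:spincbordism} to make $M\hookrightarrow W$ $2$-connected before invoking Part (1). The only detail worth making explicit is the citation of the boundary/corner version of the surgery theorem (as in \cite{MR962295}) when propagating gpsc across the handles.
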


\begin{proof}
(1) See Figure \ref{fig:extension}.
Assume that the inclusion $M\hookrightarrow W$ is a $2$-equivalence. 
Then we can apply \cite[Theorem 3]{MR290387} to conclude that
that the inclusion is also a geometric $2$-equivalence, i.e.,
has no handles of index $\le 2$, and thus the bordism $W$ from
$V$ to $M$ (since Wall's result holds
in the category of manifolds with boundary and
bordisms with corners) only involves surgeries in codimension
$\ge 3$.  Thus applying Theorem \ref{thm:Crelle4.2}, as extended
to manifolds with boundary as in \cite{MR962295}, we know that there exists $(g_W,\nabla_{\cL})$ extending $(g_V,\nabla_V)$
and with generalized psc on all of $W$.
\begin{figure}[htb]
\begin{picture}(10,0)
\put(150,5){{\small $\p M$}}
\put(142,80){{\small $V$}}
\put(10,-5){{\small $M$}}
\put(20,70){{\small $W$}}
\end{picture}
\includegraphics[height=1.3in]{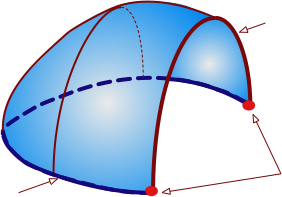}
\caption{Set-up of the Extension Theorem}
\label{fig:extension}
\end{figure}

\noindent
(2) To say that a class $[(M,\varphi\co M\to B\pi, g_{\partial
M}, \nabla_{L_{\partial}})]=0$ in $\rR^{\spinc}_* (B\pi)$ means that
there exists a spin$^c$ manifold with corners $(W,\sigma_W,\cL)$ 
endowed with a map $\Phi\co
W\to B\pi$ satisfying the following properties:
\begin{itemize}
\item 
the  codimension 1 faces of $W$ are $M$ itself and a bordism $V$  
from $\partial M$  to the empty set;
\item  the only codimension 2 corner is $\partial M=M\cap V=\partial V$;
\item $\cL$  restricts to $L$ and $\Phi$ restricts to $\varphi$ on $M$;
\item there exists a Riemannian metric $g_V$ on $V$ restricting to 
$g_{\partial M}$ on $\partial V=\partial M$
and there exists a connection $\nabla_V$ on $L_V:=\mathcal{L}|_{V}$ 
restricting to $\nabla_{L_{\partial M}}$
on $\partial V=\partial M$ so that $(g_V,L_V)$ has gpsc.
\end{itemize}
Assume that $(g_{\partial M}, \nabla_{L_{\partial M}})$ 
extends to $(g,\nabla_L)$ with gpsc on $M$. Then
we can then choose $V=M$ and $W=M\times [0,1]$ (with the corners rounded)  
so that $\cL={\rm pr}^*_1 L$, ${\rm pr}_1=$ projection
from $M\times [0,1]$ onto $M$,
and then obtain that 
$[(M,\varphi\co M\to B\pi, g_{\partial M}, \nabla_{L_{\partial M}})]=0$ 
in $\rR^{\spinc}_* (B\pi)$.

Let us now prove the converse. Because we have 
already proved (1), it suffices 
to show that there exists a modification $W'$ of
$W$ with the additional property that $M\hookrightarrow W'$ is a 2-equivalence.
This, however, is established precisely as in the proof of Theorem \ref{thm:spincbordism}. Indeed, there exists a split surjection $\pi_1 (W)\to \pi$, given that $\pi_1 (M)$ is isomorphic to
$\pi$. Then
we can do spin$^c$ surgeries on 1-spheres with trivial normal bundles  in the interior of $W$, as in the proof of  Theorem \ref{thm:spincbordism}, so as to obtain $W''$
with $\pi_1(W'')$ isomorphic to $\pi_1 (M)$ and 
$w_2 (W'')= c_1 (\cL) \mod 2$. 
Next,  using the additional hypothesis
that $M$ is totally non-spin and proceeding as in the proof of Theorem \ref{thm:spincbordism}, we can modify $W''$ to $W'$
by additional spin$^c$ surgeries to
arrange so that $\pi_2 (M)$ surjects onto $\pi_2 (W')$, still ensuring that $w_2 (W')= c_1 (\mathcal{L}) \;{\rm mod} \; 2$.  This implies
that $M\hookrightarrow W'$ is a 2-equivalence.
\end{proof}

The following Lemma will be needed later when we 
want to apply the Extension Theorem
(Theorem \ref{th:extension-theorem}), in order to ensure that
the ``totally non-spin hypothesis'' is met.

\begin{lemma}
\label{lem:makingtotallynonspin}
Let $\pi$ be a finitely presented group.
Every class in $\rR^{\spinc}_n (B\pi)$ with $n\geq 6$ is represented by some 
4-tuple
$(M,\varphi\co M\to B\pi, g_{\partial M}, \nabla_{L_{\partial}})$ with $M$ spin$^c$,
$\varphi_*$ an isomorphism on $\pi_1$, and $M$ totally non-spin.
\end{lemma}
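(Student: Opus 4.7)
The plan is to start from any representative $(M_0, \varphi_0\co M_0 \to B\pi, g_{\partial M_0}, \nabla_{L_0,\partial})$ of the given class and modify $M_0$ by a sequence of interior spin$^c$ operations, keeping the boundary data fixed, so as to arrange the three desired properties. Each modification will be realized by a spin$^c$ bordism-with-corners $W$ whose side face is $V=\partial M_0\times[0,1]$ with the product metric and connection; this inherits gpsc from $(g_{\partial M_0},\nabla_{L_0,\partial})$, so every step preserves the class in $\rR^{\spinc}_n(B\pi)$.

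First, to arrange surjectivity of $\varphi_*\co \pi_1(M_0)\to \pi$, I would pick finitely many elements $g_1,\dots,g_k\in \pi$ generating $\pi$ modulo the image of $\varphi_{0,*}$ (possible since $\pi$ is finitely presented) and, for each $g_j$, form the interior connect-sum of $M_0$ with $S^1\times S^{n-1}$, extending $\varphi_0$ so that the new circle maps to a loop representing $g_j$. Since $S^1\times S^{n-1}=\partial(S^1\times D^n)$ as spin$^c$ manifolds, this connect-sum is obtained from $M_0$ by a spin$^c$ bordism rel $\partial M_0$, namely the boundary connect sum of $M_0\times[0,1]$ with $S^1\times D^n$ at interior points of their top faces; the side face remains the product $\partial M_0\times[0,1]$.

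Second, to make $M$ totally non-spin, I would take the interior connect-sum with $N:=\bC\bP^2\times S^{n-4}$, equipped with the spin$^c$ structure coming from $\cO(1)$ on $\bC\bP^2$ and the spin structure on $S^{n-4}$. For $n\geq 6$ the manifold $N$ is simply connected, spin$^c$ with $w_2(N)\neq 0$, and spin$^c$-null-bordant via $\bC\bP^2\times D^{n-3}$; as in the first step the interior connect-sum preserves the $\rR^{\spinc}_n(B\pi)$-class, and by Van Kampen it preserves $\pi_1$, so $\varphi_*$ remains surjective. The universal cover of $M_0\#N$ contains a connect-summand copy of $N$ over each preimage of the connect-sum basepoint, each carrying the pullback of $w_2(N)\neq 0$, so $M_0\#N$ is totally non-spin.

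Third, to kill $\ker\varphi_*$, I would perform spin$^c$ $1$-surgeries on embedded loops in the interior of $M_0\#N$ representing a finite set of normal generators of the kernel. Such loops can be isotoped to be disjoint from the $N$-summand region; they have trivial normal bundle (the structure group $SO(n-1)$ is connected), and any complex line bundle on $S^1$ is trivial, so each $1$-surgery is spin$^c$-compatible, and its trace is a spin$^c$ bordism rel $\partial M_0$ with side face $\partial M_0\times[0,1]$. Because the surgeries avoid the $N$-summand, the universal cover still contains $N$-summands with nontrivial $w_2$, so the totally non-spin property persists. The main obstacle will be Step~2: verifying that the bordism from $M_0$ to $M_0\#N$ (the boundary connect sum of $M_0\times[0,1]$ with $\bC\bP^2\times D^{n-3}$) carries a coherent spin$^c$ structure and spin$^c$ line bundle restricting correctly to the original data on the two top/bottom faces---this is precisely where the spin$^c$-nullbordism of $N$, together with the assumption $n\geq 6$ ensuring $N$ is simply connected, enters in an essential way.
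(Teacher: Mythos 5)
Your proposal is correct and follows essentially the same route as the paper: the paper likewise fixes $\pi_1$ by interior spin$^c$ surgeries on $0$- and $1$-spheres and achieves the totally non-spin condition by connected sum with $\bC\bP^2\times S^{n-4}$ (spin$^c$ null-bordant via $\bC\bP^2\times D^{n-3}$), treating the zero class by an explicit such construction starting from a regular neighborhood of a $2$-complex. The only point worth tightening is your justification of spin$^c$-compatibility of the $1$-surgeries: triviality of the line bundle on $S^1$ is not the whole story---one must also choose the normal framing of each circle so that the underlying spin-type structure extends over the $2$-handle, which is always possible since $\pi_1(SO(n-1))=\bZ/2$ acts transitively on the relevant choices.
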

\begin{proof}
We start by showing that $0\in \rR^{\spinc}_n (B\pi)$ has such a
representative. To this end we consider a finite 2-complex $K$ with
$\pi_1 (K)$ isomorphic to $\pi$. Embed $K$ into $\bR^n$ and let $M'$
be a closed regular neighborhood of $K$ in $\bR^n$. $M'$ is a
codimension 0 submanifold of $\bR^n$, hence parallelizable and spin.
Note that $\pi_1 (\partial M')\cong \pi_1 (M')\cong \pi$. By
\cite[Theorem 2.1]{MR4703039}, $M'$ admits a psc metric,
which we can consider as a gpsc metric with respect to
the trivial line bundle and the flat connection on it.

The manifold $\bC \bP^2\times S^{n-4}$ with line bundle coming from
$\cO (1)$ on $\bC \bP^2$ is simply connected, totally non-spin, with
gpsc with respect to the product of the Fubini-Study metric on
$\bC\bP^2$ and the round metric on $S^{n-4}$, using the standard
connection on $\cO (1)$.  Define
\begin{equation*}
M:= M'\,\sharp\, (\bC \bP^2\times S^{n-4}),
\end{equation*}
Then $M$ comes equipped with a line bundle $L$, obtained from $\cO (1)$ on $\bC\bP^2$ in
the obvious way, ensuring that $M$ is indeed spin$^c$. 
Moreover, $M$ is totally non-spin, with
fundamental group $\pi$.
Choose a spin$^c$ structure $\sigma$ inducing such a line bundle.
  By the Surgery Theorem,
Theorem \ref{thm:Crelle4.2}, $(M,\sigma,L)$ admits gpsc via the $0$-surgery
that converts $M' \sqcup (\bC \bP^2\times S^{n-4})$ to $M$.

The cases of 
non-zero bordism classes can be handled similarly,
by first reducing via surgery to the case where the
classifying map to $B\pi$ is an isomorphism on $\pi_1$,
and then taking a connected sum with $\bC \bP^2\times S^{n-4}$.
\end{proof}

\section{Concordance classes and action of the $\rR$-group}
\label{sec:conc}
Let $(M,\sigma,L)$ be a connected compact spin$^c$ manifold, for the time
being without boundary. Let $\pi=\pi_1 (M)$. For a
cylinder $M\times [0,1]$, we denote by ${\rm pr}_1\co M\times [0,1]\to
M$ the projection on the first factor.
\begin{definition}\label{def:concordance^c}
Two pairs $(g_0,\nabla^0)$ and $(g_1,\nabla^1)$ on
$(M,\sigma,L)$ with gpsc are \emph{concordant} if there exist a metric $\bar
g$ on $M\times [0,1]$ and a connection $\nabla^{\cL}$ on $\cL={\rm
pr}_1^*L$,
such that
$(\bar g,\nabla^{\cL})$ has generalized positive scalar curvature,
restricts to $(g_j,\nabla^j)$ on $M\times \{j\}$, $j=0,1$, and is of
product type in a neighborhood of the boundary.

The concordance set $\cP^c (M,\sigma,L)$ (or just $\cP^c (M,L)$
when $\sigma$ is understood) is defined as the set of concordant classes of
pairs $(g,\nabla)$ with gpsc.
%
%
%
\end{definition}
Our objective in this section is to show how the concordance
set $\cP^c (M,L)$ can be
studied via the $\rR^{\spinc}_{*}(X)$-groups of
section \ref{sec:Stolz}, following the paradigm in
\cite[\S5]{StolzConc}.
The set $\cP^c (M,L)$ has a group structure 
\textbf{if $M$ is totally non-spin}. This group
structure depends on a choice of a reference pair $(g_0,\nabla^0)$
satisfying the gpsc condition. We describe this group structure now,
following the treatment in the spin case given by
Weinberger-Yu \cite[\S4]{MR3416114}.

Fix a reference pair $(g_0,\nabla^0)$ satisfying the gpsc
condition. Let $[(g,\nabla)]$ and $[(g',\nabla^{\prime})]$ be two
elements in $\cP^c (M,L)$; we want to define their sum.  Consider two
cylinders $M\times [0,1]$ and then take their
connected sum away from their boundaries
\begin{equation*}
(M\times [0,1])\,\# 
(M\times [0,1])\,.
\end{equation*}
The fundamental group of $(M\times [0,1])\,\sharp\, (M\times [0,1])$
is $\pi * \pi$; we do spin$^c$ surgeries in the interior so as to
ensure that we get a manifold $Z$ with fundamental
group $\pi$.  Thus $Z$ comes with a classifying map
into $B\pi$.  Now we consider the boundary of $Z$
which is the union of 4 copies of $M$; consider $(g,\nabla)$ and
$(g',\nabla^{\prime})$ on the first two copies, those corresponding to
the first cylinder, and $(g_0,\nabla^0)$ on the third copy.
Let $N$ be
equal, by definition, to the union of these 3 copies; $N$ is spin$^c$
bordant to the fourth copy, that is $M$, and this takes into account
the classifying maps into $B\pi$. Now apply the bordism theorem in the
spin$^c$ case, Theorem \ref{thm:spincbordism}, and get a metric $g''$
and a connection $\nabla''$ on the fourth component of $\partial X$,
which is by construction $M$, which together have gpsc. We set
\begin{equation}\label{group-structure-1}
[(g,\nabla)]+[(g',\nabla^{\prime})]:= [(g'',\nabla^{\prime\prime})]\,.
\end{equation}
This determines a well-defined sum on $\cP^c (M,L)$.
We are interested in giving lower bounds for the rank of this group
$\cP^c(M,L)$.

In the spin case there is a different but equivalent description of
this group structure; this pre-dates the Weinberger-Yu definition and
it is due to Stolz \cite{StolzConc}. For the sake
of clarity in our constructions, we briefly review the spin case (with
the obvious change in notations).

If $M$ is a spin manifold with $\dim M = n$ and
fundamental group $\pi$, then Stolz proves that there exists a free
and transitive action of $\rR^{\spin}_{n+1}
(M)\equiv \rR^{\spin}_{n+1} (B\pi)$ on $\cP (M)$, the set of
concordance classes of psc metrics on $M$.%
\footnote{Stolz employs the notation $\widetilde{\pi}_0 \cR^+ (M)$ for the set of concordance classes 
of psc metrics on $M$, since by the ``isotopy implies concordance''
lemma (e.g., \cite[Proposition 3.3]{MR1818778}), this is a quotient of ${\pi}_0 \cR^+ (M)$.}
This is achieved by defining a pairing
\begin{equation}\label{stolz-pairing}
\iota\co \cP (M)\times \cP (M)\to \rR^{\spin}_{n+1} (M)
\end{equation}
satisfying 
\begin{itemize}
  \item[(i)] $\iota([g],[g])=0$ and $\iota([g_0],[g_1])+ \iota([g_1],[g_2])= \iota([g_0],[g_2])$;
  \item[(ii)] for any fixed $[g_0]$ the map 
  \[ \iota_{[g_0]}\co \cP (M)\ni [g]\mapsto \iota([g_0],[g])\in \rR^{\spin}_{n+1} (M) \]
  is a bijection.
\end{itemize}
The action of $\rR^{\spin}_{n+1} (M)$ on $\cP (M)$ is then defined as
$x\cdot [g]:= \iota_{[g]}^{-1} (x)$, $x\in \rR^{\spin}_{n+1} (M)$,
$[g]\in\cP (M)$. Then the map $\iota([g_0],[g_1])$ is
given by the class
\begin{equation*}
[(M\times [0,1], {\rm pr}_1\co M\times [0,1]\to M,
g_0 \sqcup g_1)]
\in \rR^{\spin}_{n+1} (M).
\end{equation*}
We can then define a group structure on $\cP (M)$ by using the group
structure on ${\rm R}^{\spin}_{n+1} (M)$ and the free transitive action of
this group on $\cP (M)$. This group structure is equivalent to the one
defined by Weiberger and Yu; see Xie-Yu-Zeidler
\cite[Proposition 4.1]{MR4292958} for a proof of this equivalence. 

\begin{remark}\label{rem:actiononconc}
There is also still another way to understand the action of
$\rR^{\spin}_*(M)$
on the concordance classes $\cP (M)$ of psc metrics.
For simplicity we explain this just in the simply connected case with
$M=S^n$.\footnote{This is, in fact, the most general case for simply
connected spin manifolds $M^n$ with $n\ge 5$ 
and $\alpha(M)=0$, for then 
the space $\cR^+(M^n)$ is always homotopy
equivalent to $\cR^+(S^n)$; see \cite{MR4767495}.}
We choose a basepoint for $\cP(S^n)$ given by the concordance
class of the standard round metric.

Then a class in
$\rR^\spin_{n+1}(S^n)=\rR^\spin_{n+1}(\pt)$ is represented by an
$(n+1)$-dimensional spin manifold $W$ with boundary $M'$, together
with a psc metric $g'$ on $M'$.  Since $M'$ is null-bordant, $g'$
corresponds to a metric $g$ on $S^n$, which can be obtained by pushing
the metric $g$ through the bordism $W$ to the boundary of a small disk
$D^{n+1}$.  It's easy to see that this gives an identification of
$\rR^\spin_{n+1}(\pt)$ with $\cP(S^n)$.  See
Figure \ref{fig:concsphere}.  The resulting group structure on
$\cP(S^n)$ was also described slightly differently
in \cite[Proposition 3.1]{MR962295}.
\end{remark}
\begin{figure}[htb]
\begin{picture}(10,0)
\put(70,-10){{\small $M'^{n}$}}
\put(0,70){{\small $W^{n+1}$}}
\put(80,123){{\small $S^{n}$}}
\put(45,125){{\small $D^{n+1}$}}
\end{picture}
\includegraphics[height=1.7in]{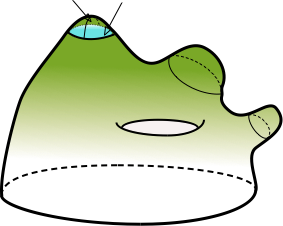}
\caption{The identification of $\rR_{n+1}^{\spin}(\pt)$
and $\cP(S^n)$}
\label{fig:concsphere}
\end{figure}

Now we want to carry the theory of the action of the
$\rR$-groups on concordance classes over to the spin$^c$ case.
Following Stolz's program, we would like to define a pairing
\begin{equation}\label{stolz-c-pairing}
  \iota\co \cP^c (M,L)\times \cP^c (M,L)\to \rR^{\spinc}_{n+1} (M)
\end{equation}
when $(M,\sigma,L)$ is spin$^c$ and totally non-spin and prove that this can
be done ensuring the analogues of (i) and (ii) above
(as listed right after equation \eqref{stolz-pairing}).  We follow
exactly the program of \cite[\S5]{StolzConc},
using \cite{MR192509} just as in Stolz's proof of
Proposition 5.8. 
\begin{proposition}[Cf.\ {\cite[Proposition 5.8]{StolzConc}}]
\label{prop:Stolz5.8}
Let $N$ be a connected manifold with boundary of dimension 
$n\ge 6$ which is simple homotopy equivalent to a connected
finite $2$-complex $K$ with fundamental group $\pi$,
and assume that $N$ is equipped with a spin$^c$
structure with spin$^c$ line bundle $L$
for which the associated map $c_N\co N\to B\gamma'$  
{\lp}in the notation of Remark \ref{rem:gamma}{\rp} 
is a $2$-equivalence. {\lp}This requires that $N$
be totally non-spin.{\rp}
Then every element of $\rR^{\spinc}_n(K)$ can be represented by
some $(N,g,L\otimes L', \nabla, h)$ with $g$ a psc metric on $\partial N$,
$h$ a simple homotopy equivalence $N\to K$,
$L'$ an even line bundle {\lp}one whose $c_1$ reduces
mod $2$ to $0${\rp} and $\nabla$ a connection on
$L\otimes L'$ giving $(\partial N, (L\otimes L')|_{\partial N})$
gpsc.
\end{proposition}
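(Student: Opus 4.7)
The plan is to follow Stolz's strategy for the spin case \cite[Proposition 5.8]{StolzConc}, substituting Theorem \ref{thm:Crelle4.2} for the Gromov--Lawson surgery theorem and Theorem \ref{thm:spincbordism} for its spin analogue. Start with an arbitrary representative cycle $(M,\varphi\co M\to K,g_{\partial M},\nabla_\partial)$ for the given class in $\rR^{\spinc}_n(K)$. First I would arrange, by spin$^c$ surgery in the interior of $M$ (which is realized by a bordism keeping the class fixed and does not disturb the boundary data), that $M$ is connected and that $\varphi_*\co \pi_1(M)\to \pi$ is an isomorphism. If $M$ is not already totally non-spin, I would also form a connected sum in the interior with $\CP^2\times S^{n-4}$ carrying the line bundle pulled back from $\cO(1)$, exactly as in the proof of Lemma \ref{lem:makingtotallynonspin}, so that after this step $M$ is totally non-spin.

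Next I would promote $\varphi$ to a $2$-equivalence by spin$^c$ surgery in the interior of $M$ on embedded $2$-spheres representing, as a $\bZ[\pi]$-module, generators of $\ker(\varphi_*\co \pi_2(M)\to\pi_2(K))$. Because $M$ is totally non-spin, the argument used in the proof of Theorem \ref{thm:spincbordism}---using the commuting diagram for $w_2(\wM)$ and tensoring the spin$^c$ line bundle of $M$ with the square of a correcting line bundle supported away from $\partial M$---allows these surgeries to be performed compatibly with the spin$^c$ structure, without touching the given data on $\partial M$.

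With $\varphi$ now a $2$-equivalence, I would embed $K$ into the interior of $M$ along a map homotopic to a lift of $\varphi$ (available by general position since $n\ge 6\ge 2\cdot 2+2$) and let $N'$ be a closed regular neighborhood of this embedded $K$. Then $N'$ is a codimension-zero spin$^c$ submanifold of $M$ which is simple homotopy equivalent to $K$ via the deformation retraction. By Wall's uniqueness theorem for thickenings of finite complexes \cite{MR192509}, $N'$ is diffeomorphic to the given $N$ via a diffeomorphism compatible with the simple homotopy equivalences to $K$, so we may take $h$ to be the resulting map $N\to K$. Transporting the inherited spin$^c$ line bundle on $N'$ across this diffeomorphism produces a spin$^c$ line bundle on $N$ whose $c_1$ has the same mod $2$ reduction as $c_1(L)$ (namely $w_2(N)$), and hence differs from $L$ by an even line bundle $L'$; the inherited bundle is therefore of the form $L\otimes L'$.

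Finally, the complement $W:=\overline{M\setminus \mathrm{int}(N')}$ is a compact spin$^c$ cobordism with corners from $\partial N'$ to $\partial M$, and the inclusion $N'\hookrightarrow M$ is by construction a $2$-equivalence. Then \cite[Theorem 3]{MR290387} provides a handle decomposition of $W$ relative to $\partial M$ with all handles of index $\ge 3$, and Theorem \ref{thm:Crelle4.2}, in the boundary form of \cite{MR962295}, propagates $(g_{\partial M},\nabla_\partial)$ backward through these codimension-$\ge 3$ surgeries to a gpsc pair $(g,\nabla)$ on $(\partial N',(L\otimes L')|_{\partial N'})$; the underlying metric $g$ is automatically psc, since gpsc forces $R_g>2|\Omega_{L\otimes L'}|_{\mathrm{op}}\ge 0$. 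I expect the main obstacle to be the thickening identification step, where the totally non-spin hypothesis on $N$ is essential both to make Wall's theorem applicable in a spin$^c$-compatible way and to guarantee that the inherited and the given spin$^c$ structures on $N$ differ by precisely an even line bundle.
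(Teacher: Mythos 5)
Your overall architecture coincides with the paper's: normalize the representative to a totally non-spin one by the method of Lemma \ref{lem:makingtotallynonspin}, embed a thickening $N'$ of $K$ in the interior of $M$, identify $N'$ with $N$ via Wall's classification of thickenings, and propagate the gpsc data across the complement using Wall's handle theorem together with Theorem \ref{thm:Crelle4.2} (that combination is exactly the Extension Theorem \ref{th:extension-theorem}, which is what the paper invokes). The genuine gap is at the thickening-identification step --- precisely the point you flag as ``the main obstacle'' without resolving it. Wall's uniqueness theorem identifies two thickenings of $K$ in the stable range only when their stable normal bundles agree as bundles over $K$; since $N$ and $N'$ are orientable and $K$ is $2$-dimensional, this amounts to $w_2(N')$ and $w_2(N)$ having the same image in $H^2(K;\bZ/2)$ under the two homotopy equivalences to $K$. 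Your embedding of $K$ is a homotopy section $s$ of $\varphi\co M\to K$, which controls only the $\pi_1$-data; nothing forces $s^*w_2(M)$ to equal the class determined by $w_2(N)$. (For instance $w_2(M)$ could restrict to zero on the embedded copy of $K$ while $w_2(N)$ is nonzero on $H_2(K)$, in which case $N'$ is a trivial thickening and is not diffeomorphic to $N$.) This is exactly where the hypothesis that $c_N\co N\to B\gamma'=K(\bZ/2,2)\times B\pi$ is a $2$-equivalence must enter: the paper builds the map $j\co K\to M$ not as a section of $\varphi$ but as a lift of $c_N\circ\alpha\co K\to B\gamma'$ (with $\alpha\co K\to N$ the homotopy equivalence) through the $2$-equivalence $c_M$, which exists by obstruction theory because the homotopy fiber of $c_M$ is $1$-connected and $K$ is $2$-dimensional, and which forces $j^*w_2(M)=\alpha^*w_2(N)$. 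Your argument never uses the $K(\bZ/2,2)$ factor of $B\gamma'$, so it cannot recover this matching.

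A secondary point: your assertion that ``the inclusion $N'\hookrightarrow M$ is by construction a $2$-equivalence'' does not follow from $\varphi$ being a $2$-equivalence, since a homotopy section of a $\pi_2$-surjection need not itself be surjective on $\pi_2$ (indeed $\pi_2(K)$ may vanish while $\pi_2(M)\ne 0$, as it must be for $M$ totally non-spin). What the final propagation step needs is that $\partial N'\hookrightarrow C$ be ($2$-connected, hence by Wall geometrically) $2$-connected, so that the surgeries read from $\partial M$ toward $\partial N'$ have codimension at least $3$; this is the statement to verify, and your justification as written is not the relevant one.
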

\begin{proof}
We follow Stolz's method.  Suppose a class in 
$\rR^{\spinc}_n(K)\cong \rR^{\spinc}_n(B\pi)$
is represented by some manifold $M^n$ with boundary and suitable
line bundle and a map to $B\pi$, 
with a metric and connection on $\partial M$
with gpsc.  
By Lemma \ref{lem:makingtotallynonspin}, we can
assume that $M$ is totally non-spin, and thus that the
classifying map $c_M\co M\to B\gamma'$ is a $2$-equivalence.
Since we have a homotopy equivalence $\alpha\co K\to N$ and
$c_N\co N\to B\gamma'$ is also a $2$-equivalence, there
is a map $j\co K\to M$ such that 
$c_N\circ\alpha\simeq c_M\circ j$. By the Embedding Theorem
in \cite[\S2]{MR192509}, since $\dim K = 2 < \dim M -3$
and the map $j\co K\to M$ is a $2$-equivalence, there is
an embedding $i\co N'\hookrightarrow M$ of
a compact codimension-$0$ submanifold $N'$ of $M$ with
$\pi_1(\partial N')\cong\pi_1(N')$ and a simple
homotopy equivalence $j'\co K\to N'$ with $j\simeq i\circ j'$.
Note that $\alpha\co K\to N$ and $j'\co K\to N'$ are both
thickenings of $K$ in the sense of \cite{MR192509}.
By \cite[Proposition 5.1]{MR192509}, thickenings in the
stable range are classified by their stable normal bundles,
which in this case are the same, and so 
$N'$ and $N$ are diffeomorphic.  Identifying $N$ with $N'$,
we get a decomposition of $M$ as $M=N\cup_{\partial N} C$
for some complement $C$, as shown in Figure \ref{fig:Stolz5-8}.
\begin{figure}[htb]
\vspace*{5mm}

\begin{picture}(10,30)
\put(120,-10){{\small $M$}}
\put(32,3){{\small $C$}}
\put(80,103){{\small $N$}}
\put(137,103){{\small $K$}}
\end{picture}
\includegraphics[height=1.4in]{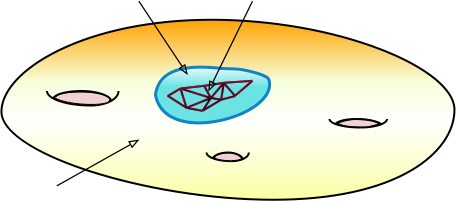}

\caption{Decomposition of $M=N\cup_{\partial N}C$,
where $N$ thickens $K$}
\label{fig:Stolz5-8}
\end{figure}
The inclusion of $\partial N$ into $C$ is a $2$-equivalence,
so we are in a position to apply the Extension Theorem,
Theorem \ref{th:extension-theorem}, to extend gpsc
from $\partial M$ across $C$ to $\partial N$.
Then the bordism coming from $C$ shows that our
original class in $\rR^{\spinc}_n(K)$ is equivalent
to the class represented by $N$ and the restriction of
the bundle, metric, and connection from $M$ to $N$.
Now this bundle on $N$ may not be the same as the one
original given to us in the hypothesis of the proposition,
but its Chern class must have the same reduction mod $2$, and hence
our class is represented by
some $(N,g,L\otimes L', \nabla)$ with $g$ a psc metric
on $\partial N$,
$L'$ an even line bundle {\lp}one whose $c_1$ reduces
mod $2$ to $0${\rp} and $\nabla$ a connection on
$L\otimes L'$ giving $(\partial N, (L\otimes L')|_{\partial N})$
gpsc.
\end{proof}
Now we are ready for the main classification theorem.  Unfortunately
there are no cases where we can compute the $\rR^{{\spin}^c}$ groups
exactly in high dimensions, but later we will at least give some
bounds on their size, and just the existence of a group acting simply
transitively on $\cP^c(M,L)$ already has some
interesting formal consequences.
\begin{theorem}[Classification Theorem]
\label{thm:classification}
Let $(M,\sigma,L)$ be a totally non-spin spin$^c$
$n$-manifold, $n\ge 5$, admitting a metric and connection with
generalized psc.  If $M$ has a boundary, fix a metric $h$ and
connection $\nabla_{\partial M}$ on $\partial M$ so that $(\partial M,
L|_{\partial M},h,\nabla_{\partial M})$ has gpsc; then all metrics and
connections considered on $M$ are to be of product type near the
boundary restricting to $(h,\nabla_{\partial M})$ on the boundary.  In
this situation, $\rR^{{\spin}^c}_{n+1}(M)$ acts simply transitively on
$\cP^c(M,L)$.
\end{theorem}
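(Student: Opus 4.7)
The plan is to adapt Stolz's proof in the spin case \cite{StolzConc} to the spin$^c$ setting, leveraging the machinery developed in the preceding sections, in particular the Extension Theorem (Theorem \ref{th:extension-theorem}), the spin$^c$ Bordism Theorem (Theorem \ref{thm:spincbordism}), Theorem \ref{thm:R^c-2-equivalence}, and Proposition \ref{prop:Stolz5.8}. The strategy is to construct the pairing $\iota$ of \eqref{stolz-c-pairing}, verify the cocycle-type properties listed after \eqref{stolz-pairing}, and then define $x\cdot [g,\nabla] := \iota_{[g,\nabla]}^{-1}(x)$.

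First I would define
\begin{equation*}
\iota([g_0,\nabla^0],[g_1,\nabla^1]) := [M\times[0,1],\;\pr_1,\;g_0\sqcup g_1,\;\nabla^0\sqcup\nabla^1]\in \rR^{\spinc}_{n+1}(M),
\end{equation*}
where the cylinder carries the pullback spin$^c$ structure and line bundle $\pr_1^* L$, and all metrics and connections are of product type near the boundary. Well-definedness on concordance classes is immediate from Definition \ref{def:concordance^c}. Two properties would need verification: (i) $\iota([g,\nabla],[g,\nabla])=0$, which would follow by taking the filling manifold-with-corners $W = M \times D^2_+$ (with $D^2_+$ a half-disk) and placing the product gpsc structure on the boundary face $V = M \times [0,1]$; (ii) the cocycle relation $\iota([g_0],[g_1]) + \iota([g_1],[g_2]) = \iota([g_0],[g_2])$, proved by gluing $M\times[0,1]$ and $M\times[1,2]$ along $M\times\{1\}$ into $M\times[0,2]$, which furnishes the required $\rR^{\spinc}$-equivalence between the disjoint union cycle and the length-$2$ cylinder.

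The crux of the argument is to show that, for each fixed $[g_0,\nabla^0]$, the map $\iota_{[g_0,\nabla^0]}\co [g,\nabla] \mapsto \iota([g_0,\nabla^0],[g,\nabla])$ is a bijection. For injectivity, I would suppose $\iota([g_0,\nabla^0],[g_1,\nabla^1])=0$ and apply Theorem \ref{th:extension-theorem}(2) to the cylinder $M\times[0,1]$ equipped with the classifying map to $B\pi$ (which is automatically a $2$-equivalence since $\pi_2(B\pi)=0$, while $M\times[0,1]$ is totally non-spin because $M$ is), obtaining a gpsc pair extending $(g_0,\nabla^0)\sqcup(g_1,\nabla^1)$ across all of $M\times[0,1]$; this is precisely a concordance. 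For surjectivity, given $x\in \rR^{\spinc}_{n+1}(M)$, I would use Theorem \ref{thm:R^c-2-equivalence} to transport $x$ to a class in $\rR^{\spinc}_{n+1}(B\pi)$, apply Proposition \ref{prop:Stolz5.8} with $K$ a finite $2$-complex for $\pi$ to obtain a representative supported on a thickening of $K$, and then deform this representative via spin$^c$ surgeries of codimension $\geq 3$ in the interior (justified by Theorem \ref{thm:spincbordism} together with Theorem \ref{thm:Crelle4.2}) into one of the form $[M\times[0,1],\pr_1,g_0\sqcup g_1,\nabla^0\sqcup\nabla^1]$ with $(g_0,\nabla^0)$ prescribed on $M\times\{0\}$; the other boundary datum then defines the sought $[g_1,\nabla^1]$.

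Once bijectivity is established, the assignment $x\cdot[g,\nabla]:=\iota_{[g,\nabla]}^{-1}(x)$ is automatically a free and transitive action, and the cocycle identity confirms it is a genuine group action (with equivalence between Weinberger--Yu's and Stolz's descriptions handled as in \cite{MR4292958}). I expect the main obstacle to be the surjectivity step: Proposition \ref{prop:Stolz5.8} only produces a representative on a thickening of a $2$-complex for $\pi$, whereas one needs a representative on $M\times[0,1]$. Closing this gap will require careful control of the spin$^c$ line bundle under the relevant interior surgeries (possibly after tensoring with an even line bundle, as Proposition \ref{prop:Stolz5.8} already permits), and the totally non-spin hypothesis on $M$ is precisely what ensures these modifications preserve the $2$-equivalences needed, in parallel with the proof of Theorem \ref{thm:spincbordism}.
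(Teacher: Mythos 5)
Your construction of the pairing $\iota$, the verification of the cocycle identities, and the injectivity argument via part (2) of the Extension Theorem all match the paper's proof. The problem is surjectivity, which the paper itself identifies as the hard part, and which you explicitly leave open: your proposed route — take the representative on a thickening of a $2$-complex for $\pi$ supplied by Proposition \ref{prop:Stolz5.8} and then ``deform it via interior spin$^c$ surgeries into a cylinder $M\times[0,1]$ with $(g_0,\nabla^0)$ prescribed on one end'' — is not a step that surgery can perform. Surgery in the interior changes the bordism class of the total space but gives you no mechanism for (a) turning an arbitrary thickening of a $2$-complex into the specific manifold $M\times I$, nor (b) prescribing the gpsc data on a chosen boundary face. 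The missing idea is geometric, not a matter of line-bundle bookkeeping as you suggest.

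What the paper actually does is the following. Take $K$ to be the $2$-skeleton of $M$ and let $N$ be a thickening of $K$ \emph{embedded inside} $M\times I$; one checks $N$ is totally non-spin because the embedded $2$-sphere in $M$ with $w_2\ne 0$ can be pushed into $N$ (a hypothesis you would also need for Proposition \ref{prop:Stolz5.8} and do not verify). Proposition \ref{prop:Stolz5.8} then represents the given class $x\in\rR^{\spinc}_{n+1}(M)$ by gpsc data on $\partial N$ for a suitable spin$^c$ structure on this particular $N$. Now decompose $M\times I=N\cup_{\partial N}C$. The complement $C$ is a bordism with corners whose other faces are $M\times\{0\}$, $\partial M\times I$ and $M\times\{1\}$; feed it the data $[g_0,\nabla_0]$ on $M\times\{0\}$, the product data on $\partial M\times I$, and the given data on $\partial N$, and apply part (1) of the Extension Theorem (using that $M\times\{1\}\hookrightarrow C$ is a $2$-equivalence) to extend gpsc across $C$. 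The restriction to $M\times\{1\}$ \emph{produces} the sought $[g_1,\nabla_1]$, and $C$ itself is the $\rR^{\spinc}_{n+1}(M)$-equivalence between $(N,\dots)$ and the cylinder class $\iota([g_0,\nabla_0],[g_1,\nabla_1])$. In short: the second metric is manufactured by the Extension Theorem applied to the complement of the thickening, not found by normalizing the representative through surgery. Without this decomposition your argument does not close.
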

\begin{proof}
As with \eqref{stolz-pairing}, we define
a pairing
\begin{equation}\label{stolz-pairing1}
\iota\co \cP^c (M,L)\times \cP^c (M,L)\to \rR^{\spinc}_{n+1} (M)
\end{equation}
satisfying 
\begin{itemize}
  \item[(i)] $\iota([g,\nabla],[g,\nabla])=0$ and 
  $\iota([g_0,\nabla_0],[g_1,\nabla_1])+ \iota([g_1,\nabla_1],[g_2,\nabla_2])= \iota([g_0,\nabla_0],[g_2,\nabla_2])$;
  \item[(ii)] for any fixed $[g_0,\nabla_0]$ the map 
  \[
  \iota_{[g_0,\nabla_0]}\co \cP^c (M,L)\ni [g,\nabla]\mapsto 
  \iota([g_0,\nabla_0],[g,\nabla])
  \in \rR^{\spinc}_{n+1} (M)
  \]
  is a bijection.
\end{itemize}
The map $\iota$ associates to $[g_0,\nabla_0]$ and 
$[g_1,\nabla_0]$ the class 
\begin{equation*}
[(M\times [0,1], {\rm pr}_1\co M\times [0,1]\to M, 
(g_0,\nabla_0) \sqcup (g_1,\nabla_1))]
\in \rR^{\spin}_{n+1} (M).
\end{equation*}
Then the action of 
$\rR^{\spinc}_{n+1} (M)$ on $\cP^c(M,L)$ is given by
$x\cdot (g,\nabla) = \iota_{[g,\nabla]}^{-1}(x)$.

The definition of $\iota$ and property (i) are rather easy.
Given $[g_0,\nabla_0]$ and $[g_1,\nabla_1]$ with gpsc on $M$,
we consider $M\times I$ with the metric $h\times dt^2$
and connection pulled back from $\nabla_{\partial M}$
on $(\partial M)\times I$ and $(g_0,\nabla_0)$ and 
$(g_1,\nabla_1)$ on $M\times \{0\}$ and $M\times \{1\}$,
respectively.  This defines a class in $\rR^{\spinc}_{n+1}(M)$.
That $\iota([g,\nabla],[g,\nabla])=0$ follows from (2)
of Theorem \ref{th:extension-theorem}, and the identity
\begin{equation*}
\iota([g_0,\nabla_0],[g_1,\nabla_1])+ \iota([g_1,\nabla_1],[g_2,\nabla_2])= \iota([g_0,\nabla_0],[g_2,\nabla_2])
\end{equation*}
follows from gluing two copies of $M\times I$ together.
Injectivity of $\iota_{[g,\nabla]}$ follows from the
following argument.  Suppose $\iota([g,\nabla],[g_1,\nabla_1])
= \iota([g,\nabla],[g_2,\nabla_2])$.  Then
\[
\begin{aligned}
\iota([g_1,\nabla_1],[g_2,\nabla_2]) &=
\iota([g_1,\nabla_1],[g,\nabla])+\iota([g,\nabla],[g_2,\nabla_2]) \\
&= - \iota([g,\nabla],[g_1,\nabla_1])
+\iota([g,\nabla],[g_2,\nabla_2]) =0
\end{aligned}
\]
by (i), and so $[g_1,\nabla_1]$ and $[g_2,\nabla_2]$ are
concordant by the Extension Theorem,
Theorem \ref{th:extension-theorem}.

The hard part is the proof of surjectivity, and this is where we use
Proposition \ref{prop:Stolz5.8}.  Let $N$ be a thickening (in the
sense of \cite{MR192509}) of the $2$-skeleton of $M$ inside $M\times
I$. Note that since $M$ is totally non-spin of dimension $\ge 5$, $M$
contains an embedded $2$-sphere on which $w_2$ is non-zero, so that
the normal bundle of this $2$-sphere is \emph{not} trivial.  This
$2$-sphere can be chosen to lie in $N$, so $N$ is totally non-spin.
By Proposition \ref{prop:Stolz5.8}, any class in
$\rR^{{\spin}^c}_{n+1}(M)$ can be represented by a suitable
spin$^c$ structure on $N$ and a metric and
connection with gpsc on the boundary $\partial N$.
Decompose $M\times I$ as $N\cup_{\partial N}C$, where $C$
is the complement of $N$ in $M\times I$.  (See
Figure \ref{fig:Stolz5-4}.)

\begin{figure}[htb]


\begin{picture}(10,30)
\put(125,-5){{\small $N$}}
\put(215,50){{\small $C$}}
\put(-10,60){{\small $\p M \times [0,1]$}}
\put(40,40){{\small $M\times \{0\}$}}
\put(290,60){{\small $M\times \{1\}$}}
\end{picture}
\includegraphics[height=1.4in]{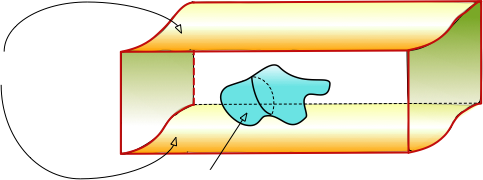}
\caption{Decomposition of $M\times I=N\cup_{\partial N}C$}
\label{fig:Stolz5-4}
\end{figure}
%

We take the gpsc data given by
$[g_0,\nabla_0]$ on $M\times \{0\}$, by
$(h\times dt^2,\nabla_{\partial M})$ on 
$\partial M\times I$, and by $(g_{\partial N},\nabla_{\partial N})$
on $\partial N$. Assuming $M\times \{1\}\hookrightarrow C$
is a $2$-equivalence, we can apply the Extension Theorem,
Theorem \ref{th:extension-theorem}, to extend gpsc
from $\partial M$ across $C$ to  obtain
$[g_1,\nabla_1]$ on $M\times \{1\}$.
Then $(N,g_{\partial N},\nabla_{\partial N})$
is equivalent in $\rR^{{\spin}^c}_{n+1}(M)$  to 
$\iota([g_0,\nabla_0],[g_1,\nabla_1])$, as required.
\end{proof}
\begin{proposition}
\label{prop:action}
Let $M$ be a totally non-spin spin$^c$ n-manifold, $n\geq 5$. Let us
fix $(g_0,\nabla^0)\in \cP^c (M,L)$. Then the group structure on
$\cP^c (M,L)$ given by \eqref{group-structure-1} and the group
structure defined by the free and transitive action of
$\rR^{\spinc}_{n+1} (M)$ on $\cP^c (M,L)$ agree.
\end{proposition}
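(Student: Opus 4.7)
The plan is to show that the Weinberger--Yu operation $+_{\text{WY}}$ coincides with the group structure $+_{\text{S}}$ pulled back from $\rR^{\spinc}_{n+1}(M)$ via the bijection $\iota_{[g_0,\nabla^0]}$ of Theorem \ref{thm:classification}. Since $+_{\text{S}}$ is characterized by making $\iota_{[g_0,\nabla^0]}$ a group isomorphism, it suffices to verify that $\iota_{[g_0,\nabla^0]}$ is also a homomorphism for $+_{\text{WY}}$, namely the identity
\begin{equation*}
\iota([g_0,\nabla^0],\,[g,\nabla]\,+_{\text{WY}}\,[g',\nabla^{\prime}]) \;=\; \iota([g_0,\nabla^0],[g,\nabla]) \;+\; \iota([g_0,\nabla^0],[g',\nabla^{\prime}])
\end{equation*}
in $\rR^{\spinc}_{n+1}(M)$, where the right-hand sum is the disjoint-union operation. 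Automatically this forces $[g_0,\nabla^0]$ to be the identity for both structures (since $\iota([g_0,\nabla^0],[g_0,\nabla^0])=0$ by Theorem \ref{th:extension-theorem}(2)), so the displayed identity is the only thing left to prove.

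My approach will be to construct an explicit equivalence datum witnessing this identity. Let $(g'',\nabla'')$ be the representative of $[g,\nabla]\,+_{\text{WY}}\,[g',\nabla^{\prime}]$ produced by the Weinberger--Yu bordism $Z^{n+1}$: thus $Z$ is a compact connected spin$^c$ manifold with gpsc, with fundamental group $\pi$ and classifying map to $B\pi$, and with four $M$-boundary components carrying the gpsc data $(g,\nabla),(g',\nabla^{\prime}),(g_0,\nabla^0),(g'',\nabla'')$, where $g,g'$ lie on one of the two original cylinders and $g_0,g''$ on the other. Using Theorem \ref{thm:R^c-2-equivalence} to identify $\rR^{\spinc}_{n+1}(M)\cong\rR^{\spinc}_{n+1}(B\pi)$, I would then exhibit the desired equivalence via an equivalence datum in the sense of Definition \ref{def:R^c} with codimension-$1$ faces
\begin{equation*}
M_0 = M\times I,\quad M_1 = (M\times I)\sqcup(M\times I),\quad V = Z \sqcup (M\times I,\,g_0\times dt^2,\,\nabla^0),
\end{equation*}
carrying the prescribed gpsc boundary data on $M_0,M_1$. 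The manifold $V$ has gpsc, and its six boundary components carry the same multiset of metrics as $\partial M_0\sqcup\partial M_1$; after the obvious matching, $V$ is a gpsc bordism from $\partial M_0$ to $\partial M_1$.

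The central step will be the construction of the spin$^c$ $(n+2)$-manifold with corners $W$ whose three codimension-$1$ faces are $M_0$, $M_1$, $V$, together with a line bundle $\cL$ and a classifying map $\Phi\co W\to B\pi$. Tracing through the identifications, the closed $(n+1)$-manifold $N=M_0\cup_{\partial M_0}V\cup_{\partial M_1}M_1$ that must be realized as the underlying topological boundary of $W$ is simply $Z$ with two tubes $M\times I$ attached, pairing up its four $M$-boundary components in the pattern $(g_0,g'')$ and $(g,g')$ that matches the two original Weinberger--Yu cylinders. At the pre-surgery level $Z_0=(M\times I)\,\#\,(M\times I)$, this closure is $(M\times S^1)\,\#\,(M\times S^1)$, which bounds the spin$^c$ manifold $(M\times D^2)\,\natural\,(M\times D^2)$ with its natural line bundle and with classifying map induced by the projection $M\times D^2\to M\to B\pi$. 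The spin$^c$ $1$-surgeries used to produce $Z$ from $Z_0$ can then be realized by spin$^c$ $2$-handle attachments on this bounding manifold, producing the desired $W$ with $\partial W=N$ and extending $\cL$ and $\Phi$.

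The main obstacle will be the careful verification that all the structural data extend consistently: the spin$^c$ congruence $c_1(\cL)\equiv w_2(W)\pmod{2}$ across each handle attachment (which can be handled, as in the proof of Theorem \ref{thm:spincbordism}, by twisting $\cL$ by the square of an auxiliary line bundle if needed), the extension of $\Phi$ from $N$ to $W$ (which is automatic because $B\pi=K(\pi,1)$ and the necessary $\pi_1$-factorization is provided by construction), and the product-like structure near all corners and collars. Once $W$ is in place, the tuple $(W,\sigma_W,\cL,\Phi,g_V,\nabla_V)$ furnishes the required equivalence in $\rR^{\spinc}_{n+1}(B\pi)\cong\rR^{\spinc}_{n+1}(M)$, proving the displayed identity and hence the proposition.
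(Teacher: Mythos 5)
Your reduction of Proposition \ref{prop:action} to the single identity
\begin{equation*}
\iota([g_0,\nabla^0],\,[g,\nabla]+_{\mathrm{WY}}[g',\nabla'])
=\iota([g_0,\nabla^0],[g,\nabla])+\iota([g_0,\nabla^0],[g',\nabla'])
\end{equation*}
is correct, and your route is genuinely different from the paper's: the paper disposes of the proposition in one sentence by asserting that the spin-case argument of Xie--Yu--Zeidler carries over, whereas you construct an explicit equivalence datum in the sense of Definition \ref{def:R^c}. Your bookkeeping checks out: with $M_0$, $M_1$, $V$ as you define them, the six boundary pieces of $V$ match those of $\partial M_0\sqcup\partial M_1$; the closed-up $(n+1)$-manifold is indeed $Z$ with two tubes joining the boundary pairs coming from each original cylinder; and at the pre-surgery level this is $(M\times S^1)\,\#\,(M\times S^1)$, filled by $(M\times D^2)\,\natural\,(M\times D^2)$ with the interior $1$-surgeries absorbed as $2$-handles. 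What your approach buys is a self-contained proof that makes visible where the bordism with corners comes from; what it costs is the corner/collar bookkeeping, the mod-$2$ twisting of $\cL$ over the handles, and attention to the spin structures on the two $S^1$ factors (each must be the bounding one, or else one replaces $D^2$ by a spin$^c$ surface with boundary, which is harmless since $\Omega^{\spinc}_1=0$).

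One assertion in your setup does need an argument: that $Z$ itself carries gpsc restricting to all four prescribed boundary data, including $(g'',\nabla'')$. Theorem \ref{thm:spincbordism} only produces a metric on the fourth copy of $M$, not on $Z$. To get gpsc on all of $Z$ you should first perform further interior surgeries on $Z$ so that the fourth boundary component includes $2$-connectedly (using total non-spinness, exactly as in the proof of Theorem \ref{thm:spincbordism}) and then apply Theorem \ref{th:extension-theorem}(1); whatever metric this induces on the fourth copy is a legitimate representative of the Weinberger--Yu sum, by the asserted well-definedness of that sum. These extra surgeries on circles and $2$-spheres are then realized by additional $2$- and $3$-handles on your filling, and working over $B\pi$ via Theorem \ref{thm:R^c-2-equivalence}, as you propose, guarantees that the reference map extends over all handles. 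With these amendments the construction goes through.
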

\begin{proof}
The proof given by Xie-Yu-Zeidler \cite[Proposition 4.1]{MR4292958}
carries over with essentially no change.
\end{proof}

\section{The spin$^c$ Stolz sequence and the analytic surgery sequence}
\label{sec:mapping}
Following the spin case, we shall
explain in this section and in Appendix \ref{appendix1} how to map the
Stolz sequence in the spin$^c$ case to the Higson-Roe analytic surgery
sequence. The
spin case was first established in \cite{MR3286895} for $n$
odd. Alternative treatments, valid for any $n$, were subsequently
given by Xie-Yu \cite{xie-yu-advances}, Zeidler \cite{zeidler-j-top}
and Zenobi \cite{zenobi-advances} \cite{zenobi-jncg}.  The original
approach in \cite{MR3286895} was extended to the case $n$ even by
Zenobi in \cite{zenobi-jta}.  These results and the subsequent
treatments, in the context of spin manifolds with
positive scalar curvature, can be viewed as analogues of the original
seminal results of Higson and Roe regarding the mapping of the surgery
sequence in topology to their analytic surgery
sequence \cite{higson-roeI}, \cite{higson-roeII}, \cite{higson-roeIII}.

\subsection{Mapping the spin$^c$ Stolz sequence to the
analytic surgery sequence} In this subsection we only state the
relevant results. We refer
to Appendix \ref{appendix1} for the necessary definitions of the
groups and the maps entering in the diagrams of
Theorem \ref{theo:commute-r-pos} below.  The
Appendix also contains a comment about the proof,
noting its similarity to the one given for the spin case.

\begin{theorem}\label{theo:commute-r-pos}
For a compact space $X$ with fundamental group $\Gamma$ and universal covering  
$\tX$, there exists a well-defined and commutative diagram
with exact rows:
\begin{equation}\label{eq:StolzToAnaX}
\begin{tikzcd}
\cdots \to \Omega^{\spinc}_{n+1} (X) \ar[r] \ar[d, "\beta^c"]&
R^{\spinc}_{n+1}(X) \ar[r] \ar[d, "\Ind^c_\Gamma"]
&
\Pos^{\spinc}_n (X) \ar[r] \ar[d, "\rho^c_\Gamma"] &
\Omega^{\spinc}_n (X) \ar[d, "\beta^c"] \to \cdots
\\ 
\cdots \to K_{n+1} (X) \ar[r] & K_{n+1} ( C^*(\tX)^\Gamma) \ar[r] & 
K_{n+1}(D^*(\tX)^\Gamma) \ar[r] & K_{n} (X)  \to \cdots
\end{tikzcd}
\end{equation}
We also have a universal commutative diagram with exact rows:
\begin{equation}\label{eq:StolzToAna}
\begin{tikzcd}
\cdots \to \Omega^{\spinc}_{n+1} (B\Gamma) \ar[r] \ar[d, "\beta^c"]&
R^{\spinc}_{n+1}(B\Gamma) \ar[r] \ar[d, "\Ind^c_\Gamma"]
&
\Pos^{\spinc}_n (B\Gamma) \ar[r] \ar[d, "\rho^c_\Gamma"] &
\Omega^{\spinc}_n (B\Gamma) \ar[d, "\beta^c"] \to \cdots
\\ 
\cdots \to K_{n+1} (B\Gamma) \ar[r] & K_{n+1} ( C^* (E\Gamma)^\Gamma) \ar[r] & 
K_{n+1}(D^* (E\Gamma)^\Gamma) \ar[r] & K_{n} (B\Gamma)  \to \cdots
\end{tikzcd}
\end{equation}
\end{theorem}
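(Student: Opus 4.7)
The plan is to construct each of the three vertical maps $\beta^c$, $\Ind^c_\Gamma$, $\rho^c_\Gamma$ via coarse index theory applied to the spin$^c$ Dirac operator, and then verify commutativity of each of the three squares and well-definedness. The input that makes all of the constructions go through is the spin$^c$ Lichnerowicz-Schrödinger formula $D^2 = \nabla\nabla^* + \tfrac14 R^{\tw}_{g,L}$, together with the equivalence (recalled in the Introduction) between positivity of $R^{\tw}_{g,L}$ and positivity of the generalized scalar curvature. Whenever $(g,\nabla_L)$ is gpsc on a closed spin$^c$ manifold (or on a neighborhood of infinity after passing to the universal cover), the operator $D$ is invertible there, which is precisely what is needed to define a coarse structure class in $K_{n+1}(D^*(\tX)^\Gamma)$ or, in the presence of a boundary, a relative coarse index class in $K_{n+1}(C^*(\tX)^\Gamma)$.

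First I would fix notation and recall the needed coarse geometric machinery in an appendix (as flagged by the authors): the $C^*$-algebras $C^*(\tX)^\Gamma$, $D^*(\tX)^\Gamma$, and their long exact sequence in $K$-theory, together with the standard fact that a Dirac-type operator invertible at infinity yields an index class and one which is globally invertible yields a structure class. Then I would define $\beta^c[(M,\sigma,f\co M\to X)]$ as the pushforward $f_* \alpha^c[M,\sigma]$, define $\Ind^c_\Gamma$ on a class $[(W,\sigma_W,\cL,\Phi,g_{\partial W},\nabla_{\partial})]\in R^{\spinc}_{n+1}(X)$ by lifting $D^{\spinc}_{g,\nabla_\cL}$ to the $\Gamma$-cover of $W$, observing it is invertible near the boundary because $(g_{\partial W},\nabla_\partial)$ is gpsc, and taking its coarse index in $K_{n+1}(C^*(\tX)^\Gamma)$ via the map induced by $\Phi$; and define $\rho^c_\Gamma[(M,\sigma,L,\varphi,g,\nabla_L)]\in K_{n+1}(D^*(\tX)^\Gamma)$ by the coarse $\rho$-class of the $\Gamma$-invariant lift of $D^{\spinc}_{g,\nabla_L}$, which is globally invertible by gpsc. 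The well-definedness of $\Ind^c_\Gamma$ and $\rho^c_\Gamma$ on equivalence classes is handled by the relative index theorem applied to the gpsc bordisms appearing in Definitions \ref{def:pos^c} and \ref{def:R^c}, combined with the standard argument that gpsc cylinders give trivial boundary contributions.

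Next I would verify the three commuting squares. Commutativity of the left square ($\Omega^{\spinc}_{n+1}\to K_{n+1}$ versus $R^{\spinc}_{n+1}\to K_{n+1}(C^*)$) follows because the map $K_{n+1}(X)\to K_{n+1}(C^*(\tX)^\Gamma)$ is the assembly map, which by the Mishchenko–Fomenko index theorem sends $\alpha^c[M,\sigma]$ to the higher index of the spin$^c$ Dirac operator; on the bordism side, a closed spin$^c$ manifold viewed as an element of $R^{\spinc}_{n+1}$ (with empty boundary) maps to this same coarse index. The middle square, relating $R^{\spinc}_{n+1}\to \Pos^{\spinc}_n$ with $K_{n+1}(C^*)\to K_{n+1}(D^*)$, is essentially the relative coarse index theorem: the boundary map in the Stolz sequence restricts a gpsc pair to the boundary, and the analytic sequence's boundary map sends the coarse index to the rho class of the boundary operator, which are the same thing by construction. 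The right square ($\Pos^{\spinc}_n\to \Omega^{\spinc}_n$ versus $K_{n+1}(D^*)\to K_n(X)$) follows because the coarse connecting homomorphism sends the rho class back to the coarse index class of the Dirac operator viewed on the closed manifold, which is the image of $\alpha^c[M,\sigma]$ under assembly; this is a standard computation using the compatibility of boundary maps in $K$-theory long exact sequences with the symbol class.

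The hardest step will be the middle square: one must carefully track how gpsc on the boundary of a spin$^c$ bordism interacts with the decomposition of the $C^*$-algebra exact sequence, and verify that the relative index class for the Dirac operator with gpsc collar restricts under $\partial$ to the rho class defined by the collar. This is essentially the spin$^c$ analogue of the key computation in \cite{xie-yu-advances, zeidler-j-top, zenobi-advances, zenobi-jncg}; the line bundle $L$ and its connection must be tracked through the Roe algebra argument, but since the extra Clifford-type term $\tfrac{i}{2}c(\Omega_L)$ only enters through the gpsc positivity estimate and not through the symbol of $D$, all constructions that work in the spin case go through mutatis mutandis. Finally, the second diagram \eqref{eq:StolzToAna} is the special case $X=B\Gamma$, noting that both $E\Gamma$ serves as the universal cover and that naturality of all constructions in $X$ gives the universal diagram. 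Since $\Gamma$ is finitely presented, a finite CW model for $B\Gamma$ exists (or one works with a cofinal system), and functoriality of $C^*(\tX)^\Gamma$ and $D^*(\tX)^\Gamma$ under $\Gamma$-equivariant coarse maps delivers the required compatibility.
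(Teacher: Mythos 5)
Your proposal follows essentially the same route as the paper: define $\Ind^c_\Gamma$ and $\rho^c_\Gamma$ via coarse/APS-type index and structure classes of the spin$^c$ Dirac operator on cylindrical-end $\Gamma$-covers (invertible at infinity, resp.\ globally, by the spin$^c$ Lichnerowicz--Schr\"odinger formula and gpsc), establish well-definedness by Bunke's relative index theorem and the delocalized APS index theorem in $K$-theory, use the latter for the middle square, and observe that everything carries over from the spin case of \cite{MR3286895} because the line bundle only enters through the positivity estimate and not the symbol. One small correction: in the right-hand square the map $K_{n+1}(D^*(\tX)^\Gamma)\to K_n(X)$ is induced by the quotient $D^*\to D^*/C^*$ and sends the rho class to the $K$-homology class $f_*[D]$ --- no assembly map is involved there --- but this does not affect the argument.
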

\begin{remark}
It is well known that an oriented hypersurface $N$ of a spin$^c$ manifold $M$ 
inherits a spin$^c$ structure. With little effort one could also develop a 
{\it secondary partitioned  index theorem in the spin$^c$ case}, following the spin case
developed in \cite{MR3286895}.  This result, together with its
enhancements due to Zeidler \cite{zeidler-j-top}, might have some
nice geometric applications; see \cite{zeidler-j-top} for the spin
case.  Similarly, one could develop product formulas in the spin$^c$
context as in Zeidler \cite{zeidler-j-top} and
Zenobi \cite{zenobi-jta} and give, correspondingly, geometric
applications.
\end{remark}

\subsection{Alternative treatments}\label{subsect:alternative}
As already remarked, there are alternative treatments of the
Higson-Roe analytic surgery sequence and of its
mapping to the Stolz sequence. Here is a brief description of these:

\medskip
\noindent
\textbf{The approach through localization algebras.}\\
Developed by Xie and Yu in \cite{xie-yu-advances}, this approach  employs Yu's localization algebras and can be developed without further complications for the associated  real $C^*$-algebras. 
The  real version of the Higson-Roe surgery sequence can be written, after applying natural isomorphisms
to the Xie-Yu sequence,  as
\begin{equation}\label{real-HR}
\cdots\rightarrow KO_{n+1}(B\Gamma)\rightarrow KO_{n+1}(C^*_{r,\bR} \Gamma)\rightarrow \SG^\Gamma_{*,\bR} (E\Gamma) \rightarrow KO_{n}(B\Gamma)\rightarrow \cdots
\end{equation}
The original Stolz' surgery sequence can thus be mapped to this analytic sequence, see \cite{xie-yu-advances}.

\medskip
\noindent
{\bf The approach through Trout's treatment of K-theory.}\\ There is a
slightly different approach developed by Zeidler
in \cite{zeidler-j-top}; this employs Trout's approach to K-theory and
in principle contains both the real and the complex case.%
\footnote{Unfortunately, while the compatibility of this approach with 
the Higson-Roe sequence is fully discussed in the complex case 
(see the appendix in \cite{zeidler-j-top}), 
this has not been done in the real case.}

\medskip
\noindent
\textbf{The approach through the adiabatic groupoid.}\\  There is an
approach through the adiabatic groupoid, developed by Zenobi
in \cite{zenobi-advances}.  The compatibility with Higson-Roe is
discussed in great detail in \cite{zenobi-jncg}. This approach allows
for interesting generalizations of the Higson-Roe surgery sequence,
beyond the case of Galois coverings.  See \cite{zenobi-advances} and
also \cite{zenobi-jfa}.

\medskip
\noindent
{\bf The approach through pseudodifferential operators.}\\ A variant
of Zenobi's approach, particularly useful in discussing numeric
invariants associated to the rho class, is discussed in
\cite{PSZ}. It is valid when $X$ is a smooth compact
manifold with fundamental group $\Gamma$ and universal cover
$\widetilde{X}$. We give the necessary details in
Appendix \ref{appendix1} but we anticipate that this approach allows
for the mapping of the Higson-Roe sequence to a sequence in
noncommutative de Rham homology through appropriate Chern character
maps. This is used in order to define, under suitable assumptions on
the group $\Gamma$, for example if $\Gamma$ is Gromov hyperbolic, a
pairing
\begin{equation}\label{S-pairing-no-appendix}
\SG^\Gamma_*(\tX)\times HP^{*-1}(\mathbb{C}\Gamma;\langle x \rangle)\to \mathbb{C}
\end{equation}
Applied to the rho class this pairing defines {\it higher rho
numbers}, parametrized by elements in the groups
$HP^{*-1}(\bC\Gamma;\langle x \rangle)$.

\subsection{Spin versus spin$^c$}
We now make a few crucial remarks on the relationship between the spin case and the spin$^c$ case and discuss how
results for the former imply results for the latter.
Let $M$ be a spin manifold with a fixed spin structure; it is well known that $M$
has then a natural spin$^c$ structure, see \cite{lawson89:_spin}, with 
trivial
spin$^c$ line bundle; thus there is a natural group homomorphism
\begin{equation}\label{from-spin-to-spinc-pos}
j\co {\rm Pos}^{\spin}_* (B\Gamma)\rightarrow {\rm Pos}^{\spinc}_* (B\Gamma)
\end{equation}
\begin{equation}\label{from-spin-to-spinc-pos-map}
{\rm Pos}^{\spin}_* (B\Gamma)\ni[(M,\varphi\co M\to
B\Gamma, g)]
\overset{j}{\longmapsto} [(M,\varphi\co M\to B\Gamma, g,\nabla^{{\bf 1}})]\in {\rm Pos}^{\spinc}_* (B\Gamma)
\end{equation}
with ${\bf 1}:=M\times \bC$ the trivial complex line 
bundle and $\nabla^{{\bf 1}}$ the trivial connection on it.
Similarly, we have a natural group homomorphism 
\begin{equation}\label{from-spin-to-spinc-R}
\kappa:  {\rm R}^{\spin}_* (B\Gamma)\rightarrow  {\rm R}^{\spinc}_* (B\Gamma),
\end{equation}
\begin{equation}\label{from-spin-to-spinc-pos-map1}
 {\rm R}^{\spin}_* (B\Gamma)\ni[(M,\varphi: M\to B\Gamma, g_{\partial M})]
\xrightarrow{\kappa} [(M,\varphi: M\to B\Gamma, g_{\partial M},\nabla^{{\bf 1}}_\partial)]\in  {\rm R}^{\spinc}_* (B\Gamma).
\end{equation}
The following two diagrams are clearly commutative:

\begin{equation}\label{eq:1a}
\xymatrix{& {\rm Pos}^{\spinc}_n (B\Gamma)\ar[d]^{\rho^c_\Gamma}\\
{\rm Pos}^{\spin}_n (B\Gamma) \ar[ur]^j \ar[r]^{\;\;\;\;\;\rho_{\Gamma}} & \,\SG^\Gamma_* (E\Gamma),}\quad\quad
\xymatrix{& {\rm R}^{\spinc}_n (B\Gamma)\ar[d]^{\Ind^c_\Gamma}\\
{\rm R}^{\spin}_n (B\Gamma) \ar[ur]^\kappa \ar[r]^{\;\;\;\;\;\Ind_\Gamma} & \,K_* (C^*_r \Gamma).}
\end{equation}
%
%
%
%

\smallskip
\noindent
We can sharpen the commutativity of \eqref{eq:1a} as follows. 

\begin{proposition}\label{prop:spinKO-versus-spinc}
The following  diagrams are commutative

\begin{equation}\label{eq:1}
  \begin{diagram}
    \setlength{\dgARROWLENGTH}{1.95em}
  \node{{\rm Pos}^{\spin}_* (B\Gamma)}
           \arrow{e,t}{j}
           \arrow{s,l}{\rho_\Gamma}
  \node{{\rm Pos}^{\spinc}_* (B\Gamma)}
             \arrow{s,l}{\rho^c_\Gamma}
  \\
  \node{\SG^\Gamma_{*,\bR} (E\Gamma)}
  \arrow{e,t}{{\rm c}}
  \node{\SG^\Gamma_* (E\Gamma),}
  \end{diagram}
  \quad\quad
%
  \begin{diagram}
    \setlength{\dgARROWLENGTH}{1.95em}
  \node{ {\rm R}^{\spin}_* (B\Gamma)}
           \arrow{e,t}{\kappa}
           \arrow{s,l}{\Ind_\Gamma}
  \node{ {\rm R}^{\spinc}_* (B\Gamma)}
             \arrow{s,l}{\Ind^c_\Gamma}
  \\
  \node{KO_* (C^*_{r,\bR} \Gamma)}
  \arrow{e,t}{c}
  \node{K_* (C^*_r \Gamma),}
  \end{diagram}
\end{equation}
with $c$ in the bottom horizontal rows induced by complexification.
\end{proposition}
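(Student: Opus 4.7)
The plan is to reduce both diagrams to a single statement about operators: the spin$^c$ Dirac operator attached to a spin manifold $M$ equipped with the canonical spin$^c$ structure (trivial line bundle $\mathbf{1}$ and trivial connection $\nabla^{\mathbf{1}}$) is, under the natural identification of the complex spinor bundle with the complexification of the real spinor bundle, exactly the complexification of the spin Dirac operator. This is classical (see \cite{lawson89:_spin}) and holds with all relevant Clifford structures; in particular it holds on the universal cover $\widetilde{M}$, where both operators are $\Gamma$-equivariant.

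First I would unpack the definitions of the four vertical maps (which are recalled in Appendix \ref{appendix1}). The index $\Ind_\Gamma$ and the rho invariant $\rho_\Gamma$ on $\Pos^{\spin}_n(B\Gamma)$ are constructed from the (real/$\mathrm{Cl}_n$-linear) spin Dirac operator on the $\Gamma$-cover $\widetilde{M}$; this produces classes in $KO_*(C^*_{r,\bR}\Gamma)$ and in $\SG^\Gamma_{*,\bR}(E\Gamma)\cong K_{*+1}(D^*_\bR(E\Gamma)^\Gamma)$. The spin$^c$ versions $\Ind^c_\Gamma$ and $\rho^c_\Gamma$ are defined in the same way but starting from the spin$^c$ Dirac operator twisted by $\nabla_L$, with values in the complex coarse $C^*$- and $D^*$-algebras. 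The complexification functor on Hilbert modules induces natural maps $c\co K_*(C^*_{r,\bR}\Gamma)\to K_*(C^*_r\Gamma)$ and $c\co \SG^\Gamma_{*,\bR}(E\Gamma)\to \SG^\Gamma_*(E\Gamma)$ which appear along the bottom rows of \eqref{eq:1}.

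Second, I would use the operator-level identification above. Take $[(M,\varphi,g)]\in \Pos^{\spin}_n(B\Gamma)$ and its image $j[(M,\varphi,g)]=[(M,\varphi,g,\nabla^{\mathbf{1}})]$. On the $\Gamma$-cover $\widetilde{M}$, the spin$^c$ Dirac operator $\Dirac^c_{\tM,\mathbf{1},\nabla^{\mathbf{1}}}$ acts on $S^c(\tM)=S(\tM)\otimes_\bR \bC$ and is equal to $\Dirac^{\spin}_{\tM}\otimes_\bR \bC$. Passing to the functional-calculus constructions that define the coarse index and rho classes (e.g.\ through the Roe algebras $C^*(\tM)^\Gamma$, $D^*(\tM)^\Gamma$ and their real analogues), complexification of the Hilbert-module set-up sends the real constructions to the complex ones, because every step in the definition (Dirac operator, chopping function, Mishchenko bundle, localization algebra, etc.) is compatible with tensoring by $\bC$ over $\bR$. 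Hence
\[
c\bigl(\Ind_\Gamma[(M,\varphi,g)]\bigr)=\Ind^c_\Gamma\bigl(\kappa[(M,\varphi,g)]\bigr),\qquad c\bigl(\rho_\Gamma[(M,\varphi,g)]\bigr)=\rho^c_\Gamma\bigl(j[(M,\varphi,g)]\bigr),
\]
and the two squares of \eqref{eq:1} commute.

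Third, I would note that although the statement is essentially formal, the main subtlety is bookkeeping with the $\mathrm{Cl}_n$-linear/$\bZ/2$-graded structures used on the spin side versus the ungraded complex set-up used on the spin$^c$ side. The right way to handle this is to use one of the alternative treatments listed in \S\ref{subsect:alternative} (most conveniently the localization-algebra approach of Xie--Yu, which works uniformly in the real and complex categories), so that $\Ind_\Gamma$, $\rho_\Gamma$, $\Ind^c_\Gamma$, $\rho^c_\Gamma$ all fit into a single diagram of natural transformations, and the complexification functor is a natural transformation between the real and complex pictures. Once this framework is in place, the commutativity of \eqref{eq:1} is a direct consequence of the operator identity $\Dirac^c_{\tM,\mathbf{1},\nabla^{\mathbf{1}}}=\Dirac^{\spin}_{\tM}\otimes_\bR \bC$, and the result follows.
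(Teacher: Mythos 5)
Your proposal is correct and follows essentially the same route as the paper: both arguments reduce to the operator-level identification of the spin$^c$ Dirac operator with trivial line bundle and trivial connection as the complexification of the ($C\ell_n$-linear) spin Dirac operator on the $\Gamma$-cover, together with the compatibility of complexification with the coarse index and rho constructions. The paper handles the $C\ell_n$-linear versus complex bookkeeping you flag by citing the identification in Zeidler's appendix and the treatment in Xie--Yu, which is exactly the resolution you propose.
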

\begin{proof}
Consider for example the diagram on the right.  Regarding the left
vertical map we follow here the treatment in \cite[Remark
3.14]{xie-yu-advances}.  The map $\Ind_\Gamma$ employs the
$C\ell_n$-linear Atiyah-Singer operator $\Di$. Briefly, given a spin
manifold $(M,g)$ we consider $C\ell_n$, the real Clifford algebra
associated to the euclidean space $\mathbb{R}^n$, and $\ell\co {\rm
Spin}_n\to {\rm Hom} (C\ell_n,C\ell_n)$, the representation given by
left multiplication. We then consider the bundle of rank one
$C\ell_n$-modules given by $\Spi (M):= P_{{\rm Spin}} (M)\times_{\ell}
C\ell_n$. The bundle $\Spi (M)$, often denoted simply by $\Spi$,
inherits a Levi-Civita connection $\nabla$ and we can thus consider
$\Di$, the associated Dirac operator: $\Di := {\rm cl}\circ \nabla$.
It is a (right) $C\ell_n$-linear operator and
depends on $g$ and the chosen spin structure. See \cite[Chapter II,
Section 7]{lawson89:_spin} for more details.  There is,
correspondingly, a $\Gamma$-invariant operator $\Di_\Gamma$ on any
$\Gamma$-Galois covering of $M$.  The left vertical map associates to
a class $[(M,\phi:M\to B\Gamma,g_{\partial M})]\in
{\rm R}^{\spin}_* (B\Gamma)$ the index class associated to the
operator $\Di_\Gamma$ on the manifold with cylindrical ends,
$\widetilde{M}_\infty$, associated to the Galois covering defined by
$\phi$. The complexification of this class is equal to the class
defined by the complexification of the bundle $\widetilde{\Spi}$ over
$\widetilde{M}_\infty$ and of the Atiyah-Singer operator $\Di_\Gamma$
acting on the sections of this bundle. As explained in great detail
in \cite[Appendix 2]{zeidler-j-top} these complexifications can be
identified respectively with the $L^2$-sections of the complex spinor
bundle on $\widetilde{M}_\infty$ and the complex spin-Dirac operator
$D_\Gamma$ on it. Since this is precisely what is employed in the
definition of the right vertical map, the commutativity of the right
diagram follows immediately. The commutativity of the left diagram is
proved in a similar way.
\end{proof}
We use this Proposition as follows. Following
\cite{BC-fete} and \cite{MR3830202}, given a discrete group $\Gamma$,
we define the vector space
$F\Gamma:=\{f\colon \Gamma_{fin}\to\mathbb{C}\;\;\text{such
that}\;\;\vert{\rm supp(f)}\vert<\infty\}$, where $\Gamma_{fin}$ is
the set of elements of finite order in
$\Gamma$.  Then $F\Gamma$ becomes a
$\Gamma$-module under conjugation; moreover, $F\Gamma$ decomposes as
the direct sum $F\Gamma=F^0\Gamma\oplus F^1\Gamma$ with
$F^q\Gamma=\{f\in F\Gamma\mid f(\gamma)=(-1)^q f(\gamma^{-1})\}$, the
$\pm 1$-eigenspaces for the involution $\tau\colon F\Gamma\to F\Gamma$
induced by sending $\gamma$ to $\gamma^{-1}$. Finally, we consider 
the submodule
$F^q_0 (\Gamma)=\{f\in F^q (\Gamma)\;\;\text{such that}\;\;f(1)=0\}$. This was also denoted
as $F^q_{\rm del} (\Gamma)$ in \cite{PSZ} and we shall adopt this 
notation later in the article when we shall want to quote results from \cite{PSZ}.

Now, in the diagram \eqref{eq:1}, since the bottom maps are
rationally injective, commutativity implies that we can obtain lower
bounds on the ranks of ${\rm R}^{\spinc}_* (B\Gamma)$ and ${\rm
Pos}^{\spinc}_* (B\Gamma)$ by using lower bounds on the images of
$\Ind_\Gamma$ and $\rho_\Gamma$ on the left-hand side of the diagrams
(i.e., in the spin case).
Lower bounds in the spin case have
been obtained in the work of Xie, Yu and Zeidler \cite{MR4292958}, and
using their results and Proposition
\ref{prop:spinKO-versus-spinc}, we obtain in particular the following:

\begin{proposition}\label{prop:from-XYZ}
Let $p\in\{0,1,2,3\}$ and let $k\geq 0$. Assume that
$\Gamma$ satisfies the Novikov conjecture,
i.e., the Baum-Connes 
map $\mu\co K_*^\Gamma (\underline{E}\Gamma)\to K_* (C^*_r \Gamma)$
is rationally injective. Then the index map 
$\Ind^c_\Gamma\co {\rm R}^{\spinc}_{4k+p} (B\Gamma) \to K_* (C^*_{r} \Gamma)$ 
surjects onto 
a subgroup of rank  greater or equal to the rank of 
$\bigoplus_{l<k} \bigoplus_{q\in\{0,1\}} H_{4l-2q+p}(\Gamma,F^q \Gamma)$. 
More precisely, the image of the map
$\Ind^c_\Gamma\otimes\mathbb{C}$ contains the image under $c$ 
{\lp}the injective complexification map{\rp} of 
\[
\mu (({\rm ph}_\Gamma)^{-1} (\bigoplus_{l<k} \bigoplus_{q\in\{0,1\}} H_{4l-2q+p}(\Gamma,F^q \Gamma))
\]
with ${\rm ph}_\Gamma$ denoting the equivariant Pontryagin character, 
\[
{\rm ph}_\Gamma\co   KO^\Gamma_*(\underline{E}\Gamma)\otimes\mathbb{C} \xrightarrow{\simeq}
  \bigoplus_{k\in\mathbb{Z}} H_{*+4k}(\Gamma; F^0\Gamma) \oplus
  H_{*+2+4k}(\Gamma;F^1\Gamma).
  \]
Under the same assumptions
the rank of ${\rm Pos}^{\spinc}_{4k+p-1} (B\Gamma)$ is greater or equal to the rank of\\
$\bigoplus_{l<k} \bigoplus_{q\in\{0,1\}} H_{4l-2q+p}(\Gamma,F^q_0 \Gamma)$.
\end{proposition}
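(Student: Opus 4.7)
The plan is to reduce the problem to the spin case using the commutative squares of Proposition \ref{prop:spinKO-versus-spinc}, and then verify that the spin lower bounds survive complexification.

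First, I would invoke the main results of Xie-Yu-Zeidler \cite{MR4292958}: under the Novikov conjecture for $\Gamma$, the image of the real Atiyah-Singer index map $\Ind_\Gamma\co {\rm R}^{\spin}_{4k+p}(B\Gamma)\to KO_*(C^{*,\bR}_r\Gamma)$ rationally contains $\mu^{\bR}\circ({\rm ph}_\Gamma)^{-1}$ applied to $\bigoplus_{l<k}\bigoplus_{q\in\{0,1\}} H_{4l-2q+p}(\Gamma,F^q\Gamma)$, and the real rho map $\rho_\Gamma$ on ${\rm Pos}^{\spin}_{4k+p-1}(B\Gamma)$ rationally hits the analogous subgroup of $\SG^\Gamma_{*,\bR}(E\Gamma)$ with $F^q_0\Gamma$ in place of $F^q\Gamma$.

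Next, I would apply the right-hand square in \eqref{eq:1}, which gives $\Ind^c_\Gamma \circ \kappa = c \circ \Ind_\Gamma$; hence the image of $\Ind^c_\Gamma$ contains $c$ applied to the image of $\Ind_\Gamma$, and in particular contains $c\circ\mu^{\bR}\circ({\rm ph}_\Gamma)^{-1}$ of the claimed subgroup. By naturality of assembly under complexification this coincides with $\mu\circ c'\circ({\rm ph}_\Gamma)^{-1}$ of the same subgroup, where $c'$ denotes complexification at the level of equivariant $K$-homology. The analogous identity $\rho^c_\Gamma \circ j = c \circ \rho_\Gamma$ from the left square handles the ${\rm Pos}^{\spinc}_{4k+p-1}(B\Gamma)$ statement with $F^q_0\Gamma$. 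To upgrade these set-theoretic containments to rank estimates, I would use that $c'\co KO^\Gamma_*(\underline{E}\Gamma)\otimes\mathbb{Q}\to K^\Gamma_*(\underline{E}\Gamma)\otimes\mathbb{Q}$ is injective on the rational Pontryagin subspaces --- a direct consequence of the compatibility between the equivariant Pontryagin and Chern characters --- combined with the rational injectivity of $\mu$ afforded by the Novikov hypothesis.

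The main obstacle, in my view, is confirming that the Xie-Yu-Zeidler constructions are fully natural under the complexification maps $j$, $\kappa$, and $c$ appearing in \eqref{eq:1}, i.e., that complexification commutes with the higher index and rho classes at the level of the Higson-Roe surgery sequence. Granted the localization-algebra framework of \cite{xie-yu-advances}, which is inherently functorial under complexification, this reduces to an essentially formal check; no further ideas beyond \cite{MR4292958} and Proposition \ref{prop:spinKO-versus-spinc} should be needed.
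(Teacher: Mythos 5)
Your proposal follows the paper's proof exactly: the paper also deduces the proposition from the Xie--Yu--Zeidler lower bounds in the spin case by chasing the commutative squares of Proposition \ref{prop:spinKO-versus-spinc} and using the rational injectivity of the complexification maps at the bottom of those diagrams. The details you supply (naturality of assembly under complexification, compatibility of the equivariant Pontryagin and Chern characters) are exactly the formal checks the paper leaves implicit.
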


\noindent
These results have of course an  intrinsic interest; moreover they will be very useful in Section \ref{sec:psc}, when we shall give results
on moduli spaces.

\section{The rho map for finite groups}
\label{sec:genrho}
We introduce a short notation for the structure group $\SG^\Gamma_*
(E\Gamma)$. We set $\cS_*(\Gamma):= \SG^\Gamma_* (E\Gamma)\,$.
%
In this section we discuss some computations of the map
\begin{equation}
\label{eq:rho}
\rho_\Gamma\co \Pos^{\spinc}_d(B\Gamma)\to \cS_d(\Gamma)
\end{equation}
for certain finite groups $\Gamma$.  (It
is natural to begin with finite groups because, as shown
in \cite[equation (1.5)]{MR3286895}, the map $\rho_\Gamma$
automatically vanishes when the fundamental group $\Gamma$ is
torsion-free and satisfies the Baum-Connes Conjecture.)
More precisely, we compute
the structure group $\cS_d(\Gamma)$ in a few cases and give results on
the image of $\rho_\Gamma$, as we currently have no methods for computing
$R_d^{\spinc}(B\Gamma)$ and $\Pos^{\spinc}_d(B\Gamma)$ exactly, only for
giving lower bounds on their size.

\subsection{The case of $\bZ/p$, $p$ a prime}
We begin with the very simplest example of a nontrivial finite group,
namely $\Gamma=\bZ/p$ with $p$ a prime.  This group is abelian and is
(non-canonically) self-dual, so its group $C^*$-algebra or complex
group ring is (additively) just isomorphic to a direct sum of $p$
copies of $\bC$ indexed by
$\widehat\Gamma$, the character group of $\Gamma$.  
One of these copies
corresponds to the trivial representation.  The $K$-homology
$K_*(B\Gamma)$ is a direct sum of one copy of
$K_*(\pt)$ and of a torsion group concentrated in odd degrees (this
immediately follows from the Atiyah-Hirzebruch spectral sequence since
$\widetilde H_*(\Gamma,\bZ)$ is $\bZ/p$ in odd degrees and $0$ in even
degrees).  The assembly map $K_*(B\Gamma)\to K_*(\bC\Gamma)$ is an
isomorphism $K_*(\pt)\to K_*(\pt)$ onto the summand corresponding to
the trivial representation and must be $0$ on the torsion. In fact we
can say more.  The arguments that follow are similar to, but a little
easier than, the ones in
\cite[\S2]{MR1133900} (since the case of $KO$ is harder than that
of complex $K$-theory).
\begin{proposition}
\label{prop:strsetZp}  
If $\Gamma=\bZ/p$ with $p$ a prime, then 
$\widetilde K_0(B\Gamma)=0$ and
$K_1(B\Gamma)\cong \mu_{p^\infty}^{p-1}$, where
$\mu_{p^\infty}$ is the union of the groups of $p^r$-th
roots of unity, as $r\to\infty$.  The analytic structure group
$\cS_d(\Gamma)$ for $\Gamma$ is $0$ for $d$ even and isomorphic
to a direct sum of $p-1$ copies of $\bZ[\frac{1}{p}]$
for $d$ odd.
\end{proposition}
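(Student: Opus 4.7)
First, I would compute $K_*(B\bZ/p)$ using the Atiyah-Hirzebruch spectral sequence
\[ E^2_{s,q}=H_s(B\bZ/p;K_q(\mathrm{pt}))\Rightarrow K_{s+q}(B\bZ/p). \]
Since $\widetilde{H}_*(B\bZ/p;\bZ)$ is concentrated in positive odd degrees (where it equals $\bZ/p$) and $K_q(\mathrm{pt})$ is concentrated in even degrees, the $E^2$-page lives in bidegrees with $q$ even and $s$ either zero or odd positive, which forces all differentials to vanish for parity reasons. Hence $\widetilde K_0(B\bZ/p)=0$ immediately. For $K_1(B\bZ/p)$, the $E^\infty$-filtration has a $\bZ/p$ subquotient in each odd filtration degree; I would identify the resulting iterated extension with $\mu_{p^\infty}^{p-1}$ by invoking Atiyah's computation $\widetilde K^0(B\bZ/p)\cong \bZ_p^{p-1}$ (the $I$-adic completion of the augmentation ideal of $R(\bZ/p)$, which for a cyclic $p$-group coincides with $p$-adic completion) combined with the universal coefficient theorem for $K$-homology, or equivalently by working out the long exact sequence in $K$-homology associated to the principal fibration $B\bZ/p\to \CP^\infty\xrightarrow{\cdot p}\CP^\infty$ and computing the cokernel of the induced map $(\cdot p)_*$ on $K_0(\CP^\infty)$ in a basis dual to the power series generators $\beta^k$.

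Next, I would plug these inputs into the bottom row of the Higson-Roe diagram from Theorem H. Because $\bZ/p$ is finite, $C^*(E\bZ/p)^{\bZ/p}$ is Morita equivalent to $C^*_r(\bZ/p)=\bC^p$, so $K_0(C^*(E\bZ/p)^{\bZ/p})=\bZ^p$ and $K_1=0$; the complex assembly map $A\co K_0(B\bZ/p)=\bZ\hookrightarrow \bZ^p$ embeds as the trivial representation coordinate, while $A\co K_1(B\bZ/p)\to 0$ vanishes trivially. For $d$ even the Higson-Roe sequence reads $0\to \cS_d(\bZ/p)\to \bZ\xrightarrow{A}\bZ^p$, giving $\cS_d(\bZ/p)=0$ by injectivity of $A$. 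For $d$ odd it yields the extension
\[ 0\to \bZ^{p-1}\to \cS_d(\bZ/p)\to \mu_{p^\infty}^{p-1}\to 0, \]
with $\bZ^{p-1}=\coker(A)$.

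The main obstacle will be to identify this extension as $\bZ[1/p]^{p-1}$ rather than as some other extension of $\mu_{p^\infty}^{p-1}$ by $\bZ^{p-1}$, since $\mathrm{Ext}^1_\bZ(\mu_{p^\infty},\bZ)\cong \bZ_p$ is nontrivial. My strategy is to use Atiyah-Patodi-Singer relative $\eta$-invariants as explicit cocycle representatives. For each non-trivial irreducible representation $V_j$ of $\bZ/p$ with $1\le j\le p-1$, the assignment $[D]\mapsto \eta_{V_j}(D)-\dim(V_j)\,\eta(D)$ defines a pairing $\cS_d(\bZ/p)\to \bR$ which, by the classical rationality of $\eta$-invariants of spin$^c$ Dirac operators on lens spaces, lands in $\bZ[1/p]\subset \bR$. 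Assembling these $p-1$ pairings yields a homomorphism $\cS_d(\bZ/p)\to \bZ[1/p]^{p-1}$ compatible with the extension: it restricts to the obvious inclusion $\bZ^{p-1}\hookrightarrow \bZ[1/p]^{p-1}$ on the image of the index map from $K_0(C^*_r(\bZ/p))$, and it recovers the canonical projection $\bZ[1/p]^{p-1}\twoheadrightarrow \mu_{p^\infty}^{p-1}$ on the quotient side. Surjectivity follows by realizing arbitrary elements $a_j p^{-n_j}$ as relative $\eta$-invariants of suitable spin$^c$ lens spaces $L^d(p^n)\to B\bZ/p$ (pulled back along the quotient $\bZ/p^n\twoheadrightarrow \bZ/p$), and the Five Lemma then forces injectivity, giving the desired isomorphism $\cS_d(\bZ/p)\cong \bZ[1/p]^{p-1}$.
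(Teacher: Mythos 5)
Your proposal is correct and follows essentially the same route as the paper: the $K$-homology computation via Atiyah--Segal completion plus the universal coefficient theorem (with the parity collapse of the Atiyah--Hirzebruch spectral sequence giving $\widetilde K_0=0$), followed by resolving the extension $0\to\bZ^{p-1}\to\cS_{\text{odd}}(\Gamma)\to\mu_{p^\infty}^{p-1}\to 0$ by means of the $p-1$ pairings indexed by the non-trivial characters. Your relative $\eta$-invariant pairings are precisely the Higson--Roe relative trace maps $\Tr_{\rho^k,1}$ that the paper invokes, mapping the surgery sequence onto $0\to\bZ\to\bZ[\tfrac1p]\to\bZ[\tfrac1p]/\bZ\to 0$; note only that the containment of the image in $\bZ[\tfrac1p]$ is already forced by the commutative diagram (the quotient map lands in $\mu_{p^\infty}\subset\bR/\bZ$), so the appeal to lens-space rationality is not needed for elements of the structure group that are not represented by lens spaces.
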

\begin{proof}
We have by the universal coefficient theorem for
$K$-theory \cite{AndUCT,MR0388375}
that \begin{equation*} \Hom(K_*(X;\bF_p),\bF_p)\cong
K^*(X;\bF_p) \end{equation*} (for any finite CW complex $X$, and
hence also for $B\Gamma$ since it is a filtered colimit of such spaces).  By
the Atiyah-Segal Theorem \cite{MR0148722,MR0259946},
$K^*(B\Gamma;\bF_p)\cong \widehat{R(\Gamma)\otimes \bF_p}$, where
$R(\Gamma)\cong \bZ[\rho]/(\rho^p-1)$ is the representation ring of
$\Gamma$ (here $\rho$ is the character sending the standard
generator of $\Gamma$ to $e^{2\pi i/p}$) and
the ``hat'' denotes completion with
respect to the $I(\Gamma)$-adic topology, $I(\Gamma)$ the
augmentation ideal (the kernel of the map $R(\Gamma)\to\bZ$ sending
$\rho$ to $1$).  Now \begin{equation*}
R(\Gamma)\otimes \bF_p\cong \bF_p[\rho]/(\rho^p-1)\cong \bF_p[\alpha]/((\alpha+1)^p-1)\cong \bF_p[\alpha]/(\alpha^p), \end{equation*}
where $\alpha = \rho-1$ generates $I(\Gamma)$.  This is already
$I(\Gamma)$-adically complete, since $\alpha^p=0$ in
$R(\Gamma)\otimes \bF_p$.  Hence
$K^*(B\Gamma;\bF_p)\cong \bF_p[\alpha]/(\alpha^p)$, which has rank
$p$ over $\bF_p$. (This includes one copy of $K^*(\pt;\bF_p)$
pulled back via the map $B\Gamma\to\pt$.)  In
fact, we see that $I(\Gamma)$
is generated multiplicatively over $\bZ$ by $\alpha$, with relation
$(\alpha+1)^p-1=0$ or $p\alpha + \binom{p}{2}\alpha^2 + \cdots +
p\alpha^{p-1} = -\alpha^p$, so as pointed out in \cite[proof of
Proposition 8.1]{MR0148722}, $I(\Gamma)^k/I(\Gamma)^{k+1}$ is cyclic
of order $p$ generated by the class of $\alpha^k$, and one can see
from this that $\widetilde{K^0(B\Gamma)}$ is additively isomorphic
to the direct sum of $p-1$ copies (one for each of
$\alpha,\alpha^2,\cdots,\alpha^{p-1}$) of $\widehat\bZ_p$, the
$p$-adic integers.  Dualizing via the universal coefficient
theorem \cite{AndUCT,MR0388375} (here one should work at the
filtration level and note the degree shift due to the torsion), we
see that $K_1(B\Gamma)$ is a direct sum of $p-1$ copies of
$G=\varinjlim C_{p^r}=\mu_{p^\infty}$. Note that this is consistent
with our calculation that $\widetilde K_*(B\Gamma;\bF_p)$ has rank
$p-1$ over $\bF_p$.  As we already remarked, $\widetilde
K_0(B\Gamma)=0$ since $B\Gamma$ has no reduced homology in even
degrees.

Now we use this information to compute the structure group
$\cS_d(\Gamma)$.  After canceling the contribution of the basepoint,
$\widetilde K_*(B\Gamma)$ is concentrated in odd degrees,
$\widetilde K_*(\bC\Gamma)$ is free abelian and concentrated in even
degrees, and the analytic surgery sequence  becomes
\begin{equation}
  \label{eq:surgseqcyclic}
    0\to \widetilde K_{\text{even}} (\bC\Gamma)
    \cong \bZ^{p-1} \xrightarrow{\partial}
    \cS_{\text{odd}}(\Gamma) \to  
    \mu_{p^\infty}^{p-1}\cong K_{\text{odd}} (B\Gamma)\to 0.
\end{equation}
We claim that \eqref{eq:surgseqcyclic} actually splits into
$p-1$ non-split short exact sequences of the form
\[
  0\to \bZ \to \bZ\bigl[\tfrac{1}{p}\bigr] \to
  \bZ\bigl[\tfrac{1}{p}\bigr]/\bZ \to 0.
\]
In particular, $\cS_{\text{odd}}(\Gamma)$ is torsion-free. Here we
can apply \cite{higson-roe4}.  Indeed, \cite{higson-roe4}
gives for each $k$, $1\le k\le p-1$, a map $\Tr_{\rho^k,1}$ fitting into
a commutative diagram with exact rows
\[
  \xymatrix{
    0\ar[r]& \widetilde{K_{\text{even}} (\bC\Gamma)}\ar[r]^\partial
    \ar[d]_{\Tr_{\rho^k}-\Tr_1}&  \cS_{\text{odd}}(\Gamma)\ar[r]\ar[d]_{\Tr_{\rho^k,1}}
    &   K_{\text{odd}} (B\Gamma)\ar[r]\ar[d]_{\ind_{\rho^k,1}}& 0\\
     0 \ar[r]& \bZ\ar[r]&  \bR\ar[r]&  \bR/\bZ\ar[r]& 0}
\]
with the first downward arrow surjective and the last downward
arrow surjective onto $G=\bZ[\frac1p]/\bZ$.  Thus the middle downward
arrow surjects onto $\bZ[\frac1p]$.  The copies of the short exact
sequence at the bottom are easily seen to be distinct for the
$p-1$ values of $k$, and the result follows.
\end{proof}
\subsection{The case of $\bZ/p^r$}
The case of a general cyclic $p$-group $\Gamma=\bZ/p^r$
is similar. Here is the result.
\begin{proposition}
\label{prop:strsetZpr}  
If $\Gamma=\bZ/p^r$ with $r\ge1$, then 
$\widetilde K_0(B\Gamma)=0$ and
$K_1(B\Gamma)\cong \mu_{p^\infty}^{p^r-1}$.  
The analytic structure group
$\cS_d(\Gamma)$ for $\Gamma$ is $0$ for $d$ even and isomorphic
to a direct sum of $p^r-1$ copies of $\bZ\left[\frac{1}{p}\right]$
for $d$ odd.
\end{proposition}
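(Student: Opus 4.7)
The plan is to follow the proof of Proposition \ref{prop:strsetZp} closely, substituting $p^r$ for $p$ and adapting to the more intricate representation ring of $\bZ/p^r$. First I would compute $K^*(B\Gamma;\bF_p)$ via Atiyah-Segal: one has $R(\Gamma) = \bZ[\rho]/(\rho^{p^r}-1)$ with augmentation ideal $I$ generated by $\alpha = \rho-1$, and the Frobenius in characteristic $p$ gives $(\alpha+1)^{p^r} - 1 = \alpha^{p^r}$, so $R(\Gamma) \otimes \bF_p \cong \bF_p[\alpha]/(\alpha^{p^r})$. Since $\alpha^{p^r}=0$ in this quotient, it is already $I$-adically complete, and hence $K^*(B\Gamma;\bF_p) \cong \bF_p[\alpha]/(\alpha^{p^r})$, of $\bF_p$-rank $p^r$ and concentrated in even degrees. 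Decomposing $R(\Gamma) \otimes \bZ_{(p)}$ as a product of $p$-local cyclotomic integer rings $\bZ_{(p)}[\zeta_{p^k}]$ for $0 \le k \le r$, one sees that $\widetilde{K^0}(B\Gamma) \cong \widehat{I}$ is free of $\widehat{\bZ}_p$-rank $p^r-1$, while $\widetilde{K^1}(B\Gamma)=0$. Dualizing at the filtration level via the $K$-theory UCT, exactly as in Proposition \ref{prop:strsetZp}, then yields $\widetilde{K}_0(B\Gamma)=0$ and $\widetilde{K}_1(B\Gamma)\cong \mu_{p^\infty}^{p^r-1}$.

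Next I would plug these into the analytic surgery sequence. Since $C^*_r(\Gamma) \cong \bC^{p^r}$, we have $K_{\mathrm{even}}(C^*_r\Gamma) \cong \bZ^{p^r}$, $K_{\mathrm{odd}}(C^*_r\Gamma)=0$, and the complex assembly map is the inclusion onto the trivial-character summand. Canceling this basepoint contribution as in Proposition \ref{prop:strsetZp} yields
\[
0 \to \widetilde{K}_{\mathrm{even}}(\bC\Gamma) \cong \bZ^{p^r-1} \xrightarrow{\partial} \cS_{\mathrm{odd}}(\Gamma) \to \mu_{p^\infty}^{p^r-1} \to 0,
\]
together with $\cS_{\mathrm{even}}(\Gamma)=0$. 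To identify the extension, I would apply the trace constructions of \cite{higson-roe4} to each of the $p^r-1$ non-trivial characters $\chi$ of $\bZ/p^r$. Each $\chi$ of order $p^s$ (with $1\le s\le r$) supplies a commutative diagram
\[
\xymatrix{
0\ar[r] & \widetilde{K}_{\mathrm{even}}(\bC\Gamma)\ar[r]^-{\partial} \ar[d]_{\Tr_{\chi}-\Tr_1} & \cS_{\mathrm{odd}}(\Gamma)\ar[r]\ar[d]_{\Tr_{\chi,1}} & K_{\mathrm{odd}}(B\Gamma)\ar[r]\ar[d]_{\ind_{\chi,1}} & 0\\
0\ar[r] & \bZ\ar[r] & \bR\ar[r] & \bR/\bZ\ar[r] & 0
}
\]
in which the left arrow is surjective and the right arrow surjects onto $\bZ[1/p^s]/\bZ = \bZ[1/p]/\bZ$, forcing $\Tr_{\chi,1}$ to surject onto $\bZ[1/p] \subset \bR$.

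The main obstacle lies in showing that the $p^r-1$ Higson-Roe trace maps assemble into a \emph{joint} isomorphism $\cS_{\mathrm{odd}}(\Gamma) \to \bZ[1/p]^{p^r-1}$, rather than merely giving individual surjections onto $\bZ[1/p]$. Concretely, one must verify that on the subgroup $\widetilde{K}_{\mathrm{even}}(\bC\Gamma)\cong \bZ^{p^r-1}$, the $(p^r-1) \times (p^r-1)$ matrix of trace differences $\Tr_{\chi}-\Tr_1$ across the non-trivial characters $\chi$ is invertible over $\bQ$, and similarly that the relative $\rho$-invariants of $\bZ/p^r$-lens spaces across these characters give a rational basis. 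Both follow from the elementary fact that the non-trivial irreducible characters of $\bZ/p^r$ are linearly independent functionals on $R(\Gamma) \otimes \bC$; once this is pinned down, a five-lemma argument comparing the assembled diagram with the above short exact sequence completes the proof.
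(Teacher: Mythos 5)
Your proposal is correct and follows essentially the same route as the paper, whose entire proof of this proposition is the remark that it is "essentially the same as the proof of Proposition \ref{prop:strsetZp}"; you simply carry out that adaptation explicitly (Atiyah--Segal mod $p$, dualization via the UCT, the surgery sequence, and the Higson--Roe trace diagrams for the $p^r-1$ nontrivial characters). The only cosmetic difference is that you identify $\widetilde{K^0}(B\Gamma)$ via the cyclotomic decomposition of $R(\Gamma)\otimes\bZ_{(p)}$ rather than the $I^k/I^{k+1}$ filtration argument used in the $\bZ/p$ case, which is a harmless variation.
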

\begin{proof}
This is essentially the same as the proof of Proposition \ref{prop:strsetZp}.
\end{proof}

\subsection{Elementary abelian $p$-groups}
As a first look at what happens for more complicated finite groups,
consider an elementary abelian $p$-group
$\Gamma=(\bZ/p)^r$, for $p$ a prime.  Using what we
know about the cyclic case, we have 
\[
R(\Gamma)\otimes \bF_p\cong 
\bF_p[\alpha_1,\cdots ,\alpha_r]/(\alpha_1^p,\cdots,\alpha_r^p).
\]
This is $I(\Gamma)$-adically complete and is thus isomorphic to 
$K^*(B\Gamma;\bF_p)$.  Arguing as in the cyclic case, we find that
$\widetilde{K^0(B\Gamma)}$ is additively isomorphic to the direct
sum of $p^r-1$ copies of $\widehat\bZ_p$
(recall that $K^1(B\Gamma)=0$ for any finite group
\cite[Corollary 7.3]{MR0148722}), and $K_1(B\Gamma)$ is a direct
sum of $p^r-1$ copies of $\mu_{p^\infty}$.  The conclusion of this analysis 
is the following:
\begin{proposition}
\label{prop:strsetZp2}  
If $\Gamma=(\bZ/p)^r$ with $p$ a prime, then 
$\widetilde K_0(B\Gamma)=0$ and
$K_1(B\Gamma)\cong \mu_{p^\infty}^{p^r-1}$.  The analytic 
structure group
$\cS_d(\Gamma)$ for $\Gamma$ is $0$ for $d$ even and isomorphic
to a direct sum of $p^r-1$ copies of $\bZ\left[\frac{1}{p}\right]$
for $d$ odd.
\end{proposition}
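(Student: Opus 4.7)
The plan is to mirror the proof of Proposition \ref{prop:strsetZp} step by step, with the Künneth-type multiplicative structure of $R(\Gamma)\otimes\bF_p$ doing the work that was handled by a single variable in the cyclic case. Most of the $K$-theoretic input is already assembled in the paragraph preceding the statement, so my job is to (i) convert that input into the claimed $K$-homology groups and (ii) identify the structure group from the analytic surgery sequence.

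First I would record the $K$-cohomology computation cleanly. By Atiyah--Segal, $K^*(B\Gamma;\bF_p)\cong \widehat{R(\Gamma)\otimes\bF_p}$, and since $\alpha_i^p=0$ in $R(\Gamma)\otimes\bF_p$, no completion is needed; thus $K^*(B\Gamma;\bF_p)\cong \bF_p[\alpha_1,\dots,\alpha_r]/(\alpha_1^p,\dots,\alpha_r^p)$, a ring of $\bF_p$-dimension $p^r$ concentrated in even degree. Filtering $R(\Gamma)$ by powers of the augmentation ideal $I(\Gamma)$, each subquotient $I(\Gamma)^k/I(\Gamma)^{k+1}$ is a free $\bF_p$-module of finite rank on monomials in the $\alpha_i$, and the same Atiyah argument from \cite{MR0148722} used in Proposition \ref{prop:strsetZp} yields that $\widetilde{K^0(B\Gamma)}$ is (additively) the sum of $p^r-1$ copies of $\widehat\bZ_p$, while $K^1(B\Gamma)=0$. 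Applying the universal coefficient theorem of \cite{AndUCT,MR0388375} degree by degree and keeping track of the degree shift from the torsion Ext term then yields $\widetilde K_0(B\Gamma)=0$ and $K_1(B\Gamma)\cong \mu_{p^\infty}^{p^r-1}$, which is the first assertion.

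Next I would plug this into the analytic surgery sequence. Since $\Gamma$ is finite abelian, $C^*_r\Gamma\cong \bC^{|\Gamma|}=\bC^{p^r}$, so $K_{\text{even}}(C^*_r\Gamma)\cong \bZ^{p^r}$ and $K_{\text{odd}}(C^*_r\Gamma)=0$. Combined with the parity information above, the analytic surgery sequence collapses into the six-term
\begin{equation*}
0\to \widetilde K_{\text{even}}(\bC\Gamma)\cong \bZ^{p^r-1}\xrightarrow{\partial}
\cS_{\text{odd}}(\Gamma)\to \mu_{p^\infty}^{p^r-1}\cong K_{\text{odd}}(B\Gamma)\to 0,
\end{equation*}
with $\cS_{\text{even}}(\Gamma)=0$. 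The remaining task is to identify the middle term. Exactly as in the cyclic case, I would apply the Higson--Roe trace construction \cite{higson-roe4}: for each nontrivial character $\chi\in\widehat\Gamma$ there is a commuting ladder
\begin{equation*}
\xymatrix@C=14pt{
0\ar[r]&\widetilde K_{\text{even}}(\bC\Gamma)\ar[r]^{\partial}\ar[d]_{\Tr_{\chi}-\Tr_1}
&\cS_{\text{odd}}(\Gamma)\ar[r]\ar[d]_{\Tr_{\chi,1}}
&K_{\text{odd}}(B\Gamma)\ar[r]\ar[d]_{\ind_{\chi,1}}&0\\
0\ar[r]&\bZ\ar[r]&\bR\ar[r]&\bR/\bZ\ar[r]&0,}
\end{equation*}
whose outer vertical arrows land in $\bZ$ and $\bZ[\tfrac1p]/\bZ$ respectively; hence the middle vertical arrow lands in $\bZ[\tfrac1p]$. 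Assembling these maps over the $p^r-1$ nontrivial characters gives a splitting of the surgery sequence into $p^r-1$ extensions $0\to\bZ\to\bZ[\tfrac1p]\to\mu_{p^\infty}\to 0$, yielding $\cS_{\text{odd}}(\Gamma)\cong \bZ[\tfrac1p]^{p^r-1}$.

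The only point that requires a little care, and is really the sole thing beyond bookkeeping, is verifying that the ladders for distinct characters are linearly independent, so that the induced map $\cS_{\text{odd}}(\Gamma)\to \bZ[\tfrac1p]^{p^r-1}$ is an isomorphism rather than merely a surjection onto a full-rank subgroup. Linear independence of the maps $\Tr_\chi-\Tr_1$ on $\widetilde K_{\text{even}}(\bC\Gamma)$ is immediate from orthogonality of characters (they are simply the evaluation maps on the character summands, minus the trivial one), and the corresponding independence of the $\ind_{\chi,1}$ on $K_{\text{odd}}(B\Gamma)\otimes \bZ[\tfrac1p]\cong (\bZ[\tfrac1p]/\bZ)^{p^r-1}$ follows by the same character argument applied to the composition with the assembly map. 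A diagram chase then promotes the surjection onto $\bZ[\tfrac1p]^{p^r-1}$ to an isomorphism, completing the proof.
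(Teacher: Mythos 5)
Your proposal is correct and follows essentially the same route as the paper: the paper computes $K^*(B\Gamma;\bF_p)$ via Atiyah--Segal and the $I(\Gamma)$-adic filtration, dualizes by the universal coefficient theorem to get $\widetilde K_0(B\Gamma)=0$ and $K_1(B\Gamma)\cong\mu_{p^\infty}^{p^r-1}$, and then identifies $\cS_{\text{odd}}(\Gamma)$ by running the Higson--Roe trace ladders over the nontrivial characters exactly as in Proposition \ref{prop:strsetZp}. Your added remark on checking linear independence of the ladders for distinct characters is a reasonable elaboration of what the paper leaves implicit.
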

Note incidentally that $\cS_d(\Gamma)$ is the same
for $\Gamma=\bZ/p^r$ and for $\Gamma=(\bZ/p)^r$.
Now we are ready for the main result of this section.
\begin{theorem}
\label{thm:rangeofrho}   
Let $\Gamma=(\bZ/p)^r$ be an elementary abelian $p$-group.
Then the $\rho$-map \textup{\eqref{eq:rho}} has image
in $\cS_d(\Gamma)\cong \left(\bZ\left[\frac{1}{p}\right]\right)^{p^r-1}$
which is of full rank $p^r-1$, provided $d\ge 5$ is odd, and 
provided $p$ is odd if $d\equiv1\pmod4$.
\end{theorem}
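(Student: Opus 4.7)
The plan is to decompose $\cS_d(\Gamma) \otimes \bC$ via Fourier duality on the abelian group $\Gamma = (\bZ/p)^r$ and realize each summand by a lens-space construction pulled back from a cyclic subgroup. By Proposition \ref{prop:strsetZp2}, $\cS_d(\Gamma) \otimes \bC \cong \bC^{\widehat\Gamma \setminus \{1\}} \cong \bC^{\Gamma \setminus \{e\}}$, the second identification by Fourier transform, with the latter basis given by ``delocalized'' values of the rho invariant at non-identity group elements. The disjoint partition $\Gamma \setminus \{e\} = \bigsqcup_C (C \setminus \{e\})$ over the $(p^r - 1)/(p - 1)$ cyclic subgroups $C$ of order $p$ decomposes this space orthogonally into $(p - 1)$-dimensional delocalized sectors, one per cyclic subgroup.

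For each such $C = \langle h \rangle$, I would fix a splitting $\Gamma = C \times H'$ (possible since $\Gamma$ is elementary abelian) and consider classifying maps $f_C \co L \to BC \hookrightarrow B\Gamma$. Here $L$ is a spin psc $d$-manifold with $\pi_1(L) = C$: for $p$ odd I take $L = L^d(p; a_1, \ldots, a_{(d+1)/2})$ with varying weights (automatically spin since $H^2(L; \bZ/2) = 0$ for $p$ odd), equipped with the quotient round metric, and for $p = 2$ with $d \equiv 3 \pmod 4$ I take $L = \bR\bP^d$. Via the natural map $j$ of \eqref{from-spin-to-spinc-pos}, these produce classes $j[L, f_C] \in \Pos^{\spinc}_d(B\Gamma)$ carrying the trivial spin$^c$ line bundle and flat connection.

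The localization step is the main computation: for a character $\chi = (\chi_C, \chi_{H'}) \in \widehat\Gamma$, the flat line bundle $L_\chi \to L$ is determined by the restriction $\chi|_C = \chi_C$ to $\pi_1(L) = C$, so that $\rho^c_\chi(L, f_C) = \rho^c_{\chi_C}(L)$ when $\chi_C \ne 1$ and vanishes when $\chi_C = 1$. Fourier-transforming over $\widehat\Gamma$ and using orthogonality on the $\widehat{H'}$-factor, the $h'$-th delocalized component of $\rho^c(L, f_C)$ vanishes unless $h' \in C$; hence the rho vector lies entirely in the $(p-1)$-dimensional sector dual to $C \setminus \{e\}$. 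Within this sector, the $r = 1$ case, via classical Atiyah-Patodi-Singer eta-invariant formulas for the spin Dirac operator on lens spaces (varying the weights $a_i$ for $p$ odd, or using the single generator $\bR\bP^d$ for $p = 2$), produces a full-rank $(p-1)$-dimensional rational sublattice matching $\cS_d(\bZ/p) \cong (\bZ[1/p])^{p-1}$ from Proposition \ref{prop:strsetZp}.

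Combining contributions from the $(p^r - 1)/(p - 1)$ cyclic subgroups, whose delocalized sectors are mutually transverse by the disjoint partition, the total image spans $\cS_d(\Gamma) \otimes \bC$, giving a rank-$(p^r - 1)$ sublattice as required. The main obstacle, and the source of the exclusion at $p = 2$, $d \equiv 1 \pmod 4$, is already present in the $r = 1$ case: $\bR\bP^{4k+1}$ is not spin, so it does not contribute through $j$, and although it does carry a (unique, non-trivial) spin$^c$ structure with a flat spin$^c$ line bundle, the corresponding spin$^c$-Dirac rho invariants tend to vanish for symmetry reasons (the twisted operator inherits a quaternionic structure from the real character of $\bZ/2$), so a genuinely different construction would be needed to recover the missing rank in those dimensions; we do not attempt this, whence the explicit hypothesis.
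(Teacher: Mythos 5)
Your global architecture is sound and, in one respect, cleaner than the paper's: you decompose $\cS_d(\Gamma)\otimes\bC$ into delocalized sectors indexed by the $(p^r-1)/(p-1)$ cyclic subgroups $C$, push lens spaces forward along $BC\hookrightarrow B\Gamma$, and observe that the resulting rho classes are supported on mutually disjoint sectors, which gives the independence across subgroups for free. (The paper instead reduces via characters factoring through cyclic quotients and treats one character-summand at a time.) The gap is in the $r=1$ input you feed into each sector. You claim that spin lens spaces $L^d(p;a_1,\dots,a_{(d+1)/2})$ with the \emph{trivial} spin$^c$ line bundle and flat connection, sent through the map $j$ of \eqref{from-spin-to-spinc-pos}, span a full-rank $(p-1)$-dimensional subspace of $\cS_d(\bZ/p)\otimes\bQ$ as the weights vary. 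For $p$ odd they do not. The complex spinor bundle of an odd-dimensional spin manifold carries a real or quaternionic structure commuting with the Dirac operator, which forces the relative eta invariants of the flat twists to satisfy $\widetilde\eta(\bar\chi)=\widetilde\eta(\chi)$ when $d\equiv3\pmod4$ and $\widetilde\eta(\bar\chi)=-\widetilde\eta(\chi)$ when $d\equiv1\pmod4$ (in the APS lens-space formula the substitution $j\mapsto p-j$ conjugates the numerator $\chi(\lambda^j)$ while the denominator $\prod_k(\lambda^{ja_k/2}-\lambda^{-ja_k/2})$ is real or imaginary according to the parity of $(d+1)/2$). Since the coordinates $\Tr_{\chi,1}$ of $\cS_d(\bZ/p)\otimes\bQ\cong\bQ^{p-1}$ compose with the rho map to give exactly these relative eta invariants, the image of your spin representatives is confined to one eigenspace of the involution $\chi\mapsto\bar\chi$, of dimension $\frac{p-1}{2}$ --- this is precisely the statement $\rank R_0^{\pm}(\bZ/p)=\frac{p-1}{2}$ quoted in the proof of Theorem \ref{thm:GLRpercoh}. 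Your construction therefore yields total rank $(p^r-1)/2$ for $p$ odd, half of what the theorem asserts. You correctly diagnose this very symmetry as the source of the $p=2$, $d\equiv1\pmod4$ exclusion, but it bites equally for $p$ odd, merely halving the rank in each sector instead of killing it.

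To close the gap you must use genuinely spin$^c$ representatives in each sector, not images under $j$ of spin ones: for instance lens spaces equipped with one of the $p-1$ non-trivial spin$^c$ line bundles (for $p$ odd there is no $2$-torsion in $H^2$, so these exist and are classified by $c_1$), whose eta invariants acquire an extra factor $\lambda^{jb/2}$ in the APS sum and break the conjugation symmetry; or products $L^{d-2}(p;\vec a)\times(\bC\bP^1,\cO(2))$, which import the opposite-parity eigenspace via the Bott class. With either repair the sector-decomposition and independence parts of your argument go through unchanged.
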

\begin{proof}
Since $\cS_d(\Gamma)$ is torsion-free and given by a direct sum of groups
indexed by the non-trivial characters of $\Gamma$, and each such character
factors through a cyclic quotient of $\Gamma$ isomorphic to
$\bZ/p$, it suffices to consider the cyclic case $r=1$ and the summand
associated to the identity character sending the generator $1$ of
$\Gamma$ to $e^{2\pi i/p}$. When $d\equiv3 \pmod{4}$, the result is
just \cite[Example 2.13]{MR2366359}.  When $d\equiv1 \pmod{4}$, 
we need to exclude $p=2$ since in that case $\Gamma$ has only one
non-trivial character, and it's invariant under $g\mapsto g^{-1}$,
so that by \cite{MR1339924}, the rho-invariant vanishes identically.
However, for $p$ odd, $\Gamma$ has a virtual character odd
under $g\mapsto g^{-1}$, so the result follows from 
\cite{MR1339924,MR2366359}.
\end{proof}
It turns out that there is no difference here 
between the spin and spin$^c$ cases, i.e., the map $c$ on the lower left
in diagram \eqref{eq:1} turns out to be an isomorphism in the cases
of Theorem \ref{thm:rangeofrho}.

\section{Moduli spaces of GPSC metrics}
\label{sec:psc}
Let $(M,\sigma,L)$ be a closed,
connected spin$^c$ manifold.
We denote by $\Gamma$ the fundamental group of
$M$. Let $\mathrm{Diff}(M,\sigma)$ be the subgroup
of $\Diff(M)$ consisting of the diffeomorphisms that
preserve the spin$^c$ structure $\sigma$ (up to isomorphism).
One of the goals of this section is to study the set of path
components 
\begin{equation*}
\pi_0 \left(\mathcal{R}^{c,+}(M,\sigma,L)/G \right)
\end{equation*}
of the moduli space $\mathcal{R}^{c,+}(M,\sigma,L)/G$, where
$G\leq \Diff(M,\sigma)$. Here $\mathcal{R}^{c,+}(M,\sigma,L)$
denotes the set of pairs $(g,\nabla_L)$ with gpsc.

We shall investigate a more general question,
namely we estimate a lower
bound for the rank of the coinvariant groups ${\rm R}^{\spinc}_{*+1}
(M)_G$ and ${\rm Im}( {\rm R}^{\spinc}_{*+1} (M)_G\rightarrow {\rm
Pos}^{\spinc}_* (M)_G)$.\footnote{Assuming $G$ acts on an abelian 
group $A$,
the \emph{coinvariant group} $A_G$ is the quotient of $A$ by the
subgroup generated by the set $\{a-g\cdot a: a\in A, g\in G\}$.
This is the same as $H_0(G,A)$.}
(We shall explain why we are interested in these groups
later in this Section, and we shall also define
the action of $G$ on ${\rm Pos}^{\spinc}_* (M)$ and on
${\rm R}^{\spinc}_{*+1} (M)$.) Our results will be of two types:
\begin{itemize}
\item results valid for general fundamental groups containing at least one element of finite order;
\item results valid for fundamental groups satisfying  special properties, for example  Gromov hyperbolic groups.
\end{itemize}
As in the spin case we shall make use of
(higher) $\rho$ invariants associated to the
spin$^c$ Dirac operator of a generalized spin$^c$ metric. Our main
tools will will include the spin$^c$ Bordism Theorem,
Theorem \ref{thm:spincbordism}, the Classification Theorem,
Theorem \ref{thm:classification}, as well as extensions of a few
results from the spin to the spin$^c$ case. Instead 
of attempting to generalize all currently available
results from the spin case, we shall just
focus on a few crucial cases.

This Section is organized as follows: in
Subsection \ref{subsect:spinc-preserving} we define the subgroup of
spin$^c$ preserving diffeomorphisms and describe its
basic properties; in Subsection
\ref{subsect:induced-actions}
we define actions of this group on the groups appearing in the Stolz
spin$^c$ sequence; in Subsection
\ref{subsect:general-with-torsion} we state results valid for general
fundamental groups containing at least one element of finite order;
and finally, in Subsection
\ref{subsect:hyperbolic}, we treat the special case when the 
fundamental group is Gromov hyperbolic and we state sharper results
using higher $\rho$ invariants.

In the concluding two subsections, we shall be
concise and merely state the results, as their proofs are variations
on the spin case and are detailed in Appendix \ref{appendix2}.

\subsection{Diffeomorphisms preserving a spin$^c$ structure}\label{subsect:spinc-preserving}
Let $(M,\sigma)$ be a spin$^c$ manifold. 
We fix the spin$^c$-structure $\sigma$ through $({\rm P}_{\widetilde{\rm GL}^c} (M),\zeta\co {\rm P}_{\widetilde{\rm GL}^c} (M) \to {\rm P}_{GL^+} (M))$. See 
Appendix \ref{sec:spinc-structures} for more details.
As usual, we denote the associated line bundle by $L$.

Let $\psi\in \Diff(M)$ be an orientation-preserving diffeomorphism.
The differential of $\psi$ induces a smooth map $d\psi: {\rm P}_{GL^+} (M)\to  {\rm P}_{GL^+} (M)$.

\begin{definition} The orientation-preserving diffeomorphism 
$\psi$ is said to \emph{preserve the
 spin$^c$ structure $\sigma$} if 
 $\psi^*\sigma:= ((d\psi)^*  {\rm P}_{\widetilde{\rm GL}^c} (M), (d\psi)^* \zeta)$ is isomorphic to $\sigma$.
 We shall write that $\psi^*\sigma=\sigma$.
 \end{definition}
  
  \noindent
If $\psi^*\sigma=\sigma$, then, consequently, 
$\psi$ preserves the isomorphism class of the line bundle $L$.
The orientation-preserving
diffeomorphisms preserving the spin$^c$ structure $\sigma$ form a subgroup of $\Diff(M)$, denoted
$\Diff(M,\sigma)$. 

\begin{remark}
Note that in general $\Diff(M,\sigma)$ is {\em not} a
finite index subgroup of $\Diff(M)$.  A case where 
it is not is the manifold $M=\bC\bP^2\#T^4$, with associated line bundle $L$
coming from the bundle $\cO(1)$ on $\bC\bP^2$ and a non-trivial even
bundle on $T^4$ to be specified shortly.  
(This manifold has torsion-free $H^2$, so specifying
a spin$^c$ structure is the same as specifying an
orientation and the spin$^c$ line
bundle.) Note that $H^2(T^4)$ is free abelian of rank
$\binom 4 2 = 6$, and $GL(4,\bZ)$ acts on $H^1(T^4; \bZ)\cong \bZ^4$
in the obvious way.  If we let $x_1,\cdots,x_4$ be a free basis of
$H^1(T^4; \bZ)$, then the orbit of $2x_1\wedge x_2$ in $H^2(T^4;\bZ)$
under $GL(4,\bZ)$ is infinite.  But since $GL(4,\bZ)$ acts on $T^4$
by diffeomorphisms fixing a basepoint, it also acts by diffeomorphisms
on $M$ and its orbit on $c_1(L)\in H^2(M;\bZ)$ is infinite if
$c_1(L)$ restricts to $c_1(\cO(1))$, a generator of $H^2(\bC\bP^2;\bZ)$, and
to $2x_1\wedge x_2$ in $H^2(T^4;\bZ)$.  Thus in this case $\Diff(M,\sigma)$
is of infinite index in $\Diff(M)$.
\end{remark}

 Recall that we have fixed the spin$^c$-structure
$\sigma$ and that we have denoted the associated line bundle by $L$. We fix a unitary connection $\nabla_L$.
We shall need the following Proposition. In its statement we want to
be more precise about the notation for the spin$^c$ Dirac operator $D$
associated to $\sigma$ and to $(g,\nabla_L)$; as $\sigma$ is fixed, we denote it by $D_{(g,\nabla_L)}$. We
denote the induced operator on the universal cover by
$D^\Gamma_{(g,\nabla_L)}$. 
\begin{proposition}\label{prop:invariance-CG}
If $\varphi\in \Diff(M,\sigma)$ then $D_{(g,\nabla_L)}$ and
$D_{(\varphi^*g,\nabla_{\varphi^* L})}$, with $\nabla_{\varphi^* L}$
denoting the pulled-back connection, are unitarily equivalent;
similarly $D^\Gamma_{(g,\nabla_L)}$ and
$D^\Gamma_{(\varphi^*g,\nabla_{\varphi^* L})}$ are unitarily
equivalent.
\end{proposition}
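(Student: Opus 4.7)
The plan is to reduce the assertion to the standard naturality of the spin$^c$ Dirac operator under diffeomorphisms, together with the hypothesis that $\varphi^*\sigma \cong \sigma$. First I would record a general covariance statement that holds for \emph{any} orientation-preserving diffeomorphism $\psi$ of a spin$^c$ manifold: the pulled-back data $(\psi^*\sigma, \psi^*g, \psi^*\nabla_L)$ yield a Dirac operator $D_{\psi^*\sigma, \psi^*g, \psi^*\nabla_L}$ acting on sections of the pulled-back spinor bundle $\psi^*S_\sigma$, and the pullback map $\psi^*: \Gamma(S_\sigma) \to \Gamma(\psi^*S_\sigma)$ extends to a unitary isomorphism $L^2(S_\sigma, g) \to L^2(\psi^*S_\sigma, \psi^*g)$ intertwining $D_{(g,\nabla_L)}$ with $D_{\psi^*\sigma, \psi^*g, \psi^*\nabla_L}$. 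This is a completely formal consequence of the functoriality of the Clifford-module/connection construction that defines $D$, and does not use any preservation of $\sigma$.

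Second I would invoke the hypothesis $\varphi \in \Diff(M,\sigma)$ to obtain an explicit isomorphism $\Phi: (d\varphi)^*{\rm P}_{\widetilde{\rm GL}^c}(M) \to {\rm P}_{\widetilde{\rm GL}^c}(M)$ of $\widetilde{\rm GL}^c$-principal bundles compatible with the maps $\zeta$ and $(d\varphi)^*\zeta$. This $\Phi$ induces unitary bundle isomorphisms $\tilde\Phi: \varphi^*S_\sigma \to S_\sigma$ of spinor bundles and $\phi_L: \varphi^*L \to L$ of line bundles, each covering the identity on $M$ and preserving all associated hermitian/Clifford structures (see Appendix \ref{sec:spinc-structures}). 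Transporting the connection $\varphi^*\nabla_L$ along $\phi_L$ yields, by definition, the connection $\nabla_{\varphi^*L}$ on $L$ appearing in the statement, and $\tilde\Phi$ then identifies $D_{\varphi^*\sigma, \varphi^*g, \varphi^*\nabla_L}$ with $D_{\sigma, \varphi^*g, \nabla_{\varphi^*L}} = D_{(\varphi^*g, \nabla_{\varphi^*L})}$ as operators on $L^2(S_\sigma, \varphi^*g)$.

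Composing the unitary equivalences from the first and second steps produces the desired unitary equivalence between $D_{(g, \nabla_L)}$ and $D_{(\varphi^*g, \nabla_{\varphi^*L})}$. For the universal-cover statement, I would lift $\varphi$ to a diffeomorphism $\tilde\varphi$ of $\widetilde M$; this lift is unique up to composition with the deck action of $\Gamma$, but any choice suffices. All the structures and isomorphisms above lift canonically: the spin$^c$ structure $\sigma$ pulls back to a $\Gamma$-invariant spin$^c$ structure $\tilde\sigma$ on $\widetilde M$, the isomorphism $\Phi$ lifts to a bundle isomorphism on the universal cover, and the resulting unitary equivalence between $D^\Gamma_{(g,\nabla_L)}$ and $D^\Gamma_{(\varphi^*g, \nabla_{\varphi^*L})}$ is manifestly $\Gamma$-equivariant because $\varphi$ is a diffeomorphism of the base $M$.

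The main obstacle is bookkeeping rather than any analytical difficulty: one must be careful that $\Phi$ truly identifies the pulled-back spinor bundle and line bundle \emph{with their hermitian and Clifford structures} with the originals, so that the transported connection $\nabla_{\varphi^*L}$ is exactly the one implicit in the notation $D_{(\varphi^*g, \nabla_{\varphi^*L})}$. Once this identification is set up carefully from the definition of $\Diff(M,\sigma)$ via $({\rm P}_{\widetilde{\rm GL}^c}(M), \zeta)$ in Appendix \ref{sec:spinc-structures}, the proof is a routine unravelling of definitions.
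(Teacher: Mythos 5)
Your argument is correct and is essentially the proof the paper has in mind: the paper simply delegates to the spin-case references (Dabrowski--Dossena and \cite[Proposition 2.10]{MR2366359}), whose argument is exactly your two-step scheme of general diffeomorphism covariance of the Dirac operator followed by the identification $\varphi^*\sigma\cong\sigma$ transporting all structures back to $(S_\sigma,L)$. The only point to state more carefully is the last one: a lift $\tilde\varphi$ of $\varphi$ to $\widetilde M$ is not $\Gamma$-equivariant on the nose but satisfies $\tilde\varphi\circ\gamma=\varphi_*(\gamma)\circ\tilde\varphi$ for the induced automorphism $\varphi_*$ of $\Gamma$, so the resulting unitary intertwines the two $\Gamma$-actions through $\varphi_*$ rather than commuting with them; this suffices both for the bare unitary-equivalence claim and for the trace-based applications, but ``manifestly $\Gamma$-equivariant'' is not quite accurate as written.
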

\begin{proof}
The detailed proof of the corresponding statement in the spin case, given in 
\cite[Propositions 2 and 3]{dabrowski-dossena} or in \cite[Proposition 2.10]{MR2366359},
can easily be adapted to the spin$^c$ context.
\end{proof}

\subsection{Actions induced by the group ${\rm Diff}(M,\sigma)$ on the spin$^c$ Stolz sequence}\label{subsect:induced-actions}

Let $(M,\sigma,L)$ be a connected, closed,
totally non-spin spin$^c$ manifold.
Before studying
the various actions induced by $\Diff(M,\sigma)$ we
pause for a moment and go back to the concordance
set $\mathcal{P}^c (M,L)$ and to the groups ${\rm Pos}^{\spinc}_*
(M)$ and ${\rm R}^{\spinc}_{*+1} (M)$ from the Stolz
exact sequence. 

Recall that once we fix a concordance class
$[(g_0,\nabla^0)]\in \mathcal{P}^c (M,L)$, there is a
bijection
\begin{equation*}
\iota_{[(g_0,\nabla^0)]}\co \mathcal{P}^c (M,L) \to {\rm
 R}^{\spinc}_{*+1} (M)\equiv {\rm R}^{\spinc}_{*+1}
 (B\Gamma)\,,\quad\Gamma=\pi_1 (M).
\end{equation*}
We shall also be interested in the natural map 
$b\co \mathcal{P}^c (M,L)\to {\rm Pos}^{\spinc}_* (M)$ 
sending $[(g,\nabla)]$ to the
element $[((M,\sigma,L),{\rm Id}\co M\to M,
g,\nabla)]$, denoted briefly $[((M,\sigma,L),{\rm Id}_M, g,\nabla)]$.  
We denote by $b(\mathcal{P}^c (M,L))$ the image of the map $b$; 
the passage from $\mathcal{P}^c (M,L)$ to $b(\mathcal{P}^c (M,L))$ 
amounts to a change of equivalence relation on the pairs $(g,\nabla)$,
from concordance to bordism (hence the notation $b$ for the map).  
We observe that $b(\mathcal{P}^c (M,L))$ is not a subgroup of 
${\rm Pos}^{\spinc}_* (M)$ but rather a coset.  Indeed it is
a coset of $\mathrm{Im}( {\rm R}^{\spinc}_{*+1} (M)\xrightarrow{\partial}
{\rm Pos}^{\spinc}_* (M))$ since it is equal to
\begin{equation*}
 [(M, L,{\rm Id}_M,
 g_0,\nabla^0)] + {\rm Im}( {\rm R}^{\spinc}_{*+1}
 (M)\xrightarrow{\partial} {\rm Pos}^{\spinc}_* (M))
\end{equation*} once we fix a
basepoint $[g_0,\nabla^0]$ in $\mathcal{P}^c (M,L)$. This follows
immediately from the fact that
\begin{equation*}
b([(g,\nabla)])=\partial \circ \iota_{[g_0,\nabla^0]} ([(g,\nabla)])+
 [(M,L,{\rm Id}_M,g_0,\nabla^0)].
\end{equation*}
Following \cite{PSZ} we define the rank of $b(\mathcal{P}^c (M,L)))$ as 
the rank of ${\rm
 Im}( {\rm R}^{\spinc}_{*+1} (M)\xrightarrow{\partial} {\rm
 Pos}^{\spinc}_* (M))$ (which is an abelian group).

If $\psi\in \Diff(M,\sigma)$, then, as already remarked,  
$\psi$ preserves the spin$^c$ line bundle $L$ and so acts on 
$\mathcal{P}^c (M,L)$ by pull-back. It also acts on ${\rm
 Pos}^{\spinc}_* (M)$ as follows. If $x=[(X,
 u\co X\to M,g_X,\nabla_{L_X})]\in {\rm Pos}^{\spinc}_* (M))$, then
\begin{equation*}
\psi\cdot x:= [(X,\psi\circ u\co X\to M,g_X,\nabla_{L_X})\in {\rm
 Pos}^{\spinc}_* (M).
\end{equation*} 
Here we have adopted the concise notation for elements in 
${\rm Pos}^{\spinc}_* (M)$.

If  $x= [((M, \sigma, L),{\rm Id}_M,g,\nabla_L)]\in 
b(\mathcal{P}^c (M,L))$,
then it is not difficult to prove, see 
\cite[Prop. 11.8]{PSZ} for the spin case, that
$\psi\cdot x:= [(M,{\rm Id}_M\co M\to M,\psi_*(g),\nabla^{\psi_*L})]$ with $\psi_*$ denoting the natural action through $\psi^{-1}$.
Hence $b(\mathcal{P}^c (M,L))$ is sent to itself
by the action of $\Diff (M,\sigma)$, the surjective map
$b\co \mathcal{P}^c (M,L)\to b(\mathcal{P}^c (M,L))$ is equivariant
and descends to a surjective map $b_G\co \mathcal{P}^c
(M,L)/G\to b(\mathcal{P}^c (M,L))/G$ for any $G\leq \Diff(M,\sigma)$.
 
Similarly, we can define the action of $\Diff (M,\sigma)$ on ${\rm
 R}^{\spinc}_{*+1} (M)$ as follows. 
Let $\psi\in \Diff (M,\sigma)$. Then if 
$y=[(Y, u\co Y\to M, g_{\partial Y}, \nabla_{\partial Y, L_Y})]$, 
$\psi\cdot y:= [(Y, \psi\circ u\co Y\to M, g_{\partial Y}, 
\nabla_{\partial Y, L_Y})]$, 
where also in this case we have adopted the concise 
notation for the cycles of ${\rm  R}^{\spinc}_{*+1} (M)$. In the case  $y\in {\rm R}^{\spinc}_{*+1} (M)$, 
$y=[(M\times [0,1], \pr_1\co M\times [0,1] \to M, g\sqcup
 g_1, \nabla_L\sqcup \nabla_{1,L})]$, then
\begin{equation*}
\psi\cdot y= [(M\times
 [0,1],\pr_1\co M\times [0,1] \to M, \psi_*
 g\sqcup \psi_*g_1, \nabla^{\psi_* L}\sqcup \nabla^{1, \psi_*  L})]
\end{equation*}
This implies, in particular, that the map $\iota\co \mathcal{P}^c
(M,L)\times \mathcal{P}^c (M,L)\to {\rm R}^{\spinc}_{*+1} (M)$ is
equivariant with respect to the action of $\Diff(M,\sigma)$ on both
sides (where we consider the diagonal action on the left hand side)
and induces a bijection that we call $\iota_{0,G}$ between
$\mathcal{P}^c (M,L)/G$ and ${\rm R}^{\spinc}_{*+1} (M)_G$ once we
fix as a basepoint on $\mathcal{P}^c (M,L)/G$ the orbit of
$[g_0,\nabla^0]$.
  
\smallskip
\noindent
{\it We define the rank of $\mathcal{P}^c (M,L)/G$ as the 
rank of ${\rm R}^{\spinc}_{*+1} (M)_G$.}
   
\smallskip
\noindent
From the definitions it is obvious that
${\rm R}^{\spinc}_{*+1} (M)\xrightarrow{\partial}  {\rm Pos}^{\spinc}_* (M)$ is  equivariant and descends to a map
$\partial_G$ on the associated coinvariant groups. Taking into 
account the various maps, we see  that $b(\mathcal{P}^c (M,L))/G$ is a coset of ${\rm Im}( {\rm R}^{\spinc}_{*+1} (M)_G\xrightarrow{\partial_G} {\rm Pos}^{\spinc}_* (M)_G)$, since $b_G$ and 
$\partial_G\circ \iota_{0,G}$ differ by the image of the basepoint of 
$\mathcal{P}^c (M,L)/G$ into ${\rm Pos}^{\spinc}_* (M)_G$.
  
\smallskip
\noindent
{\em We define the
  rank of $b(\mathcal{P}^c (M,L))/G$ as the rank of the abelian group 
${\rm Im}( {\rm R}^{\spinc}_{*+1} (M)_G\xrightarrow{\partial_G} 
{\rm Pos}^{\spinc}_* (M)_G)$.}

   \smallskip
  \noindent
Notice that 
$${\rm rank} (\mathcal{P}^c (M,L)/G) \geq {\rm rank} (b(\mathcal{P}^c (M,L))/G)\,,\quad  \text{for any} \quad G\leq {\rm Diffeo}(M,L).$$
Moreover, as $\pi_0 \left(\mathcal{R}^{c,+}(M,\sigma,L)/G \right)$ surjects onto $\mathcal{P}^c (M,L)/G$, we see that 
 if ${\rm rank} (b(\mathcal{P}^c (M,L))/G)>0$ then the cardinality of $\pi_0 \left(\mathcal{R}^{c,+}(M,\sigma,L)/G \right)$ is infinite.

\smallskip
\noindent
{\em One of our goals in this Section will be to give lower bounds on the rank of $b(\mathcal{P}^c (M,L))/G$, for suitable finite index subgroups $G$ of  $ {\rm Diff}(M,\sigma)$.}

\subsection{Results for general fundamental groups containing torsion}\label{subsect:general-with-torsion}
We extend to the spin$^c$ case the results of \cite{MR2366359}, based in turn on  \cite{MR1339924}. We only state
the result here and refer to Appendix \ref{appendix2} for the proof; there it will be clear that a crucial role 
is played by the spin$^c$ bordism theorem.

\begin{theorem}\label{theo:any-torsion-group}
Let $(M,\sigma,L)$ be spin$^c$ and totally non-spin and assume 
that $(g,\nabla_L)$ is gpsc. Assume 
that $\pi_1 (M)\equiv \Gamma$ has an element of finite order and 
that $\dim M=4k+3$, $k\ge 1$. Then the set
$\pi_0 \left(\mathcal{R}^{c,+}(M,\sigma,L)/ {\rm Diff} (M,\sigma) \right)$ has infinite cardinality.
\end{theorem}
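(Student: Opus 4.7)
The plan is to combine the Classification Theorem (Theorem \ref{thm:classification}), which presents $\cP^c(M,L)$ as a principal homogeneous space under $\rR^{\spinc}_{4k+4}(M)\cong \rR^{\spinc}_{4k+4}(B\Gamma)$, with a $\rho$-invariant detection argument carried out at a finite cyclic subgroup of $\Gamma$. The infinitude of concordance classes so produced should survive passage to $\mathrm{Diff}(M,\sigma)$-orbits because the induced action on the $\rho$-target has only finite orbits.

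Since $\Gamma$ contains a nontrivial element of finite order, we may choose a prime $p$ and a cyclic subgroup $H\cong\bZ/p$ of $\Gamma$, with induced map $j\co BH\to B\Gamma$. By the naturality of the Stolz sequence and of its mapping to the Higson--Roe analytic surgery sequence (Theorem \ref{theo:commute-r-pos}), there is a commutative diagram
\begin{equation*}
\begin{array}{ccccc}
\rR^{\spinc}_{4k+4}(BH) & \xrightarrow{\partial} & \Pos^{\spinc}_{4k+3}(BH) & \xrightarrow{\rho_H} & \cS_{4k+3}(H) \\
\downarrow j_* & & \downarrow j_* & & \downarrow j_* \\
\rR^{\spinc}_{4k+4}(B\Gamma) & \xrightarrow{\partial} & \Pos^{\spinc}_{4k+3}(B\Gamma) & \xrightarrow{\rho_\Gamma} & \cS_{4k+3}(\Gamma).
\end{array}
\end{equation*}
Because $4k+3\equiv 3\pmod 4$, Theorem \ref{thm:rangeofrho} applies for any prime $p$ and yields that $\rho_H$ has image of full rank $p-1$ in $\cS_{4k+3}(H)\cong \bZ[\tfrac{1}{p}]^{p-1}$ (see Proposition \ref{prop:strsetZp}). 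Since $\Omega^{\spinc}_{4k+3}(BH)$ is finite (every term of the Atiyah--Hirzebruch spectral sequence $H_s(BH;\Omega^{\spinc}_t)\Rightarrow \Omega^{\spinc}_{s+t}(BH)$ in odd total degree is finite), the map $\partial\co \rR^{\spinc}_{4k+4}(BH)\to \Pos^{\spinc}_{4k+3}(BH)$ surjects onto a finite-index subgroup, so $\rho_H\circ\partial$ still has image of rank $p-1$; a fortiori, the composite $\rho_\Gamma\circ\partial\circ j_*$ is detected by an infinite, torsion-free-valued invariant obtained by restriction to $H$-data.

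Translating back via the simply transitive $\rR^{\spinc}_{4k+4}(M)$-action on $\cP^c(M,L)$ furnishes infinitely many pairwise inequivalent concordance classes of gpsc pairs on $(M,\sigma,L)$, distinguished by their rho-invariant at characters of $H$. To descend to the moduli space, observe that $\mathrm{Diff}(M,\sigma)$ acts on $\Gamma$ through its image in $\mathrm{Out}(\Gamma)$, and hence on $\cS_{4k+3}(\Gamma)$ by naturality; this action permutes the $\Gamma$-conjugacy classes of order-$p$ cyclic subgroups and, after passing to a single conjugacy class, relabels characters of $H$ through the finite group $\mathrm{Aut}(H)=(\bZ/p)^\times$. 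Each rho-value therefore has finite orbit, so an infinite set of rho-values must break into infinitely many $\mathrm{Diff}(M,\sigma)$-orbits. Consequently $\cP^c(M,L)/\mathrm{Diff}(M,\sigma)$, and hence $\pi_0(\cR^{c,+}(M,\sigma,L)/\mathrm{Diff}(M,\sigma))$ onto which it surjects, has infinite cardinality.

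The main technical obstacle lies in precisely tracking how $\rho$-invariants transform under pullback by an arbitrary $\varphi\in\mathrm{Diff}(M,\sigma)$, since $\varphi$ need neither preserve $H$ nor even its $\Gamma$-conjugacy class, so $\rho$-values must be identified across a finite collection of order-$p$ subgroups modulo $\Gamma$-conjugation. The key input is Proposition \ref{prop:invariance-CG}, which provides unitary equivalence of the spin$^c$ Dirac operators associated to $(g,\nabla_L)$ and $(\varphi^*g,\nabla_{\varphi^*L})$ and thereby matches their $\rho$-invariants via the induced permutation of characters, confirming the finiteness of orbits claimed above. The spin$^c$ Bordism Theorem (Theorem \ref{thm:spincbordism}) is used throughout to realize the abstract $\rR^{\spinc}(B\Gamma)$-cycles as concrete gpsc data on $M$, and the remainder of the argument parallels its spin counterpart in \cite{MR2366359}, with details relegated to Appendix \ref{appendix2}.
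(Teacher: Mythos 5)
Your overall strategy (produce infinitely many concordance classes by acting with classes supported on a finite cyclic subgroup $H\le\Gamma$, then detect them with a rho invariant that is controlled under $\mathrm{Diff}(M,\sigma)$) is the right shape, and your inputs about $H=\bZ/p$ — finiteness of $\Omega^{\spinc}_{4k+3}(BH)$, full rank of the image of $\rho_H$ in $\cS_{4k+3}(H)$ — are correct and appear in the paper. But there is a genuine gap at the detection step. After pushing forward along $j_*\co\Pos^{\spinc}_{4k+3}(BH)\to\Pos^{\spinc}_{4k+3}(B\Gamma)$ you assert that the classes are still "detected by an infinite, torsion-free-valued invariant obtained by restriction to $H$-data." No such invariant is constructed: the map $j_*\co\cS_{4k+3}(H)\to\cS_{4k+3}(\Gamma)$ has no reason to be injective for a general $\Gamma$, there is no restriction map from $\cS_*(\Gamma)$ back to $\cS_*(H)$, and the character-twisted APS rho invariants $\rho_{\chi,1}$ of $H$ used in Theorem \ref{thm:rangeofrho} are only defined on $\Pos^{\spinc}_*(BH)$ — a character of $H$ need not extend to $\Gamma$, so "the rho-invariant at characters of $H$" is not a well-defined function on $\Pos^{\spinc}_*(B\Gamma)$. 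This is precisely the point where the paper's proof does its real work: it replaces the character rho invariants by the Cheeger–Gromov $L^2$-rho invariant $\rho_{(2)}\co\Pos^{\spinc}_{4k+3}(B\Gamma)\to\bR$, which is defined for \emph{every} group, is nontrivial on $\ker(\Pos^{\spinc}_{4k+3}(BH)\to\Omega^{\spinc}_{4k+3}(BH))$ by the Botvinnik–Gilkey computation pushed through the spin-to-spin$^c$ map $j$, and satisfies the compatibility $\rho_{(2)}(B\iota_*x)=\rho_{(2)}(x)$ for the inclusion $\iota\co\bZ/n\hookrightarrow\Gamma$. Without this (or an equivalent substitute) your infinite family in $\cP^c(M,L)$ is not shown to remain infinite in $\Pos^{\spinc}_{4k+3}(B\Gamma)$, let alone in the moduli space.

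The same gap propagates into your descent argument. Since you have no invariant defined on $B\Gamma$-data, the claim that "each rho-value has finite orbit" under $\mathrm{Diff}(M,\sigma)$ is not meaningful as stated; moreover $\mathrm{Out}(\Gamma)$ may be infinite and may move $H$ through infinitely many conjugacy classes of order-$p$ subgroups, so a finiteness-of-orbits argument would require additional justification even after the detection issue is fixed. The paper sidesteps all of this: $\rho_{(2)}$ assigns a single real number to each gpsc pair on $M$ itself, and Proposition \ref{prop:invariance-CG} (unitary equivalence of $D_{(g,\nabla_L)}$ and $D_{(\varphi^*g,\nabla_{\varphi^*L})}$) shows it is literally constant on $\mathrm{Diff}(M,\sigma)$-orbits, so an infinite image forces infinitely many components of the moduli space. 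Also note the paper does not need the Classification Theorem here; the weaker Proposition \ref{prop:char} (free transitive action of $\ker(\Pos^{\spinc}_n(B\Gamma)\to\Omega^{\spinc}_n(B\Gamma))$ on $u_*(b(\cP^c(M,L)))$, a direct consequence of the spin$^c$ Bordism Theorem) suffices, and your appeal to $\rR^{\spinc}_{4k+4}$ adds machinery without closing the gap.
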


\subsection{Results when the fundamental group is Gromov hyperbolic}\label{subsect:hyperbolic}
$\;$\\
Let $(M^n,\sigma,L)$ be a connected 
closed spin$^c$ totally non-spin 
manifold with fundamental group $\Gamma$ and 
universal covering $\tM$. We assume that $n\geq 5$ and that 
$\mathcal{R}^{c,+}(M,\sigma,L) \ne \emptyset$.
We fix a base point $(g_0,\nabla^0)\in \mathcal{R}^{c,+}(M,\sigma,L)$.
In this Subsection we will present results on 
${\rm rank} (b(\mathcal{P}^c (M,L))/G)$. 
As in the previous subsection we only state the results and refer to Appendix \ref{appendix2} for details.
The proofs use higher rho numbers as well
as  the classification theorem,
identifying $\mathcal{P}^c (M,L)$ with ${\rm R}^{\spinc}_* (M)$.

\smallskip
\noindent
Recall the definition of $F\Gamma$ from
the discussion just before Proposition \ref{prop:from-XYZ} and in particular
the cohomology groups $H^{*}(\Gamma;F^p_0\Gamma)$, $p\in\{0,1\}$.

\begin{theorem}\label{theo:moduli1}
Let $\Gamma$ be a Gromov hyperbolic group with finite outer automorphism group
and let $(M^n,\sigma, L)$ be a closed connected spin$^c$ totally non-spin  manifold  with
$\pi_1(M)=\Gamma$.
We assume that $n\geq 5$ and that $\mathcal{R}^{c,+}(M,\sigma,L) \not= \emptyset$. Then
there exists a finite index subgroup $G\leq {\rm Diff}(M,\sigma)$ such that     
\begin{equation*}
     {\rm rank} (b(\mathcal{P}^c (M,L))/G)\geq 
      \sum_{k>0,\,p\in\{0,1\}} \rank(H^{n+1-4k-2p}(\Gamma;F^p_0\Gamma)). 
\end{equation*}
\end{theorem}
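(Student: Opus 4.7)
The plan is to combine the Classification Theorem \ref{thm:classification} with the higher rho number technology from \cite{PSZ}, transported to the spin$^c$ case via the comparison results of Section \ref{sec:mapping} and Proposition \ref{prop:spinKO-versus-spinc}. As discussed in Subsection \ref{subsect:induced-actions}, once a basepoint $(g_0,\nabla^0)\in \cR^{c,+}(M,\sigma,L)$ is fixed, the bijection $\iota_{[(g_0,\nabla^0)]}$ identifies $\cP^c(M,L)/G$ equivariantly with ${\rR}^{\spinc}_{n+1}(M)_G$, and $b(\cP^c(M,L))/G$ is a coset of $\mathrm{Im}(\partial_G)$. So the task reduces to producing linearly independent classes in $\mathrm{Im}(\partial_G\co{\rR}^{\spinc}_{n+1}(M)_G \to {\rm Pos}^{\spinc}_n(M)_G)$ of the predicted size.

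I would detect such classes by mapping through diagram \eqref{eq:StolzToAnaX}: first into $K_{n+1}(C^*(\tM)^\Gamma)$ via $\Ind^c_\Gamma$, then into the structure group $\SG^\Gamma_*(\tM)$ via $\rho^c_\Gamma$. Because $\Gamma$ is Gromov hyperbolic, the pairing \eqref{S-pairing-no-appendix} discussed in Subsection \ref{subsect:alternative} is defined on all of $\SG^\Gamma_*(\tM)$ and yields numerical invariants parametrised by $HP^{n-4k-2p}(\bC\Gamma;\langle x\rangle)$. A delocalised Chern-character computation, following \cite{PSZ}, matches the span of these invariants with $\bigoplus_{k>0,\,p\in\{0,1\}} H^{n+1-4k-2p}(\Gamma;F^p_0\Gamma)$.

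To populate enough cycles on the domain side, I would import from the spin case. The spin analogue of Proposition \ref{prop:from-XYZ}, refined by the PSZ higher rho construction, produces classes in ${\rR}^{\spin}_{n+1}(B\Gamma)$ that realise the corresponding cohomological rank bound; via the forgetful map $\kappa\co {\rR}^{\spin}_{n+1}(B\Gamma) \to {\rR}^{\spinc}_{n+1}(B\Gamma)$ and Proposition \ref{prop:spinKO-versus-spinc}, their higher rho pairings in the spin$^c$ setting are the complexifications of the spin ones, so independence is preserved. The spin$^c$ Bordism Theorem \ref{thm:spincbordism} and the Extension Theorem \ref{th:extension-theorem}, together with Lemma \ref{lem:makingtotallynonspin} to maintain the totally non-spin condition, let me lift these to classes in ${\rR}^{\spinc}_{n+1}(M)$ based at $(g_0,\nabla^0)$ without rank loss.

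Finally, descent to coinvariants uses the finite outer automorphism hypothesis. Every $\varphi\in \Diff(M,\sigma)$ induces an outer automorphism of $\Gamma$; as $\mathrm{Out}(\Gamma)$ is finite, the kernel $G$ of the induced homomorphism $\Diff(M,\sigma)\to \mathrm{Out}(\Gamma)$ has finite index. For $\varphi\in G$, the induced action on $\bC\Gamma$ is inner and hence trivial on $HP^*(\bC\Gamma;\langle x\rangle)$, while Proposition \ref{prop:invariance-CG} shows that the spin$^c$ Dirac operator is unitarily equivalent to its pull-back by $\varphi$; so the higher rho numbers descend to ${\rR}^{\spinc}_{n+1}(M)_G$ and the stated rank bound follows. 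The main obstacle will be the careful transfer of the pseudodifferential-operator construction underlying the pairing \eqref{S-pairing-no-appendix} from the spin to the spin$^c$ Dirac operator, together with verification of compatibility with the complexification square of Proposition \ref{prop:spinKO-versus-spinc}; once this technical step is completed, the numerical rank estimate reduces to the computation of \cite{PSZ}, and the details are deferred to Appendix \ref{appendix2}.
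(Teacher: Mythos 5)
Your proposal is correct and follows essentially the same route as the paper: reduce via the Classification Theorem to estimating the rank of $\mathrm{Im}(\partial_G)$, produce a subgroup $B\subset K_{n+1}(C^*_r\Gamma)$ of index classes pairing non-degenerately with $H=\bigoplus_{k>0,p}H^{n+1-4k-2p}(\Gamma;F^p_0\Gamma)$ by transporting the spin-case lower bounds of Proposition \ref{prop:from-XYZ} through the complexification square of Proposition \ref{prop:spinKO-versus-spinc}, detect the resulting affine family of rho classes with the PSZ higher rho pairing for hyperbolic $\Gamma$, and descend to coinvariants by taking $G$ to be the preimage of the trivial outer automorphism. This is precisely the content of Propositions \ref{prop:XYZ} and \ref{prop:crucial-moduli} in Appendix \ref{appendix2}.
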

    
\noindent
Notice that as already remarked in \cite{PSZ} and proved by 
Bestvina and Feighn \cite{B-F}, the 
finiteness of ${\rm Out}(\Gamma)$ is true for a generic 
Gromov hyperbolic group.
        
\begin{theorem}\label{theo:moduli2}
Let $M$ be as in the previous Theorem and let $\Gamma$ be  Gromov hyperbolic. Then there 
exists a finite index subgroup $G\leq {\rm Diff}(M,\sigma)$ such that     
$ {\rm rank} (b(\mathcal{P}^c (M,L))/G)\geq \ell$  with   
 \begin{equation*}
      \ell :=
      \begin{cases}
        \abs{\{[\gamma]\subset\Gamma\mid \ord(\gamma)<\infty\}}; & n\equiv -1\pmod 4\\
        \abs{\{[\gamma]\subset\Gamma\mid [\gamma]\ne[\gamma^{-1}]\}}; &
        n\equiv 1\pmod 4 
      \end{cases}
    \end{equation*}
\end{theorem}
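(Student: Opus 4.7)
The plan is to follow the strategy that Piazza--Schick--Zenobi develop in the spin case in \cite{PSZ}, transporting it to the spin$^c$ setting through the comparison maps $\kappa$ and $j$ of \eqref{from-spin-to-spinc-R} and \eqref{from-spin-to-spinc-pos} together with the compatibility diagrams in Proposition \ref{prop:spinKO-versus-spinc}. First, using the Classification Theorem (Theorem \ref{thm:classification}) together with Theorem \ref{thm:R^c-2-equivalence}, one obtains a $\mathrm{Diff}(M,\sigma)$-equivariant identification $\mathcal{P}^c(M,L) \cong \mathrm{R}^{\spinc}_{n+1}(M) \cong \mathrm{R}^{\spinc}_{n+1}(B\Gamma)$ under which the map $b$ corresponds, up to an additive shift by the basepoint, to $\partial$. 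Consequently, the rank of $b(\mathcal{P}^c(M,L))/G$ equals the rank of the image of the induced boundary $\partial_G : \mathrm{R}^{\spinc}_{n+1}(B\Gamma)_G \to \mathrm{Pos}^{\spinc}_n(B\Gamma)_G$, and it suffices to exhibit $\ell$ linearly independent classes in this image.

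To detect such classes I would use diagram \eqref{eq:StolzToAnaX} to post-compose $\partial$ with $\rho^c_\Gamma$, landing in the analytic structure group $\mathcal{S}_n(\Gamma)$, and then pair against the higher rho cocycles built from delocalized cyclic cohomology via the pseudodifferential approach recalled in Subsection \ref{subsect:alternative} and Appendix \ref{appendix1}; this is exactly the pairing \eqref{S-pairing-no-appendix}, available for Gromov hyperbolic $\Gamma$. By the computations of Puschnigg and Burghelea, for such $\Gamma$ every non-trivial conjugacy class $\langle x\rangle \subset \Gamma$ contributes non-trivially to $HP^{*-1}(\mathbb{C}\Gamma;\langle x\rangle)$, with the parity pattern exactly matching the dichotomy in the statement: finite-order classes contribute in every odd degree, while infinite-order classes with $\langle x\rangle \ne \langle x^{-1}\rangle$ produce the additional contributions detected only when $n \equiv 1 \pmod 4$. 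This is the same parity that underlies the decomposition $F\Gamma = F^0\Gamma \oplus F^1\Gamma$ already used in Proposition \ref{prop:from-XYZ}.

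Third, for each such conjugacy class I would construct an explicit cycle in $\mathrm{R}^{\spinc}_{n+1}(B\Gamma)$ as the image under $\kappa$ of a suitable spin cycle built, as in Example 2.13 of \cite{MR2366359} and in the proof of Theorem \ref{thm:rangeofrho}, from lens spaces and other spherical space forms associated to finite-order elements of $\Gamma$, crossed with a fixed auxiliary spin$^c$ manifold to reach dimension $n+1$. By Proposition \ref{prop:spinKO-versus-spinc}, the analytic indices and rho invariants of these spin$^c$ cycles are precisely the complexifications of the corresponding spin ones, so the linear independence of their higher rho numbers established in the spin case by \cite{PSZ} transfers directly to the spin$^c$ setting and yields $\ell$ linearly independent classes in the image of $\partial \otimes \mathbb{C}$.

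Finally, passage to $G$-coinvariants requires controlling the action of $\mathrm{Diff}(M,\sigma)$, which factors through $\mathrm{Out}(\Gamma)$ and permutes the conjugacy-class summands of the delocalized cohomology. The main obstacle, and the reason the statement allows a finite-index subgroup $G$, is that Theorem \ref{theo:moduli2} does \emph{not} assume $\mathrm{Out}(\Gamma)$ to be finite (in contrast with Theorem \ref{theo:moduli1}). One takes $G$ to be the preimage in $\mathrm{Diff}(M,\sigma)$ of a finite-index subgroup of the image of $\mathrm{Diff}(M,\sigma) \to \mathrm{Out}(\Gamma)$ that stabilizes each of the finitely many conjugacy classes used in the detection (in the torsion case) or stabilizes one representative per orbit and for which one averages the detection cocycle over the orbit (in the case $\langle x\rangle \ne \langle x^{-1}\rangle$); since $\pi_0\mathrm{Diff}(M,\sigma)$ is finitely generated, this is possible. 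The resulting $G$-invariant higher rho functionals still detect $\ell$ independent classes on the coinvariants, giving the required lower bound.
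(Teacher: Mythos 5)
Your overall architecture coincides with the paper's: reduce via the Classification Theorem to the image of $\partial_G$ on coinvariants, detect classes through $\rho^c_\Gamma$ and the delocalized pairing \eqref{S-pairing-no-appendix}, and cut down to a finite-index $G$ using the fact that a hyperbolic group has only finitely many conjugacy classes of torsion elements. Where you differ is in how the detected classes in $\rR^{\spinc}_{n+1}(B\Gamma)$ are produced: the paper does not build explicit lens-space cycles but instead invokes the abstract lower bound of Xie--Yu--Zeidler on the image of the index map (Proposition \ref{prop:from-XYZ}, transported to spin$^c$ by complexification via Proposition \ref{prop:spinKO-versus-spinc}) and then uses the Classification Theorem to realize the resulting subgroup $B\subset K_{n+1}(C^*_r\Gamma)$ by actual gpsc pairs $(g_\beta,\nabla^\beta)$ satisfying $\rho(g_\beta,\nabla^\beta)=\rho(g_0,\nabla^0)+\iota(\beta)$ (Propositions \ref{prop:XYZ} and \ref{prop:crucial-moduli}). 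Your lens-space route is closer in spirit to the Xie--Yu moduli argument; it is viable but ultimately leans on the same independence results from \cite{PSZ}, so it buys concreteness at the cost of having to check that the geometric cycles exhaust the relevant coefficient group.

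Two corrections are needed. First, the conjugacy classes with $[\gamma]\ne[\gamma^{-1}]$ in the case $n\equiv 1\pmod 4$ are still classes of \emph{finite-order} elements: the whole mechanism runs through cohomology with coefficients in $F^1_0\Gamma$, and $F\Gamma$ consists of finitely supported functions on $\Gamma_{fin}$ only. Your remark that infinite-order classes produce these contributions is inconsistent both with your own construction (spherical space forms only see torsion) and with Proposition \ref{prop:from-XYZ}; an infinite-order class cannot be detected this way, and if $\ell$ were read as counting such classes the bound would be unattainable by this method. Second, the existence of the finite-index subgroup $G$ does not rest on finite generation of $\pi_0\Diff(M,\sigma)$: it follows because $\mathrm{Out}(\Gamma)$ permutes the finitely many torsion conjugacy classes of the hyperbolic group $\Gamma$, so the kernel of this permutation action has finite index and fixes $H^0(\Gamma;F^p_0\Gamma)$ pointwise, which is exactly the hypothesis on $V$ in Proposition \ref{prop:crucial-moduli}; no averaging of cocycles is required.
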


\begin{remark}
For spin manifolds, a lower bound  similar to this one was first proved by Xie and Yu in \cite{Xie-Yu-Moduli}.
\end{remark}


\appendix
\section{Spin$^c$ structures and Dirac operators}
\label{sec:spinc-structures}
\subsection{Spin$^c$ structures}
For the convenience of the reader, we summarize in this Appendix the basics on spin$^c$ manifolds.
We refer to \cite{MR0358873}, \cite[Appendix D]{lawson89:_spin} and \cite[Section 1.3.3]{nicolaescu-sw}
for details.

Let $(M,g)$ be an oriented Riemannian manifold.  Recall that the group 
$\Spin (n)$ comes with a natural 2-to-1 homomorphism
$\varphi\co \Spin(n)\to {\rm SO}(n)$. By
definition $\Spinc (n):= \Spin(n)\times_{\bZ/2} {\mathrm U}(1)$. This
group fits into three distinct short exact sequences:
\begin{equation}\label{ses-1}
1\rightarrow \bZ/2 \rightarrow {\rm Spin}^c (n) \xrightarrow{\varphi^c} {\rm SO}(n)\times {\mathrm U}(1)\rightarrow 1,
\end{equation} 
where
$\varphi^c ([u,\lambda])= (\varphi (u),\lambda^2)$;
\begin{equation}\label{ses-2}
1\rightarrow {\mathrm U}(1) \rightarrow {\rm Spin}^c (n) \xrightarrow{\eta^c} {\rm SO}(n)\rightarrow 1;
\end{equation} 
\begin{equation}\label{ses-3}
1\rightarrow {\rm Spin} (n) \rightarrow {\rm Spin}^c (n) \xrightarrow{\theta^c} {\mathrm U}(1)\rightarrow 1.
\end{equation} 
Following \cite{MR0358873} 
and employing  \eqref{ses-2} we define a spin$^c$ structure $\sigma$ for $M$ as a $\Spinc (n)$ principal bundle 
${\rm P}_{\Spinc} (M)\to M$ together with a smooth map 
${\rm P}_{\Spinc} (M)\xrightarrow{\xi} {\rm P}_{{\rm SO}} (M)$, where 
${\rm P}_{{\rm SO}} (M)$ is the principal ${\rm SO}(n)$-bundle of 
oriented orthonormal frames on $(M,g)$, such that
$\xi (p h)= \xi (p) \eta^c (h)$ for all $p\in {\rm P}_{\Spinc} (M)$ and all $h\in {\rm Spin}^c (n)$. Notice that
 ${\rm P}_{\Spinc} (M)\xrightarrow{\xi} {\rm P}_{{\rm SO}} (M)$ then defines a $U(1)$ principal bundle. Two spin$^c$ structures
$\sigma$ and $\sigma'$ are isomorphic if there exists an isomorphism $\beta: {\rm P}_{\Spinc} (M) \to {\rm P}_{\Spinc} (M)'$
of $\Spinc (n)$ principal bundles with the property that $\xi=\xi' \circ \beta$.

Given a spin$^c$ structure $\sigma$ on $M$, $\sigma=({\rm P}_{\Spinc} (TM),\xi)$,  we obtain from the sequence
\eqref{ses-2} a principal $\mathrm U(1)$-bundle over $M$, or
equivalently, a complex hermitian line bundle $L_\sigma\to M$,
whose first Chern class reduces to $w_2 (M)$ mod 2.
Conversely, it can be proved that if $w_2 (M)$ is the mod 2 reduction of a class in $H^2 (M,\bZ)$, then $M$ admits a spin$^c$-structure.

Equivalently, see \cite[Appendix D]{lawson89:_spin}, building on \eqref{ses-1}, 
we can define a spin$^c$ structure as a triple 
$({\rm P}_{\Spinc} (M),P_{{\rm U}(1)} (M), \Phi^c)$ where ${\rm P}_{\Spinc} (M)$ is a $\Spinc (n)$ principal bundle over $M$,
$P_{{\rm U}(1)} (M)$ a $U(1)$-principal bundle over $M$ 
and $\Phi:  {\rm P}_{\Spinc} (M)\to {\rm P}_{\rm SO} (M)\times_M P_{{\rm U}(1)} (M)$  a  $\bZ/2$-cover such that 
$\Phi^c (p h)= \Phi^c (h) (p) \varphi^c (h)$. The $U(1)$-principal bundle $P_{{\rm U}(1)} (M)$ defines the line bundle
$L$ associated to the spin structure.

\begin{remark}\label{rem:changing-the-line-bundle} Given a spin$^c$ structure as above,
$({\rm P}_{\Spinc} (M),P_{{\rm U}(1)} (M), \Phi^c)$, we can 
define a new spin$^c$ structure by considering $\alpha\in H^2 (M,\bZ)$ and a principal ${\rm U}(1)$-bundle
$P^\alpha_{{\rm U}(1)} (M)$ with first Chern class $\alpha$. Denote by $L_\alpha$  the corresponding 
hermitian  line bundle.
Then 
$$({\rm P}_{\Spinc} (M)\times_M P^\alpha_{{\rm U}(1)} (M))/ {\rm U}(1)\xrightarrow{\Phi^c_\alpha} {\rm P}_{\rm SO} (M)\times_M (P_{{\rm U}(1)} (M)\otimes P^{2\alpha}_{{\rm U}(1)} (M))$$
defines a new spin$^c$ structure with associated line bundle $L\otimes (L_\alpha\otimes L_\alpha)$, \end{remark}

As in \cite[Remark 1.19]{lawson89:_spin} and \cite{dabrowski-percacci} we can give a metric-independent definition.
Consider ${\rm GL}^+(n)$, the connected component of the identity
in $\mathrm{GL}(n,\bR)$, and let 
$\rho\co \widetilde{{\rm GL}}^+ (n)\to {\rm GL}^+(n)$ be the 
unique nontrivial double cover.
Let ${\rm P}_{GL^+} (M)$ the bundle of oriented frames, a  ${\rm GL}^+(n)$ principal bundle.
Consider $\bC^{*}=\{z\in\bC\;|\; z\not= 0\}$. 
We define $$\widetilde{{\rm GL}}^c (n):= \widetilde{{\rm GL}}^+ (n)\times \bC^{*}/\{\pm (1,1)\}$$
We have a short exact sequence of groups
\begin{equation}\label{ses-2-bis}
1\rightarrow \bC^{*} \rightarrow \widetilde{{\rm GL}}^c (n) \xrightarrow{\eta^c} {\rm GL}^+(n)\rightarrow 1.
\end{equation} 
A spin$^c$ structure for the smooth oriented
manifold $M$ is a $\widetilde{{\rm GL}}^c (n)$-principal bundle 
${\rm P}_{\widetilde{\rm GL}^c} (M)$ over $M$ 
together with
a map $\zeta\co {\rm P}_{\widetilde{\rm GL}^c} (M) \to {\rm P}_{GL^+} (M)$ 
with the property that $\zeta (p g)=\zeta(p) \eta^c (g)$. 
Isomorphic spin$^c$-structures are defined by an obvious modification of the definition given
above. The metric-dependent definition is obtained by observing that 
${\rm P}_{{\rm SO}} (M) \hookrightarrow {\rm P}_{GL^+} (M)$
and defining 
\[
{\rm P}_{\Spinc} (M):= \zeta^{-1}{\rm P}_{{\rm SO}} (M)\quad\text{and}\quad \xi:= \zeta |_{{\rm P}\Spinc (TM)}\,.
\]

Let $\mathcal{L}_M:=\{\alpha\in H^2 (M,\bZ)\;\;\text{such that}\;\; \alpha \equiv w_2 (M) \;\;{\rm mod} \;\;2\}$ and let us denote
by ${\rm Spin}^c (M)$ the collection  of isomorphism classes of spin$^c$-structures. If $\sigma$ is isomorphic
to $\sigma'$ then from standard properties of principal bundles 
we have that  $L_\sigma$ is isomorphic to $L_{\sigma'}$.
Thus the  map 
\[
{\rm Spin}^c (M)\ni [\sigma]\to c_1 (L_\sigma)\in \mathcal L_M
\]
is well defined; it can be proved that this map is surjective. 
Notice however that this map is \emph{not}
injective unless $H^2 (M,\bZ)$ has no 2-torsion elements. See 
\cite[Section 1.3.3]{nicolaescu-sw}.  
Nevertheless, for most
of what we do in this paper, only the image of a spin$^c$ structure
in $\mathcal{L}_M$ will matter; we call it the spin$^c$ line bundle
of the spin$^c$ structure.

\subsection{The spin$^c$-Dirac operator associated to a spin$^c$-structure}
Given a $\bC \ell_n$-module $V$ we can consider the associated representation $\Delta: {\rm Spin}^c (n)\to {\rm GL}(V)$.
Given a spin$^c$-structure $\sigma=({\rm P}_{\Spinc} (M),\xi)$, this defines a spinor bundle 
${\rm P}_{\Spinc} (M)\times_\Delta V$. If the representation is irreducible then this spinor
bundle is said to be fundamental. It can be proved that up to isomorphisms there exists only one fundamental
spinor bundle, denoted  $S(M)\to M$ (a more precise notation would be $S_\sigma(M)\to M$
but the spin$^c$-structure is usually fixed and for this reason we shall not follow this precise notation). 
In even dimension the spinor bundle is
$\bZ/2$-graded, $S(M)=S^+ (M)\oplus S^- (M)$. 

Given a unitary  connection $\nabla_{L_\sigma}$ on $L_\sigma$ 
and employing the Levi-Civita connection associated to the metric 
$g$ on $M$ we can define a connection on $S(M)$ and $S(M)$
becomes, in this way, a Dirac bundle.  The associated Dirac operator 
$D$ is, by definition, the spin$^c$-Dirac operator.
See again \cite[Appendix D]{lawson89:_spin} for all this. Notice that $D$ depends on $\sigma$, the metric $g$ and the 
connection $\nabla_{L_\sigma}$.

\section{The Higson-Roe sequence and higher rho numbers: definitions}
\label{appendix1}
In this second Appendix we give more information about the Higson-Roe
surgery sequence and one of its variants we alluded to back in
Subsection \ref{subsect:alternative}, the one through
pseudodifferential operators used in \cite{PSZ}; we also define higher
rho numbers when $\Gamma$ is Gromov hyperbolic.

\subsection{More on the mapping of the spin$^c$ Stolz sequence to the Higson-Roe sequence}
Recall that the Higson-Roe sequence is the $K$-theory sequence
associated to the short exact sequence of $C^*$-algebras
\begin{equation*}
0\to C^*
(\widetilde{X})^\Gamma\to D^* (\widetilde{X})^\Gamma\to D^*
(\widetilde{X})^\Gamma/C^* (\widetilde{X})^\Gamma \to 0.
\end{equation*}
For the definition of the $C^*$-algebras $C^* (\tX)$ and $D^* (\tX)$,
we refer to the original work of Higson and Roe; a quick introduction
can be found in \cite{MR3286895}. The basic idea is that $D^*(\tX)$ is
the closure of the pseudo-local operators of finite propagation, and
$C^*(\tX)$ is the ideal inside $D^*(\tX)$ obtained as the closure of
the operators that are of finite propagation and locally compact.
Regarding the universal Higson-Roe surgery sequence, figuring in
Theorem \ref{theo:commute-r-pos}, we recall that
\begin{equation*}
      K_*(C^* (E\Gamma)^\Gamma):=  \dirlim_{X\subset E\Gamma\text{ $\Gamma$-compact}}
      K_*(C^*(X)^\Gamma);\quad 
      K_*(D^* (E\Gamma)^\Gamma):= \dirlim_{X\subset E\Gamma\text{ $\Gamma$-compact}} K_*(D^*(X)^\Gamma).
\end{equation*}
Here, $E\Gamma$ is any contractible CW-complex with free cellular
$\Gamma$-action, a  universal space for free $\Gamma$ actions. We  
recall that there are   canonical isomorphisms
\[
K_{*+1} (D^* (\widetilde{X})^\Gamma/C^* (\widetilde{X})^\Gamma)=K_* (X)\;\;\;\text{and}\;\;\; K_*(C^* (\tX)^\Gamma)=K_*(C^*_r\Gamma)=K_*(C^*(E\Gamma)^\Gamma).
\]
We set $\SG^\Gamma_* (\tX):=K_{n+1}(D^*(\tX)^\Gamma) $ and we call
this group the \emph{analytic structure group} associated to $X$.  The
analytic surgery sequence can then be written as
\begin{equation}\label{HR-compact-notation}
\cdots\rightarrow K_{n+1}(X)\rightarrow K_{n+1}(C^*_r \Gamma)\rightarrow \SG^\Gamma_* (\tX) \rightarrow K_{n}(X)\rightarrow \cdots
\end{equation}
Similarly, we set $\SG^\Gamma_* (E\Gamma):=  K_{*+1}(D^* (E\Gamma)^\Gamma)$ and the {\it universal} analytic surgery sequence can be written as
\begin{equation}\label{HR-compact-notation-universal}
\cdots\rightarrow K_{n+1}(B\Gamma)\rightarrow K_{n+1}(C^*_r \Gamma)\rightarrow \SG^\Gamma_* (E\Gamma) \rightarrow K_{n}(B\Gamma)\rightarrow \cdots
\end{equation}
Let us see how the vertical maps figuring in Theorem
\ref{theo:commute-r-pos} are defined.
The map $\beta^c$ associates to $[((M,\sigma,L),
f\co M\to X)]\in \Omega^{\spinc}_{*} (X)$ the class $f_* [D]\in K_*
(X)$, with $[D]\in K_* (M)$ the class associated to the spin$^c$ Dirac
operator defined by $(M,\sigma,L)$. It is
well-known that the transformation $\beta^c\co \Omega^{\spinc}_{*}
(X)\to K_* (X)$ is well-defined. We recall that
the transformation $\beta^c$ can be identified with the map of
homology theories obtained by composing $\alpha^c\co \MSpinc\to \ku$
and the periodization map $\per\co \ku\to \K$.

The map $\Ind^c_\Gamma$ sends a class $[((M,\sigma,L),\varphi\co M\to
X, g_{\partial M}, \nabla_{L,{\partial}})]\in \mathrm{R}^{\spinc}_{*}(X) $ to
the class in $K_{n+1} ( C^*(\tilde X)^\Gamma) $ defined as follows. We
extend $(g_{\partial M}, \nabla_{L,{\partial}})$ to $(g_M,\nabla_L)$
on all of $M$; we consider $M_\Gamma= \varphi^* \widetilde{X}$ and the
associated $\Gamma$-cover with cylindrical ends $M^\infty_\Gamma$; we
lift $(g_{M}, \nabla_{L})$ to $M^\infty_\Gamma$ and consider the
associated $\Gamma$-equivariant spin$^c$ operator $D_\Gamma$; because
of the gpsc assumption on $(g_{\partial M}, \nabla_{L,{\partial}})$
this has an index class in $K_* (C^* (M_\Gamma\subset
M^\infty_\Gamma)^\Gamma)$, a group that can be identified with $K_*
(C^* (M_\Gamma)^\Gamma)$; we denote by $\Ind_\Gamma (D_\Gamma)$ this
index class. We set
\[
\Ind^c_\Gamma [((M,\sigma,L),\varphi\co M\to X, g_{\partial M}, 
\nabla_{L,{\partial}})]:= (\varphi^\Gamma)_* (\Ind_\Gamma (D_\Gamma))
\in K_* (C^* (\widetilde{X})^\Gamma)
\]
with $\varphi^\Gamma\co M_\Gamma\equiv \varphi^* \tX\to \tX$ the
$\Gamma$-equivariant map covering $\varphi\co M\to X$. Notice that
there are different realizations of the index class $\Ind_\Gamma
(D_\Gamma)\in K_* (C^* (M_\Gamma)^\Gamma)$, for example a
Mishchenko-Fomenko description; see \cite{MR3286895} for a detailed
discussion.  The proof that $\Ind^c_\Gamma$ is well defined is the
same as in the spin case, based on a relative index theorem of
Bunke. (This employs the compatibility between the Mishchenko-Fomenko
description of the APS index class and the coarse index class, alluded
to above.)

Finally, if $[((M,\sigma,L),\varphi\co M\to
X,g,\nabla_L)]\in {\rm Pos}^{\spinc}_* (X)$, then
\[
\rho^c_\Gamma [( (M,\sigma,L),\varphi\co M\to X,g,\nabla_L)]:= (\varphi^\Gamma)_* (\rho (D_\Gamma))\in K_{*+1} (D^*(\tX)^\Gamma)
\]
with $\rho (D_\Gamma)\in K_{*+1} (D^* (M_\Gamma)^\Gamma)$ the rho class
associated to the $\Gamma$-equivariant $L^2$-invertible spin$^c$ Dirac
operator $D_\Gamma$ associated to the lift of $(g,\nabla_L)$ to
$M_\Gamma$. Thus if $M$ is odd dimensional, then
\[
\rho (D_\Gamma):=\left[\frac{1}{2}\left(\frac{D_\Gamma}{|D_\Gamma|} 
     + 1\right)\right]\in K_{0} (D^* (M_\Gamma)^\Gamma),
\]
whereas if $M$ is even dimensional, so that $D_\Gamma$ acts on the sections of a $\mathbb{Z}_2$-graded bundle, then
\[
\rho (D_\Gamma):=[U \chi (D_\Gamma)_+]\in K_{1} (D^* (M_\Gamma)^\Gamma)
\]
with $\chi$ a odd smooth real function equal to the sign function on the spectrum of $D_\Gamma$
and $U$ a suitable isometry $L^2 (M_\Gamma, S^-_\Gamma)\to L^2(M_\Gamma, S^+_\Gamma)$
with $S_\Gamma$ the spinor bundle over $M_\Gamma$.

This homomorphism is well-defined thanks to the {\em delocalized APS
index theorem in $K$-theory}, one of the main results
in \cite{MR3286895} (see Theorem 1.14 there).  This latter result is
also used crucially in order to show the commutativity of the central
square in \eqref{eq:StolzToAnaX}.  We omit further details because the
analysis here is very similar to the one carried out in the spin case
in \cite{MR3286895} (and for the even case in \cite{zenobi-jta}).

\medskip
Let us now see the description of the Higson-Roe sequence through pseudodifferential operators; we shall see in the next
section why it is useful.
Consider  $\Psi^0_{\Gamma,c} (\widetilde{X})$,  the algebra of 0-th order $\Gamma$-equivariant pseudodifferential operators of $\Gamma$-compact support and let
 $\Psi^0_\Gamma(\widetilde{X})$ denote the $C^*$-closure of $\Psi^0_{\Gamma,c} (\widetilde{X})$.
Consider the classic short exact sequence  of pseudodifferential operators:
\begin{equation*} 
 0\to C^*_{red}(\widetilde{X}\times_\Gamma\widetilde{X})\rightarrow
\Psi^0_{\Gamma}({\widetilde{X}})\xrightarrow{\sigma} C(S^*M) 
\to 0
\end{equation*}
where $\sigma$ is the principal symbol map and
$C^*_{red}(\widetilde{X}\times_\Gamma\widetilde{C})$ is the
$C^*$-closure of $\Psi^{-\infty}_{\Gamma,c}(\widetilde{X})$, the ideal
of smoothing $\Gamma$-equivariant operators with $\Gamma$-compact
support.  This induces a long exact sequence in $K$-theory for the
mapping cone $C^*$-algebras:
\begin{equation}\label{PSZ-HR}
\xymatrix@C-1em{\cdots\!\ar[r]^(.25){\partial}\!& \!K_*(0\hookrightarrow C^*_{red}(\widetilde{X}\times_\Gamma\widetilde{X}))\ar[r]^(.45){i_*}\!& \!K_*(C(X)\xrightarrow{\mathfrak{m}}\Psi^0_{\Gamma}({\widetilde{X}}))\ar[r]^(.45){\sigma_*}& K_*(C(X)\xrightarrow{\mathfrak{\pi^*}} C(S^*X))\ar[r]^(.75){\partial}&\cdots	}
\end{equation}
where $\pi\co S^*X\to X$ is the bundle projection of the cosphere
bundle and $\mathfrak{m}$ is obtained by lifting a function from $X$
to $\widetilde{X}$ and then seeing it as a multiplication operator on
$L^2$.  We observe that $K_*(0\hookrightarrow
C^*_{red}(\widetilde{X}\times_\Gamma\widetilde{X}))$ is equal to
$K_{*}(C^*_{red}(\widetilde{X}\times_\Gamma \widetilde{X})\otimes
C_0(0,1))$.  There is an analogous and equivalent sequence if we take
pseudodifferential operators acting on the sections of a
$\Gamma$-equivariant vector bundle $\widetilde{E}$.  As for the other
alternative approaches, there is a natural isomorphism from this
sequence to the Higson-Roe sequence; in particular
\begin{equation}\label{PSZ:structure-group}\SG^\Gamma_{*} (\widetilde{X})\simeq K_*(C(X)\xrightarrow{\mathfrak{m}}\Psi^0_{\Gamma}(\widetilde{X}, \widetilde{E}))\,.
\end{equation}
Moreover, the rho class  previously defined and the rho class defined in \cite{PSZ} correspond under this
isomorphism. See \cite[Section 5.3]{zenobi-jncg}.

\subsection{Higher rho numbers}\label{subsect:higher-rho}
One of the great advantages of expressing the Higson-Roe sequence in terms of pseudodifferential operators
is that it is then possible to map the entire sequence to a sequence in noncommutative de Rham homology; more precisely, given any dense holomorphically closed subalgebra $\mathcal{A}\Gamma$ of $C^*_r\Gamma$ there are Chern character homomorphisms defining a commutative diagram 
{\small 	\begin{equation}\label{mktoh}
 \xymatrix{
 \cdots\ar[r]^(.2){\partial}& K_*(0\to C^*_{red}({\tX}\times_\Gamma {\tX}))\ar[r]^(.50){i_*}\ar[d]^{\Ch_\Gamma}& K_*(C(X)\xrightarrow{\mathfrak{m}}\Pdo^0_{\Gamma}({\tX}))\ar[r]^(.50){\sigma_*}\ar[d]^{\Ch_\Gamma^{\del}}& K_* (C(X)\xrightarrow{\mathfrak{\pi^*}} C(S^*X))\ar[d]^{\Ch_\Gamma^{e}}\ar[r]^(.75){\partial}&\cdots\\
 	\cdots\ar[r]&H_{[*-1]}(\mathcal{A}\Gamma)\ar[r]& H_{[*-1]}^{\del} (\mathcal{A}\Gamma)\ar[r]^{\delta}& H_{[*]}^{e} (\mathcal{A}\Gamma)\ar[r]&\,\cdots.}
            \end{equation}}%
See \cite[Section 6]{PSZ}. We shall not digress in order to  recall the necessary definitions, except to say that here the superscript
$e$ denotes localization at the identity and the superscript
$\del$ denotes ``delocalization'' away from the identity.
But we do need to mention a
consequence of \eqref{mktoh}. Recall the decomposition of 
the periodic cyclic cohomology of $\bC\Gamma$, $HP^*(\bC\Gamma)$, 
as the direct product
of groups  $HP^* (\bC\Gamma;\langle x \rangle )$ localized at the conjugacy classes of $\Gamma$:
$HP^* (\bC\Gamma)=\prod_{\langle x \rangle} HP^* (\bC\Gamma;\langle x \rangle )$.
If $\Gamma$ is now a Gromov hyperbolic group 
and $\mathcal{A}\Gamma$ is the Puschnigg algebra, then through the 
Chern character $\Ch_\Gamma^{\del}$ it is possible to define a 
pairing between the periodic cyclic cohomology group $HP^{*-1} (\bC\Gamma;\langle x \rangle )$ and $\SG^\Gamma_{*} (\widetilde{X})$. 
More precisely, we can define a pairing
\begin{equation}\label{S-pairing}
\SG^\Gamma_*(\tX)\times HP^{*-1}(\bC\Gamma;\langle x \rangle) \equiv  K_{*}\left(C(M)\to\Pdo^0_\Gamma({\tX})\right)\times HP^{*-1}(\mathbb{C}\Gamma;\langle x \rangle)\to \bC
\end{equation}
given by
\begin{equation}\label{S-pairing-bis}
(\xi,[\tau])\mapsto \langle \xi,[\tau] \rangle_{{\rm S}}:=\langle\Ch_\Gamma^{\del}(\xi), \tau\rangle
\end{equation}
Applied in particular to the rho class of an invertible
Dirac operator, this allows to define {\em higher rho numbers}, parametrized by the elements of the (delocalized)
periodic cyclic cohomology of $\bC\Gamma$. 
Notice that
this pairing is compatible with the (usual) pairing, denoted $\langle \cdot,\cdot \rangle_{{\rm K}}$, between{\linebreak} $K_{*+1}(C^*_{red}({\tX}\times_\Gamma {\tX}))\equiv 
K_{*+1}(C^*_{r}\Gamma)$ and
cyclic cohomology, defined through $\Ch_\Gamma$ on the left column of \eqref{mktoh}: 
\begin{equation}\label{ch-c*}
\langle \xi,[\tau] \rangle_{{\rm K}} := \langle \Ch_\Gamma(\xi),[\tau] \rangle\;\;\text{for}\;\;[\tau]\in
HP^{[*-1]}(\mathbb{C}\Gamma).
\end{equation}
More precisely, for $\xi\in K_{*+1}(C^*_{red}({\tX}\times_\Gamma {\tX}))$ and $[\tau]\in
HP^{[*-1]}(\mathbb{C}\Gamma; \langle x \rangle)$ we have
\begin{equation}\label{iota-compatibility}
\langle \Ch_\Gamma(\xi),[\tau] \rangle= \langle\Ch_\Gamma^{\del}(\iota(\xi)), [\tau]\rangle
\end{equation} where $\iota\colon K_{*+1}(C^*_{red}({\tX}\times_\Gamma {\tX})\to \SG^\Gamma_*(\tX)$ is the homomorphism appearing in the Higson-Roe sequence. 
 We can rewrite \eqref{iota-compatibility}
as
\begin{equation}\label{iota-compatibility-bis}
\langle \xi,[\tau] \rangle_{{\rm K}}= \langle \iota (\xi),[\tau]\rangle_{{\rm S}}.
\end{equation}
Unless confusion should arise we shall not use these precise notations for the parings.\\
Needless to say, the fact that  $\tau\in HP^{[*-1]}(\mathbb{C}\Gamma; \langle x \rangle)$ is extendable to 
a delocalized cyclic cocycle for the Puschnigg algebra, that is to an element in $HP^{[*-1]}_{\del}(\mathcal{A}\Gamma)$
that can then be paired with $H_{[*-1]}^{\del} (\mathcal{A}\Gamma)$,
is highly non-trivial and constitutes one of the
contributions of \cite{PSZ}. The case of 0-degree cocycles was established in the seminal paper of Puschnigg 
\cite{MR2647141}.

Notice that what we just explained 
pertains the Higson-Roe sequence and its mapping to homology and so 
does not see the difference
between spin and spin$^c$. The same is true for the next subsection.\\
For an alternative approach to the definition of  higher rho numbers see \cite{CWXY-higher-rho}.

\subsection{Further actions induced by ${\rm Diff}(M,\sigma)$}

\smallskip
\noindent
We go on explaining further actions induced by ${\rm Diff}(M)$ 
and ${\rm Diff}(M,\sigma)$; we  refer to \cite[Section 11]{PSZ} for details. 

\smallskip
\noindent
 First of all, we have a group 
homomorphism $\alpha: {\rm Diff}(M)\to {\rm Out}(\Gamma)$. Through $\alpha$ we have actions of ${\rm Diff}(M)$
on the noncommutative de Rham homology group figuring in \eqref{mktoh} and, dually, on the corresponding
cyclic cohomology groups $HC^* (\mathcal{A}\Gamma)$, $HC^*_e (\mathcal{A}\Gamma)$, $HC^*_{\del} (\mathcal{A}\Gamma)$. If $\psi\in  {\rm Diff}(M)$ and $\alpha(\psi)\in {\rm Out}(\Gamma)$,
we denote by $\alpha (\psi)_*$ and $\alpha(\psi)^*$ these two actions. Next, there is an action $\psi_*$  of 
$\psi\in {\rm Diff}(M,\sigma)$ on spaces of pseudodifferential 
operators acting on the spinor bundle
associated to $(M,\sigma)$
and this implies an action $\psi_*$
on the whole Higson-Roe sequence, once we adopt the expression of this sequence through
pseudodifferential operators acting on the sections of the spinor 
bundle $\widetilde{S}$ on $\tM$.  See  
Subsection \ref{subsect:higher-rho}, just before this subsection.
Moreover,
the Chern character homomorphisms in \eqref{mktoh} are equivariant with respect to these two actions defined  on the
upper row and the lower row. See \cite[Theorem 11.11]{PSZ}. This equivariance of the Chern characters has the following important consequence: if $x=\rho_c (D^\Gamma_{(g,\nabla^L)})\in \SG_*^\Gamma (\tM)$
is the rho class associated to the invertible Dirac operator $D^\Gamma_{g,\nabla^L}$ associated to a gpsc metric
$(g,\nabla^L)$, then
given $\tau\in HC^{*-1}_{\del}(\mathcal{A}\Gamma)$ the following formula holds
\begin{equation}\label{compatibility-through-Ch}
\langle \psi_* x,\tau \rangle = \langle x,\alpha(\psi)^* \tau \rangle
\end{equation}
This is in fact true for any $x\in \SG_*^\Gamma (\tM)$.
Formula \eqref{compatibility-through-Ch} and Proposition \ref{prop:invariance-CG} imply the following crucial result, see \cite[Corollary 11.16]{PSZ}:
\begin{equation}\label{compatibility-through-Ch-bis}
\langle \rho_c (D^\Gamma_{\psi^*g,\nabla^{\psi^*L}}),
 \tau \rangle=
\langle \rho_c (D^\Gamma_{(g,\nabla^L)}),\alpha(\psi^{-1})^*\tau \rangle. \end{equation}

\section{Moduli spaces: proofs}
\label{appendix2}
In this third appendix we give proofs for the results claimed in
Section \ref{sec:psc}.  While the proofs build heavily on the bordism
theorem in the spin$^c$ context and on the classification theorem for
the Stolz ${\rm R}^{\spinc}_*$ groups, the actual arguments, once we
have these two fundamental results, are variations of the argument
given in the spin case and for this reason we have decided to present
them in an Appendix.

\subsection{Proof of Theorem \ref{theo:any-torsion-group}} 
First of all, we recall that 
by the Atiyah-Patodi-Singer index theorem and its $L^2$-version 
on Galois coverings we know that
the Cheeger-Gromov rho invariant defines 
group homomorphisms
\begin{equation}\label{CG-rho}
\rho_{(2)}\co {\rm Pos}^{\spin}_n (B\Gamma)\to \bR\;\;\;\text{and}\;\;\; \rho_{(2)}\co {\rm Pos}^{\spinc}_n (B\Gamma)\to \bR\,;
\end{equation}
see \cite{MR2366359} for details.

Notice, incidentally, that by the work of Benameur-Roy \cite{BR},
these homomorphisms are also obtained from the rho homomorphisms
$\rho^c_{\Gamma}\co 
{\rm Pos}^{\spin}_n (B\Gamma)\to\SG^\Gamma_{n,{\rm max}} 
     (E\Gamma)$ and
$\rho^c_{\Gamma}\co {\rm Pos}^{\spinc}_n (B\Gamma)\to 
\SG^\Gamma_{n,{\rm max}} (E\Gamma)$
followed by the application of  
suitable traces on $\SG^\Gamma_{n,{\rm max}} (E\Gamma)$.

The following diagram is clearly commutative:
\begin{equation}\label{eq:3}
\xymatrix{& {\rm Pos}^{\spinc}_n (B\Gamma)\ar[d]^{\rho_{(2)}}\\
{\rm Pos}^{\spin}_n (B\Gamma) \ar[ur]^j \ar[r]^{\;\;\;\;\;\rho_{(2)}} & \,\bR}
\end{equation}
with $j$ as in \eqref{from-spin-to-spinc-pos-map}, that is
\begin{equation*}
{\rm Pos}^{\spin}_* (B\Gamma)\ni
[(M,\varphi\co M\to B\Gamma, g)]
\overset{j}{\longmapsto}
[(M,\varphi\co M\to B\Gamma, g,\nabla^{{\bf 1}})]
\in {\rm Pos}^{\spinc}_* (B\Gamma).
\end{equation*}
Let $u\co M\to B\Gamma$ be the classifying map for the universal 
cover of $M$. Recall the notation introduced 
in Subsection \ref{subsect:induced-actions}.
We consider the following subset in ${\rm Pos}^{\spinc}_n (B\Gamma)$:
\begin{equation}\label{charac}
[(M,u\co M\to B\Gamma,g,\nabla_L)]\in {\rm Pos}^{\spinc}_* (B\Gamma)\}.
\end{equation}
In the notation of Subsection \ref{subsect:induced-actions} this in nothing 
but $u_* (b(\mathcal{P}^c (M,L)))$.
Note that we have a surjective map
$\mathcal{R}^{c,+}(M,\sigma,L)\rightarrow u_* (b(\mathcal{P}^c (M,L)))$,
sending $(g,\nabla_L)$ to 
$ [(M,u\co M\to B\Gamma,g,\nabla_L)]$.

In the next Proposition we briefly denote by $[g,\nabla_L]$ an element in 
$u_* (b(\mathcal{P}^c (M,L)))$.
\begin{proposition}\label{prop:char}
Let $(M,\sigma,L)$ be  spin$^c$ and totally non-spin.
Suppose $[g,\nabla_L]\in u_* (b(\mathcal{P}^c (M,L)))$ and $x\in \ker
({\rm Pos}^{\spinc}_n (B\Gamma)\to \Omega^{\spinc}_n (B\Gamma))$. Then
$[g,\nabla^L]+ x \in u_* (b(\mathcal{P}^c (M,L)))$. Moreover, the
action of $\ker ({\rm Pos}^{\spinc}_n (B\Gamma)\to \Omega^{\spinc}_n
(B\Gamma))$ on $u_* (b(\mathcal{P}^c (M,L)))$ is free and transitive.
%
%
\end{proposition}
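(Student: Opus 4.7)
The plan is to combine Theorem~\ref{thm:classification} (simply transitive action of ${\rm R}^{\spinc}_{n+1}(M)$ on $\mathcal{P}^c(M,L)$) with exactness of the spin$^c$ Stolz sequence, using the fact that, because $M$ is totally non-spin, the map $u_{\ast}\colon {\rm R}^{\spinc}_{n+1}(M)\xrightarrow{\cong}{\rm R}^{\spinc}_{n+1}(B\Gamma)$ induced by the classifying map is an isomorphism (corollary to Theorem~\ref{thm:R^c-2-equivalence}). By exactness of the Stolz sequence, $\ker({\rm Pos}^{\spinc}_n(B\Gamma)\to\Omega^{\spinc}_n(B\Gamma))$ coincides with $\partial({\rm R}^{\spinc}_{n+1}(B\Gamma))$. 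The geometric input I would record once, which is essentially by definition of $\iota$, $b$ and $\partial$, is the identity
\begin{equation*}
\partial\circ\iota([g_0,\nabla_0],[g_1,\nabla_1]) \;=\; b([g_1,\nabla_1]) - b([g_0,\nabla_0])
\end{equation*}
in ${\rm Pos}^{\spinc}_n(M)$, read off the two boundary components of the cycle $(M\times[0,1],{\rm pr}_1,(g_0,\nabla_0)\sqcup(g_1,\nabla_1))$ representing $\iota([g_0,\nabla_0],[g_1,\nabla_1])$.

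For closure, given $[g,\nabla_L]=u_{\ast}(b([\tilde g,\tilde\nabla]))$ with $[\tilde g,\tilde\nabla]\in\mathcal{P}^c(M,L)$ and $x=\partial(y)$ with $y\in{\rm R}^{\spinc}_{n+1}(B\Gamma)$, I would pull $y$ back to the unique $y'\in{\rm R}^{\spinc}_{n+1}(M)$ with $u_{\ast}(y')=y$ and invoke Theorem~\ref{thm:classification} to produce the unique $[\tilde g',\tilde\nabla']\in\mathcal{P}^c(M,L)$ with $\iota([\tilde g,\tilde\nabla],[\tilde g',\tilde\nabla'])=y'$. Naturality of $\partial$ combined with the identity above then yields
\begin{equation*}
x \;=\; \partial(y) \;=\; u_{\ast}(\partial(y')) \;=\; u_{\ast}(b([\tilde g',\tilde\nabla'])) - [g,\nabla_L],
\end{equation*}
so $[g,\nabla_L]+x=u_{\ast}(b([\tilde g',\tilde\nabla']))\in u_{\ast}(b(\mathcal{P}^c(M,L)))$, as required.

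Transitivity is the same construction running in reverse: given $[g_i,\nabla_i]=u_{\ast}(b([\tilde g_i,\tilde\nabla_i]))$ for $i=1,2$, set $x:=\partial(u_{\ast}\iota([\tilde g_1,\tilde\nabla_1],[\tilde g_2,\tilde\nabla_2]))$; this $x$ lies in the kernel by construction and satisfies $[g_1,\nabla_1]+x=[g_2,\nabla_2]$ by the same identity. Freeness is immediate, since the ``action'' is translation inside the abelian group ${\rm Pos}^{\spinc}_n(B\Gamma)$, so $[g,\nabla_L]+x_1=[g,\nabla_L]+x_2$ forces $x_1=x_2$.

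The main obstacle is not the combinatorics above but the deployment of the prerequisites: the simply transitive action of ${\rm R}^{\spinc}_{n+1}(M)$ on $\mathcal{P}^c(M,L)$ (Theorem~\ref{thm:classification}, which itself relies on the Extension Theorem, the spin$^c$ Bordism Theorem and Proposition~\ref{prop:Stolz5.8}) together with the fact that $u_{\ast}$ is an isomorphism on ${\rm R}^{\spinc}_{n+1}$ (Theorem~\ref{thm:R^c-2-equivalence}, where the ``totally non-spin'' hypothesis enters decisively). Once these are in hand, the proof is pure bookkeeping around the naturality of $\partial$ and the identity $\partial\circ\iota = b(\cdot)-b(\cdot)$.
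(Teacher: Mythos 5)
Your argument is correct, but it is not the route the paper takes. The paper's proof is a one-step bordism argument: writing $x=[(X,\phi,g_X,\nabla_{L_X})]$, the hypothesis $x\in\ker({\rm Pos}^{\spinc}_n(B\Gamma)\to\Omega^{\spinc}_n(B\Gamma))$ says $(X,\phi)$ is spin$^c$ null-bordant, so $(M,u)\sqcup(X,\phi)$ is spin$^c$ bordant to $(M,u)$; the gpsc data on the disjoint union is then propagated through this bordism back to $(M,L,u)$ via the spin$^c$ Bordism Theorem (Theorem \ref{thm:spincbordism}), exactly as in Piazza--Schick's spin-case argument, and freeness/transitivity are quoted from that source. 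You instead identify the kernel with $\partial\bigl(\rR^{\spinc}_{n+1}(B\Gamma)\bigr)$ by exactness of the Stolz sequence, transport it to $\rR^{\spinc}_{n+1}(M)$ via the $2$-equivalence isomorphism of Theorem \ref{thm:R^c-2-equivalence}, and then use the surjectivity half of the Classification Theorem \ref{thm:classification} together with the identity $\partial\circ\iota([g_0,\nabla_0],[g_1,\nabla_1])=b([g_1,\nabla_1])-b([g_0,\nabla_0])$ (which the paper itself records in Section \ref{sec:psc}) to realize any kernel element as a difference of classes in $u_*(b(\cP^c(M,L)))$. The bookkeeping is sound, including the observation that freeness is automatic for translation in an abelian group. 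The trade-off: your route is cleaner and purely formal once the structural results are in place, and it makes transparent that the kernel acting is precisely $\operatorname{image}(\partial)$; but it is logically heavier, since the surjectivity of $\iota_{[\tilde g,\tilde\nabla]}$ is the hardest part of Theorem \ref{thm:classification} (resting on the Extension Theorem and Wall's thickening theory), whereas the paper gets by with the Bordism Theorem alone --- which is also the essential content of the Classification Theorem's proof. Both arguments require $n\ge 5$ and nonemptiness of $\cP^c(M,L)$, which are implicit in the statement.
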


\begin{proof}
Reasoning as in \cite[Proposition 2.4]{MR2366359}, this is a
consequence of the Spin$^c$ bordism theorem.  Indeed, let $x=
[X, \phi: X\to B\Gamma, g_X, \nabla_{L_X}]$ be as in the statement of
the Proposition. We know that, by assumption, $(X, \phi: X\to
B\Gamma)$ is spin$^c$ null-bordant. Thus $(M,u)$ disjoint union with
$(X,\phi)$ is spin$^c$ bordant to $(M,u)$; on this disjoint union we
have a generalized positive curvature metric, which we can propagate
back to $(M,L,u)$ thanks to the Spin$^c$ bordism theorem,
Theorem \ref{thm:spincbordism}.  This new element in ${\rm
Pos}^{\spinc}_n (B\Gamma)$ does belong, by construction, to $u_*
(b(\mathcal{P}^c (M,L)))$.
 The fact that this action is free and transitive is proved as in 
 \cite[Proposition 2.4]{MR2366359}.
\end{proof}
\begin{proposition}
\begin{enumerate}
\item If $m$ is odd, then $\Omega^{\spinc}_m (B \bZ/n)$ is finite. Hence the kernel $K^c$ 
of the natural homomorphism ${\rm Pos}^{\spinc}_m
(B\bZ/n)\to \Omega^{\spinc}_m (B\bZ/n)$ is a finite index subgroup.
\item  If $m=4k+3$ then $\rho_{(2)}: {\rm Pos}^{\spinc}_m (B\bZ/n)\to \bR$, with $\bR$ endowed with its additive structure,
 is a non-trivial homomorphism and thus of infinite image. Hence,
from \textup{(1)}, it follows that $\rho_{(2)}|_{K^c}$ has infinite
image.
\item Let $\iota\co\bZ/n \to \Gamma$ be an injective group homomorphism and let
$$B \iota_* \co  {\rm Pos}^{\spinc}_m (B\bZ/n)
\to {\rm Pos}^{\spinc}_m (B\Gamma)$$ be the induced homomorphism. Then 
\begin{equation}\label{Bj}
\rho_{(2)}(B \iota_*(x))=\rho_{(2)}(x),\;\;\;\forall x\in {\rm Pos}^{\spinc}_m (B\bZ/n).
\end{equation}
\end{enumerate}
\end{proposition}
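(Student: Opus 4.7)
The plan is to handle the three parts in turn: part (1) is a homotopy-theoretic finiteness, part (2) reduces via the commutative diagram \eqref{eq:3} to the known spin case, and part (3) is the induction invariance of $L^2$-eta invariants.

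For (1), I would apply the Atiyah-Hirzebruch spectral sequence $E^2_{p,q}=H_p(B\bZ/n;\Omega^{\spinc}_q)\Longrightarrow \Omega^{\spinc}_{p+q}(B\bZ/n)$. Rationally $\MSpinc$ is concentrated in even degrees (using $\MSpinc\simeq\MSpin\wedge\Sigma^{-2}\CP^\infty$ from \eqref{eq:Mspinc} together with the fact that $\pi_*\MSpin\otimes\bQ$ is polynomial in degrees divisible by $4$), so $\Omega^{\spinc}_q$ is finite for $q$ odd. For $m$ odd and $p+q=m$, either $p=0$ and $\Omega^{\spinc}_m$ is finite, or $p>0$ and $\widetilde H_p(B\bZ/n;\bZ)$ is a finitely generated torsion group, so $E^2_{p,q}$ is finite. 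Hence $\Omega^{\spinc}_m(B\bZ/n)$ is finite, and the image of ${\rm Pos}^{\spinc}_m(B\bZ/n)$ in it is finite, so its kernel $K^c$ has finite index.

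For (2), the results of \cite{MR1339924,MR2366359} give that $\rho_{(2)}\co {\rm Pos}^{\spin}_{4k+3}(B\bZ/n)\to\bR$ has non-trivial (hence infinite) image, detected by pairs of psc spin lens spaces whose Cheeger-Gromov invariants differ. The commutativity of \eqref{eq:3} gives $\rho_{(2)}^{\spinc}\circ j=\rho_{(2)}^{\spin}$, so $\rho_{(2)}^{\spinc}$ also has infinite image, proving non-triviality. For the consequence on $K^c$: if $K^c$ has finite index $N$, then $Nx\in K^c$ for every $x\in{\rm Pos}^{\spinc}_m(B\bZ/n)$, whence $\rho_{(2)}(K^c)\supseteq N\cdot\rho_{(2)}({\rm Pos}^{\spinc}_m(B\bZ/n))$, which is an infinite subset of $\bR$ since multiplication by $N$ is injective.

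For (3), represent $x$ by $(N,\psi\co N\to B\bZ/n,g,\nabla_L)$. Since $\iota$ is injective, $E\Gamma$ is also a contractible free $\bZ/n$-space via $\iota$, and the $\Gamma$-cover of $N$ pulled back via $B\iota\circ\psi$ equals the induced cover $\tilde N\times_{\bZ/n}\Gamma\to N$, where $\tilde N=\psi^* E\bZ/n$; the lifted spin$^c$ Dirac operator and connection on the $\Gamma$-cover are the corresponding inductions of the data on $\tilde N$. The standard induction identity for group von Neumann traces then yields $\eta_{(2)}^{\Gamma}(\tilde N\times_{\bZ/n}\Gamma)=\eta_{(2)}^{\bZ/n}(\tilde N)$: a fundamental domain for the $\Gamma$-action on $\tilde N\times_{\bZ/n}\Gamma=\bigsqcup_{\bZ/n\backslash\Gamma}\tilde N$ is just a fundamental domain for the $\bZ/n$-action on one copy of $\tilde N$. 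Subtracting the common classical eta invariant $\eta(D_N)$ yields \eqref{Bj}. The genuinely technical step is this part: verifying that the induced cover, the induced Dirac operator, and the two $L^2$-traces match up in exactly the right way, so that the $\Gamma$-theory on $\tilde N\times_{\bZ/n}\Gamma$ computes the same spectral quantity as the $\bZ/n$-theory on $\tilde N$. Parts (1) and (2) are then essentially bookkeeping once the rational structure of $\Omega^{\spinc}_*$, the diagram \eqref{eq:3}, and the spin-case non-vanishing of $\rho_{(2)}$ are invoked.
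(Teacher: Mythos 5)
Your proposal is correct and follows essentially the same route as the paper: part (1) is the same Atiyah--Hirzebruch spectral sequence argument (finiteness of $\Omega^{\spinc}_q$ in odd degrees plus finiteness of positive-degree homology of a finite group), part (2) uses the commutativity of \eqref{eq:3} together with the non-vanishing of $\rho_{(2)}$ on ${\rm Pos}^{\spin}_{4k+3}(B\bZ/n)$ from \cite{MR1339924}, and part (3) is the standard induction invariance of the Cheeger--Gromov $L^2$-rho invariant, which the paper simply delegates to \cite[Lemma 2.22]{MR2366359} but which you spell out correctly.
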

\begin{proof}
(1) We apply the Atiyah-Hirzebruch spectral sequence $H_p\bigl(\bZ/n,
\Omega^{\spinc}_q\bigr)\Rightarrow \Omega^{\spinc}_{p+q}(B\bZ/n)$.
The groups $\Omega^{\spinc}_*$ are finite except in even degree,
and homology of any finite group is also finite except in
degree $0$.  So the result immediately follows.\\
(2) By  \cite{MR1339924} we know that there exists $\kappa\in {\rm Pos}^{\spin}_m (B\bZ/n)$
such that $\rho_{(2)}(\kappa)\not= 0$. Then by \eqref{eq:3} we have that 
 $j(\kappa)\in {\rm Pos}^{\spinc}_m (B\bZ/n)$ and $\rho_{(2)}(j(\kappa))\not= 0$.\\
 (3) Exactly the same proof as in  \cite[Lemma 2.22]{MR2366359} can be given here.
\end{proof}
We are now ready to prove Theorem \ref{theo:any-torsion-group} that we
restate below for the benefit of the reader.
\begin{theorem}
Let $(M,\sigma,L)$ be spin$^c$ and totally non-spin and assume that
$(g,\nabla^L)$ has gpsc. Assume that $\pi_1 (M)\equiv \Gamma$ has an
element of finite order and that $\dim M=4k+3$, $k\ge 1$.  Then the
set $\pi_0 \left(\mathcal{R}^{c,+}(M,\sigma,L)/ {\rm
Diff}(M,\sigma) \right)$ has infinite cardinality.
\end{theorem}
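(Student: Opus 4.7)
The plan is to use the Cheeger–Gromov $L^2$ rho invariant to detect infinitely many orbits. First fix the given basepoint $(g_0,\nabla^0)\in\mathcal{R}^{c,+}(M,\sigma,L)$, let $u\colon M\to B\Gamma$ be a classifying map for the universal cover, and set $y_0 := [(M,u,g_0,\nabla^0)]\in u_*(b(\mathcal{P}^c(M,L)))\subset \Pos^{\spinc}_n(B\Gamma)$. Next, using the torsion hypothesis, choose an injection $\iota\colon \bZ/p\hookrightarrow \Gamma$ coming from a finite order element of $\Gamma$.

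The second step extracts infinitely many auxiliary elements. Parts (1) and (2) of the Proposition preceding the theorem show that $K^c\subset \Pos^{\spinc}_{4k+3}(B\bZ/p)$, the kernel of the forgetful map to $\Omega^{\spinc}_{4k+3}(B\bZ/p)$, is of finite index and that $\rho_{(2)}\colon \Pos^{\spinc}_{4k+3}(B\bZ/p)\to \bR$ has infinite image when restricted to $K^c$. Pick $\{x_m\}_{m\in\bN}\subset K^c$ with $\rho_{(2)}(x_m)$ pairwise distinct; by \eqref{Bj} the elements $z_m := B\iota_*(x_m)$ lie in $\ker\bigl(\Pos^{\spinc}_{4k+3}(B\Gamma)\to \Omega^{\spinc}_{4k+3}(B\Gamma)\bigr)$ and satisfy $\rho_{(2)}(z_m)=\rho_{(2)}(x_m)$, hence are pairwise distinguished by $\rho_{(2)}$.

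Now I invoke the key observation of Proposition \ref{prop:char}: since each $z_m$ lies in the kernel of the forgetful map to bordism, and $y_0 \in u_*(b(\mathcal{P}^c(M,L)))$, the element $y_0 + z_m$ again belongs to $u_*(b(\mathcal{P}^c(M,L)))$. Choose $[g_m,\nabla_m]\in \mathcal{P}^c(M,L)$ with $[(M,u,g_m,\nabla_m)] = y_0 + z_m$. By additivity of $\rho_{(2)}$, the values $\rho_{(2)}(y_0) + \rho_{(2)}(z_m)$ are pairwise distinct, so the concordance classes $[g_m,\nabla_m]$ are pairwise distinct in $\mathcal{P}^c(M,L)$.

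It remains to descend to the quotient by $\Diff(M,\sigma)$, which is the main delicate point. By Proposition \ref{prop:invariance-CG}, for any $\psi\in \Diff(M,\sigma)$ the $\Gamma$-equivariant spin$^c$ Dirac operators $D^\Gamma_{(g,\nabla_L)}$ and $D^\Gamma_{(\psi^*g,\nabla_{\psi^*L})}$ are unitarily equivalent, hence have identical spectrum and spectral density; since the Cheeger–Gromov rho invariant depends only on these spectral data, $\rho_{(2)}$ is $\Diff(M,\sigma)$-invariant on $\mathcal{P}^c(M,L)$. Consequently the $[g_m,\nabla_m]$ remain pairwise distinct in $\mathcal{P}^c(M,L)/\Diff(M,\sigma)$. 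Because isotopy implies concordance, the natural map $\pi_0\bigl(\mathcal{R}^{c,+}(M,\sigma,L)/\Diff(M,\sigma)\bigr)\to \mathcal{P}^c(M,L)/\Diff(M,\sigma)$ is surjective, so the left-hand set also has infinite cardinality. The principal obstacle is conceptual rather than computational: verifying that all the pieces — the bordism theorem (via Proposition \ref{prop:char}), additivity and naturality of $\rho_{(2)}$ under $B\iota_*$, and invariance of $\rho_{(2)}$ under pullback by elements of $\Diff(M,\sigma)$ — fit together in the spin$^c$ setting, exactly as they do in the spin case treated in \cite{MR2366359}.
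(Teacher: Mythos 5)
Your proposal is correct and follows essentially the same route as the paper's proof in Appendix \ref{appendix2}: torsion gives an inclusion $B\iota_*$ of a cyclic group, the $L^2$ Cheeger--Gromov rho invariant has infinite image on $K^c$, Proposition \ref{prop:char} (i.e.\ the spin$^c$ bordism theorem) transports the resulting classes into $u_*(b(\mathcal{P}^c(M,L)))$, and Proposition \ref{prop:invariance-CG} gives invariance under $\Diff(M,\sigma)$. The only cosmetic difference is that you descend to $\pi_0$ via the surjection onto $\mathcal{P}^c(M,L)/\Diff(M,\sigma)$, whereas the paper invokes local constancy of $\rho_{(2)}$ on $\mathcal{R}^{c,+}(M,\sigma,L)$; both are standard and equivalent here.
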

\begin{proof}
We denote by $u\co M\to B\Gamma$ the classifying map 
for the universal cover of $M$.
By assumption, there exists $n>1$ in $\bN$ and an injection 
$\iota\co \bZ/n\hookrightarrow \Gamma$.
Consider the composition
\begin{equation}\label{long-composition}
\mathcal{R}^{c,+}(M,\sigma,L) \twoheadrightarrow  u_* (b(\mathcal{P}^c (M,L)))
\hookrightarrow
{\rm Pos}^{\spinc}_m (B\Gamma)\xrightarrow{\rho_{(2)}} \bR\,.
\end{equation}
We know that $\rho_{(2)}$ restricted to $K^c:= \operatorname{Ker}(
{\rm Pos}^{\spinc}_m (B\bZ/n)\to \Omega^{\spinc}_m (B\bZ/n))$ is non
trivial with infinite image, so let $x\in K^c$ be such that
$\rho_{(2)} (x)\ne 0$.

Then, by naturality, $B\iota _* (x) \in \operatorname{Ker}( {\rm
Pos}^{\spinc}_m (\Gamma)\to \Omega^{\spinc}_m (\Gamma))$. By
Proposition \ref{prop:char} we know that $B\iota _* (x)+
[(M,u,g,\nabla_L)] \in u_* (b(\mathcal{P}^c (M,L)))$ and, moreover,
\begin{equation*}
\rho_{(2)} \left(B\iota _*
(x)+ [(M,u,g,\nabla_L)]\right)= \rho_{(2)}
(x)+ \rho_{(2)}([(M,u,g,\nabla_L)])\,.
\end{equation*}
Since we know that the right hand side has infinite image as $x$ runs
in $K^c$, we conclude that $\rho_{(2)}\co u_* (b(\mathcal{P}^c
(M,L)))\to \bR$ has infinite image in $\bR$. Now, by standard stability
properties of the Cheeger-Gromov rho invariant, the composition of all
the maps in \eqref{long-composition} is locally constant on
$\mathcal{R}^{c,+}(M,\sigma,L)$ and therefore passes to $\pi_0
(\mathcal{R}^{c,+}(M,\sigma,L))$.  Thus $\pi_0
(\mathcal{R}^{c,+}(M,\sigma,L))$ has infinite cardinality.  Moreover,
by Proposition \ref{prop:invariance-CG} we know that if $\varphi\in
{\rm Diff}(M,\sigma)$ then
$\rho_{(2)}(D^\Gamma_{(g,\nabla^L)})= \rho_{(2)}(D^\Gamma_{(\varphi^*g,\nabla^{\varphi^* L})})$;
hence  such composition is constant along the orbits
of the action of ${\rm Diff} (M,\sigma)$; thus
the cardinality of $\pi_0 \left(\mathcal{R}^{c,+}(M,\sigma,L)/ {\rm Diff} (M,\sigma) \right)$ is infinite.
\end{proof}
\subsection{Proofs for  Theorem \ref{theo:moduli1}
and Theorem \ref{theo:moduli2}}
Let $(M,\sigma,L)$ be a connected spin$^c$ totally non-spin closed
$n$-manifold with fundamental group $\Gamma$ and universal cover
$\tM$. We assume that $n\geq 5$ and that
$\mathcal{R}^{c,+}(M,\sigma,L) \ne \emptyset$.  We fix a base point
$(g_0,\nabla^0)\in \mathcal{R}^{c,+}(M,\sigma,L)$. If
$(g,\nabla)\in \mathcal{R}^{c,+}(M,\sigma,L)$ then we shall briefly
denote by $\rho (g,\nabla)$ the rho class in $\SG^\Gamma_{n}
(E\Gamma)$ associated to the $L^2$-invertible Dirac operator
$D^\Gamma_{(g,\nabla)}$.

The cohomology groups $H^{*}(\Gamma;F_0\Gamma)$ appear in the statement of Theorem \ref{theo:moduli1}; 
there is, in general,  a monomorphism
\begin{equation}\label{from-F-to-P}
H^*(\Gamma;F_0\Gamma)\xrightarrow{\varphi} HP^*_{\del} (\bC\Gamma).
\end{equation}
(The definition of $F\Gamma$ was given just before
Proposition \ref{prop:from-XYZ}.)
In  case $\Gamma$ is Gromov hyperbolic, then 
$H^*(\Gamma,F_0\Gamma) \xrightarrow{\varphi}  HP^*_{\del} (\bC\Gamma)$
is an isomorphism;
moreover both groups can be described  as direct products of cohomology groups of  centralizers of elements of finite order, with the inclusion of these constituents
compatible with respect to the isomorphism $\varphi$. A completely 
analogous result holds at the homological level. For the details, 
that are somewhat cumbersome to write down explicitly, see 
\cite{PSZ}, Corollary 7.9,  Corollary 12.12, and
especially formulas (12.12), (12.13) and (12.14) there.

\medskip
\noindent
 Our first step is the analogue of \cite[Theorem 13.9]{PSZ}; 
 in fact, we take this opportunity to expand the rather brief 
 treatment of this point in
 \cite{PSZ}. We shall use  Proposition \ref{prop:from-XYZ}. Recall the map $\iota$ figuring in the Higson-Roe
 analytic surgery sequence: $\iota\co K_{n+1}(C^*_{r}\Gamma)\to \SG^\Gamma_{n} (E\Gamma)$.

\begin{proposition}\label{prop:XYZ}
Assume that $\Gamma$ is Gromov hyperbolic, so 
that the Baum-Connes assembly map is  bijective.%
\footnote{We could assume more generally that the
assembly map $\mu$ is rationally injective, but since 
our ultimate interest is in Gromov hyperbolic groups, we assume 
Gromov hyperbolicity from the start.}
Then for each subgroup $H \subset \bigoplus_{k>0,p\in\{0,1\}} H^{n+1-4k-2p}(\Gamma;F_0^p\Gamma)$ 
there is a subgroup $B\subset  K_{n+1}(C^*_{r}\Gamma)$ 
and for all $\beta\in B$ there are gpsc metrics
$(g_\beta,\nabla^\beta)$ such that 
\begin{equation}\label{affine}
\rho(g_\beta,\nabla^\beta)= \rho(g_0,\nabla^0)+\iota(\beta);\qquad\forall \beta\in B
\end{equation}
and such that the pairing $\langle \, , \, \rangle_{{\rm K}} $ between $H$ and $B$ is non-degenerate.
\end{proposition}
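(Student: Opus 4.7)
The plan is to combine three previously established results: the Classification Theorem (Theorem \ref{thm:classification}), which provides a simply transitive action of $\rR^{\spinc}_{n+1}(M)$ on $\cP^c(M,L)$; the commutativity of the middle square of the diagram \eqref{eq:StolzToAnaX} from Theorem \ref{theo:commute-r-pos}, which converts the index class of a relative bordism into the difference of $\rho$-classes of its boundary components; and Proposition \ref{prop:from-XYZ} together with its proof (modeled on Xie--Yu--Zeidler), which supplies a plentiful image for $\Ind^c_\Gamma$.

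First, since $M$ is totally non-spin of dimension $\ge 5$, the classifying map $u\co M\to B\Gamma$ is a $2$-equivalence, and Theorem \ref{thm:R^c-2-equivalence} gives an identification $u_*\co \rR^{\spinc}_{n+1}(M)\xrightarrow{\cong}\rR^{\spinc}_{n+1}(B\Gamma)$ compatible with the index map $\Ind^c_\Gamma$ (both land in $K_{n+1}(C^*_r\Gamma)$ and agree under the identification). Writing $n+1=4K+P$ with $0\le P\le 3$ and reindexing $(L,Q)=(K-k,p)$, the dimensions $4L-2Q+P$ appearing in Proposition \ref{prop:from-XYZ} match exactly $n+1-4k-2p$ with $k>0$ and $p\in\{0,1\}$. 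Hence that proposition gives, after tensoring with $\bC$, an inclusion into the image of $\Ind^c_\Gamma\otimes\bC$ of the image under $c\circ\mu\circ\mathrm{ph}_\Gamma^{-1}$ of $\bigoplus_{k>0,\,p\in\{0,1\}} H_{n+1-4k-2p}(\Gamma,F^p\Gamma)$; restricting to the direct summand with coefficients $F^p_0\Gamma$ is harmless because the splitting $F^0\Gamma=F^0_0\Gamma\oplus\bC\cdot\delta_1$ is respected by the Chern characters and the construction in the proof of Proposition \ref{prop:from-XYZ}.

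Next I construct $B$. Given a subgroup $H\subset\bigoplus_{k>0,p}H^{n+1-4k-2p}(\Gamma;F^p_0\Gamma)$, the natural Kronecker-type pairing between $H^*(\Gamma;F^p_0\Gamma)$ and $H_*(\Gamma;F^p_0\Gamma)$ is non-degenerate, so for each $\tau\in H$ of a fixed bidegree $(k,p)$ one may choose a corresponding homology class $\widetilde{\tau}\in H_{n+1-4k-2p}(\Gamma;F^p_0\Gamma)$ with $\langle\widetilde{\tau},\tau\rangle\ne 0$, and more generally a subgroup $\widetilde H\subset \bigoplus H_{n+1-4k-2p}(\Gamma;F^p_0\Gamma)$ pairing non-degenerately with $H$ (argued rank by rank if $H$ is not finitely generated). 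Applying $\mu\circ\mathrm{ph}_\Gamma^{-1}$ and using the rational surjectivity of the index map from Proposition \ref{prop:from-XYZ}, I can clear denominators to obtain an \emph{integral} subgroup $B\subset K_{n+1}(C^*_r\Gamma)$ lying in $\mathrm{Im}(\Ind^c_\Gamma)$ such that its image in $K_{n+1}(C^*_r\Gamma)\otimes\bC$ is of the same rank and still pairs non-degenerately with $H$ under $\langle\,,\,\rangle_{{\rm K}}$ (here Baum--Connes bijectivity identifies $\mu$ with the full assembly and the Chern character $\Ch_\Gamma$ identifies the pairing with the Kronecker pairing between $H_*(\Gamma;F^p\Gamma)$ and $HP^*_{\del}(\bC\Gamma)$; cf.\ the discussion in Appendix \ref{appendix1}).

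Now for each $\beta\in B$ select a preimage $\alpha_\beta\in\rR^{\spinc}_{n+1}(M)$ with $\Ind^c_\Gamma(\alpha_\beta)=\beta$. The Classification Theorem \ref{thm:classification} produces a concordance class $[g_\beta,\nabla^\beta]:=\alpha_\beta\cdot[g_0,\nabla^0]\in\cP^c(M,L)$ characterized by $\iota([g_0,\nabla^0],[g_\beta,\nabla^\beta])=\alpha_\beta$; equivalently, the cylinder $(M\times[0,1],\mathrm{pr}_1,g_0\sqcup g_\beta,\nabla^0\sqcup\nabla^\beta)$ represents $\alpha_\beta$ in $\rR^{\spinc}_{n+1}(M)$. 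Applying the boundary map $\partial\co\rR^{\spinc}_{n+1}(M)\to\Pos^{\spinc}_n(M)$ of the Stolz sequence sends $\alpha_\beta$ to $[(M,g_\beta,\nabla^\beta)]-[(M,g_0,\nabla^0)]$, and commutativity of the middle square in \eqref{eq:StolzToAnaX} then gives
\begin{equation*}
\rho(g_\beta,\nabla^\beta)-\rho(g_0,\nabla^0)
=\rho^c_\Gamma(\partial\alpha_\beta)=\iota(\Ind^c_\Gamma(\alpha_\beta))=\iota(\beta),
\end{equation*}
which is the relation \eqref{affine}. The non-degeneracy of $\langle\,,\,\rangle_{{\rm K}}\co B\times H\to\bC$ holds by construction.

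The main obstacle is the passage from the rational statement of Proposition \ref{prop:from-XYZ} to an honest integral subgroup $B$ pairing non-degenerately with $H$, together with ensuring that the identification of the cyclic-cohomology pairing $\langle\,,\,\rangle_{{\rm K}}$ with the topologically defined pairing between $H_*(\Gamma;F^p_0\Gamma)$ and $H^*(\Gamma;F^p_0\Gamma)$ is correctly normalized. This is handled by Gromov hyperbolicity (which makes $\varphi$ in \eqref{from-F-to-P} an isomorphism and makes the delocalized Chern character $\Ch_\Gamma^{\del}$ available on the Puschnigg algebra) together with the compatibility \eqref{iota-compatibility-bis} between $\langle\,,\,\rangle_{{\rm K}}$ and $\langle\,,\,\rangle_{{\rm S}}$.
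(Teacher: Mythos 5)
Your proposal is correct and follows essentially the same route as the paper: both take $B$ to be (the integral form of) $c\circ\mu\circ\mathrm{ph}_\Gamma^{-1}$ applied to a homology subgroup pairing non-degenerately with $H$ via Proposition \ref{prop:from-XYZ}, realize the metrics $(g_\beta,\nabla^\beta)$ as preimages of $\beta$ under $\Ind^c_\Gamma\circ\iota_{(g_0,\nabla^0)}$ using the Classification Theorem, derive \eqref{affine} from the commutativity of the Stolz-to-Higson--Roe diagram, and check non-degeneracy through the compatibility of the Chern characters. Your extra care with the reindexing, the $F^p\Gamma$ versus $F^p_0\Gamma$ summand, and the rational-to-integral step only makes explicit what the paper leaves implicit.
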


\noindent
The pairing $\langle \, , \, \rangle_{{\rm K}} $  here is the one defined by the isomorphism $\varphi$ in  \eqref{from-F-to-P} together with  the Chern character $\Ch_\Gamma$; we explained it in  \eqref{ch-c*}. Notice that the classes in $H $ are represented by  cocycles that extend to the Puschnigg algebra
  $\mathcal{A}\Gamma\subset C^*_{red}\Gamma$ through $\varphi$ in \eqref{from-F-to-P}.
\begin{proof}
Let $H$ be as in the statement and fix $H'  \subset \bigoplus_{k>0,p\in\{0,1\}} H_{n+1-4k-2p}(\Gamma;F_0^p\Gamma)$ of the same dimension and pairing non-trivially with $H$.
We apply Proposition \ref{prop:from-XYZ} 
and find  $B$ in the range of $\Ind_\Gamma^c\otimes\mathbb{C}$
equal to $c\circ \mu\circ {\rm ph}_\Gamma^{-1} (H')$, with $c$ denoting complexification, $\mu$  the Baum-Connes map and ${\rm ph}_\Gamma$ the Pontryagin isomorphism. Notice that 
the numerology of the indices in our homology direct sum in the statement of this Proposition  is the same as the one appearing in Proposition \ref{prop:from-XYZ}. We now take the inverse image of $B\subset
  K_{n+1}(C^*_{r}\Gamma)$
 under the map 
 $$\mathcal{P}^c (M,L)\xrightarrow{\iota_{(g_0,\nabla^0)}}
 R^{\spinc}_{n+1}(B\Gamma)\xrightarrow{\Ind^c_\Gamma} K_{n+1}(C^*_r\Gamma)$$ and define in this way a subset $\mathcal{P}_B:=\{(g_\beta,\nabla^\beta)\}_{\beta\in B}$ in $\mathcal{P}^c (M,L)$.
By construction 
we have that
$$
\begin{array}{c}
\rho^c \left( \partial \left(\iota_{(g_0,\nabla^0)} ((g_\beta,\nabla^\beta))\right)\right)=\rho^c ((g_\beta,\nabla^\beta)) - \rho^c ((g_0,\nabla^0))\quad\text{in}\quad \SG^\Gamma_n (E\Gamma)\,.
\end{array}
$$
We now use crucially the commutativity of \eqref{eq:StolzToAna} and the definition of $\mathcal{P}_B$
in order to conclude that  the left hand side of the last equation is also equal to $\iota (\beta)\in \SG^\Gamma_{n} (E\Gamma)$; thus $\forall \beta\in B$ we have that  $\rho(g_\beta,\nabla^\beta)= \rho(g_0,\nabla^0)+\iota(\beta)$ as required.
It remains to see that $B$ pairs non-trivially with $H$ through $\varphi$  in \eqref{from-F-to-P}. 
Assume that $B=c(B_{{\rm O}})$ with $B_{{\rm O}}\in KO_{n+1}(C^*_{r,\mathbb{R}}\Gamma)$. 
If $\beta\in B$ then by  construction $\beta= c(\beta_{{\rm O}})$ and 
$$\beta_{{\rm O}}=\mu ({\rm ph}_\Gamma^{-1} (h'))\;\;\;\text{with}\;\;\;h'\in H'.$$
Recall that  there exists a commutative diagram \begin{equation}\label{homology-compatibility}
    \xymatrix{   KO_p^\Gamma(\underline{E}\Gamma) \ar[r]^{\mu} \ar[d]^{{\rm ph}_\Gamma} &
      KO_p(C^*_{r,\mathbb{R}}\Gamma)\ar[r]^{c}\ar[d]^{\Ch_\Gamma} & K_p (C^*_r\Gamma)\ar[d]^{\Ch_\Gamma} \\
      \bigoplus_{k\in\mathbb{Z}} H_{p+4k}(\Gamma;F^0\Gamma) \oplus
      H_{p+2+4k}(\Gamma;F^1\Gamma) \ar[r]^(.6){\psi} &    
 HP_{p}(\mathcal{A}\Gamma)^0\oplus  HP_{p}(\mathcal{A}\Gamma)^1\ar[r]^(.6){c_P} & HP_{p} (\mathcal{A}\Gamma) }
  \end{equation}
Here the superscripts $0$ and $1$ in the periodic cyclic homology groups denote
the $\pm1$-eigenspaces under the endomorphism induced by
mapping $\gamma\in\Gamma$ to $\gamma^{-1}$.
Then the following equalities establish that $B$ pairs with $H$ in a non-degenerate way:
$$
\begin{aligned}\langle \Ch_\Gamma \beta,\varphi (h) \rangle= 
\langle \Ch_\Gamma c (\beta_{{\rm O}}),\varphi (h) \rangle=&
\langle c_P (\Ch_\Gamma (\beta_{{\rm O}})),\varphi (h) \rangle=
\langle c_P (\Ch_\Gamma \mu ({\rm ph}_\Gamma^{-1} (h')),\varphi (h) \rangle\\
=& \langle c_P (\psi \circ {\rm ph}
({\rm ph}_\Gamma^{-1} (h')))
,\varphi (h) \rangle=
\langle (c_P \circ \psi) (h'),\varphi (h) \rangle
\end{aligned}
$$
Indeed, the last member of this series of equalities is the pairing between elements of $H'$ and $H$ 
in the (co)homology of $\Gamma$ with values in $F\Gamma$
moved through injective maps to a pairing in periodic cyclic (co)homology. Since $H$ and $H'$ 
pairs in a non-degenerate way and the pairings are compatible, we are done.
\end{proof}

\begin{proposition}\label{prop:crucial-moduli} We make the same assumptions as in the previous proposition but 
require in addition that $H$ 
is pointwise fixed by a finite index subgroup $V$ of ${\rm Out}(\Gamma)$. Let $k$ be the dimension of $H$.
Then there exists 
a finite index subgroup $G$ of ${\rm Diff} (M,\sigma)$ such that 
$${\rm rank} (b(\mathcal{P}^c (M,L))/G)\geq k\,.$$
\end{proposition}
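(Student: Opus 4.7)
The plan is to produce $k$ classes in the image of $\partial_G\co R^{\spinc}_{n+1}(M)_G \to \Pos^{\spinc}_n(M)_G$ that remain $\mathbb{Q}$-linearly independent, detecting them by the delocalized rho pairing against $H$. First I would set $G := \alpha^{-1}(V) \cap \Diff(M,\sigma)$, where $\alpha\co \Diff(M,\sigma) \to \mathrm{Out}(\Gamma)$ is the natural homomorphism; since $V$ has finite index in $\mathrm{Out}(\Gamma)$, the subgroup $G$ has finite index in $\Diff(M,\sigma)$. Next, applying Proposition \ref{prop:XYZ} together with the non-degeneracy of the pairing between $B$ and $H$, I would choose $\beta_1,\dots,\beta_k \in B$ and a basis $\tau_1,\dots,\tau_k$ of $H$ so that the $k\times k$ matrix $\bigl(\langle\iota(\beta_i),\varphi(\tau_j)\rangle_{\mathrm{S}}\bigr)_{i,j}$ is non-singular, and gpsc pairs $(g_i,\nabla^i)$ on $(M,\sigma,L)$ satisfying $\rho(g_i,\nabla^i) - \rho(g_0,\nabla^0) = \iota(\beta_i)$ in $\SG^\Gamma_n(E\Gamma)$.

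Set $r_i := \iota_{[g_0,\nabla^0]}([g_i,\nabla^i]) \in R^{\spinc}_{n+1}(M)$ and $p_i := \partial r_i = b([g_i,\nabla^i]) - b([g_0,\nabla^0]) \in \Pos^{\spinc}_n(M)$, so that $[p_i]_G$ lies in the image of $\partial_G$. To separate the $[p_i]_G$, I would use, for each $j=1,\dots,k$, the $\mathbb{C}$-valued functional
\[
F_j\co \Pos^{\spinc}_n(M) \xrightarrow{\rho^c_\Gamma} \SG^\Gamma_n(\widetilde M) \xrightarrow{u_*} \SG^\Gamma_n(E\Gamma) \xrightarrow{\langle\cdot,\varphi(\tau_j)\rangle_{\mathrm{S}}} \mathbb{C},
\]
with $u\co M\to B\Gamma$ the classifying map. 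By \eqref{compatibility-through-Ch} together with the $\mathrm{Out}(\Gamma)$-equivariance of $\varphi\co H^*(\Gamma;F_0\Gamma) \to HP^*_{\del}(\bC\Gamma)$, for $\psi \in G$ and $y \in \Pos^{\spinc}_n(M)$ one has
\[
F_j(\psi\cdot y) = \langle \rho^c_\Gamma(y),\, \alpha(\psi)^*\varphi(\tau_j)\rangle_{\mathrm{S}} = \langle \rho^c_\Gamma(y), \varphi(\tau_j)\rangle_{\mathrm{S}} = F_j(y),
\]
since $\alpha(\psi) \in V$ fixes $\tau_j$ pointwise. Hence each $F_j$ descends to $\overline F_j\co \Pos^{\spinc}_n(M)_G \to \mathbb{C}$.

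A diagram chase through \eqref{eq:StolzToAnaX} then gives $\rho^c_\Gamma(u_* p_i) = \iota(\Ind^c_\Gamma(u_* r_i)) = \iota(\beta_i)$, so $\overline F_j([p_i]_G) = \langle \iota(\beta_i),\varphi(\tau_j)\rangle_{\mathrm{S}}$, and the matrix $\bigl(\overline F_j([p_i]_G)\bigr)_{i,j}$ is non-singular by construction. It follows that $[p_1]_G,\dots,[p_k]_G$ are $\mathbb{Q}$-linearly independent in $\Pos^{\spinc}_n(M)_G$, all lie in the image of $\partial_G$, and hence $\rank\bigl(b(\mathcal{P}^c(M,L))/G\bigr) \geq k$.

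The main obstacle is the $G$-invariance check in the second paragraph: it requires combining \eqref{compatibility-through-Ch}, which ties the $\Diff(M,\sigma)$-action on $\SG^\Gamma$ to the $\mathrm{Out}(\Gamma)$-action on (delocalized) cyclic cohomology through $\alpha$, with the naturality of $\varphi$ under automorphisms of $\Gamma$. Only then does the hypothesis ``$V$ fixes $H$ pointwise'' translate into the $G$-invariance of the functionals $F_j$ on $\Pos^{\spinc}_n(M)$. Once this invariance is secured, the rest of the argument is a direct application of Proposition \ref{prop:XYZ}, the classification theorem identifying $\mathcal{P}^c(M,L)$ with $R^{\spinc}_{n+1}(M)$, and the universal commutative diagram from Theorem \ref{theo:commute-r-pos}.
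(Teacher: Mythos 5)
Your proposal is correct and follows essentially the same route as the paper's proof: the same choice of $G=\Diff(M,\sigma)\cap\alpha^{-1}(V)$, the same use of Proposition \ref{prop:XYZ} and the affine formula \eqref{affine} to produce classes detected by pairing with $H$ via $\varphi$ and the delocalized Chern character, and the same invariance argument via \eqref{compatibility-through-Ch} to descend to the $G$-coinvariants. You merely spell out the functionals $F_j$ and the non-singular matrix more explicitly than the paper does.
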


\begin{proof}
We adapt the proof of the original result in \cite{PSZ} (see Theorem 13.1 there).\\
Let $G$ be the subgroup 
$$G:={\rm Diff}(M,\sigma)\cap \alpha^{-1} (V)$$
with $\alpha: {\rm Diff}(M) \to {\rm Out}(\Gamma)$. 
Recall $b: \mathcal{P}^c (M,L)\to {\rm Pos}^{\spinc}_n (B\Gamma)$, with image 
$b(\mathcal{P}^c (M,L))$.
We can define a pairing $H\times b(\mathcal{P}^c (M,L))\to \mathbb{C}$, sending
$(h, b([g^\beta,\nabla^\beta]))$ to $\langle \varphi (h), \rho(g^\beta,\nabla^\beta) \rangle$,
with $\varphi(h)\in HP^* (\mathcal{A}\Gamma)$. From \eqref{affine} and the fact that
 $H$ and $B$ pair in a non-degenerate way 
we understand that this pairing is as non-degenerate as the pairing of $H$ and $B$ and so 
the rank of $b(\mathcal{P}^c (M,L))$ is at least $k$. Now we use the extra 
hypothesis 
on $H$ and the crucial formula 
 \eqref{compatibility-through-Ch-bis} to see that
 \begin{equation}\label{compatibility-through-Ch-ter}
\langle \rho_c (D^\Gamma_{\psi^*g,\nabla^{\psi^*L}}),
 \varphi(h) \rangle=
\langle \rho_c (D^\Gamma_{(g,\nabla^L)}),\varphi (h) \rangle  \end{equation}
for any $b[g, \nabla^L]\in b(\mathcal{P}^c (M,L))$ and any $h\in H$. We deduce that the pairing $H\times
b(\mathcal{P}^c (M,L))\to \mathbb{C}$
descends to a pairing $H\times (b(\mathcal{P}^c (M,L))/G)\to \mathbb{C}$;  using again \eqref{affine} 
we see  that this pairing 
is as non-degenerate as the pairing of $H$ with $B$ and thus
${\rm rank} (b(\mathcal{P}^c (M,L))/G)\geq k$, which is what we wanted to show.
\end{proof}

We are now ready to prove  Theorem \ref{theo:moduli1} and Theorem \ref{theo:moduli2}. 
These are in fact direct consequences of Proposition \ref{prop:crucial-moduli}.
We shall restate the two theorems for the benefit of the reader.

\begin{theorem}
    Let $\Gamma$ be a Gromov hyperbolic group with finite outer automorphism group
    and let $(M^n,\sigma,L)$ be a closed connected spin$^c$ totally non-spin  manifold  with
    $\pi_1(M)=\Gamma$.
    We assume that $n\geq 5$ and that $\mathcal{R}^{c,+}(M,\sigma,L) \not= \emptyset$. Then
    there exists a finite index subgroup $G\leq {\rm Diff}(M,\sigma)$ such that     \begin{equation*}
     {\rm rank} (b(\mathcal{P}^c (M,L))/G)\geq 
      \sum_{k>0,p\in\{0,1\}} \rank(H^{n+1-4k-2p}(\Gamma;F^p_0\Gamma)). 
    \end{equation*}
    \end{theorem}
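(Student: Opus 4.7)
The plan is to derive Theorem \ref{theo:moduli1} as an immediate specialization of Proposition \ref{prop:crucial-moduli}, using the full group
\[
H \;:=\; \bigoplus_{k>0,\,p\in\{0,1\}} H^{n+1-4k-2p}(\Gamma;F_0^p\Gamma)
\]
as the chosen subspace. By construction $\rank(H)$ equals precisely the sum appearing on the right-hand side of the claimed inequality, so it suffices to verify that Proposition \ref{prop:crucial-moduli} applies to this $H$.

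The three hypotheses on $\Gamma$ and $(M,\sigma,L)$ required by Proposition \ref{prop:crucial-moduli} (which in turn inherits them from Proposition \ref{prop:XYZ}) are identical to the hypotheses of the present theorem: $\Gamma$ is Gromov hyperbolic (so the Baum--Connes assembly map is an isomorphism and $\varphi$ in \eqref{from-F-to-P} is an isomorphism, per the discussion citing \cite{PSZ}), $M$ is totally non-spin spin$^c$ of dimension $n\ge 5$, and $\mathcal{R}^{c,+}(M,\sigma,L)\ne\emptyset$. The only remaining condition to check is that $H$ is pointwise fixed by some finite-index subgroup $V\le {\rm Out}(\Gamma)$.

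Here the additional assumption that ${\rm Out}(\Gamma)$ is finite enters in a completely trivial way: take $V=\{e\}$, which is finite-index in the finite group ${\rm Out}(\Gamma)$, and observe that every element of $H$ is automatically fixed by the identity. Unwinding the proof of Proposition \ref{prop:crucial-moduli}, the corresponding subgroup $G={\rm Diff}(M,\sigma)\cap \alpha^{-1}(V)$ is then contained in $\ker(\alpha)\cap {\rm Diff}(M,\sigma)$, and has index at most $|{\rm Out}(\Gamma)|<\infty$ in ${\rm Diff}(M,\sigma)$, so it is indeed of finite index.

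There is no main obstacle: all the work has already been done. The substantive ingredients---the Spin$^c$ Bordism Theorem \ref{thm:spincbordism}, the Classification Theorem \ref{thm:classification}, the index-theoretic construction of classes $B\subset K_{n+1}(C^*_r\Gamma)$ paired non-degenerately with $H$ in Proposition \ref{prop:XYZ}, the commutativity of \eqref{eq:StolzToAna}, the extendability of the delocalized cocycles $\varphi(h)$ to the Puschnigg algebra, and the equivariance formula \eqref{compatibility-through-Ch-bis} that lets the pairing descend to $G$-coinvariants---are packaged inside Proposition \ref{prop:crucial-moduli}. Theorem \ref{theo:moduli1} is therefore just the observation that finiteness of ${\rm Out}(\Gamma)$ removes the \emph{pointwise fixed} restriction on $H$, allowing us to take $H$ as large as possible.
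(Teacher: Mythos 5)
Your proposal is correct and coincides with the paper's own proof: the authors likewise observe that finiteness of ${\rm Out}(\Gamma)$ allows taking $V=\{1\}$ and $H=\bigoplus_{k>0,\,p\in\{0,1\}} H^{n+1-4k-2p}(\Gamma;F^p_0\Gamma)$ in Proposition \ref{prop:crucial-moduli}, from which the stated rank bound is immediate. Your added verification that the hypotheses of that proposition are met and your identification of $G$ as a finite-index subgroup are consistent with, and slightly more explicit than, the paper's one-line argument.
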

    
    \noindent
    Notice that as already remarked in \cite{PSZ}, and proved by Bestvina and Feighn in
    \cite{B-F}, the 
    finiteness condition of ${\rm Out}(\Gamma)$ is generically true for a Gromov hyperbolic group.
    \begin{proof}
  As  ${\rm Out}(\Gamma)$ is finite we can take $V=\{1\}$ and $H=  \sum_{k>0,p\in\{0,1\}} H^{n+1-4k-2p}(\Gamma;F^p_0\Gamma)$ in Proposition \ref{prop:crucial-moduli}. \end{proof}

    \begin{theorem}
    Let $M$ be as in the previous Corollary and let $\Gamma$ be  Gromov hyperbolic. Then there 
    exists a finite index subgroup $G\leq {\rm Diff}(M,\sigma)$ such that     
    $ {\rm rank} (b(\mathcal{P}^c (M,L))/G)\geq \ell$
    with   
 \begin{equation*}
      \ell :=
      \begin{cases}
        \abs{\{[\gamma]\subset\Gamma\mid \ord(\gamma)<\infty\}}; & n\equiv -1\pmod 4\\
        \abs{\{[\gamma]\subset\Gamma\mid [\gamma]\ne[\gamma^{-1}]\}}; &
        n\equiv 1\pmod 4 
      \end{cases}
    \end{equation*}
    \end{theorem}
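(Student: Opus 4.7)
The plan is to apply Proposition \ref{prop:crucial-moduli} in the same way as in the proof of Theorem \ref{theo:moduli1}, but instead of assuming $\mathrm{Out}(\Gamma)$ is finite, I will produce a specific subspace $H \subset \bigoplus_{k>0,\,p\in\{0,1\}} H^{n+1-4k-2p}(\Gamma; F^p_0\Gamma)$ of dimension $\ell$ which is automatically pointwise fixed by a finite-index subgroup $V\leq \mathrm{Out}(\Gamma)$. Two facts about Gromov hyperbolic groups are decisive: (i) the set $\mathrm{Cl}_{\mathrm{fin}}(\Gamma)$ of conjugacy classes of finite-order elements is \emph{finite} (every finite subgroup is conjugate into one of finitely many, and each contains only finitely many elements), and (ii) the Baum-Connes assembly map is a (rational) isomorphism for $\Gamma$, so Proposition \ref{prop:from-XYZ} and Proposition \ref{prop:XYZ} apply without obstruction.

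First I would take $V$ to be the kernel of the natural action of $\mathrm{Out}(\Gamma)$ on the finite set $\mathrm{Cl}_{\mathrm{fin}}(\Gamma)$; since the image is finite, $V$ has finite index. Next I would use the PSZ decomposition recalled just before \eqref{from-F-to-P}: for hyperbolic $\Gamma$, the isomorphism $H^*(\Gamma; F_0\Gamma) \xrightarrow{\varphi} HP^*_{\mathrm{del}}(\mathbb{C}\Gamma)$ displays the left-hand side as a product indexed by non-trivial classes $[\gamma]\in\mathrm{Cl}_{\mathrm{fin}}(\Gamma)$, with the factor at $[\gamma]$ built from $H^*(C_\Gamma(\gamma);\mathbb{C})$; the splitting into $F^0$ and $F^1$ corresponds to taking symmetric, respectively antisymmetric, combinations of the classes attached to $[\gamma]$ and $[\gamma^{-1}]$. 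Because $V$ preserves each $[\gamma]$, every class pulled from $H^0(C_\Gamma(\gamma);\mathbb{C})$ is automatically $V$-invariant, so any subspace $H$ spanned by such ``degree-zero delocalized trace'' cocycles is pointwise fixed by $V$.

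For the counting of $\ell$, I would then choose, for each relevant $[\gamma]$, the lowest-degree delocalized trace cocycle constructed in \cite{PSZ} (extending the Puschnigg traces of \cite{MR2647141}). When $n\equiv -1\pmod 4$, the sum $\bigoplus_{k>0,\,p} H^{n+1-4k-2p}$ contains a degree-$0$ summand at $p=0$, and every class in $\mathrm{Cl}_{\mathrm{fin}}(\Gamma)$ contributes one symmetric ($F^0$-type) trace, so $\dim H = |\mathrm{Cl}_{\mathrm{fin}}(\Gamma)| = \ell$. When $n\equiv 1\pmod 4$, the sign convention on the rho-map forces the contributing cocycles to sit in the $F^1$-summand (the familiar vanishing of symmetric rho-invariants in these dimensions, cf.\ \cite{MR1339924}); the $F^1$-contribution at $[\gamma]$ is non-zero precisely when $[\gamma]\neq [\gamma^{-1}]$, giving $\dim H = |\{[\gamma]\mid [\gamma]\neq[\gamma^{-1}]\}| = \ell$. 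Running Proposition \ref{prop:XYZ} produces a subgroup $B\subset K_{n+1}(C^*_r\Gamma)$ and gpsc pairs $(g_\beta,\nabla^\beta)$ with $\rho(g_\beta,\nabla^\beta) = \rho(g_0,\nabla^0) + \iota(\beta)$, paired non-degenerately with $H$ through $\Ch_\Gamma$. Then Proposition \ref{prop:crucial-moduli} applied to $(V,H)$ yields $\mathrm{rank}(b(\mathcal{P}^c(M,L))/G) \geq \ell$ with $G = \mathrm{Diff}(M,\sigma)\cap \alpha^{-1}(V)$.

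The main obstacle I expect is verifying the count $\dim H = \ell$ in the case $n\equiv 1\pmod 4$, since one has to check that the antisymmetric delocalized cocycles are simultaneously (a) non-vanishing on indices coming from each pair $\{[\gamma],[\gamma^{-1}]\}$, (b) genuinely realized by Puschnigg-algebra extensions, and (c) compatible with the pairing \eqref{S-pairing-bis}. This is exactly the technical core of the PSZ machinery; it transfers from the spin to the spin$^c$ setting without modification, because the Higson-Roe pairing depends on $D$ only as a self-adjoint invertible operator on $\tM$, and the mapping of the spin$^c$ Stolz sequence into the analytic surgery sequence (Theorem \ref{theo:commute-r-pos}) has all the formal features needed to invoke Proposition \ref{prop:from-XYZ} and the subsequent arguments identically to the spin case.
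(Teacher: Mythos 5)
Your proposal follows the same route as the paper: the paper's entire proof of this theorem is the single sentence that it is ``identical to the one given in \cite{PSZ}'' and ``a consequence of Proposition \ref{prop:crucial-moduli}'', and your argument is precisely an unwinding of that sentence — take $V$ to be the finite-index kernel of the $\mathrm{Out}(\Gamma)$-action on the finite set of conjugacy classes of torsion elements, take $H$ to be spanned by degree-zero delocalized cocycles, and feed $(V,H)$ into Proposition \ref{prop:crucial-moduli}. Replacing the blanket finiteness of $\mathrm{Out}(\Gamma)$ used in Theorem \ref{theo:moduli1} by the finiteness of the set of torsion conjugacy classes in a hyperbolic group is exactly the point of this second theorem, and you have that right, including the equivariance input \eqref{compatibility-through-Ch-bis} that makes $H$ pointwise $V$-fixed.

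The one step that is thinner than it should be is the verification that $\dim H=\ell$. For $n\equiv -1\pmod 4$ the congruence $n+1-4k-2p=0$ forces $p=0$, so the only degree-zero summand available in $\bigoplus_{k>0,\,p}H^{n+1-4k-2p}(\Gamma;F^p_0\Gamma)$ is $H^0(\Gamma;F^0_0\Gamma)$; this space is spanned by the \emph{symmetrized} combinations of the delocalized traces attached to $[\gamma]$ and $[\gamma^{-1}]$, and the identity class is excluded from $F_0\Gamma$ altogether. So ``one symmetric trace per class'' does not literally compute the dimension of that cohomology group, and the passage from the number of inversion-orbits of nontrivial torsion classes to the stated count $\ell$ is exactly the bookkeeping carried out in \cite{PSZ} (following \cite{Xie-Yu-Moduli}), where the non-degeneracy of the pairing with suitably chosen $K$-theory classes coming from finite cyclic subgroups is what delivers the full rank. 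You flag the analogous issue for $n\equiv 1\pmod 4$ as ``the main obstacle'', but the same care is needed in the $n\equiv -1$ case; in both cases the resolution is imported wholesale from \cite{PSZ}, which is also all the paper does.
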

    
    \begin{proof}
    The proof is identical to the one given in \cite{PSZ}; it is a consequence of Proposition \ref{prop:crucial-moduli}.
    \end{proof}
%

\bibliographystyle{amsplain}
\bibliography{GPSCandStolzseq}

\end{document}